\documentclass[11pt,twoside,reqno]{amsart}
\usepackage{amsmath}
\usepackage{amsfonts}
\usepackage{amssymb}
\usepackage{color}
\usepackage{mathrsfs}
\usepackage{cite}

\usepackage{tikz}
\usepackage{pgfplots}
\pgfplotsset{compat=1.10}
\usepgfplotslibrary{fillbetween}
\usetikzlibrary{patterns}
%
%
%
\allowdisplaybreaks
\textwidth 18cm 
\textheight 20cm 
\headheight 15pt 
\headsep 0.2in
\oddsidemargin -1cm 
\evensidemargin -1cm 
\topmargin 0cm
%
%

\newcommand{\R}{\mathbb R} 
%
%

\newcommand{\rd}{\mathrm{d}}
\definecolor{cadmiumgreen}{rgb}{0.0, 0.42, 0.24}
%

%
%
%
\newtheorem{theorem}{Theorem}[section]
\newtheorem{corollary}[theorem]{Corollary}
\newtheorem{lemma}[theorem]{Lemma}
\newtheorem{proposition}[theorem]{Proposition}

\newtheorem{remark}[theorem]{Remark}
\newtheorem{example}[theorem]{Example}
\numberwithin{equation}{section}

\newcommand{\hb}[1]{H_B^1(#1)}
\begin{document}
\title{Energy Minimizers for an Asymptotic MEMS Model with Heterogeneous Dielectric Properties}
 \thanks{Partially supported by the CNRS Projet International de Coop\'eration Scientifique PICS07710}

\author{Philippe Lauren\c{c}ot}
\address{Institut de Math\'ematiques de Toulouse, UMR~5219, Universit\'e de Toulouse, CNRS \\ F--31062 Toulouse Cedex 9, France}
\email{laurenco@math.univ-toulouse.fr}
\author{Katerina Nik}
\address{Leibniz Universit\"at Hannover\\ Institut f\" ur Angewandte Mathematik \\ Welfengarten 1 \\ D--30167 Hannover\\ Germany}
\email{nik@ifam.uni-hannover.de}
\author{Christoph Walker}
\address{Leibniz Universit\"at Hannover\\ Institut f\" ur Angewandte Mathematik \\ Welfengarten 1 \\ D--30167 Hannover\\ Germany}
\email{walker@ifam.uni-hannover.de}
\keywords{Minimizers, shape derivative, obstacle problem, bilaplacian operator}
\subjclass{35J50 - 49Q10 - 49J40 - 35R35 - 35Q74}

\date{\today}

\begin{abstract}
A model for a MEMS device, consisting of a fixed bottom plate and an elastic plate, is  studied. It was derived in a previous work as a reinforced limit  when the thickness of the insulating layer covering the bottom plate tends to zero. This asymptotic model inherits the dielectric properties of the insulating layer. It involves the electrostatic potential in the device and the deformation of the elastic plate defining the geometry of the device. The electrostatic potential is given by an elliptic equation with mixed boundary conditions in the possibly non-Lipschitz region between the two plates. The deformation of the elastic plate is supposed to be a critical point of an energy functional which, in turn, depends on the electrostatic potential due to the force exerted by the latter on the elastic plate. The energy functional is shown to have a minimizer giving the geometry of the device. Moreover, the corresponding Euler-Lagrange equation is computed and the maximal regularity of the electrostatic potential is established.
\end{abstract}

\maketitle

%
%
\pagestyle{myheadings}
\markboth{\sc Ph. Lauren\c cot, K. Nik, and Ch. Walker}{\sc Energy Minimizers for an Asymptotic MEMS Model}

\section{Introduction}\label{sec.IntP}

The modeling and analysis of microelectromechanical systems (MEMS) has attracted a lot of interest in recent years, see, e.g., \cite{EGG10, FMCCS05, LWBible, LW18, Pe01, PeB03,You11} and the references therein. Idealized devices often consist of a rigid dielectric ground plate above which an elastic dielectric plate is suspended. Applying a voltage difference between the two plates induces a competition between attractive electrostatic Coulomb forces and restoring mechanical forces, the latter resulting from the elasticity of the upper plate. When electrostatic forces dominate mechanical forces, the two plates may come into contact, a phenomenon usually referred to as pull-in instability or touchdown. From a mathematical point of view, this phenomenon may be accounted for in different ways. In fact, in most mathematical models considered so far in the MEMS literature, the pull-in instability is revealed as a singularity in the corresponding mathematical equations which coincides with a breakdown of the model, see \cite{EGG10, LWBible,PeB03} and the references therein. There is a close connection between the singular character of the touchdown and the fact that the modeling does not account for the thickness of the plates. Indeed, coating the ground plate with a thin insulating layer prevents a direct contact of the plates, so that a touchdown of the elastic plate on the insulating layer does not interrupt the operation of the device \cite{BG01, LLG14, LLG15, LW19}. Due to the presence of this layer, the MEMS device features heterogeneous dielectric properties (with a jump of the permittivity at the interface separating the coated ground plate and the free space beneath the elastic plate) and the electrostatic potential solves a free boundary transmission problem in the non-smooth domain enclosed between the two plates \cite{LW19}. The shape of the domain itself is given by a partial differential equation governing the deflection of the elastic plate from rest, which, in turn, involves the electrostatic force exerted on the latter.
The mathematical treatment of such a model is rather complex, see \cite[Section~5]{LW19} and \cite{LW19b}.  It is thus desirable to derive simpler and more tractable models. As the modeling involves two small spatial scales -- the aspect ratio $\varepsilon$ of the device and the thickness $d$ of the insulating layer -- a variety of reduced models may be obtained. For instance, the assumption of a vanishing aspect ratio of the device, when either the ratio $d/\varepsilon$ has a positive finite limit \cite{AmEtal, BG01, LW17, LLG14, LLG15} or converges to zero, see \cite{EGG10, Pe01, PeB03} and the references therein, leads to a model which no longer involves a free boundary. Indeed, in that case, the electrostatic potential can be computed explicitly in terms of the deflection of the elastic plate and the model reduces to a single equation for the deflection, with the drawback that some important information on the electrostatic potential may thus be lost. 

For this reason an intermediate model is derived in \cite{LNW19} by letting only the thickness of the insulating  layer $d$ go to zero (keeping the aspect ratio of the device of order one). Assuming an appropriate scaling of the dielectric permittivity in dependence on the layer's thickness (in order to keep relevant information of the dielectric heterogeneity of the device) and using a Gamma convergence approach, the resulting energy, which is the building block of the model, is computed. The next step is the mathematical analysis of the thus derived model, in which stationary solutions correspond to critical points of the energy, while the dynamics is described by the gradient flow associated with the energy. The aim of the present work is to show the existence of a particular class of stationary solutions, which are additionally energy minimizers, and to identify the corresponding Euler-Lagrange equations.

Let us provide beforehand a more precise description of the MEMS configuration under study. We consider an idealized MEMS device composed of two rectangular two-dimensional dielectric plates: a fixed ground plate above which an elastic plate, with the same shape at rest, is suspended and clamped in only one direction while free in the other. We assume that the device is homogeneous in the free direction and that it is thus sufficient to consider only a cross-section of the device orthogonal to the free direction. The shape of the ground plate and that of the elastic plate at rest are then represented by $D:=(-L,L)\subset\R$, the ground plate being located at $z=-H$ with $H>0$ and covered with an infinitesimally thin dielectric layer (in consistency with the aforementioned limit). The vertical deflection of the elastic plate from its rest position at $z=0$ is described by a function  $u:\bar D\rightarrow [-H, \infty)$ satisfying the clamped boundary conditions 
\begin{equation}\label{ubc}
u(\pm L)= \partial_x u(\pm L)=0\,,
\end{equation}
so that its graph
$$
\mathfrak{G}(u):=\{(x,u(x))\,:\, x\in \bar D\}
$$
represents the elastic plate and
$$
 \Omega(u):=\left\{(x,z)\in D\times \mathbb{R}\,:\, -H<  z <  u(x)\right\}
$$
is the free space between the elastic plate and the ground plate. Since we do not exclude the possibility of contact between the two plates, we introduce the {\it coincidence set}
$$
\mathcal{C}(u):=\{x\in D\, :\, u(x)=-H\}
$$
and let
$$
\Sigma (u):=\{(x,-H)\,:\, x\in D,\, u(x)>-H\} = \big(D\setminus\mathcal{C}(u)\big) \times \{-H\}
$$
be the part of the ground plate which is not in contact with the elastic plate. A touchdown of the elastic plate on the ground plate corresponds to a non-empty coincidence set, in which case $\Sigma (u)$ is a strict subset of $D\times \{-H\}$. Note that the free space $\Omega(u)$ then has a  different geometry with at least two connected components, which may not be Lipschitz domains due to cusps (independent of the smoothness of the function $u$).  In Figure~\ref{F1} the different situations with empty and non-empty coincidence sets are depicted.

 \begin{figure}
 	\begin{tikzpicture}[scale=0.9]
 	\draw[black, line width = 1.5pt, dashed] (-7,0)--(7,0);
 	\draw[black, line width = 2pt] (-7,0)--(-7,-4);
 	\draw[black, line width = 2pt] (7,-4)--(7,0);
 	\draw[black, line width = 2pt] (-7,-4)--(7,-4);
 	\draw[cadmiumgreen, line width = 2pt] plot[domain=-7:7] (\x,{-1-cos((pi*\x/7) r)});
	\draw[blue, line width = 2pt] plot[domain=-7:-1] (\x,{-2-2*cos((pi*(\x+1)/6) r)});
 	\draw[blue, line width = 2pt] (-1,-4)--(3,-4);
	\draw[blue, line width = 2pt] plot[domain=3:7] (\x,{-2-2*cos((pi*(\x-3)/4) r)});
 	\draw[cadmiumgreen, line width = 1pt, arrows=->] (3,0)--(3,-1.15);
 	\node at (3.2,-0.6) {${\color{cadmiumgreen} v}$};
 	\draw[blue, line width = 1pt, arrows=->] (5,0)--(5,-1.85);
 	\node at (5.2,-1) {${\color{blue} w}$};
 	\node at (3,-2) {${\color{cadmiumgreen} \Omega(v)}$};
 	\node at (-5.5,-2) {${\color{blue} \Omega(w)}$};
 	\node at (6,-2) {${\color{blue} \Omega(w)}$};
 	\node at (3.75,-4.75) {$D$};
	\draw (3.55,-4.75) edge[->,bend left, line width = 1pt] (2.3,-4.1);
 	\node at (7.5,-5) {$\Sigma(w)$};
 	\draw (7,-5) edge[->,bend left, line width = 1pt] (5.2,-4.1);
 	\node at (-6,-5) {$\Sigma(w)$};
 	\draw (-5.5,-5) edge[->,bend right, line width = 1pt] (-4,-4.1);
 	\node at (-7.8,1) {$z$};
 	\draw[black, line width = 1pt, arrows = ->] (-7.5,-5)--(-7.5,1);
 	\node at (-8,-4) {$-H$};
 	\draw[black, line width = 1pt] (-7.6,-4)--(-7.4,-4);
 	\node at (-7.8,0) {$0$};
 	\draw[black, line width = 1pt] (-7.6,0)--(-7.4,0);
 	\node at (1,-3) {${\color{blue} \mathcal{C}(w)}$};
 	\draw (0.45,-3) edge[->,bend right,blue, line width = 1pt] (-0.5,-3.95);
 	\end{tikzpicture}
 	\caption{Geometry of $\Omega(u)$ for a state $u=v$ with empty coincidence set (\textcolor{cadmiumgreen}{green}) and a state $u=w$ with non-empty coincidence set (\textcolor{blue}{blue}).}\label{F1}
 \end{figure}
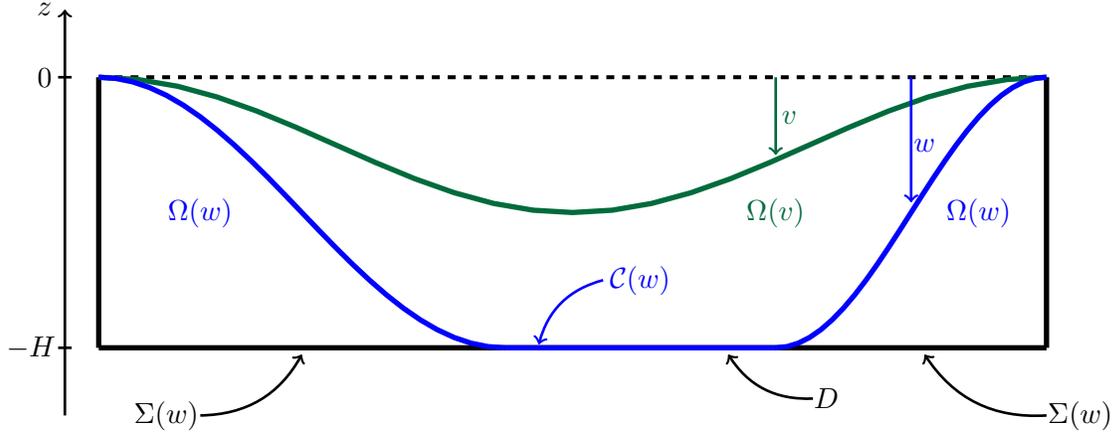

As already mentioned, the building block of the model studied in this paper is the total energy $E(u)$ of the device at a state $u$ given by
$$
E(u):= E_m(u) + E_e(u)
$$ 
and derived in \cite{LNW19} in the limit of an infinitesimally small insulating layer. It 
consists of the mechanical energy $E_m(u)$ and the electrostatic energy $E_e(u)$. The former is given by
$$
E_m(u):=\frac{\beta}{2}\|\partial_x^2u\|_{L_2(D)}^2 +\left(\frac{\tau}{2}+\frac{\alpha}{4}\|\partial_x u\|_{L_2(D)}^2\right)\|\partial_x u\|_{L_2(D)}^2
$$
with $\beta>0$ and $\tau,\alpha \ge 0$, taking into account bending and external- and self-stretching effects of the elastic plate. The electrostatic energy is
\begin{equation}\label{Ee}
E_e(u):=-\dfrac{1}{2}\displaystyle\int_{\Omega(u)}  \big\vert\nabla \psi_u\big\vert^2\,\rd (x,z)
 -\dfrac{1}{2}\displaystyle\int_{ D} \sigma(x) \big\vert \psi_u(x,-H) - \mathfrak{h}_{u}(x) \big\vert^2\,\rd x\,,
\end{equation}
where $\psi_u$ is the electrostatic potential in the device and solves the elliptic equation with mixed boundary conditions
\begin{subequations}\label{MBP0}
\begin{align}
\Delta\psi_u&=0 \quad\text{in }\ \Omega(u)\,, \label{MBP1}\\
\psi_u &=h_u\quad\text{on }\ \partial\Omega(u)\setminus  \Sigma(u)\,,\label{MBP2}\\
- \partial_z\psi_u +\sigma (\psi_u-\mathfrak{h}_u)&=0\quad\text{on }\    \Sigma(u)\label{MBP3}\,.
\end{align}
\end{subequations}
In \eqref{MBP0}, the function $\sigma$ represents the properties of the dielectric permittivity inherited from the insulating layer while the functions $h_u$ and $\mathfrak{h}_u$ determining the boundary values of $\psi_u$ on $\partial\Omega(u)$  are of the form
\begin{equation}\label{hnot}
h_u(x,z):=h(x,z,u(x))\,,\quad (x,z)\in \bar{D}\times [-H,\infty)\,,\qquad \mathfrak{h}_u(x):=\mathfrak{h}(x,u(x))\,,\quad x\in \bar D\,,
\end{equation}  
for some prescribed functions
\begin{equation*}
h:\bar D\times [-H,\infty)\times [-H,\infty)\rightarrow \R \,,\qquad \mathfrak{h}: \bar D\times [-H,\infty)\rightarrow \R\,.
\end{equation*} 

The main results of this work are the existence of at least one minimizer of the total energy~$E$ and the derivation of the corresponding Euler-Lagrange equation. This requires, of course, first to study the well-posedness of the elliptic problem \eqref{MBP0} subject to its mixed boundary conditions. A first step in that direction is to guarantee that the electrostatic energy~$E_e$ is well-defined, which turns out to require some care. Indeed, it should be pointed out that $\Omega(u)$ is a non-smooth domain with corners and possibly features turning points, for instance when $\mathcal{C}(u)$ includes an interval, see Figure~\ref{F1}. Thus, $\Omega(u)$ might consist of several components no longer having a Lipschitz boundary, so that traces have first to be given a meaning. Once this matter is settled, the existence of a variational solution $\psi_u$ to \eqref{MBP0} readily follows from the Lax-Milgram Theorem and the electrostatic energy is then well-defined. This paves the way to the proof of the existence of minimizers of the total energy by the direct method of calculus of variations but does not yet allow us to conclude. Indeed, since $E$ involves two contributions with opposite signs, it might be unbounded from below. We overcome this difficulty by adding a penalization term to the total energy. This additional term can be removed afterwards, thanks to an \textit{a priori} upper bound on the minimizers which follows from the corresponding Euler-Lagrange equation. However, it turns out that the derivation of the latter requires additional regularity of the electrostatic potential $\psi_u$. Such a regularity is actually not obvious, as the highest expected smoothness of the boundary of $\Omega(u)$ is Lipschitz regularity (when the coincidence set $\mathcal{C}(u)$ is empty). Consequently, one needs to establish sufficient regularity for $\psi_u$ both for states $u$ with empty and with non-empty coincidence sets $\mathcal{C}(u)$. In particular, this will ensure a well-defined  normal trace of the gradient of $\psi_u$ on $\Sigma(u)$ as required by \eqref{MBP3} and on the part of $\mathfrak{G}(u)$ lying above $\Sigma(u)$ as required by \eqref{g} below. The above mentioned difficulties are actually not the only ones that we face in the forthcoming analysis. To name but a few, the electrostatic energy $E_e(u)$ features a nonlocal and intricate dependence upon the state $u$ and appropriate continuity properties are needed in the minimizing procedure. This requires a thorough understanding of the dependence of $\psi_u$ on the state $u$, this dependence being due to the domain $\Omega(u)$ as well as the functions $h_u$ and~$\mathfrak{h}_u$. Also, due to the prescribed constraint $u\ge -H$, the Euler-Lagrange equation solved by minimizers is in fact a variational inequality.

\section{Main Results}\label{sec.IntP2}

Throughout this work we shall assume that 
\begin{subequations}\label{hH}
\begin{equation}
\sigma\in C^2(\bar{D})\,,\qquad \sigma(x)>0\,,\quad x\in \bar D\,. \label{beep}
\end{equation}
As for the functions $h_u$ and $\mathfrak{h}_u$ appearing in \eqref{MBP0} we 
shall assume in the following that
\begin{equation} \label{zucchero}
h\in C^2(\bar D\times [-H,\infty)\times [-H,\infty))\,,\qquad \mathfrak{h}\in C^1(\bar D\times [-H,\infty))\,,
\end{equation} 
satisfy
\begin{equation}
\partial_z h(x,-H,w) = \sigma(x) \big[ h(x,-H,w) - \mathfrak{h}(x,w) \big]\,, \qquad (x,w)\in D\times [-H,\infty)\, . \label{bryanadams}
\end{equation}
\end{subequations}
Assumption~\eqref{bryanadams} allows us later to rewrite \eqref{MBP0} as an elliptic equation with homogeneous boundary conditions. In the following, we shall use the notation introduced in \eqref{hnot}.

A simple example of boundary functions $(h,\mathfrak{h})$ satisfying \eqref{zucchero} and \eqref{bryanadams} may be derived from \cite[Example~5.5]{LW19} with the scaling from \cite{LNW19}:

\begin{example}\label{ex.h}
 Let $V>0$ and set 
\begin{equation*}
h(x,z,w) := V \frac{1+\sigma(x)(H+z)}{1+\sigma(x)(H+w)}\,, \qquad (x,z,w)\in \bar{D}\times [-H,\infty)\times [-H,\infty)\,, 
\end{equation*}
and $\mathfrak{h}\equiv 0$. Then $(h,\mathfrak{h})$ clearly satisfies \eqref{zucchero} and \eqref{bryanadams}, the former being a consequence of the regularity \eqref{beep} of $\sigma$. Note that $h_u(x,u(x))=V, x\in D$, for a given state $u$; that is, in this example the electrostatic potential is kept constant to the value $V$ along the elastic plate, see \eqref{MBP2}.
\end{example}

\subsection{The Electrostatic Potential}

We first turn to the existence of an electrostatic potential for a given state $u$.
To have an appropriate functional setting for $u$ we introduce
\begin{equation}
\bar{S}  := \{ u\in H^2(D)\cap H_0^1(D)\ :\ -H\le u \;\text{ in }\; D \}\,,
\label{SP}
\end{equation}
and point out that $\mathcal{C}(u)=\emptyset$ if and only if $u$ belongs to the interior of $\bar S$; that is,  $u\in S$, where
$$
S := \{ u\in H^2(D)\cap H_0^1(D)\ :\ -H<u \;\text{ in }\; D \}\,.
$$
Note that $H^2(D)$ is  embedded in $C(\bar D)$ so that $\Omega(u)$ is well-defined for $u\in \bar S$. Regarding the well-posedness of \eqref{MBP0} we shall prove the following result.

\begin{theorem}\label{Thmpsi}
Suppose \eqref{hH}. For each $u\in \bar S$ there exists a unique strong solution $\psi_u\in  H^2(\Omega(u))$ to \eqref{MBP0}. Moreover, given $\kappa>0$ and  $r\in [2,\infty)$, there are $c(\kappa)>0$ and $c(r,\kappa)>0$ such that
\begin{equation*}
\|\psi_u\|_{H^2(\Omega(u))} + \|\partial_x\psi_u(\cdot,-H)\|_{L_2(D\setminus\mathcal{C}(u))} \le c(\kappa)\,, \qquad \|\partial_z\psi_u(\cdot,u)\|_{L_r(D\setminus\mathcal{C}(u))} \le c(r,\kappa)
\end{equation*}
 for each  $u\in \bar S$ with $\|u\|_{H^2(D)} \le \kappa$.
\end{theorem}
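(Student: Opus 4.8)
The plan is to reduce \eqref{MBP0} to a problem with homogeneous boundary data, then argue separately for states with empty and non-empty coincidence sets before gluing the estimates together via a compactness/uniformity argument. First I would use assumption \eqref{bryanadams} to subtract off the boundary data: setting $\phi_u := \psi_u - h_u$, the pair $(h,\mathfrak h)$ has been cooked up precisely so that $\phi_u$ solves $\Delta\phi_u = -\Delta h_u$ in $\Omega(u)$ with $\phi_u = 0$ on $\partial\Omega(u)\setminus\Sigma(u)$ and $-\partial_z\phi_u + \sigma\phi_u = 0$ on $\Sigma(u)$, the Robin condition now being homogeneous thanks to \eqref{bryanadams}. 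Existence and uniqueness of a variational solution $\phi_u \in H^1(\Omega(u))$ (with the appropriate trace vanishing) then follows from Lax--Milgram, the coercivity of the bilinear form $a(\phi,\chi) := \int_{\Omega(u)}\nabla\phi\cdot\nabla\chi\,\rd(x,z) + \int_{D\setminus\mathcal C(u)}\sigma\,\phi\chi\,\rd x$ on the relevant closed subspace of $H^1(\Omega(u))$ being standard (Poincaré plus the positive sign of $\sigma$). This gives the variational solution; the work is to upgrade it to $H^2$.

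For $H^2$-regularity I would distinguish two cases. If $u \in S$, i.e. $\mathcal C(u) = \emptyset$, then $\Omega(u)$ is a Lipschitz domain, bounded away from $z = -H$, and one can flatten it by the diffeomorphism $(x,z)\mapsto (x, (z+H)/(u(x)+H))$ onto the fixed rectangle $D\times(0,1)$; the transformed equation is a uniformly elliptic divergence-form equation with $H^2$-regular (in fact $C^1$) coefficients depending on $u$ and its derivatives up to second order, and mixed Dirichlet--Robin data on the two horizontal sides with Dirichlet data on the lateral sides. Since the rectangle is convex, standard elliptic regularity for mixed boundary conditions on a rectangle (handling the corners via reflection, or by invoking the relevant results on polygonal domains) yields $\phi_u \in H^2$, with a constant controlled by $\|u\|_{H^2(D)}$ and a positive lower bound on $H + \min_{\bar D} u$. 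The subtle point in this case — and the place where uniformity in $u$ must be tracked carefully — is that the flattening is only uniformly nondegenerate when $\min_{\bar D} u > -H$, and this fails as $u$ approaches $\bar S \setminus S$. The resolution, and the main obstacle of the whole proof, is to obtain an $H^2$-bound that does \emph{not} degenerate as $\min u \to -H$: here one exploits that near a touchdown point the domain pinches, and the potential together with its gradient is small there because the Dirichlet data on top and the homogeneous-type condition on the bottom are compatible in the limit; quantitatively I expect one argues on the doubled/reflected domain across $\Sigma(u)$, or uses the monotonicity of the Dirichlet energy under domain inclusion together with local estimates near the cusps, to absorb the degeneracy.

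For $u \in \bar S$ with $\mathcal C(u)\neq\emptyset$, $\Omega(u)$ splits into finitely many components which may have cuspidal (hence non-Lipschitz) boundary points at the edges of $\mathcal C(u)$. On each such component I would again introduce $\phi_u$, note that near a cusp the Dirichlet condition holds on \emph{both} touching boundary arcs (the elastic plate and the ground plate coincide there at height $-H$, and $\Sigma(u)$ does not reach the cusp), so the relevant local model is the Dirichlet Laplacian on a cusp domain; the key fact is that for such outward cusps the Dirichlet problem still gains two derivatives (unlike inward cusps), which can be seen by a Hardy-type inequality or by comparison with a slightly larger Lipschitz domain, and this is where the hypothesis $u\in H^2(D)\hookrightarrow C^1(\bar D)$ controlling the rate at which $u$ detaches from $-H$ enters. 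Having $\phi_u\in H^2$ on each component gives $\psi_u = \phi_u + h_u \in H^2(\Omega(u))$; the trace estimates for $\partial_x\psi_u(\cdot,-H)$ in $L_2(D\setminus\mathcal C(u))$ and $\partial_z\psi_u(\cdot,u)$ in $L_r(D\setminus\mathcal C(u))$ then come from the $H^2$-bound via the trace theorem on the flattened rectangle for the first, and via Gagliardo--Nirenberg / Sobolev embedding of $H^2$ traces into $W^{1,r}$-type spaces on the one-dimensional boundary for the second (recall $H^{1/2}(D)\hookrightarrow L_r(D)$ for every finite $r$ in one dimension, applied to $\partial_z\psi_u$ restricted to $\mathfrak G(u)$). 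Finally, to get the uniform constants $c(\kappa)$, $c(r,\kappa)$ I would run a compactness argument: if they failed there would be a sequence $u_n$ with $\|u_n\|_{H^2(D)}\le\kappa$ along which the bound blows up; extract a weakly $H^2$-, strongly $C^1$-convergent subsequence $u_n\to u_*\in\bar S$, show the solutions $\psi_{u_n}$ converge (after transporting to fixed reference domains) to $\psi_{u_*}$ in a sense strong enough to pass the $H^2$-norm to the limit, and derive a contradiction with the already-established finiteness of $\|\psi_{u_*}\|_{H^2(\Omega(u_*))}$ — the only genuine difficulty being, once more, handling the components of $\Omega(u_n)$ that disappear or merge in the limit, which is controlled by the non-degenerate cusp estimate above.
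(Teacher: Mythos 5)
Your reduction to homogeneous data via $\phi_u=\psi_u-h_u$ and the Lax--Milgram step match the paper (its $\chi_v$ in \eqref{chi} and \eqref{bbb}), but the $H^2$-regularity part has two genuine gaps. First, your local model at a cusp is wrong: on each component $\mathcal{O}_{I_j}(u)$ of $\Omega(u)$ the bottom edge $I_j\times\{-H\}$ carries the Robin condition \eqref{MBP3} at every point $(x,-H)$ with $x\in I_j$, hence arbitrarily close to the cusp; only the cusp point itself lies over $\mathcal{C}(u)$. The problem near a cusp is therefore a mixed Dirichlet--Robin problem, not ``the Dirichlet Laplacian on a cusp domain'', and the Hardy-inequality/outward-cusp heuristic you invoke does not apply as stated. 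Second, and more seriously, the compactness argument you propose for the uniform constants does not close: weak $H^2$-compactness only yields $\liminf_n\|\psi_{u_n}\|_{H^2}\ge\|\psi_{u_*}\|_{H^2}$, which is the wrong inequality to contradict a blow-up of the norms, and the strong (transported) $H^2$-convergence you would need to ``pass the $H^2$-norm to the limit'' is essentially equivalent to the uniform bound you are trying to prove. The same circularity affects your empty-coincidence-set case, where the flattening constant degenerates as $\min_{\bar D}u\to -H$ and the proposed fixes are left at the level of ``I expect one argues\ldots''.

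The paper's way out is constructive rather than by contradiction: for $v\in W_\infty^3(I)$ with $\min v>-H$ it first obtains qualitative $H^2$-regularity from the theory of mixed boundary value problems in curvilinear polygons, and then proves the \emph{quantitative} bound via the Grisvard-type identity of Lemma~\ref{lemt2Px}, namely $\int_{\mathcal{O}_I(v)}\partial_x^2\zeta_v\,\partial_z^2\zeta_v\,\rd(x,z)=\int_{\mathcal{O}_I(v)}|\partial_x\partial_z\zeta_v|^2\,\rd(x,z)+\int_I\bigl(\partial_x\zeta_v\,\partial_x(\sigma\zeta_v)\bigr)(\cdot,-H)\,\rd x-\frac12\int_I\partial_x^2 v\,|\partial_z\zeta_v(\cdot,v)|^2\,\rd x$, whose boundary terms are absorbed using the trace and Sobolev inequalities of Appendix~\ref{A2}, with constants depending only on $\|H+v\|_{L_\infty}$, $\|\partial_x v\|_{L_\infty}$ and $\|\partial_x^2 v\|_{L_2}$ --- in particular not on $\min(H+v)$. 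A general $v\in\bar S$ is then approximated from above by smooth $v_n\ge v+1/n$, so that $\mathcal{O}_I(v)\subset\mathcal{O}_I(v_n)$ and the approximating domains are Lipschitz, and the uniform estimate passes to the limit on each interval $I_j$ of $D\setminus\mathcal{C}(v)$ before summing over $j$. Without an identity of this kind, or some other mechanism producing constants independent of $\min(H+u)$, your outline delivers neither the $H^2$-regularity at the cusps nor the uniform bounds $c(\kappa)$ and $c(r,\kappa)$.
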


Theorem~\ref{Thmpsi} is an immediate consequence of Lemma~\ref{h}, Theorem~\ref{thmt1PP}, and \eqref{chi} below.

\subsection{Existence of Energy Minimizers}

Owing to Theorem~\ref{Thmpsi}, the total energy is well-defined on the set
$$
\bar S_0 := \{ u\in H^2(D)\, :\, u(\pm L)=\partial_x u(\pm L)=0\,,\, -H\le u \;\text{ in }\; D \}\subset\bar S \,,
$$
taking into account the clamped boundary conditions \eqref{ubc}. We shall now focus on the existence of energy minimizers on $\bar S_0$. We have already observed that the total energy $E$ is the sum of two terms $E_m$ and $E_e$ with different signs. Hence, the coercivity of $E$ is not obvious.  However, if $\alpha>0$, the first order term in the mechanical energy $E_m$ is quartic and thus dominates the negative contribution coming from the electrostatic energy $E_e$. This property allows us to follow the lines of \cite[Section~5]{LW19} to derive the coercivity of $E$ based on the following growth assumption for $h$: there is a constant $K>0$ such that 
\begin{subequations}\label{bb5}
\begin{equation} 
\vert \partial_x h(x,z,w)\vert +\vert\partial_z h(x,z,w)\vert \le K \sqrt{\frac{1+w^2}{H+w}}\,,\quad \vert\partial_w h(x,z,w)\vert \le \frac{K}{\sqrt{H+w}}\,, \label{bb6}
\end{equation}
for $(x,z,w)\in \bar D \times [-H,\infty) \times [-H,\infty)$ and
\begin{equation}\label{bb7}
\vert h(x,-H,w)\vert +\vert\mathfrak{h}(x,w)\vert \le K\,, \quad (x,w)\in \bar D\times [-H,\infty)\,.
\end{equation}
\end{subequations}
This approach no longer works if $\alpha=0$ and the coercivity of $E$ is not granted. To remedy this drawback, we shall use a regularized energy functional (see \eqref{Ek} below), which includes a penalization term ensuring its coercivity if, in addition to \eqref{bb5}, we assume that 
\begin{subequations}\label{hbound}
\begin{equation}\label{hbound1}
\vert h(x,w,w)\vert + \vert h(\pm L,z,w)\vert \le K\,,  \quad (x,z,w)\in \bar D\times [-H,\infty)\times[-H,\infty)\,,
\end{equation}
and
\begin{equation}\label{hbound2}
|\partial_x h(x,w,w)| + |\partial_z h(x,w,w)| + |\partial_w h(x,w,w)| + \vert\partial_w \mathfrak{h}(x,w)\vert \le K\,, \quad (x,w)\in \bar D\times [-H,\infty)\,.
\end{equation}
\end{subequations}
We complete the analysis when $\alpha=0$ by showing that minimizers of the regularized energy functional for a suitable choice of the penalization parameter give rise to a minimizer of $E$, establishing indirectly that $E$ is bounded from below in that case as well. Consequently, in both cases we can prove the existence of at least one energy minimizer as stated in the next result.

\begin{theorem}\label{Thm22}
Assume  \eqref{hH} and \eqref{bb5} and, either  $\alpha>0$, or
 $\alpha=0$ and \eqref{hbound}. Then the total energy $E$ has at least one minimizer $u_*$ in $\bar{S_0}$; that is,  $u_*\in\bar{S_0}$ and
\begin{equation}\label{B}
E(u_*)=\min_{\bar{S_0}}E\,.
\end{equation}
\end{theorem}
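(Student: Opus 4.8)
The plan is to use the direct method of the calculus of variations, carefully splitting the argument according to whether $\alpha>0$ or $\alpha=0$. First I would treat the case $\alpha>0$. Here the key point is coercivity: using the growth bounds \eqref{bb5} for $h$ and $\mathfrak h$, together with Theorem~\ref{Thmpsi} (in particular the bound on $\|\psi_u\|_{H^2(\Omega(u))}$), one estimates $|E_e(u)|$ from above by a quantity growing at most like $1+\|\partial_x u\|_{L_2(D)}^2$ times a constant, while the self-stretching term $\tfrac{\alpha}{4}\|\partial_x u\|_{L_2(D)}^4$ in $E_m(u)$ is quartic in $\|\partial_x u\|_{L_2(D)}$; combined with $\tfrac{\beta}{2}\|\partial_x^2 u\|_{L_2(D)}^2$ and the clamped boundary conditions (which give a Poincar\'e-type control of $\|u\|_{H^2(D)}$ by $\|\partial_x^2 u\|_{L_2(D)}$), this yields $E(u)\to\infty$ as $\|u\|_{H^2(D)}\to\infty$ in $\bar S_0$, so in particular $E$ is bounded below and any minimizing sequence is bounded in $H^2(D)$.

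Next I would extract a minimizing sequence $(u_j)\subset\bar S_0$ with $E(u_j)\to\inf_{\bar S_0}E$. By the coercivity just established, $(u_j)$ is bounded in $H^2(D)$, hence (after passing to a subsequence) converges weakly in $H^2(D)$ and strongly in $C^1(\bar D)$ to some limit $u_*$. The strong $C^1$ convergence shows that $u_*$ inherits the constraint $-H\le u_*$ and the clamped boundary conditions, so $u_*\in\bar S_0$; it also controls the geometry of $\Omega(u_j)$ converging to that of $\Omega(u_*)$. For the mechanical energy, weak lower semicontinuity of the $H^2$-seminorm and continuity of the lower-order stretching terms under $C^1$ convergence give $E_m(u_*)\le\liminf E_m(u_j)$. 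The decisive and most delicate step is to show $E_e(u_j)\to E_e(u_*)$, or at least $\limsup E_e(u_j)\le E_e(u_*)$: this is where the nonlocal, domain-dependent character of $\psi_u$ bites. One needs a continuous-dependence statement for $u\mapsto\psi_u$ with respect to the state $u$ — convergence of $\psi_{u_j}$ to $\psi_{u_*}$ in an appropriate sense (e.g.\ after extension, strong convergence of the gradients in $L_2$ and of the traces on $\Sigma(u_j)$), despite the domains $\Omega(u_j)$ moving and possibly degenerating to a non-Lipschitz limit. I expect this continuity of the electrostatic energy to be the main obstacle, and it is presumably supplied by results established between the statements quoted here and this proof (the $\Gamma$-convergence-type or continuity lemmas alluded to in the introduction). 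Granting it, passing to the limit in $E(u_j)=E_m(u_j)+E_e(u_j)$ yields $E(u_*)\le\inf_{\bar S_0}E$, hence equality \eqref{B}.

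Finally I would handle the case $\alpha=0$. Since the quartic term is absent, $E$ need not be coercive, so instead I would work with the regularized functional from \eqref{Ek} (referenced in the text as $E_\kappa$ or similar), which adds a penalization term making it coercive under the extra assumptions \eqref{hbound}. Repeating the direct-method argument above for the regularized functional — coercivity from the penalization, extraction of a bounded minimizing sequence, weak $H^2$ and strong $C^1$ limits, lower semicontinuity of $E_m$, continuity of $E_e$, and continuity/lower semicontinuity of the penalization term — produces a minimizer $u_\kappa$ of the regularized energy for each value of the penalization parameter. The crucial closing step, to be carried out later once the Euler-Lagrange equation is available, is an \emph{a priori} bound on $u_\kappa$ (an upper bound of the form $u_\kappa\le M$ with $M$ independent of the parameter) derived from the variational inequality solved by $u_\kappa$; once the penalization parameter is chosen beyond this threshold, the penalization term vanishes at $u_\kappa$, and a short comparison argument shows $u_\kappa$ is in fact a minimizer of the original $E$ on $\bar S_0$, and in particular that $E$ is bounded below. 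This gives \eqref{B} in the case $\alpha=0$ as well and completes the proof.
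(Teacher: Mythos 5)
Your proposal is correct and follows essentially the same route as the paper: the direct method with coercivity supplied by the quartic stretching term when $\alpha>0$, the penalized functional \eqref{Ek} together with the \textit{a priori} $L_\infty$ bound from the Euler--Lagrange inequality (and the limit of large penalization threshold) when $\alpha=0$, and in both cases the continuity of $E_e$ for the weak topology of $H^2(D)$ (Theorem~\ref{CC}) as the decisive input. One small caveat: the needed bound $-E_e(u)\le C\big(1+\|\partial_x u\|_{L_2(D)}^2\big)$ is not obtained from the $H^2$ estimate of Theorem~\ref{Thmpsi} (whose constant $c(\kappa)$ grows in an uncontrolled way with $\kappa=\|u\|_{H^2(D)}$), but from the variational comparison $-E_e(u)=\mathcal{G}(u)[\chi_u]\le\mathcal{G}(u)[0]$ combined with the growth assumptions \eqref{bb5}, as carried out in Lemmas~\ref{L333} and~\ref{kraftwerk}.
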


At this point, no further qualitative information on energy minimizers $u_*$ is available, and a particularly interesting question, which is yet left unanswered by our analysis, is whether the coincidence set $\mathcal{C}(u_*)$ is empty or not. Another interesting open issue is the uniqueness of minimizers. The proof of Theorem~\ref{Thm22} is given in Section~\ref{proof} for $\alpha=0$ and in Section~\ref{proof2} for $\alpha>0$.

\subsection{Euler-Lagrange Equation}

We next aim at deriving the Euler-Lagrange equation satisfied by minimizers of the total energy $E$. Recalling the prescribed constraint $u\ge -H$ for $u\in \bar{S_0}$, we are dealing with an obstacle problem and the resulting equation is actually a variational inequality. For the precise statement we introduce, for a given $u \in \bar{S}$, the function \mbox{$g(u):D\rightarrow\R$} by setting 
\begin{subequations}\label{GG}
\begin{equation}
\label{g}
\begin{split}
 g(u)(x):= &
\frac{1}{2} (1+\vert\partial_x u(x)\vert^2)\,\big[\partial_z\psi_{u}-(\partial_z h)_{u}-(\partial_w h)_{u}\big]^2(x, u(x))\\
& + \sigma(x)\big[\psi_{u}(x,-H)-\mathfrak{h}_{u}(x)\big](\partial_w \mathfrak{h})_{u}(x)\\
&-\frac{1}{2} \left[ \big\vert(\partial_x h)_u\big\vert^2+ \big((\partial_z h)_u+(\partial_w h)_u\big)^2 \right](x, u(x))
\end{split}
\end{equation}
for $x\in D\setminus\mathcal{C}(u)$ while setting
\begin{equation}
\label{gg}
\begin{split}
g(u)(x):= & \frac{1}{2} \vert (\partial_w h)_{u}\vert^2(x, -H)+  \sigma(x)\big[h(x,-H,-H)-\mathfrak{h}_u(x)\big](\partial_w \mathfrak{h})_{u}(x)\\
&-\frac{1}{2} \left[ \big\vert(\partial_x h)_u\big\vert^2+ \big((\partial_z h)_u+(\partial_w h)_u\big)^2 \right](x, -H)
\end{split}
\end{equation}
\end{subequations}
for $x\in \mathcal{C}(u)$. In fact, $g(u)$ represents the electrostatic force exerted on the elastic plate and is computed as the differential (in a suitable sense) of the electrostatic energy $E_e(u)$ with respect to $u$. We emphasize here that the regularity properties of $\psi_u$ established in Theorem~\ref{Thmpsi} are of utmost importance to guarantee that $g(u)$ is well-defined on $D\setminus\mathcal{C}(u)$,  since it features the trace of $\partial_z \psi_u$ on $\mathfrak{G}(u)$. With this notation, we are able to identify the variational inequality solved (in a weak sense) by energy minimizers.

\begin{theorem}\label{Thm33}
Assume  \eqref{hH}. Assume that $u\in \bar{S_0}$ is a minimizer of $E$ on $\bar S_0$. Then $g(u)\in L_2(D)$ and $u$ is an $H^2$-weak solution to the variational inequality
\begin{equation}
\beta\partial_x^4u-(\tau+\alpha\|\partial_x u\|_{L_2(D)}^2)\partial_x^2 u+\partial\mathbb{I}_{\bar{S_0}}(u) \ni -g(u) \;\;\text{ in }\;\; D \label{bennygoodman0}
\end{equation}
where $\partial\mathbb{I}_{\bar{S_0}}$ denotes the subdifferential of the indicator function $\mathbb{I}_{\bar S_0}$ of the closed convex subset $\bar{S_0}$ of $H^2(D)$; that is, 
\begin{equation*}
\begin{split}
\int_D &\Big\{\beta\partial_x^2 u\,\partial_x^2 (w-u)+\big[\tau+\alpha \|\partial_x u\|_{L_2(D)}^2\big] \partial_x u\, \partial_x(w-u)\Big\}\,\rd x\ge -\int_D g(u) (w-u)\, \rd x  
\end{split}
\end{equation*}
for all $w\in \bar{S_0}$. 
\end{theorem}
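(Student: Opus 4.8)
The plan is to combine the minimality of $u$ with the convexity of $\bar{S_0}$ and a shape-derivative formula for the electrostatic energy. Fix $w\in\bar{S_0}$ and put $u_t:=(1-t)u+tw$ for $t\in[0,1]$; since $\bar{S_0}$ is convex, $u_t\in\bar{S_0}$, and minimality gives $E(u_t)-E(u)\ge0$, hence $\liminf_{t\downarrow0}t^{-1}(E(u_t)-E(u))\ge0$. As $E_m$ is a polynomial in $\|\partial_x^2u\|_{L_2(D)}^2$ and $\|\partial_x u\|_{L_2(D)}^2$, the map $t\mapsto E_m(u_t)$ is smooth with
\begin{equation*}
\frac{\rd}{\rd t}\Big|_{t=0}E_m(u_t)=\int_D\Big\{\beta\partial_x^2u\,\partial_x^2(w-u)+\big[\tau+\alpha\|\partial_x u\|_{L_2(D)}^2\big]\partial_x u\,\partial_x(w-u)\Big\}\,\rd x=:\mathcal{A}(u)[w-u]\,,
\end{equation*}
so everything reduces to showing that $g(u)\in L_2(D)$ and that the one-sided derivative $D^+E_e(u)[w-u]:=\lim_{t\downarrow0}t^{-1}(E_e(u_t)-E_e(u))$ exists and equals $\int_D g(u)(w-u)\,\rd x$. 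The integrability of $g(u)$ is read off from \eqref{g}--\eqref{gg}: on $\mathcal{C}(u)$ every factor in \eqref{gg} is bounded, and on $D\setminus\mathcal{C}(u)$ the only terms requiring care in \eqref{g} are those containing traces of $\psi_u$, namely $\psi_u(\cdot,-H)\in L_2(D)$ and $\partial_z\psi_u(\cdot,u)\in L_4(D\setminus\mathcal{C}(u))$ — the latter being the last estimate of Theorem~\ref{Thmpsi} with $r=4$, while $1+|\partial_x u|^2\in L_\infty(D)$ because $H^2(D)\hookrightarrow C^1(\bar D)$ and $h,\mathfrak{h},\sigma$ and their derivatives are bounded.

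For the shape derivative I would first use \eqref{bryanadams} to split $\psi_u=\mathcal{H}_u+\phi_u$ with $\mathcal{H}_u(x,z):=h(x,z,u(x))$, so that $\phi_u$ solves an elliptic problem on $\Omega(u)$ with \emph{homogeneous} mixed boundary conditions and with data depending smoothly on $u$. Consider first the case $\mathcal{C}(u)=\emptyset$, so that $u+H\ge\delta>0$ on $\bar D$ and
\begin{equation*}
\Theta_t(x,z):=\Big(x,\,-H+(z+H)\,\tfrac{u_t(x)+H}{u(x)+H}\Big)
\end{equation*}
is, for $t$ small, a diffeomorphism from $\Omega(u)$ onto $\Omega(u_t)$ that fixes the stationary parts of the boundary and depends smoothly on $t$. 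Pulling $\phi_{u_t}$ back by $\Theta_t$ turns $E_e(u_t)$ into a quadratic functional over a \emph{fixed} function space with $t$-dependent coefficients, minimized by the pull-back. Since $\psi_u$ is the corresponding minimizer at $t=0$, an envelope argument — which needs only the Lipschitz dependence $u_t\mapsto\psi_{u_t}$ provided by the uniform estimates — shows that the variation of the minimizer contributes nothing at first order in the interior of $\Omega(u)$ and on $\Sigma(u)$ (this is where \eqref{MBP0}, in particular the Robin condition \eqref{MBP3}, is used); thus $D^+E_e(u)[w-u]$ reduces to the explicit $t$-derivative of the coefficients plus the variation of the Dirichlet datum $\mathcal{H}_u$ along the moving plate. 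Differentiating, integrating by parts and transporting back to $\Omega(u)$, the interior contribution collapses to an integral over $\mathfrak{G}(u)$; rewriting $|\nabla\psi_u|^2$ on $\mathfrak{G}(u)$ in terms of the tangential datum $h_u$ and the normal-type derivative $\partial_z\psi_u-(\partial_z h)_u-(\partial_w h)_u$, and parametrizing $\mathfrak{G}(u)$ by $x$ (which produces the factor $1+|\partial_x u|^2$), one arrives at exactly \eqref{g}. Here the uniform $H^2(\Omega(u))$ bound and the $L_r$-trace estimates for $\partial_z\psi_u(\cdot,u)$ and $\partial_x\psi_u(\cdot,-H)$ from Theorem~\ref{Thmpsi} are what make the boundary integrals meaningful and what justify differentiating under the integral sign and passing to the limit $t\downarrow0$.

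\emph{The step I expect to be the main obstacle is the case $\mathcal{C}(u)\neq\emptyset$.} Then $\Omega(u)$ is merely a union of non-Lipschitz pieces with cusps, and the flattening map $\Theta_t$ degenerates where $u=-H$; moreover on $\mathcal{C}(u)$ one has $u_t+H=t(w+H)$, so the perturbation opens up, at rate $t$, new free space above the former contact region, and the contribution of this detaching part to $t^{-1}(E_e(u_t)-E_e(u))$ must be computed separately — this is the source of the distinct, simpler expression \eqref{gg}, in which the trace $\partial_z\psi_u$ is effectively replaced by $0$. I would handle it either by constructing a refined, non-degenerate change of variables adapted to the opening coincidence set and tracking the new term directly, or by approximating $u$ within $\bar{S_0}$ by states with empty coincidence set, applying the previous step to the approximants, and passing to the limit using the continuity of $E_e$ and $g$ together with the uniform bounds of Theorem~\ref{Thmpsi}.

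Assembling the pieces, for every $w\in\bar{S_0}$ one obtains
\begin{equation*}
0\le\lim_{t\downarrow0}\frac{E(u_t)-E(u)}{t}=\mathcal{A}(u)[w-u]+\int_D g(u)(w-u)\,\rd x\,,
\end{equation*}
that is, $\mathcal{A}(u)[w-u]\ge-\int_D g(u)(w-u)\,\rd x$; since $w\in\bar{S_0}$ was arbitrary and $g(u)\in L_2(D)$, this is exactly the variational inequality in the statement, i.e.\ $u$ is an $H^2$-weak solution of \eqref{bennygoodman0}, which completes the argument.
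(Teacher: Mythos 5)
Your overall frame (convexity of $\bar S_0$, exact first variation of $E_m$, reduction to a one-sided shape derivative of $E_e$ equal to $\int_D g(u)(w-u)\,\rd x$, and the $L_2$-integrability of $g(u)$ via the $L_4$ trace bound on $\partial_z\psi_u(\cdot,u)$ from Theorem~\ref{Thmpsi}) matches the paper, and your sketch for $\mathcal{C}(u)=\emptyset$ is essentially the paper's Lemma~\ref{P316} and Proposition~\ref{P416} (implicit function theorem for the pulled-back potential, then the cancellation of the terms involving $\partial_v\xi_v$ via the weak formulation). But the step you flag as ``the main obstacle'' is precisely where the proof is incomplete: you propose either a refined change of variables adapted to the opening coincidence set or an approximation of $u$ by states with empty coincidence set, and neither is carried out; the second, as stated, does not obviously close, since applying the derivative formula at approximants $u_n$ of $u$ and passing to the limit in $D^+E_e(u_n)[w-u_n]$ requires a uniformity that continuity of $E_e$ and $g$ alone does not provide.

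The paper's resolution (Proposition~\ref{P516}) sidesteps the degenerate computation entirely and is the idea missing from your argument. One first restricts to directions $w\in S_0=\bar S_0\cap S$, i.e.\ $w>-H$ strictly. Then $u_s=(1-s)u+sw$ belongs to $S$ for every $s\in(0,1)$ --- the coincidence set of $u_s$ is empty for all $s>0$, not just ``opening at rate $t$'' --- so the Fr\'echet derivative formula of Proposition~\ref{P416} applies at each $u_s$, giving $\frac{\rd}{\rd s}E_e(u_s)=\int_D g(u_s)(w-u)\,\rd x$. Writing $E_e(u_t)-E_e(u)=\int_0^t\frac{\rd}{\rd s}E_e(u_s)\,\rd s$ (using continuity of $E_e$ at $u$) and invoking the continuity of $v\mapsto g(v)$ in $L_r(D)$ for the weak $H^2$ topology (Theorem~\ref{CC}, which rests on the delicate convergence of the trace $\partial_z\chi_v(\cdot,v)$ in Proposition~\ref{L2}) then yields $D^+E_e(u)[w-u]=\int_D g(u)(w-u)\,\rd x$ for $w\in S_0$; the variational inequality for general $w\in\bar S_0$ follows afterwards by density of $S_0$ in $\bar S_0$, using $g(u)\in L_2(D)$. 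Note also that your direct treatment of arbitrary $w\in\bar S_0$ is problematic even in principle: where $u=w=-H$ the free space does not open up at all under the perturbation, so the ``detaching part'' analysis you describe only makes sense for $w\in S$, which is another reason the paper's order of quantifiers (first $w\in S_0$, then density) is the right one.
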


At this point, we do not know whether minimizers of $E$ in $\bar{S_0}$ are the only solutions to \eqref{bennygoodman0}, a question closely connected to the uniqueness issue for \eqref{bennygoodman0}. It is, however, expected that the set of solutions to \eqref{bennygoodman0} exhibits a complex structure. Indeed, in the much simpler situation studied in \cite{LW17}, the minimizer may coexist with other steady states,  depending on the boundary values of the electrostatic potential.

The proof of Theorem~\ref{Thm33} is given in Section~\ref{proof} for $\alpha=0$ and in Section~\ref{proof2} for $\alpha>0$. It relies on the computation of the shape derivative of the electrostatic energy $E_e(u)$, which is performed in Section~\ref{Sec5}.

\begin{remark}\label{rem.NavierBC}
It is also possible to minimize the total energy $E$ on the set $\bar S$ (instead on $\bar S_0$). Then the corresponding minimizer in $\bar S$ satisfies instead of the clamped boundary conditions \eqref{ubc} the  Navier or pinned boundary conditions $u(\pm L)=\partial_x^2 u(\pm L)=0$. With this change, the statements of Theorem~\ref{Thm22}  and Theorem~\ref{Thm33}  remain true when $\bar S_0$ is replaced everywhere by $\bar S$.
\end{remark}

Now, combining Theorem~\ref{Thm22} and Theorem~\ref{Thm33} we obtain the existence of a stationary configuration of the MEMS device given as a solution to the  force balance~\eqref{bennygoodman0}:

\begin{corollary}\label{Cor44}
Assume  \eqref{hH} and \eqref{bb5} and, either  $\alpha>0$, or
 $\alpha=0$ and \eqref{hbound}. Then there is a solution $u_*\in \bar S_0$ to the variational inequality~\eqref{bennygoodman0}.
\end{corollary}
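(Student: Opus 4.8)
\textbf{Proof proposal for Corollary~\ref{Cor44}.}

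The plan is to simply combine the two main existence results already in hand. Under the stated hypotheses -- namely \eqref{hH}, \eqref{bb5}, and either $\alpha>0$ or ($\alpha=0$ together with \eqref{hbound}) -- Theorem~\ref{Thm22} provides a minimizer $u_*\in\bar S_0$ of the total energy $E$ on $\bar S_0$, so that $E(u_*)=\min_{\bar S_0}E$. I would then feed this particular $u_*$ into Theorem~\ref{Thm33}: since Theorem~\ref{Thm33} requires only \eqref{hH} (which is part of our assumptions) and the hypothesis that $u\in\bar S_0$ be a minimizer of $E$ on $\bar S_0$, it applies verbatim to $u_*$. The conclusion of Theorem~\ref{Thm33} is exactly that $g(u_*)\in L_2(D)$ and that $u_*$ is an $H^2$-weak solution of the variational inequality~\eqref{bennygoodman0}, which is the assertion of the corollary.

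Concretely, the write-up is two sentences: first invoke Theorem~\ref{Thm22} to obtain $u_*\in\bar S_0$ with $E(u_*)=\min_{\bar S_0}E$; then observe that $u_*$ satisfies the hypotheses of Theorem~\ref{Thm33}, whence $u_*$ solves~\eqref{bennygoodman0} in the stated weak sense. No further estimates or constructions are needed, since all the analytic content -- coercivity of (a regularized) $E$, lower semicontinuity of $E_e$, removal of the penalization, and the shape-derivative computation underlying the Euler--Lagrange equation -- has been absorbed into the two quoted theorems.

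There is essentially no obstacle at this level: the only point requiring a moment's care is checking that the hypothesis sets match, i.e.\ that the assumptions of Corollary~\ref{Cor44} imply both the assumptions of Theorem~\ref{Thm22} (they are literally the same) and those of Theorem~\ref{Thm33} (which needs strictly less, only \eqref{hH}). All the genuine difficulty -- the non-Lipschitz geometry of $\Omega(u)$, the well-posedness and regularity of $\psi_u$ from Theorem~\ref{Thmpsi}, the coercivity issue when $\alpha=0$ and its resolution via the penalized functional, and the shape-derivative calculation giving the force term $g(u)$ -- lies in the proofs of the underlying theorems, carried out in Sections~\ref{proof}, \ref{proof2}, and~\ref{Sec5}. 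The corollary itself is a clean corollary.
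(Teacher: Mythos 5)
Your proposal is correct and coincides with the paper's own argument: the corollary is stated immediately after the sentence "combining Theorem~\ref{Thm22} and Theorem~\ref{Thm33} we obtain the existence of a stationary configuration," which is exactly your two-step application of the existence theorem followed by the Euler--Lagrange theorem. The hypothesis check you perform (Corollary~\ref{Cor44} assumes precisely what Theorem~\ref{Thm22} needs, while Theorem~\ref{Thm33} needs only \eqref{hH}) is the only content required, so nothing is missing.
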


The subsequent sections are dedicated to the proofs of the results stated in this section.

\bigskip

Throughout the paper, we impose assumptions \eqref{hH} and set
\begin{equation}
\sigma_{min} := \min_{\bar{D}}\{\sigma\}>0 \,,\qquad  \bar{\sigma} := \|\sigma\|_{C^2(\bar{D})} <\infty\,.  \label{s0P}
\end{equation}

\section{Existence and $H^2$-Regularity of the Electrostatic Potential $\psi_u$}\label{sec.hP}

This section is dedicated to the proof of Theorem~\ref{Thmpsi}; that is, to the existence and regularity of a unique solution $\psi_u$ to \eqref{MBP0}. We first recall some basic properties of the boundary function $h_v$ which are established in \cite[Lemma~3.10]{LW19} and rely on the properties \eqref{zucchero} and \eqref{bryanadams} of $h$ and $\mathfrak{h}$.

\begin{lemma}\label{h}
Let $M>0$.

\noindent {\bf (a)} Given $v\in \bar S$  satisfying $-H\le v(x)\le M-H$ for $x\in D$, the function $h_v$ belongs to $H^2(\Omega(v))$ and
\begin{equation}\label{hh}
\begin{split}
& \|h_v\|_{H^2(\Omega(v))}\le C(M) \big(1 +\|\partial_x^2 v\|_{L_2(D)}^2\big)\,, \\
&  \|\partial_x h_v(\cdot,-H)\|_{L_2(D)} \le C(M) \big(1+\|\partial_x v\|_{L_2(D)}\big)\,, \\
&  \|\partial_z h_v(\cdot,v)\|_{L_r(D)} \le C(M)\,, \qquad r\in [1,\infty]\,.
\end{split}
\end{equation}

\noindent {\bf (b)}  Consider a sequence $(v_n)_{n\ge 1}$ in $\bar{S}$ and $v\in \bar S$ such that 
\begin{equation}\label{p}
-H\le v_n(x)\,,\, v(x)\le M-H\,,  \quad x\in D\,, \qquad v_n\rightarrow v \text{ in }\ H_0^1(D)\,.
\end{equation}
Let $\Omega(M) := D\times (-H,M)$. Then
\begin{align}
h_{v_n}&\rightarrow h_v\quad\text{in }\ H^1(\Omega(M))\,, \label{Gcz5} \\
h_{v_n}(\cdot,-H)&\rightarrow h_v(\cdot,-H) \quad\text{in }\ L_2(D)\,, \label{Gcz6} \\
\mathfrak{h}_{v_n}&\rightarrow \mathfrak{h}_v \quad\text{in }\ L_2(D)\,. \label{Gcz7}
\end{align}
\end{lemma}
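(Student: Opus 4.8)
\textbf{Proof strategy for Lemma~\ref{h}.}

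The plan is to reduce everything to an explicit formula for $h_v$. Since $h_v(x,z)=h(x,z,v(x))$ with $h\in C^2$, the chain rule gives $\partial_x h_v = (\partial_x h)_v + (\partial_w h)_v\,\partial_x v$, $\partial_z h_v = (\partial_z h)_v$, and second derivatives bring in $\partial_x^2 v$ linearly together with products of $\partial_x v$ and bounded derivatives of $h$. For part (a), I would first note that $v\in H^2(D)\hookrightarrow C(\bar D)$ together with the constraint $-H\le v\le M-H$ keeps the third argument of $h$ in the compact set $[-H,M-H]$, so all derivatives of $h$ up to order two appearing in these formulas are bounded by a constant $C(M)$. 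Then $\|h_v\|_{H^2(\Omega(v))}^2$ is estimated by integrating $|D^2 h_v|^2$ over $\Omega(v)\subset D\times(-H,M-H)$; the worst term is $|(\partial_w h)_v\,\partial_x^2 v|^2\le C(M)^2|\partial_x^2 v|^2$, and the cross terms involve $|\partial_x v|^2$ and $|\partial_x v|^4$. The latter is controlled because $H^2(D)\hookrightarrow W^1_\infty(D)$ in one space dimension, so $\|\partial_x v\|_{L_\infty(D)}\le C\|v\|_{H^2(D)}$; hence $\|\partial_x v\|_{L_4}^4 \le \|\partial_x v\|_{L_\infty}^2\|\partial_x v\|_{L_2}^2$, which can be absorbed into $C(M)(1+\|\partial_x^2 v\|_{L_2}^2)$ after using the clamped/zero boundary conditions and Poincaré to bound lower-order norms of $v$ by $\|\partial_x^2 v\|_{L_2}$. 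The trace estimates on $z=-H$ and on $z=v(x)$ are obtained by evaluating the chain-rule formulas at those heights: on $z=-H$ one gets $\partial_x h_v(\cdot,-H) = (\partial_x h)(\cdot,-H,v) + (\partial_w h)(\cdot,-H,v)\partial_x v$, whose $L_2(D)$ norm is $\le C(M)(1+\|\partial_x v\|_{L_2})$; and $\partial_z h_v(\cdot,v) = (\partial_z h)(\cdot,v,v)$ is pointwise bounded by $C(M)$, giving the $L_r$ bound for every $r\in[1,\infty]$ at once (since $D$ is bounded).

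For part (b), the continuity statements, I would use that $v_n\to v$ in $H_0^1(D)$ implies, via the one-dimensional embedding $H^1(D)\hookrightarrow C(\bar D)$, that $v_n\to v$ uniformly on $\bar D$, and also $\partial_x v_n\to\partial_x v$ in $L_2(D)$. All functions $h_{v_n}$ are now defined on the fixed domain $\Omega(M)=D\times(-H,M)$ by the same formula $h_{v_n}(x,z)=h(x,z,v_n(x))$, so no domain-dependence issue arises here. Uniform convergence $v_n\to v$ plus continuity of $h,\partial_x h,\partial_z h,\partial_w h$ give $h_{v_n}\to h_v$, $(\partial_z h)_{v_n}\to(\partial_z h)_v$ pointwise and uniformly on $\bar D\times[-H,M]$, hence in $L_2(\Omega(M))$ by dominated convergence; for $\partial_x h_{v_n} = (\partial_x h)_{v_n} + (\partial_w h)_{v_n}\partial_x v_n$, the first summand converges uniformly, and the second converges in $L_2(\Omega(M))$ because $(\partial_w h)_{v_n}\to(\partial_w h)_v$ uniformly (bounded) and $\partial_x v_n\to\partial_x v$ in $L_2$ — a product of a uniformly convergent bounded sequence with an $L_2$-convergent one. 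This yields \eqref{Gcz5}. The trace convergences \eqref{Gcz6} and \eqref{Gcz7} follow the same way by freezing $z=-H$: $h_{v_n}(\cdot,-H)=h(\cdot,-H,v_n)\to h(\cdot,-H,v)$ and $\mathfrak{h}_{v_n}=\mathfrak{h}(\cdot,v_n)\to\mathfrak{h}(\cdot,v)$, uniformly and hence in $L_2(D)$, using the continuity of $h$ and of $\mathfrak{h}$.

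I do not expect a genuine obstacle here; the result is essentially bookkeeping with the chain rule, the compactness of the range of the third argument, and the favorable one-dimensional Sobolev embeddings $H^2(D)\hookrightarrow W^1_\infty(D)$ and $H^1(D)\hookrightarrow C(\bar D)$. The only mild point requiring care is the quartic term $\|\partial_x v\|_{L_4}^4$ in the $H^2$-estimate of part~(a), which must be absorbed using the $L_\infty$-bound on $\partial_x v$ rather than treated as genuinely higher order, and the observation in part~(b) that all the perturbed functions live on the common domain $\Omega(M)$, so that the convergences can be checked by elementary dominated-convergence arguments rather than any delicate change-of-variables on moving domains. Since \cite[Lemma~3.10]{LW19} already records these facts, the proof may be kept brief, indicating the chain-rule formulas and the two embeddings and referring to that source for the details.
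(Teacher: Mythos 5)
Your proposal is correct and follows essentially the same route as the paper: chain-rule expansion of $h_v$, boundedness of the derivatives of $h$ on the compact set $\bar D\times[-H,M]\times[-H,M]$, and the bound $\|\partial_x v\|_{L_\infty(D)}\le C\|\partial_x^2 v\|_{L_2(D)}$ obtained from the boundary values of $v$ (the paper derives this by hand by integrating $\partial_x v(x)=\partial_x v(y)+\int_y^x\partial_x^2 v$ in $y$, which is just your embedding-plus-Poincar\'e step made explicit), with the quartic term $\|\partial_x v\|_{L_4}^4$ absorbed exactly as you indicate. Part~(b) likewise matches the paper's argument (uniform convergence of $v_n$ via $H^1(D)\hookrightarrow C(\bar D)$, continuity of $h,\mathfrak{h}$ and their derivatives, and the product of a uniformly convergent bounded factor with an $L_2$-convergent one), which the paper itself delegates to \cite[Lemma~3.10]{LW19}.
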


\begin{proof}
Integrating
$$
\partial_x v(x)=\partial_x v(y)+\int_y^x\partial_x^2 v(z)\,\rd z\,, \qquad  (x,y)\in [-L,L]^2\,,
$$
with respect to $y\in [-L,L]$ and taking into account the boundary condition $v(\pm L)=0$, we obtain
$$
2L\partial_x v(x)=\int_{-L}^L \int_y^x\partial_x^2 v(z)\,\rd z\,\rd y\,,\quad x\in [-L,L]\,.
$$
Hence, by H\"older's inequality we get
$$
\|\partial_x v\|_{L_\infty(D)}\le \sqrt{2L}\|\partial_x^2v\|_{L_2(D)}\,.
$$
Using this inequality and the fact that $h$ and its derivatives up to second order are bounded on $\bar D\times [-H,M]\times [-H,M]$ we derive
\begin{align*}
\|h_v\|_{H^2(\Omega(v))} & \le C(M) \big(1+\|\partial_x v\|_{L_2(D)}+\|\partial_x v\|_{L_\infty(D)} \|\partial_x v\|_{L_2(D)} + \|\partial_x^2 v\|_{L_2(D)}\big) \\
&  \le C(M) \big(1+\|\partial_x^2 v\|_{L_2(D)} + \|\partial_x^2 v\|_{L_2(D)}^2\big)\,,
\end{align*}
which yields {\bf (a)}.  As for {\bf (b)} we first note that \eqref{p} and the compact embedding of $H^1(D)$ in $C(\bar{D})$ ensure that
\begin{equation*}
v_n \rightarrow v \;\;\text{ in }\;\; C(\bar{D})\,.
\end{equation*}
Combining this convergence with \eqref{p} and the continuity properties \eqref{zucchero} of $h$ and $\mathfrak{h}$ readily gives \eqref{Gcz6} and \eqref{Gcz7}, as well as \eqref{Gcz5} with the additional use of \eqref{p}, see \cite[Lemma~3.10]{LW19}.
\end{proof}

We shall now prove Theorem~\ref{Thmpsi} and thus focus on \eqref{MBP0}, which is more conveniently considered with homogeneous boundary conditions. To this end, we introduce  
\begin{equation}\label{chi}
\chi_v:=\psi_v-h_v
\end{equation}
for a given and fixed function $v\in \bar S$. Due to assumption \eqref{bryanadams}, problem  \eqref{MBP0} (with $v$ instead of $u$) is  then equivalent to
\begin{subequations}\label{bbb}
\begin{align}
-\Delta\chi_v & = \Delta h_v \;\text{ in }\; \Omega(v)\, , \\
\chi_v & = 0 \;\text{ on }\; \partial\Omega(v)\setminus \Sigma(v)\, , \\
-\partial_z \chi_v + \sigma \chi_v & = 0 \;\text{ on }\; \Sigma(v)\,.
\end{align}
\end{subequations}
Hence, the next result can be seen as a reformulation of Theorem~\ref{Thmpsi} in terms of $\chi_v$.

\begin{theorem}\label{thmt1PP}
Consider a function $v\in \bar S$ and let $\kappa>0$ be such that 
\begin{equation}
\|v\|_{H^2(D)} \le \kappa\ . \label{t2PP}
\end{equation}
Then there exists a unique strong solution $\chi_v\in H^2(\Omega(v))$ to \eqref{bbb}
 and there is $C(\kappa)>0$ depending only on $\sigma$ and $\kappa$ such that
\begin{equation}
\|\chi_v\|_{H^2(\Omega(v))} + \|\partial_x\chi_v(\cdot,-H)\|_{L_2(D\setminus\mathcal{C}(v))} \le C(\kappa)\ . \label{t6PP}
\end{equation}
Moreover, for any $r\in [2,\infty)$, there is $C(\kappa) >0$ depending only on $\sigma$ and $\kappa$ such that
\begin{equation}
\|\partial_z\chi_v(\cdot,v)\|_{L_r(D\setminus\mathcal{C}(v))} \le r  C(\kappa) \ .  \label{t6rPP}
\end{equation}
\end{theorem}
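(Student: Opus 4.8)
The strategy is to handle the two geometric situations separately: states $v\in S$ (empty coincidence set, so $\Omega(v)$ is Lipschitz) and states $v\in\bar S\setminus S$ (non-empty coincidence set, $\Omega(v)$ possibly non-Lipschitz with cusps), and to combine them by an approximation argument so that the constant $C(\kappa)$ is uniform. First, existence and uniqueness of a \emph{variational} solution $\chi_v\in H_B^1(\Omega(v))$ (the subspace of $H^1(\Omega(v))$ of functions vanishing on $\partial\Omega(v)\setminus\Sigma(v)$, which must itself be defined carefully via traces when $\Omega(v)$ is not Lipschitz) follows from the Lax--Milgram theorem: the bilinear form $a(\varphi,\phi)=\int_{\Omega(v)}\nabla\varphi\cdot\nabla\phi\,\rd(x,z)+\int_{\Sigma(v)}\sigma\,\varphi\phi\,\rd x$ is continuous and coercive on $H_B^1(\Omega(v))$ (coercivity using a Poincaré-type inequality, valid since $\chi_v$ vanishes on the lateral sides and on $\mathfrak{G}(v)$), and the right-hand side $\phi\mapsto-\int_{\Omega(v)}\nabla h_v\cdot\nabla\phi\,\rd(x,z)$ is bounded by Lemma~\ref{h}(a). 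At this stage one only gets an $H^1$-bound $\|\chi_v\|_{H^1(\Omega(v))}\le C(\kappa)$.

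\textbf{Upgrading to $H^2$.} The heart of the matter is the $H^2$-regularity and the \emph{uniform} estimate \eqref{t6PP}. For $v\in S$ one can flatten the moving upper boundary by the change of variables $(x,z)\mapsto(x,(z+H)/(u(x)+H))$ (or similar), transforming \eqref{bbb} into a uniformly elliptic equation with $H^1$ (hence, in 2D, continuous) coefficients on the fixed rectangle $D\times(0,1)$, with a Dirichlet condition on three sides and a Robin condition on the bottom. Standard elliptic regularity up to the boundary away from the four corners, together with a careful analysis at the corners — where, crucially, the opening angles are right angles or less and the mixed Dirichlet/Robin (essentially Dirichlet/Neumann) corner at $(\pm L,-H)$ still lies in the $H^2$-range because the boundary is smooth there and the data vanish appropriately — yields $\chi_v\in H^2(\Omega(v))$ with a bound depending only on $\kappa$ and $\sigma$ through the $C^1$-norm of the transformed coefficients, which is controlled by $\|v\|_{H^2(D)}\le\kappa$ (using $\|\partial_x v\|_{L_\infty}\le\sqrt{2L}\,\|\partial_x^2 v\|_{L_2}$ as in the proof of Lemma~\ref{h}). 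The trace estimate $\|\partial_x\chi_v(\cdot,-H)\|_{L_2(D\setminus\mathcal{C}(v))}\le C(\kappa)$ then follows from $\chi_v\in H^2$ and the continuity of the trace $H^2(\Omega(v))\hookrightarrow H^1(\Sigma(v))$, again made uniform via the flattening. For $v\in\bar S\setminus S$, the domain decomposes into finitely or countably many components; on each component one argues as before, and the cusp points (where the boundary meets $z=-H$ tangentially from a touchdown interval) are handled by noting that near such a point $\chi_v$ satisfies a homogeneous Dirichlet condition on both incident boundary arcs, so the local solution extends to an $H^2$ function — here one may invoke a reflection or a known result on the Dirichlet problem in domains with outward cusps, or approximate $v$ from within $S$.

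\textbf{The $L_r$-trace estimate.} For \eqref{t6rPP} the plan is to interpolate: from $\chi_v\in H^2(\Omega(v))$ one gets $\partial_z\chi_v\in H^1(\Omega(v))$, whose trace on $\mathfrak{G}(v)$ lies in $H^{1/2}$, hence in $L_r$ for every $r<\infty$ in dimension one, with the embedding constant of $H^{1/2}(D)\hookrightarrow L_r(D)$ growing linearly in $r$ (the classical Sobolev/Moser-type bound $\|f\|_{L_r}\le C\sqrt{r}\,\|f\|_{H^{1/2}}$, or even the cruder $\le r\,C$ as stated). Composing with the trace operator and the uniform $H^2$-bound from \eqref{t6PP} gives $\|\partial_z\chi_v(\cdot,v)\|_{L_r(D\setminus\mathcal{C}(v))}\le r\,C(\kappa)$. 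One must be slightly careful that $\partial_z\chi_v(\cdot,u(\cdot))$ is the composition of the trace on $\mathfrak{G}(v)$ with the graph parametrization, which is controlled since $\|\partial_x v\|_{L_\infty}\le\sqrt{2L}\kappa$.

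\textbf{Main obstacle.} The genuine difficulty is obtaining the $H^2$-regularity and, above all, the \emph{uniformity} of the constant over the whole of $\bar S$, including states with non-Lipschitz $\Omega(v)$: the flattening diffeomorphism degenerates precisely where $u$ touches $-H$, so one cannot use it globally, and one must instead exploit the vanishing of $\chi_v$ on the touchdown part of $\mathfrak{G}(v)$ together with a uniform local regularity result near cusps, or pass to the limit from $S$ while keeping the $H^2$-norm of $\chi_{v_n}$ bounded and identifying the weak limit with $\chi_v$ via the convergence \eqref{Gcz5}--\eqref{Gcz7} of the boundary data from Lemma~\ref{h}(b). Making this limiting procedure rigorous — in particular, showing that the limit of the extended solutions solves the limiting mixed problem and that no mass is lost across the shrinking necks — is where the real work lies.
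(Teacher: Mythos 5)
There is a genuine gap at the heart of your argument, namely the claim that after flattening, ``standard elliptic regularity \dots yields $\chi_v\in H^2(\Omega(v))$ with a bound depending only on $\kappa$ and $\sigma$ through the $C^1$-norm of the transformed coefficients, which is controlled by $\|v\|_{H^2(D)}\le\kappa$.'' This is not true: since $v$ is only in $H^2(D)$, one has $\partial_x v\in C^{1/2}(\bar D)$ and $\partial_x^2 v\in L_2(D)$ only, so the transformed operator has principal coefficients that are merely H\"older continuous and lower-order coefficients that are merely $L_2$ — its coefficients are \emph{not} $C^1$, and no standard $H^2$ a priori estimate gives a constant depending only on $\|v\|_{H^2(D)}$. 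This uniformity is precisely the hard point, and the paper obtains it differently: a qualitative $H^2$-regularity from corner theory (for $v\in W_\infty^3$ with $\min v>-H$), followed by the Grisvard-type identity of Lemma~\ref{lemt2Px},
\begin{equation*}
\int_{\mathcal{O}_I(v)} \partial_x^2\zeta_v\,\partial_z^2\zeta_v\,\mathrm{d}(x,z) = \int_{\mathcal{O}_I(v)} |\partial_x\partial_z \zeta_v|^2\,\mathrm{d}(x,z) + \int_I \big( \partial_x \zeta_v\, \partial_x(\sigma\zeta_v) \big)(\cdot,-H)\,\mathrm{d}x - \frac{1}{2} \int_I \partial_x^2 v\, |\partial_z\zeta_v(\cdot,v)|^2\,\mathrm{d}x\,,
\end{equation*}
in which the only occurrence of $\partial_x^2 v$ is through the last boundary integral; that term is then absorbed \emph{sublinearly} using the dedicated trace inequalities of Appendix~\ref{A2} (whose constants depend only on $\|H+v\|_{L_\infty}$ and $\|\partial_x v\|_{L_\infty}$), which is what makes the final constant depend only on $\|v\|_{H^2(I)}$. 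Your proposal contains no substitute for this mechanism, so the uniform bound \eqref{t6PP} — and with it the entire passage to general $v\in\bar S$ by approximation — is unsupported.

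A secondary but concrete error: your treatment of cusps asserts that near a touchdown point ``$\chi_v$ satisfies a homogeneous Dirichlet condition on both incident boundary arcs.'' It does not. At the edge of a coincidence interval the two arcs meeting at the cusp are the graph $\mathfrak{G}(v)$ (Dirichlet) and the bottom $\Sigma(v)=\{z=-H,\,v>-H\}$ (Robin, by \eqref{t5cP}); the reflection/outward-cusp Dirichlet results you invoke therefore do not apply. The paper avoids this entirely by approximating $v$ from \emph{above} by smooth $v_n\ge v+1/n$ (so $\mathcal{O}_I(v)\subset\mathcal{O}_I(v_n)$ and the uniform estimates transfer by weak compactness), on each interval $I_j$ of the decomposition $D\setminus\mathcal{C}(v)=\bigcup_j I_j$, and then sums over $j$; note that your ``approximate from within $S$'' variant still requires the uniform $H^2$ bound discussed above, so it does not rescue the argument. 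Your $L_r$-trace step via $H^{1/2}\hookrightarrow L_r$ is plausible in spirit, but it too relies on a trace constant that is uniform over the degenerating domains, which again is what the explicit inequalities of Lemma~\ref{lemt6P} are designed to provide.
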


The remainder of this section is devoted to the proof of Theorem~\ref{thmt1PP}. 
	
\subsection{Variational Solution to \eqref{bbb}}
 	
We first establish the existence of a variational solution to \eqref{bbb}. To this end, we introduce for $v\in \bar S$ the space $H_B^1(\Omega(v))$ as the closure in $H^1(\Omega(v))$ of the set
\begin{equation*}
C_B^1\big( \overline{\Omega(v)} \big) := \Big\{\theta\in C^1\big( \overline{\Omega(v)} \big)\,:\, \theta(x,v(x))=0\,,\ x\in D\,,  \,\theta(\pm L,z)=0\,,\ z\in (-H,0 ) \Big\}\,,
\end{equation*}
and shall then minimize the functional
\begin{equation}\label{GFunc}
\mathcal{G}(v)[\vartheta] := \frac{1}{2} \int_{\Omega(v)} |\nabla (\vartheta+h_v)|^2 \,\mathrm{d}(x,z) + \frac{1}{2} \int_D \sigma(x) |\vartheta(x,-H)+h_v(x,-H) - \mathfrak{h}_v(x)|^2   \,\mathrm{d}x
\end{equation}
with respect to $\vartheta\in H_B^1(\Omega(v))$. Let us recall from \cite[Lemma~2.2]{LNW19} that the trace $\vartheta(\cdot,-H)\in L_2(D)$ is well-defined for $\vartheta\in H_B^1(\Omega(v))$ (see also Lemma~\ref{lemT2H} below for a complete statement), while Lemma~\ref{h} ensures that $h_v\in  H^1(\Omega(v))$ and that $h_v(\cdot,-H)$ and $\mathfrak{h}_v$ belong to $L_2(D)$. Thus, $\mathcal{G}(v)[\vartheta]$ is well-defined for $\vartheta\in H_B^1(\Omega(v))$.

\begin{proposition}\label{lemt9P}
Let $v\in\bar{S}$. There is a unique variational solution $\chi_v\in H_B^1(\Omega(v))$ to \eqref{bbb}
given as the unique minimizer of the functional $\mathcal{G}(v)$ on $H_B^1(\Omega(v))$. Moreover, $\chi_v$ is also the unique minimizer on $H_B^1(\Omega(v))$ of the functional $G_D(v)$ defined by
\begin{equation*}
G_D(v)[\vartheta] := \frac{1}{2} \int_{\Omega(v)} |\nabla\vartheta|^2\ \mathrm{d}(x,z) + \frac{1}{2} \int_D \sigma |\vartheta(\cdot,-H)|^2\ \mathrm{d}x - \int_{\Omega(v)} \vartheta\Delta h_v\ \mathrm{d}(x,z) \,.
\end{equation*}
\end{proposition}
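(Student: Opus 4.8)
The plan is to exhibit $\mathcal{G}(v)$ and $G_D(v)$ as continuous, strictly convex and coercive quadratic functionals on the Hilbert space $H_B^1(\Omega(v))$, so that each has a unique minimiser determined by its Euler--Lagrange equation, and then to verify that these two Euler--Lagrange equations coincide.

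First I would establish coercivity. For $\vartheta\in C_B^1(\overline{\Omega(v)})$ the identity $\vartheta(x,z)=-\int_z^{v(x)}\partial_z\vartheta(x,s)\,\rd s$ (valid since $\vartheta(x,v(x))=0$) yields, after squaring and integrating, a one-dimensional Poincar\'e inequality $\|\vartheta\|_{L_2(\Omega(v))}\le C\,\|\partial_z\vartheta\|_{L_2(\Omega(v))}$, which extends to $H_B^1(\Omega(v))$ by density. Combined with $\sigma\ge\sigma_{min}>0$ and the boundedness of the trace $\vartheta\mapsto\vartheta(\cdot,-H)$ from $H_B^1(\Omega(v))$ into $L_2(D)$ (see \cite[Lemma~2.2]{LNW19} and Lemma~\ref{lemT2H}), this shows that the bilinear form
\[
a(\vartheta,\phi) := \int_{\Omega(v)} \nabla\vartheta\cdot\nabla\phi\,\rd(x,z) + \int_D \sigma\,\vartheta(\cdot,-H)\,\phi(\cdot,-H)\,\rd x
\]
is continuous and coercive on $H_B^1(\Omega(v))$. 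Since $h_v\in H^1(\Omega(v))$ and $h_v(\cdot,-H),\mathfrak{h}_v\in L_2(D)$ by Lemma~\ref{h}, expanding the squares gives $\mathcal{G}(v)[\vartheta]=\tfrac12 a(\vartheta,\vartheta)-\ell(\vartheta)+\text{const}$ with $\ell$ a bounded linear functional, so the Lax--Milgram theorem (equivalently, the direct method) provides a unique minimiser $\chi_v\in H_B^1(\Omega(v))$ of $\mathcal{G}(v)$, characterised by $a(\chi_v,\phi)=\ell(\phi)$ for all $\phi\in H_B^1(\Omega(v))$, i.e.\ by
\[
\int_{\Omega(v)}\nabla(\chi_v+h_v)\cdot\nabla\phi\,\rd(x,z) + \int_D\sigma\big(\chi_v(\cdot,-H)+h_v(\cdot,-H)-\mathfrak{h}_v\big)\phi(\cdot,-H)\,\rd x = 0 \,,
\]
which is exactly the weak formulation of \eqref{bbb}; this also yields the asserted uniqueness of the variational solution.

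Next I would identify $\chi_v$ as the minimiser of $G_D(v)$. By Lemma~\ref{h} one has $h_v\in H^2(\Omega(v))$, hence $\Delta h_v\in L_2(\Omega(v))$, so $G_D(v)$ is again of the form $\tfrac12 a(\vartheta,\vartheta)-\tilde\ell(\vartheta)$ with $\tilde\ell(\phi)=\int_{\Omega(v)}\phi\,\Delta h_v\,\rd(x,z)$ bounded on $H_B^1(\Omega(v))$; it therefore has a unique minimiser, characterised by $a(\cdot,\phi)=\tilde\ell(\phi)$. To match the two Euler--Lagrange equations I would apply Green's formula to the term $\int_{\Omega(v)}\nabla h_v\cdot\nabla\phi$ appearing in $\ell$: using $h_v\in H^2(\Omega(v))$ and the fact that the trace of $\phi$ vanishes on $\partial\Omega(v)\setminus\Sigma(v)$, the boundary integral collapses to one over $\Sigma(v)$, where the outer normal is $\nu=(0,-1)$ and, by \eqref{bryanadams} together with \eqref{hnot}, $\partial_\nu h_v=-\partial_z h_v(\cdot,-H)=-\sigma\,(h_v(\cdot,-H)-\mathfrak{h}_v)$. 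Substituting and cancelling the two identical $\sigma(h_v(\cdot,-H)-\mathfrak{h}_v)\phi(\cdot,-H)$ contributions turns $a(\chi_v,\phi)=\ell(\phi)$ into $a(\chi_v,\phi)=\tilde\ell(\phi)$ for all $\phi$; since two strictly convex quadratic functionals with the same G\^ateaux derivative differ by a constant and hence share the same unique critical point, $\chi_v$ also minimises $G_D(v)$.

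The main obstacle is to make Green's formula and, in particular, the normal trace $\partial_\nu h_v$ on $\Sigma(v)$ rigorous when $\Omega(v)$ is not a Lipschitz domain (corners at $x=\pm L$ and possibly cusps when $\mathcal{C}(v)\neq\emptyset$). This is where the $H^2$-regularity of $h_v$ from Lemma~\ref{h} is crucial: since $\Sigma(v)$ is the flat piece $\{z=-H\}$ lying at positive distance from the singular boundary points, one can localise near it and invoke the standard $H^2$ trace theory on that smooth portion of $\partial\Omega(v)$ (alternatively, integrate by parts first in $z$ along vertical segments and then in $x$). The Poincar\'e inequality used above is likewise insensitive to the lack of Lipschitz regularity, being a purely one-dimensional estimate in the $z$ variable.
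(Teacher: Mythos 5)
Your proposal is correct and follows essentially the same route as the paper: coercivity via the one-dimensional Poincar\'e inequality of \cite[Lemma~2.2]{LNW19}, Lax--Milgram for the unique minimizer of $\mathcal{G}(v)$, and then an integration by parts of $\int_{\Omega(v)}\nabla h_v\cdot\nabla\phi$ combined with \eqref{bryanadams} to cancel the boundary contribution on $\Sigma(v)$ --- the paper phrases this as $\mathcal{G}(v)[\vartheta]=G_D(v)[\vartheta]+\mathrm{const}$, which is the same statement as your identification of the two Euler--Lagrange equations. One small inaccuracy: $\Sigma(v)$ is \emph{not} at positive distance from the singular boundary points (the corners $(\pm L,-H)$ and any cusps lie in its closure), so the localisation you sketch needs an exhaustion argument; your parenthetical alternative of integrating first in $z$ along vertical segments is fine, and the paper instead justifies Gau\ss' Theorem on each connected component by citing \cite[Folgerung~7.5]{Ko63}.
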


\begin{proof}
As noted above, $\mathcal{G}(v)$ and $G_D(v)$ are both well-defined on $H_B^1(\Omega(v))$. Moreover, owing to the Poincar\'e inequality  established in \cite[Lemma~2.2]{LNW19}, the functional $\mathcal{G}(v)$ is coercive on $H_B^1(\Omega(v))$. It thus readily follows from the Lax-Milgram Theorem that there is a unique minimizer $\chi_v\in H_B^1(\Omega(v))$ of the functional $\mathcal{G}(v)$ on $H_B^1(\Omega(v))$. Let $\vartheta\in H_B^1(\Omega(v))$.  Since each connected component of $\Omega(v)$ has at most two singular points, we infer from \cite[Folgerung~7.5]{Ko63} that we may apply Gau\ss' Theorem on each connected component of $\Omega(v)$ and deduce from \eqref{bryanadams} that
\begin{align*}
\mathcal{G}(v)[\vartheta] & = \frac{1}{2} \int_{\Omega(v)} |\nabla \vartheta|^2 \,\mathrm{d}(x,z) +  \int_{\Omega(v)} \nabla \vartheta\cdot \nabla h_v \,\mathrm{d}(x,z)+ \frac{1}{2} \int_{\Omega(v)} |\nabla h_v|^2 \,\mathrm{d}(x,z) \\
& \quad + \frac{1}{2} \int_D \sigma |\vartheta(\cdot,-H)|^2 \,\mathrm{d}x + \int_D \sigma \vartheta(\cdot,-H) [h_v(\cdot,-H)-\mathfrak{h}_v]\,\mathrm{d}x \\
& \quad + \frac{1}{2} \int_D \sigma [h_v(\cdot,-H)-\mathfrak{h}_v]^2 \,\mathrm{d}x \\
& = G_D(v)[\vartheta] + \int_{\Omega(v)} \vartheta \Delta h_v\,\mathrm{d}(x,z) - \int_D (\vartheta\partial_z h_v)(x,-H)\,\mathrm{d}x -  \int_{\Omega(v)} \vartheta \Delta h_v\,\mathrm{d}(x,z) \\
& \quad + \frac{1}{2} \int_{\Omega(v)} |\nabla h_v|^2 \,\mathrm{d}(x,z) + \int_D \sigma \vartheta(\cdot,-H) [h_v(\cdot,-H)-\mathfrak{h}_v]\,\mathrm{d}x  \\
& \quad + \frac{1}{2} \int_D \sigma [h_v(\cdot,-H)-\mathfrak{h}_v]^2 \,\mathrm{d}x \\
& = G_D(v)[\vartheta] + \frac{1}{2} \int_{\Omega(v)} |\nabla h_v|^2 \,\mathrm{d} (x,z) + \frac{1}{2} \int_D \sigma [h_v(\cdot,-H)-\mathfrak{h}_v]^2 \,\mathrm{d}x \,.
\end{align*}	
Consequently, $\chi_v$ is also the unique minimizer of the functional $G_D(v)$ on $H_B^1(\Omega(v))$.
\end{proof}

For further use we state the following weak maximum principle.

\begin{lemma}\label{MP}
Let $v\in \bar S$. Then $h_v\in C(\overline{\Omega(v)})$ and $\mathfrak{h}_v\in C(\bar{D})$ and
\begin{equation*}
\min\Big\{\min_{\partial\Omega(v)} h_v\,,\, \min_{\bar D}\mathfrak{h}_v\Big\}\le \chi_v+h_v\le  
\max\Big\{\max_{ \partial\Omega(v)} h_v\,,\, \max_{\bar D}\mathfrak{h}_v\Big\}\,.
\end{equation*}
\end{lemma}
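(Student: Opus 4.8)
The plan is to establish the weak maximum principle for the variational solution $\chi_v$ via the standard truncation (Stampacchia) method adapted to the mixed boundary conditions, using that $\chi_v + h_v$ is the unique minimizer of $\mathcal{G}(v)$ (equivalently, the variational solution of \eqref{bbb}) on $H_B^1(\Omega(v))$.

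First I would record the regularity claims: since $v\in\bar S\subset C(\bar D)$ and $h\in C^2$ by \eqref{zucchero}, the composition formulas \eqref{hnot} immediately give $h_v\in C(\overline{\Omega(v)})$ and $\mathfrak{h}_v\in C(\bar D)$; in particular the four extrema appearing in the statement are finite real numbers. Set
\begin{equation*}
m := \min\Big\{\min_{\partial\Omega(v)} h_v,\ \min_{\bar D}\mathfrak{h}_v\Big\}, \qquad M := \max\Big\{\max_{\partial\Omega(v)} h_v,\ \max_{\bar D}\mathfrak{h}_v\Big\},
\end{equation*}
and write $\psi_v := \chi_v + h_v$ for the variational solution of \eqref{MBP0}. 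I would prove the upper bound $\psi_v\le M$; the lower bound follows by applying the same argument to $-\psi_v$ (or to the data $2M_0-h,\,2M_0-\mathfrak h$ for a suitable shift, but the symmetry argument is cleaner).

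For the upper bound, consider the test function $\vartheta := (\psi_v - M)^+ = \max\{\psi_v - M,0\}$. The key point is that $\vartheta \in H_B^1(\Omega(v))$: indeed $\psi_v \in H^1(\Omega(v))$, truncation preserves $H^1$, and on $\partial\Omega(v)\setminus\Sigma(v)$ one has $\psi_v = h_v \le M$ so the trace of $(\psi_v-M)^+$ vanishes there, which is exactly the condition cutting out $H_B^1(\Omega(v))$ from $H^1(\Omega(v))$ (here I use that $\chi_v\in H_B^1$ has zero trace on $\partial\Omega(v)\setminus\Sigma(v)$, so $\psi_v$ has trace $h_v$ there). Then $\chi_v - \vartheta = \psi_v - \vartheta - h_v$ still lies in $H_B^1(\Omega(v))$, so it is an admissible competitor against the minimizer $\chi_v$; equivalently, testing the weak formulation of \eqref{bbb} (which by Proposition~\ref{lemt9P} is the Euler–Lagrange equation of $\mathcal{G}(v)$) with $\vartheta$ gives
\begin{equation*}
\int_{\Omega(v)} \nabla\psi_v\cdot\nabla\vartheta\,\mathrm{d}(x,z) + \int_D \sigma\,(\psi_v(\cdot,-H)-\mathfrak{h}_v)\,\vartheta(\cdot,-H)\,\mathrm{d}x = 0 .
\end{equation*}
On the set $\{\psi_v>M\}$ one has $\nabla\vartheta = \nabla\psi_v$ and hence $\nabla\psi_v\cdot\nabla\vartheta = |\nabla\vartheta|^2\ge 0$ a.e., while elsewhere $\nabla\vartheta=0$; and on $\Sigma(v)$, where $\vartheta(\cdot,-H)>0$, we have $\psi_v(\cdot,-H)>M\ge \mathfrak{h}_v$, so $\sigma\,(\psi_v(\cdot,-H)-\mathfrak{h}_v)\,\vartheta(\cdot,-H)\ge 0$ using $\sigma>0$. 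Both terms being nonnegative and summing to zero forces $\int_{\Omega(v)}|\nabla\vartheta|^2\,\mathrm{d}(x,z)=0$, so $\vartheta$ is constant on each connected component of $\Omega(v)$; since each component of $\Omega(v)$ touches $\partial\Omega(v)\setminus\Sigma(v)$ (the lateral boundary $\{x=\pm L\}$ or the graph $\mathfrak{G}(v)$) on a set of positive $(n-1)$-measure where $\vartheta=0$, the Poincaré inequality of \cite[Lemma~2.2]{LNW19} for $H_B^1(\Omega(v))$ gives $\vartheta\equiv 0$, i.e. $\psi_v\le M$ a.e.

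The main obstacle I anticipate is the geometric/trace bookkeeping rather than the core truncation argument: because $\Omega(v)$ may be non-Lipschitz with cusps and several connected components (when $\mathcal{C}(v)\ne\emptyset$), one must be careful that (i) truncation of an $H^1$ function with the prescribed partial boundary trace stays in $H_B^1(\Omega(v))$, and (ii) the Poincaré inequality and the component-wise argument apply — both of which are supplied by \cite[Lemma~2.2]{LNW19} and \cite[Folgerung~7.5]{Ko63} already invoked in the proof of Proposition~\ref{lemt9P}, so I would cite those and argue component by component. A secondary point worth a line is that the pointwise inequalities are a.e.\ statements, but since $h_v\le M$ on all of $\partial\Omega(v)\setminus\Sigma(v)$ by definition of $M$ and continuity of $h_v$, and $\mathfrak{h}_v\le M$ on $\bar D$, no exceptional null sets cause trouble.
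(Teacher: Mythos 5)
Your proposal is correct and follows essentially the same route as the paper: test the weak formulation \eqref{yes1} with the truncation $(\chi_v+h_v-\mu^*)_+\in H_B^1(\Omega(v))$, observe that both the gradient term and the boundary term on $\Sigma(v)$ are nonnegative (the paper writes the boundary term as $\sigma[(\chi_v+h_v)(\cdot,-H)-\mu^*+\mu^*-\mathfrak h_v]\vartheta^*\ge\sigma|\vartheta^*(\cdot,-H)|^2$, which is the same observation in a slightly sharper form), and conclude via the Poincar\'e inequality of \cite[Lemma~2.2]{LNW19}. The lower bound is handled symmetrically in both arguments.
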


\begin{proof}
We first observe that $v\in C(\bar{D})$ which ensures, together with \eqref{zucchero}, that
\begin{equation*}
\mu_* := \min\Big\{ \min_{\partial{\Omega(v)}} h_v\,,\, \min_{\bar D}\mathfrak{h}_v\Big\} \;\;\text{ and }\;\; \mu^* := \max\Big\{\max_{\partial\Omega(v)} h_v\,,\, \max_{\bar D}\mathfrak{h}_v\Big\}
\end{equation*}
are well-defined and finite. Next, since $\chi_v$ is the minimizer of $\mathcal{G}(v)$ on $H_B^1(\Omega(v))$, it satisfies
\begin{equation}
\int_{\Omega(v)} \nabla(\chi_v+h_v)\cdot \nabla\vartheta \,\mathrm{d}(x,z) + \int_D \sigma [(\chi_v+h_v)(\cdot,-H) - \mathfrak{h}_v] \vartheta(\cdot,-H)\,\mathrm{d}x = 0 \label{yes1}
\end{equation}
for all $\vartheta\in H_B^1(\Omega(v))$. 

Now, it follows from the definition of $\mu^*$ that $\vartheta^* := (\chi_v + h_v - \mu^*)_+$ belongs to $H_B^1(\Omega(v))$ with $\nabla\vartheta^* = \mathrm{sign}_+ (\chi_v + h_v - \mu^*) \nabla (\chi_v + h_v - \mu^*)$. Consequently, by \eqref{yes1},
\begin{align*}
0 & = \int_{\Omega(v)} \nabla(\chi_v+h_v)\cdot \nabla\vartheta^* \,\mathrm{d}(x,z) + \int_D \sigma [(\chi_v+h_v)(\cdot,-H) - \mathfrak{h}_v] \vartheta^*(\cdot,-H) \,\mathrm{d}x \\
& = \int_{\Omega(v)} |\nabla\vartheta^*|^2 \,\mathrm{d}(x,z) + \int_D \sigma [(\chi_v+h_v)(\cdot,-H) - \mu^* + \mu^*- \mathfrak{h}_v] \vartheta^*(\cdot,-H) \,\mathrm{d}x \\
& \ge \int_{\Omega(v)} |\nabla\vartheta^*|^2 \,\mathrm{d}(x,z) + \int_D \sigma [\vartheta^*(\cdot,-H)]^2 \,\mathrm{d}x\,,
\end{align*}
where we have used the non-negativity of both $\mu^* - \mathfrak{h}_v$ and $\vartheta^*$ to derive the last inequality. We have thereby proved that $\nabla\vartheta^*=0$ in $L_2(\Omega(v))$, which implies that $\vartheta^*=0$ in $L_2(\Omega(v))$ thanks to the Poincar\'e inequality established in \cite[Lemma~2.2]{LNW19}. In other words, $\chi_v + h_v - \mu^*\le 0$ a.e. in $\Omega(v)$ as claimed. 

Finally, a similar argument with $\vartheta_* := (\mu_*-\chi_v-h_v)_+$ leads to the inequality $\mu_*-\chi_v-h_v\le 0$ a.e. in $\Omega(v)$ and completes the proof.
\end{proof}

We  now improve the regularity of $\chi_v$ as stated in Theorem~\ref{thmt1PP} and show that $\chi_v$ belongs to $H^2(\Omega(v))$. Once this is shown, it then readily follows that $\chi_v$ is a strong solution to \eqref{bbb} (see \cite[Theorem~3.5]{LNW19}). 

As pointed out previously, for a general $v\in \bar S$, the set $\Omega(v)$ may consist of several connected components without Lipschitz boundaries when the coincidence set $\mathcal{C}(v)$ is non-empty. The global $H^2(\Omega(v))$-regularity of $\chi_v$ is thus clearly not obvious.
The main idea is to  write the open set $D\setminus\mathcal{C}(v)$ as a countable union of disjoint open intervals  $(I_j)_{j\in J}$, see \cite[IX.Proposition~1.8]{AEIII}, and to establish  the $H^2$-regularity for $\chi_v$ first locally on each component $\left\{ (x,z)\in I_j\times\mathbb{R}\ :\ -H < z < v(x) \right\}$. This local regularity is performed in Section~\ref{Sec3.1}. The global $H^2(\Omega(v))$-regularity is subsequently established in Section~\ref{Sec3.2}.

\subsection{Local $H^2$-Regularity}\label{Sec3.1}

Let $I:=(a,b)$ be an open interval in $D$ and consider
\begin{equation}
v\in H^2(I) \;\text{ with }\; v(x)> -H\ , \quad x\in I\ . \label{t0P}
\end{equation}
We define the open set $\mathcal{O}_I(v)$ by
\begin{equation}
\mathcal{O}_I(v) := \left\{ (x,z)\in I\times\mathbb{R}\ :\ -H < z < v(x) \right\} \label{t3P}
\end{equation}
and split its boundary $\partial\mathcal{O}_I(v) = \partial \mathcal{O}_{I,D}(v) \cup \overline{\Sigma_I}$ with
\begin{align}
\partial\mathcal{O}_{I,D}(v) & := \big(\{a\} \times [-H,v(a)]\big) \cup \big(\{b\}\times [-H,v(b)]\big) \cup \overline{\mathfrak{G}_I(v)}\ , \label{t3aP} \\
\overline{\Sigma_I} & := [a,b]\times \{-H\}\ , \label{t3bP}
\end{align}
where $\Sigma_I := I \times \{-H\}$, and $\overline{\mathfrak{G}_I(v)}$ denotes the closure of the graph $\mathfrak{G}_I(v)$ of $v$, defined by
\begin{equation}
\mathfrak{G}_I(v) := \{ (x,v(x))\ :\ x\in I \}\ . \label{t4P}
\end{equation} 
We emphasize that $\mathcal{O}_I(v)$ has no Lipschitz boundary when $v(a)+H= \partial_x v(a)=0$ or $v(b)+H= \partial_x v(b)=0$, as these correspond to cuspidal boundary points, see Figure~\ref{F2}.

\begin{figure}
	\begin{tikzpicture}
	\draw[black, line width = 1.5pt, dashed] (-2.5,0)--(3.5,0);
	\draw[blue, line width = 2pt] (-2,0)--(3,0);
	\node at (-2,-0.5) {$a$};
	\draw[black, line width =1.5pt] (-2,-0.2)--(-2,0.2);
	\node at (3,-0.5) {$b$};
	\draw[black, line width =1.5pt] (3,-0.2)--(3,0.2);
	\draw[blue, line width = 2pt] plot[domain=-2:0] (\x,{(\x+2)^2*(2-\x)^2/8});
	\draw[blue, line width = 2pt] plot[domain=0:3] (\x,{2*(\x+3)^2*(3-\x)^2/81});
	\node at (0,1) {${\color{blue} \mathcal{O}_I(v)}$};
	\end{tikzpicture}
	\hspace{1cm}
	\begin{tikzpicture}
	\draw[black, line width = 1.5pt, dashed] (-2.5,0)--(2.5,0);
	\draw[blue, line width = 2pt] (-2,0)--(2,0);
	\draw[blue, line width = 2pt] (-2,0)--(-2,1);
	\draw[blue, line width = 2pt] (2,0)--(2,1);
	\node at (-2,-0.5) {$a$};
	\draw[black, line width =1.5pt] (-2,-0.2)--(-2,0.2);
	\node at (2,-0.5) {$b$};
	\draw[black, line width =1.5pt] (2,-0.2)--(2,0.2);
	\draw[blue, line width = 2pt] plot[domain=-2:2] (\x,{1+(\x+2)^2*(2-\x)^2/16});
	\node at (0.5,1) {${\color{blue} \mathcal{O}_I(v)}$};
	\end{tikzpicture}\\
	\vspace{0.5cm}
	\begin{tikzpicture}
	\draw[black, line width = 1.5pt, dashed] (-2.5,0)--(3.5,0);
	\draw[blue, line width = 2pt] (-2,0)--(3,0);
	\draw[blue, line width = 2pt] (-2,0)--(-2,1);
	\node at (-2,-0.5) {$a$};
	\draw[black, line width =1.5pt] (-2,-0.2)--(-2,0.2);
	\node at (3,-0.5) {$b$};
	\draw[black, line width =1.5pt] (3,-0.2)--(3,0.2);
	\draw[blue, line width = 2pt] plot[domain=-2:0] (\x,{1+(\x+2)^2*(2-\x)^2/12});
	\draw[blue, line width = 2pt] plot[domain=0:3] (\x,{7*(\x+3)^2*(3-\x)^2/243});
	\node at (0,1) {${\color{blue} \mathcal{O}_I(v)}$};
	\end{tikzpicture}
	\caption{Geometry of $\mathcal{O}_{I}(v)$ according to the boundary values of $v$.}\label{F2}
\end{figure}
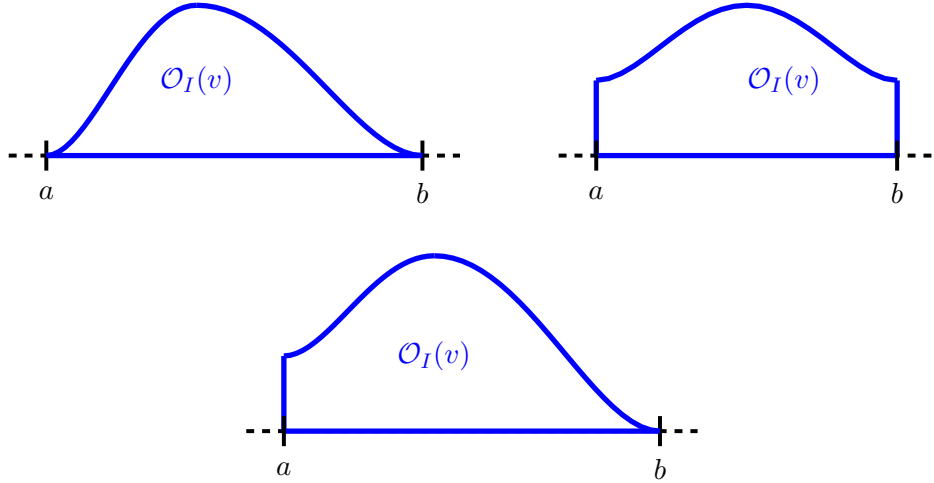

Let  $f\in L_2(\mathcal{O}_I(v))$ be a fixed function. The aim is to investigate the auxiliary problem
\begin{subequations}\label{t5P}
\begin{align}
-\Delta\zeta_v & = f \;\text{ in }\; \mathcal{O}_I(v)\ , \label{t5aP} \\
\zeta_v & = 0 \;\text{ on }\; \partial\mathcal{O}_{I,D}(v)\ , \label{t5bP} \\
-\partial_z \zeta_v + \sigma \zeta_v & = 0 \;\text{ on }\; \Sigma_I\ . \label{t5cP}
\end{align}
\end{subequations}
We shall show the existence and uniqueness of a variational solution $\zeta_v:=\zeta_{I,v}\in H^1(\mathcal{O}_I(v))$ to \eqref{t5P} and then prove its $H^2$-regularity. The main difficulty encountered here is the just mentioned possible lack of Lipschitz regularity of $\mathcal{O}_I(v)$. Indeed, the trace of functions in $H^1(\mathcal{O}_I(v))$ on $\partial\mathcal{O}_I(v)$ have no meaning yet in that case, and so \eqref{t5bP} and \eqref{t5cP} are not well-defined. We shall thus first give a precise meaning to traces for functions in $H^1(\mathcal{O}_I(v))$.

\begin{remark}
Clearly, if $v\in S$, $I=D$, and $f=h_v$, then $\chi_v=\zeta_{D,v}$, so that Theorem~\ref{thmt1PP} follows from Theorem~\ref{thmt1P} below in that case. Furthermore, if $I=(a,b)$ is a strict subinterval of $D$, $f=h_v$, and  $v\in \bar{S}$ is such that $v(a)=v(b) = -H$, or $a=-L$ and $v(-L)=v(b)+H=0$, or $b=L$ and $v(a)+H=v(L)=0$, then $\zeta_{I,v}$ coincides -- at least formally -- with the restriction of $\chi_v$ to $I$ and we shall also deduce Theorem~\ref{thmt1PP} from Theorem~\ref{thmt1P}. We thus do not impose that $v(a)=-H$ or $v(b)=-H$ in \eqref{t0P}, so as to be able to handle simultaneously the above mentioned different cases also depicted in Figure~\ref{F2}.
\end{remark}

\subsubsection{Traces}\label{sec.trP}

As already noticed in \cite{MNP00}, one can take advantage of the particular geometry of $\mathcal{O}_I(v)$, which lies between the graphs of two continuous functions, in order to define traces for functions in $H^1(\mathcal{O}_I(v))$ along these graphs. More precisely, one can derive the following result \cite[Lemma~2.1]{LNW19}.

\begin{lemma}[cite[Lemma~2.1]{LNW19}]\label{lemT1H} Assume that $v$ satisfies \eqref{t0P} and set $M_v := \|H+v\|_{L_\infty(I)}$.
	\begin{itemize}
		\item [{\bf (a)}] There is a linear bounded operator 
		\begin{equation*}
		\Gamma_{I,v} \in \mathcal{L}\left( H^1(\mathcal{O}_I(v)) , L_2(  I,(H+v)\mathrm{d}x) \right)
		\end{equation*}
		such that $\Gamma_{I,v} \vartheta = \vartheta(\cdot,v)$ for $\vartheta\in C^1(\overline{\mathcal{O}_I(v)})$ and
		\begin{equation}
		\int_I |\Gamma_{I,v} \vartheta|^2 (H+v)\ \mathrm{d}x \le \|\vartheta\|_{L_2(\mathcal{O}_I(v))}^2 + 2 M_v \|\vartheta\|_{L_2(\mathcal{O}_I(v))} \|\partial_z \vartheta\|_{L_2(\mathcal{O}_I(v))}\ . \label{t100P}
		\end{equation}
		\item [{\bf (b)}] There is a linear bounded operator 	
		\begin{equation*}
		\gamma_{I,v} \in \mathcal{L}\left( H^1(\mathcal{O}_I(v)) , L_2( I,(H+v)\mathrm{d}x) \right)
		\end{equation*}
		such that $\gamma_{I,v} \vartheta = \vartheta(\cdot,-H)$ for $\vartheta\in C^1(\overline{\mathcal{O}_I(v)})$ and
		\begin{equation}
		\int_I |\gamma_{I,v}\vartheta|^2 (H+v)\ \mathrm{d}x \le \|\vartheta\|_{L_2(\mathcal{O}_I(v))}^2 + 2 M_v \|\vartheta\|_{L_2(\mathcal{O}_I(v))} \|\partial_z \vartheta\|_{L_2(\mathcal{O}_I(v))}\ . \label{t200P}
		\end{equation}
	\end{itemize}
\end{lemma}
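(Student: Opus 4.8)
The statement concerns the construction of trace operators onto the upper graph $\mathfrak{G}_I(v)$ and the lower boundary $\Sigma_I$ for functions in $H^1(\mathcal{O}_I(v))$, despite the possible lack of Lipschitz regularity of $\mathcal{O}_I(v)$ at cuspidal points. The key observation, already exploited in \cite{MNP00}, is that $\mathcal{O}_I(v)$ is a domain trapped between the two graphs $z=-H$ and $z=v(x)$ over $I$, so traces on these graphs can be defined by a one-dimensional fundamental-theorem-of-calculus argument in the vertical variable, avoiding any reference to the (possibly bad) boundary structure near $x=a$ and $x=b$. The plan is to prove the estimate \eqref{t100P} first for smooth functions $\vartheta\in C^1(\overline{\mathcal{O}_I(v)})$ and then extend by density, using that $C^1(\overline{\mathcal{O}_I(v)})$ is dense in $H^1(\mathcal{O}_I(v))$ (this density holds without any boundary regularity, since $\mathcal{O}_I(v)$ is a bounded open set; alternatively one works with the subspace of $H^1$-functions that are $C^1$ up to the closure and argues the estimate is uniform).

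\textbf{Main computation.} Fix $\vartheta\in C^1(\overline{\mathcal{O}_I(v)})$. For $x\in I$ and $-H\le z\le v(x)$, write
\begin{equation*}
\vartheta(x,v(x))^2 = \vartheta(x,z)^2 + \int_z^{v(x)} \partial_z\big(\vartheta(x,\zeta)^2\big)\,\mathrm{d}\zeta = \vartheta(x,z)^2 + 2\int_z^{v(x)} \vartheta(x,\zeta)\,\partial_z\vartheta(x,\zeta)\,\mathrm{d}\zeta\,.
\end{equation*}
Integrating this identity in $z$ over $(-H,v(x))$ (an interval of length $H+v(x)$) gives
\begin{equation*}
(H+v(x))\,\vartheta(x,v(x))^2 = \int_{-H}^{v(x)} \vartheta(x,z)^2\,\mathrm{d}z + \int_{-H}^{v(x)}\!\!\int_z^{v(x)} 2\,\vartheta(x,\zeta)\,\partial_z\vartheta(x,\zeta)\,\mathrm{d}\zeta\,\mathrm{d}z\,.
\end{equation*}
Bounding the inner $z$-integral in the double integral trivially by $(H+v(x))\int_{-H}^{v(x)} |\vartheta(x,\zeta)|\,|\partial_z\vartheta(x,\zeta)|\,\mathrm{d}\zeta$, using $H+v(x)\le M_v$, and then integrating in $x$ over $I$ and applying the Cauchy--Schwarz inequality in $(x,\zeta)$ yields exactly \eqref{t100P}. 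This defines $\Gamma_{I,v}\vartheta := \vartheta(\cdot,v)$ on the dense subspace, and \eqref{t100P} shows it is bounded into $L_2(I,(H+v)\,\mathrm{d}x)$, so it extends uniquely to $\Gamma_{I,v}\in\mathcal{L}(H^1(\mathcal{O}_I(v)),L_2(I,(H+v)\,\mathrm{d}x))$. Part {\bf (b)} is entirely symmetric: one integrates $\vartheta(x,-H)^2 = \vartheta(x,z)^2 - 2\int_{-H}^{z}\vartheta(x,\zeta)\partial_z\vartheta(x,\zeta)\,\mathrm{d}\zeta$ over $z\in(-H,v(x))$ and argues identically to obtain \eqref{t200P} and the operator $\gamma_{I,v}$.

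\textbf{Where the care is needed.} The only subtle point is the density of $C^1(\overline{\mathcal{O}_I(v)})$ in $H^1(\mathcal{O}_I(v))$, since $\mathcal{O}_I(v)$ need not be a Lipschitz (or even extension) domain; however, because $\partial\mathcal{O}_I(v)$ has Lebesgue measure zero and $\mathcal{O}_I(v)$ is bounded, the Meyers--Serrin theorem ($H^1=W^{1,2}$) gives density of $C^\infty(\mathcal{O}_I(v))\cap H^1(\mathcal{O}_I(v))$, and one verifies that the estimates \eqref{t100P}--\eqref{t200P} pass to such functions by a routine approximation on slightly shrunk subdomains $\mathcal{O}_I(v-\delta)$ together with monotone/dominated convergence as $\delta\downarrow0$; since the target space norm is controlled uniformly, the operators are well-defined on all of $H^1(\mathcal{O}_I(v))$, and their values agree with the pointwise traces on $C^1(\overline{\mathcal{O}_I(v)})$ by construction. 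This is precisely the content of \cite[Lemma~2.1]{LNW19}, so one may simply invoke that reference; the proof above indicates why it holds.
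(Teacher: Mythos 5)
The paper does not prove this lemma at all: it is quoted verbatim from \cite[Lemma~2.1]{LNW19}, so there is no in-paper argument to compare against. Your central computation, however, is exactly the standard one for domains squeezed between two graphs (integrate $\vartheta(x,v(x))^2=\vartheta(x,z)^2+2\int_z^{v(x)}\vartheta\,\partial_z\vartheta\,\mathrm{d}\zeta$ over $z\in(-H,v(x))$, bound the double integral by $M_v\int|\vartheta||\partial_z\vartheta|$, integrate in $x$, apply Cauchy--Schwarz), and it does yield \eqref{t100P} and \eqref{t200P} with the stated constants.

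The genuine soft spot is the extension step. Your primary route rests on density of $C^1\bigl(\overline{\mathcal{O}_I(v)}\bigr)$ in $H^1(\mathcal{O}_I(v))$, and this is precisely what is \emph{not} available in the situation the lemma is designed for: when $v(a)+H=\partial_xv(a)=0$ the domain has an outward cusp at $(a,-H)$, the segment property fails there, and the classical density theorem does not apply. Your fallback via Meyers--Serrin does not repair this, because functions in $C^\infty(\mathcal{O}_I(v))\cap H^1(\mathcal{O}_I(v))$ are smooth only in the interior and hence have no classical values on $\mathfrak{G}_I(v)$ or $\Sigma_I$ either; and approximating on $\mathcal{O}_I(v-\delta)$ changes the graph onto which you are tracing, so the limit $\delta\downarrow 0$ is not the "routine" step you claim. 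The clean fix avoids density altogether: by Fubini and the ACL characterization of Sobolev functions, for a.e.\ $x\in I$ the slice $\vartheta(x,\cdot)$ belongs to $H^1((-H,v(x)))\hookrightarrow C([-H,v(x)])$, so the endpoint values $\vartheta(x,v(x))$ and $\vartheta(x,-H)$ are defined directly for a.e.\ $x$, your identity holds for a.e.\ $(x,z)$, and the same estimate shows the resulting linear maps are bounded into $L_2(I,(H+v)\,\mathrm{d}x)$; agreement with the classical trace on $C^1\bigl(\overline{\mathcal{O}_I(v)}\bigr)$ is then immediate. With that substitution your argument is complete and matches what the cited source does.
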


For simplicity, for $\vartheta\in H^1(\mathcal{O}_I(v))$, we use the notation
\begin{equation*}
\vartheta(x,v(x)) := \Gamma_{I,v}\vartheta(x)\ , \quad \vartheta(x,-H) := \gamma_{I,v}\vartheta(x)\ , \qquad x\in  I\ .
\end{equation*}

We next introduce the variational setting associated with \eqref{t5P} and define the space $H_B^1(\mathcal{O}_I(v))$ as the closure in $H^1(\mathcal{O}_I(v))$ of the set
\begin{equation*}
\begin{split}
C_B^1\left( \overline{\mathcal{O}_I(v)} \right) := \Big\{\theta\in C^1\left( \overline{\mathcal{O}_I(v)} \right) :\, & \, \theta(x,v(x))=0\,,\ x\in I\,,\\ 
& \text{ and }\theta(x,z)=0\,,\ (x,z)\in \{a,b\}\times (-H,0] \Big\}\,.
\end{split}
\end{equation*}
Note that this is consistent with the previous definition of $H_B^1(\Omega(v))$ when $I=D$ and $v\in \bar S$. We have already established in \cite[Lemma~2.2]{LNW19} a Poincar\'e inequality in $H_B^1(\mathcal{O}_I(v))$, as well as refined properties of the trace on $I\times \{-H\}$, which we recall now. 

\begin{lemma}[{\cite[Lemma~2.2]{LNW19}}] \label{lemT2H}
Assume that $v$ satisfies \eqref{t0P} and consider $\vartheta\in H_B^1(\mathcal{O}_I(v))$. Setting $M_v := \|H+v\|_{L_\infty(I)}$, there holds
	\begin{equation}
	\|\vartheta\|_{L_2(\mathcal{O}_I(v))} \le 2 M_v \|\partial_z \vartheta\|_{L_2(\mathcal{O}_I(v))}\,, \label{t300P}
	\end{equation}
	and the trace operator $\vartheta\mapsto \vartheta(\cdot,-H)$ maps $H_B^1(\mathcal{O}_I(v))$ to $L_2(I)$ with
	\begin{equation}
	\|\vartheta(\cdot,-H)\|_{L_2(I)}^2 \le 2  \|\vartheta\|_{L_2(\mathcal{O}_I(v))} \|\partial_z \vartheta\|_{L_2(\mathcal{O}_I(v))}\ . \label{t400P}
	\end{equation}
\end{lemma}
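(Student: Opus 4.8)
The plan is to establish both inequalities via the geometry of $\mathcal{O}_I(v)$, which, although possibly non-Lipschitz at cuspidal points, is sandwiched between the graphs of the two continuous functions $z=-H$ and $z=v(x)$. The key structural fact is the vertical-segment representation: every point of $\mathcal{O}_I(v)$ lies on a vertical segment joining $(x,-H)$ to $(x,v(x))$, and along such a segment a $C^1$ function can be recovered from its boundary value by integrating $\partial_z$.

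First I would prove \eqref{t300P}. For $\theta\in C_B^1(\overline{\mathcal{O}_I(v)})$ we have $\theta(x,v(x))=0$ for $x\in I$, so for $(x,z)\in\mathcal{O}_I(v)$,
\begin{equation*}
\theta(x,z) = -\int_z^{v(x)} \partial_z\theta(x,\zeta)\,\mathrm{d}\zeta\,,
\end{equation*}
whence by the Cauchy--Schwarz inequality $|\theta(x,z)|^2 \le (v(x)-z)\int_{-H}^{v(x)} |\partial_z\theta(x,\zeta)|^2\,\mathrm{d}\zeta$. Integrating over $z\in(-H,v(x))$ gives a factor $\tfrac12(H+v(x))^2 \le \tfrac12 M_v^2$, and then integrating over $x\in I$ yields $\|\theta\|_{L_2(\mathcal{O}_I(v))}^2 \le \tfrac12 M_v^2 \|\partial_z\theta\|_{L_2(\mathcal{O}_I(v))}^2$, which is even slightly sharper than \eqref{t300P} but certainly implies it. The estimate extends to all $\vartheta\in H_B^1(\mathcal{O}_I(v))$ by density, since both sides are continuous with respect to the $H^1$-norm. (One should be a little careful that the integral identity only requires $\theta$ to vanish on the upper graph, not on the lateral segments; the lateral vanishing is not needed here but is part of the definition of $C_B^1$.)

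Next I would prove \eqref{t400P}. Again starting from $\theta\in C_B^1(\overline{\mathcal{O}_I(v)})$, write for $x\in I$
\begin{equation*}
\theta(x,-H)^2 = \theta(x,-H)^2 - \theta(x,v(x))^2 = -\int_{-H}^{v(x)} \partial_z\big(\theta(x,\zeta)^2\big)\,\mathrm{d}\zeta = -2\int_{-H}^{v(x)} \theta\,\partial_z\theta\,(x,\zeta)\,\mathrm{d}\zeta\,.
\end{equation*}
Integrating over $x\in I$ and applying the Cauchy--Schwarz inequality on $\mathcal{O}_I(v)$ gives
\begin{equation*}
\|\theta(\cdot,-H)\|_{L_2(I)}^2 \le 2\int_{\mathcal{O}_I(v)} |\theta|\,|\partial_z\theta|\,\mathrm{d}(x,z) \le 2\|\theta\|_{L_2(\mathcal{O}_I(v))}\,\|\partial_z\theta\|_{L_2(\mathcal{O}_I(v))}\,,
\end{equation*}
which is exactly \eqref{t400P}. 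To pass to general $\vartheta\in H_B^1(\mathcal{O}_I(v))$ one takes a sequence $\theta_n\in C_B^1(\overline{\mathcal{O}_I(v)})$ with $\theta_n\to\vartheta$ in $H^1(\mathcal{O}_I(v))$; the right-hand side converges, and the left-hand side converges because $\gamma_{I,v}$ is bounded from $H^1(\mathcal{O}_I(v))$ into $L_2(I,(H+v)\mathrm{d}x)$ by Lemma~\ref{lemT1H}(b) — in fact it is cleaner to first note that \eqref{t300P} combined with Lemma~\ref{lemT1H}(b) already shows $\gamma_{I,v}$ maps $H_B^1(\mathcal{O}_I(v))$ into $L_2(I)$ (since $H+v$ can degenerate to zero only where the trace is forced to vanish anyway), and then the inequality \eqref{t400P} persists in the limit.

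The main obstacle is not analytical depth but the careful handling of the non-Lipschitz geometry: one must make sure that the fundamental-theorem-of-calculus step is legitimate for $C^1$ functions on $\overline{\mathcal{O}_I(v)}$ even near cuspidal points (it is, since $C^1$ functions are absolutely continuous along vertical segments and the segments foliate the domain), and that the density of $C_B^1(\overline{\mathcal{O}_I(v)})$ in $H_B^1(\mathcal{O}_I(v))$ together with the boundedness of the trace operators from Lemma~\ref{lemT1H} genuinely licenses passing to the limit in both inequalities. Since Lemma~\ref{lemT1H} is already available, the remaining work is the elementary integration argument above plus a routine density/limiting argument, and no further regularity of $\partial\mathcal{O}_I(v)$ is needed.
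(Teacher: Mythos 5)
Your argument is correct: the vertical fundamental-theorem-of-calculus identity from the upper graph $z=v(x)$ (where $C_B^1$ functions vanish) gives the Poincar\'e inequality \eqref{t300P} (even with the sharper constant $M_v/\sqrt{2}$), the identity $\theta(x,-H)^2=-2\int_{-H}^{v(x)}\theta\,\partial_z\theta\,\mathrm{d}\zeta$ gives \eqref{t400P}, and applying \eqref{t400P} to differences $\theta_n-\theta_m$ shows the traces form a Cauchy sequence in $L_2(I)$, which settles the passage from $C_B^1\big(\overline{\mathcal{O}_I(v)}\big)$ to $H_B^1(\mathcal{O}_I(v))$. Note that the paper itself does not prove this lemma but recalls it from \cite[Lemma~2.2]{LNW19}; your proof is exactly the standard argument that reference employs, so there is nothing to add.
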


\subsubsection{Variational solution to \eqref{t5P}}\label{sec.trP2}
Thanks to Lemma~\ref{lemT2H}, the trace on $I\times \{-H\}$ of a function in $H_B^1(\mathcal{O}_I(v))$ is well-defined in $L_2(I)$ and, thus, so is the functional
\begin{equation}
G_I(v)[\vartheta] := \frac{1}{2} \int_{\mathcal{O}_I(v)} |\nabla\vartheta|^2\ \mathrm{d}(x,z) + \frac{1}{2} \int_I \sigma \vert\vartheta(\cdot,-H)\vert^2\ \mathrm{d}x - \int_{\mathcal{O}_I(v)} f \vartheta\ \mathrm{d}(x,z) \label{t101P}
\end{equation}
for $\vartheta\in H_B^1(\mathcal{O}_I(v))$. We now derive the existence of a unique variational solution to \eqref{t5P}, or, equivalently, of a unique minimizer of $G_I(v)$ on $\hb{\mathcal{O}_I(v)}$. 

\begin{lemma}\label{lemt0P}
	There is a unique variational solution $\zeta_v:= \zeta_{I,v}\in \hb{\mathcal{O}_I(v)}$ to \eqref{t5P} which satisfies
	\begin{equation}
	\|\zeta_v\|_{H^1(\mathcal{O}_I(v))}^2 + 2 \|\sqrt{\sigma} \zeta_v(\cdot,-H)\|_{L_2(I)}^2 \le 16 M_v^2 \left( 1 + 4 M_v^2 \right) \|f\|_{L_2(\mathcal{O}_I(v))}^2 \ , \label{h1P}
	\end{equation}
	where $M_v := \|H+v\|_{L_\infty(I)}$.
\end{lemma}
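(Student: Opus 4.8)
The plan is to obtain the variational solution to \eqref{t5P} by minimizing the functional $G_I(v)$ on $\hb{\mathcal{O}_I(v)}$ via the direct method, and then to establish the quantitative bound \eqref{h1P} by testing the corresponding Euler--Lagrange identity with the solution itself and invoking the Poincar\'e inequality \eqref{t300P} together with the trace estimate \eqref{t400P} from Lemma~\ref{lemT2H}.

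\medskip
First I would check that $G_I(v)$ is coercive on $\hb{\mathcal{O}_I(v)}$. For $\vartheta\in\hb{\mathcal{O}_I(v)}$, the quadratic part is $\tfrac12\|\nabla\vartheta\|_{L_2(\mathcal{O}_I(v))}^2 + \tfrac12\|\sqrt\sigma\,\vartheta(\cdot,-H)\|_{L_2(I)}^2$, which by \eqref{t300P} controls $\|\vartheta\|_{H^1(\mathcal{O}_I(v))}^2$ up to the constant $1+4M_v^2$; the linear part $\int_{\mathcal{O}_I(v)} f\vartheta$ is bounded by $\|f\|_{L_2(\mathcal{O}_I(v))}\|\vartheta\|_{L_2(\mathcal{O}_I(v))}$, which is absorbed by Young's inequality. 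Since $G_I(v)$ is strictly convex, continuous, and coercive on the Hilbert space $\hb{\mathcal{O}_I(v)}$, it has a unique minimizer $\zeta_v$, whose Euler--Lagrange equation reads
\begin{equation*}
\int_{\mathcal{O}_I(v)} \nabla\zeta_v\cdot\nabla\vartheta\,\rd(x,z) + \int_I \sigma\,\zeta_v(\cdot,-H)\,\vartheta(\cdot,-H)\,\rd x = \int_{\mathcal{O}_I(v)} f\vartheta\,\rd(x,z)
\end{equation*}
for all $\vartheta\in\hb{\mathcal{O}_I(v)}$; this is exactly the weak formulation of \eqref{t5P}, and conversely any such weak solution minimizes $G_I(v)$, so uniqueness of the variational solution follows.

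\medskip
For the estimate \eqref{h1P}, I would take $\vartheta=\zeta_v$ in the identity above, giving $\|\nabla\zeta_v\|_{L_2(\mathcal{O}_I(v))}^2 + \|\sqrt\sigma\,\zeta_v(\cdot,-H)\|_{L_2(I)}^2 = \int_{\mathcal{O}_I(v)} f\zeta_v\,\rd(x,z) \le \|f\|_{L_2(\mathcal{O}_I(v))}\|\zeta_v\|_{L_2(\mathcal{O}_I(v))}$. Using \eqref{t300P} to bound $\|\zeta_v\|_{L_2(\mathcal{O}_I(v))}\le 2M_v\|\partial_z\zeta_v\|_{L_2(\mathcal{O}_I(v))}\le 2M_v\|\nabla\zeta_v\|_{L_2(\mathcal{O}_I(v))}$, the right-hand side is $\le 2M_v\|f\|_{L_2(\mathcal{O}_I(v))}\|\nabla\zeta_v\|_{L_2(\mathcal{O}_I(v))}$, so that $\|\nabla\zeta_v\|_{L_2(\mathcal{O}_I(v))}^2 + \|\sqrt\sigma\,\zeta_v(\cdot,-H)\|_{L_2(I)}^2 \le 4M_v^2\|f\|_{L_2(\mathcal{O}_I(v))}^2$ and in particular $\|\nabla\zeta_v\|_{L_2(\mathcal{O}_I(v))}\le 2M_v\|f\|_{L_2(\mathcal{O}_I(v))}$. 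Feeding this back into \eqref{t300P} yields $\|\zeta_v\|_{L_2(\mathcal{O}_I(v))}^2\le 16M_v^4\|f\|_{L_2(\mathcal{O}_I(v))}^2$, whence $\|\zeta_v\|_{H^1(\mathcal{O}_I(v))}^2 = \|\zeta_v\|_{L_2(\mathcal{O}_I(v))}^2 + \|\nabla\zeta_v\|_{L_2(\mathcal{O}_I(v))}^2 \le (16M_v^4 + 4M_v^2)\|f\|_{L_2(\mathcal{O}_I(v))}^2$. Adding $2\|\sqrt\sigma\,\zeta_v(\cdot,-H)\|_{L_2(I)}^2\le 8M_v^2\|f\|_{L_2(\mathcal{O}_I(v))}^2$ and collecting terms gives a bound of the form $16M_v^2(1+4M_v^2)\|f\|_{L_2(\mathcal{O}_I(v))}^2$, which is \eqref{h1P} (possibly after slightly enlarging constants).

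\medskip
The only genuinely delicate point is that $\mathcal{O}_I(v)$ need not have Lipschitz boundary, so one cannot invoke classical trace theory; however this is precisely what Lemma~\ref{lemT1H} and Lemma~\ref{lemT2H} are designed to handle, and the functional $G_I(v)$ is well-defined on $\hb{\mathcal{O}_I(v)}$ thanks to them. Everything else is a routine Lax--Milgram / direct-method argument combined with the energy identity, so I expect no serious obstacle beyond bookkeeping of the constants.
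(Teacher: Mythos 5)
Your proposal is correct and follows essentially the same route as the paper: existence and uniqueness via Lax--Milgram/minimization of $G_I(v)$ on $\hb{\mathcal{O}_I(v)}$, followed by an energy estimate combined with the Poincar\'e inequality \eqref{t300P}. The only (immaterial) difference is that you test the Euler--Lagrange identity with $\zeta_v$ itself, whereas the paper compares $G_I(v)[\zeta_v]\le G_I(v)[0]=0$; your constants come out slightly smaller, so \eqref{h1P} holds as stated without any enlargement.
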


\begin{proof}
	It readily follows from \eqref{s0P}, Lemma~\ref{lemT2H}, and the Lax-Milgram Theorem that there is a unique variational solution $\zeta_v\in \hb{\mathcal{O}_I(v)}$ to \eqref{t5P} in the sense that
	\begin{equation}
	G_I(v)[\zeta_v] \le G_I(v)[\vartheta]\,, \qquad \vartheta\in \hb{\mathcal{O}_I(v)}\,. \label{t102P}
	\end{equation}
	Taking $\vartheta\equiv 0$ in the previous inequality, we deduce from \eqref{t300P} and H\"older's and Young's inequalities that
	\begin{align*}
	\|\nabla\zeta_v\|_{L_2(\mathcal{O}_I(v))}^2 + \|\sqrt{\sigma} \zeta_v(\cdot,-H)\|_{L_2(I)}^2 & \le 2 \|f\|_{L_2(\mathcal{O}_I(v))} \|\zeta_v\|_{L_2(\mathcal{O}_I(v))} \\
	& \le 4 M_v \|f\|_{L_2(\mathcal{O}_I(v))} \|\nabla\zeta_v\|_{L_2(\mathcal{O}_I(v))} \\
	& \le \frac{1}{2} \|\nabla\zeta_v\|_{L_2(\mathcal{O}_I(v))}^2 +  8 M_v^2 \|f\|_{L_2(\mathcal{O}_I(v))}^2\ .
	\end{align*}
	Hence,
	\begin{equation*}
	\|\nabla\zeta_v\|_{L_2(\mathcal{O}_I(v))}^2 + 2 \|\sqrt{\sigma} \zeta_v(\cdot,-H)\|_{L_2(I)}^2 \le  16 M_v^2 \|f\|_{L_2(\mathcal{O}_I(v))}^2\ .
	\end{equation*}
	Combining the Poincar\'e inequality \eqref{t300P} and the above inequality completes the proof.
\end{proof}

\subsubsection{$H^2$-regularity of $\zeta_v$}\label{sec.tP}

\newcounter{ncr}

We next investigate the regularity of the variational solution $\zeta_v$ to \eqref{t5P}; that is, we establish a local version of Theorem~\ref{thmt1PP}.

\begin{theorem}\label{thmt1P}
\refstepcounter{ncr}\label{cr0P} Consider a function $v$ satisfying \eqref{t0P} and let $\kappa>0$ be such that 
\begin{equation}
\|v\|_{H^2(I)} \le \kappa\ . \label{t2P} 
\end{equation} 
The variational solution $\zeta_v =\zeta_{I,v}\in \hb{\mathcal{O}_I(v)}$ to \eqref{t5P} given by Lemma~\ref{lemt0P} belongs to $H^2(\mathcal{O}_I(v))$, and there is $C_{\ref{cr0P}}(\kappa)>0$ depending only on $\sigma$ and $\kappa$ such that
\begin{equation}
\|\zeta_v\|_{H^2(\mathcal{O}_I(v))} + \|\partial_x \zeta_v(\cdot,-H)\|_{L_2(I)} \le C_{\ref{cr0P}}(\kappa) \|f\|_{L_2(\mathcal{O}_I(v))}\ . \label{t6P}
\end{equation}
\refstepcounter{ncr}\label{cr0rP} Moreover, there is $C_{\ref{cr0rP}}(\kappa) >0$ depending only on $\sigma$ and $\kappa$ such that, for any $r\in [2,\infty)$,
\begin{equation}
\|\partial_z\zeta_v(\cdot,v)\|_{L_r(I)} \le r C_{\ref{cr0rP}}(\kappa) \|f\|_{L_2(\mathcal{O}_I(v))}\ .  \label{t6rP}
\end{equation}
\end{theorem}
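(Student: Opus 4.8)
The plan is to reduce the problem on the possibly cuspidal domain $\mathcal{O}_I(v)$ to one on a fixed reference domain by a suitable change of variables, where the classical elliptic regularity theory applies. First I would introduce the diffeomorphism $\Phi_v:(x,z)\mapsto (x,(z+H)/(H+v(x)))$ mapping $\mathcal{O}_I(v)$ onto the rectangle $R_I:=I\times(0,1)$; note that $\Phi_v$ is not Lipschitz up to the boundary when $v$ has a cusp, but its inverse flattens the graph $\mathfrak{G}_I(v)$ to the line $z=1$ and sends $\Sigma_I$ to $z=0$. Pushing forward $\zeta_v$ via $\Phi_v$ produces a function $\hat\zeta_v\in H^1_B(R_I)$ (in the appropriate sense) solving a uniformly elliptic equation $-\mathrm{div}(A_v\nabla\hat\zeta_v)=\hat f_v$ on $R_I$ with Dirichlet conditions on $\{a,b\}\times(0,1)$ and on $I\times\{1\}$, and a Robin condition on $I\times\{0\}$. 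The coefficient matrix $A_v$ and the lower-order terms depend on $v$, $\partial_x v$, $\partial_x^2 v$ and on $(H+v)^{-1}$; the key point is that, because $v\in H^2(I)\hookrightarrow C^1(\bar I)$, the matrix $A_v$ has $H^1$ (hence, in 2D, VMO) coefficients and is uniformly elliptic with constants controlled by $\kappa$ and by $\min_I(H+v)$ — which is where the strict positivity $v>-H$ on $I$ from \eqref{t0P} is used.

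The core step is then the $H^2$-estimate on the rectangle. I would split it into an interior-plus-lateral estimate and a boundary estimate near the two flat pieces $I\times\{0\}$ and $I\times\{1\}$. Tangential regularity (in the $x$-direction) is obtained by the difference-quotient method, testing the weak formulation with $\Delta_h^{-x}(\xi^2\Delta_h^x\hat\zeta_v)$ for a cutoff $\xi$; since $A_v\in H^1$ one controls the commutator terms, yielding $\partial_x\nabla\hat\zeta_v\in L_2$. The normal second derivative $\partial_z^2\hat\zeta_v$ is then recovered algebraically from the equation $-\partial_z(a_{22}\partial_z\hat\zeta_v)=\hat f_v+(\text{terms involving }\partial_x\nabla\hat\zeta_v)$ using uniform ellipticity $a_{22}\ge c>0$. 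Transferring back through $\Phi_v^{-1}$ gives $\zeta_v\in H^2(\mathcal{O}_I(v))$ with the bound \eqref{t6P}, the constant depending only on $\sigma$ and $\kappa$ since all the structural quantities of $A_v$ are so controlled; the trace term $\|\partial_x\zeta_v(\cdot,-H)\|_{L_2(I)}$ follows because $\partial_x\zeta_v(\cdot,-H)$ is the trace of $\partial_x\zeta_v\in H^1$ on the flat side $z=0$.

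For part \eqref{cr0rP}, once $\zeta_v\in H^2(\mathcal{O}_I(v))$ is known, $\partial_z\zeta_v(\cdot,v)$ is a well-defined trace on $\mathfrak{G}_I(v)$, but the Sobolev embedding $H^{1/2}\hookrightarrow L_r$ holds only for $r<\infty$ with a constant blowing up like $\sqrt{r}$, which is not quite the claimed linear-in-$r$ bound; I would instead argue directly. Using the flattened variable, $\partial_z\hat\zeta_v$ lies in $H^1(R_I)$, and on a two-dimensional rectangle one has the Gagliardo–Nirenberg/trace inequality $\|w(\cdot,1)\|_{L_r(I)}\le C\|w\|_{L_2(R_I)}^{1-1/r}\|w\|_{H^1(R_I)}^{1/r}\,\cdot\,(\text{something like }r)$; more cleanly, the trace of $H^1(R_I)$ on $I\times\{1\}$ embeds into $L_r(I)$ for all $r\in[2,\infty)$ with norm bounded by $Cr$ (this is the endpoint-type estimate for the critical embedding in dimension one, obtained e.g. by interpolating $\|w\|_{L_r}\le \|w\|_{L_2}^{2/r}\|w\|_{L_\infty}^{1-2/r}$ with the logarithmic bound on $\|w\|_{L_\infty}$, or more simply by Moser-type iteration on the exponents). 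Composing with $\Phi_v$ and controlling the Jacobian factors by $\kappa$ and $\min_I(H+v)$ gives \eqref{t6rP}.

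The main obstacle I anticipate is making the change of variables rigorous when $\mathcal{O}_I(v)$ is genuinely cuspidal — one must check that $\Phi_v$ induces an isomorphism $H^1(\mathcal{O}_I(v))\cong H^1(R_I)$ compatibly with the trace operators $\Gamma_{I,v},\gamma_{I,v}$ of Lemma~\ref{lemT1H}, and that the weak formulations correspond exactly, since naive integration by parts is unavailable at cusps. The reference \cite[Folgerung~7.5]{Ko63} invoked earlier is presumably what legitimizes Gauss' theorem on such domains, and I would lean on it here as well. A secondary technical point is keeping the constants in \eqref{t6P} and \eqref{t6rP} genuinely independent of $b-a$ and of the geometry beyond $\|v\|_{H^2(I)}\le\kappa$ and $\min_I(H+v)$ — this forces one to be careful that the Poincaré-type bounds from Lemma~\ref{lemT2H} (which only involve $M_v$) and the ellipticity constants are used, rather than any inequality that degenerates as the interval shrinks or as a cusp forms.
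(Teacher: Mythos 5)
There is a genuine gap, and it sits precisely where you allow your constants to depend on $\min_I(H+v)$. The conclusion \eqref{t6P}--\eqref{t6rP} requires $C(\kappa)$ to depend only on $\sigma$ and on $\|v\|_{H^2(I)}$, and this is not a cosmetic point: assumption \eqref{t0P} only imposes $v>-H$ on the \emph{open} interval $I$, so $H+v$ may vanish at $a$ or $b$ (the cuspidal configurations of Figure~\ref{F2}) and $\inf_I(H+v)=0$; moreover, in Section~\ref{Sec3.2} the theorem is applied on every component $I_j$ of $D\setminus\mathcal{C}(v)$ and the resulting estimates are summed over $j$, which forces the constant to be uniform in $j$ even though $\min_{I_j}(H+v)\to 0$. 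Your flattening map $\Phi_v$ does not produce a \emph{uniformly} elliptic operator on $\mathcal{R}_I$ in these cases: after multiplying by the Jacobian $H+v$, the coefficient matrix has eigenvalues of order $(H+v)$ and $(H+v)^{-1}$, so ellipticity degenerates at a cusp and the difference-quotient estimates blow up. A secondary unresolved point is that tangential difference quotients do not respect the Dirichlet condition on the lateral sides $\{a,b\}\times(0,1)$, so the corners (in particular the Dirichlet--Robin junctions at $(a,0)$ and $(b,0)$) need a separate argument; this is exactly why the paper invokes the corner-domain theory of \cite{BR1991} for the qualitative $H^2$-regularity.

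The paper's route is designed to avoid any dependence on $\min(H+v)$: it first treats $v$ satisfying the stronger hypothesis \eqref{t7P}, obtains qualitative $H^2$-regularity from \cite[Theorem~2.2]{BR1991}, and then derives the \emph{exact} integration-by-parts identity of Lemma~\ref{lemt2Px} (a Robin-adapted version of Grisvard's \cite[Lemma~4.3.1.3]{Gr1985}, proved via the same flattening you propose but used only to prove an identity, where the degeneracy cancels). The boundary terms in that identity are then absorbed using the functional and trace inequalities of Appendix~\ref{A2}, whose constants involve only $\|H+v\|_{L_\infty(I)}$ and $\|\partial_x v\|_{L_\infty(I)}$ --- this is where the linear-in-$r$ trace bound \eqref{t6rP} comes from (Lemmas~\ref{lemt6P} and \ref{lemt7P}, applied to $P=\partial_z\zeta_v-\sigma\zeta_v\in H^1_{WS}$), not from an $H^{1/2}\hookrightarrow L_r$ embedding on the graph. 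Finally the general case \eqref{t0P} is reached by approximating $v$ from above by smooth $v_n\ge v+1/n$ with $\|v_n\|_{H^2}\le 1+\kappa$, so that $\mathcal{O}_I(v)\subset\mathcal{O}_I(v_n)$, and passing to the limit by weak compactness and Fatou. To repair your argument you would have to redo the $H^2$-estimate with constants free of $\min_I(H+v)$, which essentially means abandoning uniform ellipticity on the rectangle in favor of identities and weighted inequalities of the type the paper uses.
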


Several difficulties are encountered in the proof of Theorem~\ref{thmt1P}, due to the low regularity of the domain $\mathcal{O}_I(v)$ which has a Lipschitz boundary if $v(a)>-H$ and $v(b)>-H$ but may have cusps otherwise,  see Figure~\ref{F2}, and due to the mixed boundary conditions \eqref{t5bP} and \eqref{t5cP}. As in \cite[Section~3.3]{Gr1985}, to remedy these problems requires to construct suitable approximations of $\mathcal{O}_I(v)$ and to pay special attention in the derivation of functional inequalities and estimates on the dependence of the constants on $v$ and $I$. To be more precise, we shall begin with the case where $v$ satisfies
\begin{equation}
v\in W_\infty^3(I) \;\text{ and }\; \min_{[a,b]} v > -H\ , \label{t7P}
\end{equation}
an assumption which is obviously stronger than \eqref{t0P}. Then $\mathcal{O}_I(v)$ is a Lipschitz domain with a piecewise $W_\infty^3$-smooth boundary and the $H^2$-regularity of $\zeta_v$ is guaranteed by \cite[Theorem~2.2]{BR1991}, see Lemma~\ref{lemt1P} below. Next, transforming $\mathcal{O}_I(v)$ to the rectangle $\mathcal{R}_I := I\times (0,1)$, we shall adapt the proof of \cite[Lemma~4.3.1.3]{Gr1985} to establish the identity
\begin{equation}
\begin{split}
\int_{\mathcal{O}_I(v)} \partial_x^2\zeta_v \partial_z^2\zeta_v\ \mathrm{d}(x,z) & = \int_{\mathcal{O}_I(v)} |\partial_x\partial_z \zeta_v|^2\ \mathrm{d}(x,z) + \int_I \left( \partial_x \zeta_v \partial_x(\sigma\zeta_v) \right)(\cdot,-H)\ \mathrm{d}x \\& \qquad - \frac{1}{2} \int_I \partial_x^2 v |\partial_z\zeta_v(\cdot,v)|^2\ \mathrm{d}x
\end{split} \label{t8P}
\end{equation}
in Lemma~\ref{lemt2Px}. We then shall show that the last two integrals on the right-hand side of \eqref{t8P} are controlled by the $H^2$-norm of $\zeta_v$ with a sublinear dependence, a feature which will allow us to derive \eqref{t6P} when $v$ satisfies \eqref{t7P}. To this end, we shall use the embedding of the subspace
\begin{equation}
H_{WS}^1(\mathcal{O}_I(v)) := \left\{ P\in H^1(\mathcal{O}_I(v))\ :\
\begin{array}{cl} 
P(x,-H) & = 0\ , \qquad x\in I\ , \\
P(a,z) & = 0\ , \qquad z\in (-H,v(a)) \ ,
\end{array}
\right\} \label{t9P}
\end{equation}
of $H^1(\mathcal{O}_I(v))$ in $L_r(\mathcal{O}_I(v))$ and the continuity of the trace operator from $H_{WS}^1(\mathcal{O}_I(v))$ to $L_r(\mathfrak{G}_I(v))$ for $r\in [1,\infty)$, which involves constants that do not depend on $\min_{[a,b]}\{v+H\}$, see Lemmas~\ref{lemt4P}-\ref{lemt6P} in Appendix~\ref{A2}. After this preparation, we will be left with relaxing the assumption \eqref{t7P} to \eqref{t0P} and this will be achieved by an approximation argument, see Section~\ref{sec.t2P}. 

\subsubsection{$H^2$-regularity of $\zeta_v$ when $v$ satisfies \eqref{t7P}} \label{sec.t1P}

Throughout this section, we assume that $v$ satisfies \eqref{t7P} and fix $M>0$ such that
\begin{equation}
M \ge \max\left\{ 1 , \|H+v\|_{L_\infty(I)}, \| \partial_x v\|_{L_\infty(I)} \right\}\, . \label{tmP}
\end{equation}
 We also denote positive constants depending only on $\sigma$ by $C$ and $(C_i)_{i\ge 3}$. The dependence upon additional parameters will be indicated explicitly.

We begin with the $H^2$-regularity of the variational solution $\zeta_v$ to \eqref{t5P}, which follows from the analysis performed in \cite{Ba1992, BR1989, BR1991}.

\begin{lemma}\label{lemt1P}
$\zeta_v\in H^2(\mathcal{O}_I(v))$.
\end{lemma}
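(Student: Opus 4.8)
The plan is to reduce Lemma~\ref{lemt1P} to the elliptic regularity theory for second-order operators on domains with piecewise smooth boundary and mixed boundary conditions, as developed in the references \cite{Ba1992, BR1989, BR1991}. Under assumption \eqref{t7P}, the set $\mathcal{O}_I(v)$ defined in \eqref{t3P} is a \emph{bounded Lipschitz domain}: since $v\in W_\infty^3(I)$ with $\min_{[a,b]} v > -H$, the upper boundary $\overline{\mathfrak{G}_I(v)}$ is a $W_\infty^3$-graph that stays strictly above the line $z=-H$, so the two lateral segments $\{a\}\times[-H,v(a)]$ and $\{b\}\times[-H,v(b)]$ are nondegenerate vertical segments and all four ``corners'' of $\partial\mathcal{O}_I(v)$ meet at angles strictly between $0$ and $\pi$. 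In particular there are no cusps, and the boundary decomposes into finitely many $W_\infty^3$-arcs meeting transversally.

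Next I would record the boundary-condition structure: problem \eqref{t5P} imposes the homogeneous Dirichlet condition \eqref{t5bP} on $\partial\mathcal{O}_{I,D}(v)$ (the two lateral segments and the graph $\overline{\mathfrak{G}_I(v)}$) and the Robin-type condition \eqref{t5cP}, namely $-\partial_z\zeta_v + \sigma\zeta_v = 0$, on the bottom segment $\Sigma_I = I\times\{-H\}$. Because $\sigma\in C^2(\bar D)$ by \eqref{beep}, the coefficient in the Robin condition is smooth. The variational solution $\zeta_v \in \hb{\mathcal{O}_I(v)}$ furnished by Lemma~\ref{lemt0P} is the weak solution of this mixed problem with right-hand side $f\in L_2(\mathcal{O}_I(v))$. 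The geometric points where the regularity could a priori fail are the four corners: the two bottom corners $(a,-H)$ and $(b,-H)$, where a Dirichlet segment meets the Robin segment at a right angle, and the two top corners $(a,v(a))$ and $(b,v(b))$, where a Dirichlet segment meets the Dirichlet graph. At a Dirichlet--Dirichlet corner of opening angle $\omega<\pi$ one has $H^2$-regularity automatically, while at a Dirichlet--Robin (Dirichlet--Neumann type) corner of opening angle $\omega$, the $H^2$-regularity holds provided $\omega < \pi/2$ (so that the critical singularity exponent $\pi/(2\omega)$ exceeds $1$). Here the bottom corners have opening angle exactly $\pi/2$ if $\partial_x v(a)=0$ (or $\partial_x v(b)=0$), and strictly less than $\pi/2$ otherwise, since the graph leaves the bottom segment and the lateral segment is vertical; in all cases $\omega \le \pi/2$, which is the borderline case still yielding $H^2$-regularity by the sharp results of \cite{BR1991} (cf. \cite[Theorem~2.2]{BR1991}).

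Concretely, the proof is then a citation: invoke \cite[Theorem~2.2]{BR1991} (the $H^2$-regularity theorem for the Laplacian with mixed Dirichlet/Robin boundary conditions on a plane polygon with piecewise $W_\infty^3$ boundary, under the angle conditions just described), applied to the weak solution $\zeta_v$ of \eqref{t5P}. The hypotheses are met: $\mathcal{O}_I(v)$ is such a polygonal-type domain by \eqref{t7P}, the Dirichlet part $\partial\mathcal{O}_{I,D}(v)$ and Robin part $\Sigma_I$ are each unions of $W_\infty^3$-arcs, the angles between consecutive arcs of differing boundary-condition type are $\le\pi/2$, the Robin coefficient $\sigma$ is $C^2$, and $f\in L_2(\mathcal{O}_I(v))$. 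The theorem then gives $\zeta_v\in H^2(\mathcal{O}_I(v))$, which is exactly the assertion of Lemma~\ref{lemt1P}. (At this stage no quantitative bound is claimed; the estimate \eqref{t6P} with its explicit dependence on $\kappa$ will be extracted afterwards via the integral identity \eqref{t8P} and the trace/embedding Lemmas~\ref{lemt4P}--\ref{lemt6P}.)

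The main obstacle is the borderline angle $\omega=\pi/2$ occurring at a bottom corner when $\partial_x v$ vanishes there: this is precisely the threshold at which the Dirichlet--Neumann corner singularity $r^{1/2}\cos(\theta/2)$ would appear, and one must be careful that the \emph{Robin} (rather than pure Neumann) condition, together with the fact that the corner is a right angle and not obtuse, keeps the solution in $H^2$ — this is the content of the sharp statement in \cite{BR1991}, which is why that reference (rather than the classical \cite{Gr1985}) is cited here. The remaining work, deferred to the later lemmas, is not in proving membership in $H^2$ but in controlling the $H^2$-norm uniformly as the geometry degenerates (i.e.\ as $\min_{[a,b]}\{v+H\}\to 0$), which is why the subsequent sections set up $v$-independent trace and embedding constants.
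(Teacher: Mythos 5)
Your proposal follows essentially the same route as the paper: under \eqref{t7P} the set $\mathcal{O}_I(v)$ is a Lipschitz curvilinear polygon with four $W_\infty^3$ edges, and the $H^2$-regularity is obtained by classifying the four corners (two Dirichlet--Dirichlet at the top with angles in $(0,\pi)$, two Dirichlet--Robin at the bottom) and invoking \cite[Theorem~2.2]{BR1991} after checking that no singular exponents fall in the critical range. One small correction to your geometric description: the bottom corners are where a \emph{vertical} Dirichlet segment meets the \emph{horizontal} Robin segment $\Sigma_I$, so their opening angle is always exactly $\pi/2$ regardless of $\partial_x v(a)$ or $\partial_x v(b)$ (the graph of $v$ stays away from these corners since $\min_{[a,b]} v>-H$); this is precisely the borderline case you correctly identify as still yielding $H^2$, so the conclusion is unaffected.
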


\begin{proof}
We first recast the boundary value problem \eqref{t5P} in the framework of \cite{BR1991}. Owing to \eqref{t7P}, the boundary of the domain $\mathcal{O}_I(v)$ includes four $W_\infty^3$-smooth edges $(\Gamma_i)_{1\le i \le 4}$ given by
\begin{align*}
\Gamma_1 := I \times \{-H\}\ , & \qquad \Gamma_3 := \mathfrak{G}_I(v)\ , \\
\Gamma_2 := \{b\}\times (-H,v(b))\ , & \qquad \Gamma_4 := \{a\}\times (-H,v(a))\ ,
\end{align*}
and four vertices $(S_i)_{1\le i\le 4}$ 
\begin{align*}
S_1 := \overline{\Gamma}_1 \cap \overline{\Gamma}_2 = (b,-H)\ , & \qquad S_3 := \overline{\Gamma}_3 \cap \overline{\Gamma}_4 = (a,v(a))\ , \\
S_2 := \overline{\Gamma}_2 \cap \overline{\Gamma}_3 = (b,v(b))\ , & \qquad S_4 := \overline{\Gamma}_4 \cap \overline{\Gamma}_1 = (a,-H)\ .
\end{align*}
We set
\begin{align*}
& \mathcal{D}_\Gamma := \{2,3,4\}\ , \quad \mathcal{N}_\Gamma := \{1\}\ , \\
& \mathcal{D} := \{2,3\}\ , \quad \mathcal{M}_{12} := \{4\}\ , \quad \mathcal{M}_{21} := \{1\}\ , \quad \mathcal{N} := \emptyset\ ,
\end{align*}
and note that $\mathcal{D}_\Gamma\ne \emptyset$ as required in \cite{BR1991}.

Since $v\in W_\infty^3(I)$, the measure $\omega_i$ of the angle at $S_i$ taken towards the interior of $\mathcal{O}_I(v)$ satisfies
\begin{equation}
\omega_1=\omega_4=\frac{\pi}{2}\ , \qquad (\omega_2,\omega_3)\in (0,\pi)^2\ . \label{t11P}
\end{equation}
For $1\le i\le 4$, we denote the outward unit normal vector field and the corresponding unit tangent vector field by $\boldsymbol{\nu}_i$ and $\boldsymbol{\tau}_i$, respectively. According to the geometry of $\mathcal{O}_I(v)$, 
\begin{align*}
\boldsymbol{\nu}_1=(0,-1)\ , \ \boldsymbol{\nu}_2=(1,0)\ , \ \boldsymbol{\nu}_3 = \frac{(- \partial_x v,1)}{\sqrt{1+|\partial_x v|^2}}\ , \ \boldsymbol{\nu}_4 = (-1,0)\ , \\
\boldsymbol{\tau}_1=(1,0)\ , \ \boldsymbol{\tau}_2=(0,1)\ , \ \boldsymbol{\tau}_3 = \frac{(-1, -\partial_x v)}{\sqrt{1+|\partial_x v|^2}}\ , \ \boldsymbol{\tau}_4 = (0,-1)\ . 
\end{align*}
We also define
\begin{equation}
\boldsymbol{\mu}_1 := \boldsymbol{\nu}_1\ , \qquad \boldsymbol{\mu}_i := \boldsymbol{\tau}_i\ , \ i\in\{2,3,4\}\ , \label{t12P}
\end{equation}
and note that the measure $\Psi_i\in [0,\pi]$ of the angle between $\boldsymbol{\mu}_i$ and $\boldsymbol{\tau}_i$, $1\le i \le 4$, is given by
\begin{equation}
\Psi_1 = \frac{\pi}{2}\ , \qquad \Psi_i=0\ , \ i\in\{2,3,4\}\ . \label{t13P}
\end{equation}
We also set 
\begin{equation}
\psi_1 = \phi_2 = \phi_3 = \phi_4 = 0\ . \label{t14P}
\end{equation}
We finally define the boundary operator
\begin{equation*}
\mathcal{B}_1 := - \partial_z + \sigma \mathrm{id} \;\text{ on }\; I \times  \{-H\}\,.
\end{equation*}

Now, on the one hand, the regularity of $\sigma$ implies that \cite[Assumption~(1.5)]{BR1991} is satisfied, while \cite[Assumption~(1.6)]{BR1991} obviously holds since $\mathcal{N}=\emptyset$. On the other hand, we note that $\boldsymbol{\mu}_1(S_1) = - \boldsymbol{\mu}_2(S_1)$ and $\boldsymbol{\mu}_4(S_4)=\boldsymbol{\mu}_1(S_4)$, so that \cite[Assumption~(2.1)]{BR1991} is satisfied for $i\in\{1,4\}$ (but not for $i\in\{2,3\}$). We then set $\varepsilon_1=-1$ and $\varepsilon_4=1$. We are left with checking \cite[Assumptions~(2.3)-(2.4)]{BR1991} but this is obvious due to \eqref{t14P}. We finally observe that
\begin{equation*}
\mathcal{K} := \left\{ (i,m)\in \{1,\ldots,4\}\times \mathbb{Z}\ :\ \lambda_{i,m}\in (-1,0) \right\}
\end{equation*}
is empty, since
\begin{align*}
\lambda_{1,m} & := \frac{\Psi_2-\Psi_1+m\pi}{\omega_1} = 2m-1 \not\in (-1,0)\ , \\
\lambda_{2,m} & := \frac{\Psi_3-\Psi_2+m\pi}{\omega_2} = \frac{m\pi}{\omega_2}\not\in (-1,0)\ , \\
\lambda_{3,m} & := \frac{\Psi_4-\Psi_3+m\pi}{\omega_3} = \frac{m\pi}{\omega_3} \not\in (-1,0)\ , \\
\lambda_{4,m} & := \frac{\Psi_1-\Psi_4+m\pi}{\omega_4} = 2m+1 \not\in (-1,0)\ , 
\end{align*}
for  any $m\in\mathbb{Z}$. We then infer from \cite[Theorem~2.2]{BR1991} that $\zeta_v$ has no singular part and thus belongs to $H^2(\mathcal{O}_I(v))$.
\end{proof}

We now investigate the quantitative dependence of the just established $H^2$-regularity of $\zeta_v$  on $v$ and derive an $H^2$-estimate, which is related to the regularity of $v$. To this end, we need the following identity.

\begin{lemma}\label{lemt2Px}
\begin{align*}
\int_{\mathcal{O}_I(v)} \partial_x^2\zeta_v \partial_z^2\zeta_v\ \mathrm{d}(x,z) & = \int_{\mathcal{O}_I(v)} |\partial_x\partial_z \zeta_v|^2\ \mathrm{d}(x,z) + \int_I \left( \partial_x \zeta_v \partial_x(\sigma\zeta_v) \right)(\cdot,-H)\ \mathrm{d}x \\
& \qquad - \frac{1}{2} \int_I \partial_x^2 v |\partial_z\zeta_v(\cdot,v)|^2\ \mathrm{d}x\ .
\end{align*} 
\end{lemma}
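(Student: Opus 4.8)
The identity in Lemma~\ref{lemt2Px} is an integration-by-parts formula. Under assumption \eqref{t7P} the domain $\mathcal{O}_I(v)$ is a bounded Lipschitz domain with piecewise $W_\infty^3$-smooth boundary, and by Lemma~\ref{lemt1P} we already know $\zeta_v\in H^2(\mathcal{O}_I(v))$, so all the integrals below make sense and traces on $\partial\mathcal{O}_I(v)$ are classical $H^{1/2}$-traces. The plan is to integrate $\partial_x^2\zeta_v\,\partial_z^2\zeta_v$ by parts twice and carefully collect the boundary contributions coming only from the bottom edge $\Gamma_1 = I\times\{-H\}$ and the graph edge $\Gamma_3 = \mathfrak{G}_I(v)$, since on the lateral edges $\Gamma_2$ and $\Gamma_4$ the solution $\zeta_v$ vanishes. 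Because the domain is genuinely two-dimensional with a curved top boundary, the cleanest way to organize the computation is to pull everything back to the rectangle $\mathcal{R}_I = I\times(0,1)$ via the change of variables $(x,z)\mapsto(x,y)$ with $z = -H + y\,(H+v(x))$, exactly as announced before the statement and as in the proof of \cite[Lemma~4.3.1.3]{Gr1985}.

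\textbf{Main steps.} First I would establish the formula for smooth $\zeta_v$, say $\zeta_v\in C^3(\overline{\mathcal{O}_I(v)})$ vanishing near $\Gamma_2\cup\Gamma_4$, and then pass to the limit using the density of such functions and the continuity of the trace maps; alternatively one works directly with $H^2$ functions and Gauss' theorem, which is licit here since $\mathcal{O}_I(v)$ is Lipschitz. The core computation: integrating by parts in $x$,
\[
\int_{\mathcal{O}_I(v)} \partial_x^2\zeta_v\,\partial_z^2\zeta_v\ \mathrm{d}(x,z) = -\int_{\mathcal{O}_I(v)} \partial_x\zeta_v\,\partial_x\partial_z^2\zeta_v\ \mathrm{d}(x,z) + \int_{\partial\mathcal{O}_I(v)} \nu_x\,\partial_x\zeta_v\,\partial_z^2\zeta_v\ \mathrm{d}s\,,
\]
then integrating by parts in $z$ in the first term to produce $\int |\partial_x\partial_z\zeta_v|^2$ plus another boundary term. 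The boundary integrals over $\Gamma_2,\Gamma_4$ vanish because $\zeta_v$ and hence $\partial_z\zeta_v$ (a tangential derivative there) vanish on them; the integral over $\Gamma_1$, where $\boldsymbol{\nu}_1 = (0,-1)$, contributes $-\int_I \partial_x\zeta_v(\cdot,-H)\,\partial_x\partial_z\zeta_v(\cdot,-H)\ \mathrm{d}x$ from the $z$-integration, and here I substitute the Robin condition \eqref{t5cP}, namely $\partial_z\zeta_v(\cdot,-H) = \sigma\,\zeta_v(\cdot,-H)$ on $\Sigma_I$, so that $\partial_x\partial_z\zeta_v(\cdot,-H) = \partial_x(\sigma\zeta_v)(\cdot,-H)$, yielding the term $+\int_I \bigl(\partial_x\zeta_v\,\partial_x(\sigma\zeta_v)\bigr)(\cdot,-H)\ \mathrm{d}x$. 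The integral over $\Gamma_3 = \mathfrak{G}_I(v)$ is where the geometry enters: parametrizing by $x\in I$ with surface element $\sqrt{1+|\partial_x v|^2}\,\mathrm{d}x$ and normal $\boldsymbol{\nu}_3 = (1+|\partial_x v|^2)^{-1/2}(-\partial_x v, 1)$, together with the Dirichlet condition $\zeta_v = 0$ on $\Gamma_3$, which forces the tangential derivative of $\zeta_v$ along the graph to vanish and hence $\partial_x\zeta_v(x,v(x)) = -\partial_x v(x)\,\partial_z\zeta_v(x,v(x))$ as well as a relation for the second derivatives; after substituting these and simplifying, the graph contribution collapses to $-\tfrac12\int_I \partial_x^2 v\,|\partial_z\zeta_v(\cdot,v)|^2\ \mathrm{d}x$.

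\textbf{Expected obstacle.} The delicate point is the $\Gamma_3$ boundary term: one must differentiate the identity $\zeta_v(x,v(x))=0$ twice in $x$, carefully keeping track of chain-rule terms involving $\partial_x v$, $\partial_x^2 v$, and both $\partial_x\partial_z\zeta_v$ and $\partial_z^2\zeta_v$ restricted to the graph, and then see that everything except the curvature-type term $-\tfrac12\partial_x^2 v\,|\partial_z\zeta_v(\cdot,v)|^2$ either cancels or recombines. This bookkeeping is exactly the analogue of the computation in \cite[Lemma~4.3.1.3]{Gr1985}, and performing it cleanly is easiest after the flattening change of variables to $\mathcal{R}_I$, where the top edge becomes $I\times\{1\}$, the Jacobian and the transformed derivatives can be written explicitly, and the second-order boundary terms become integrals of polynomial expressions in $\partial_x v$ and $\partial_x^2 v$ against $|\partial_z\zeta_v(\cdot,v)|^2$ that can be checked to reduce to the stated one. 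A minor additional point is justifying the integrations by parts: since we only have $\zeta_v\in H^2$, the products like $\partial_x\zeta_v\,\partial_x\partial_z\zeta_v$ are in $L^1$ and the trace identities hold by density from $C^\infty(\overline{\mathcal{O}_I(v)})$, which is legitimate because under \eqref{t7P} the domain is Lipschitz.
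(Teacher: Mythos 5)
Your formal computation is correct and identifies exactly the right boundary contributions: the tangential differentiation of the Robin condition on $I\times\{-H\}$ producing $\partial_x\partial_z\zeta_v(\cdot,-H)=\partial_x(\sigma\zeta_v)(\cdot,-H)$, and the curvature term $-\tfrac{1}{2}\int_I\partial_x^2 v\,|\partial_z\zeta_v(\cdot,v)|^2\,\mathrm{d}x$ obtained by differentiating $\zeta_v(x,v(x))=0$ along the graph. Your fallback of flattening to $\mathcal{R}_I$ and invoking \cite[Lemma~4.3.1.3]{Gr1985} is also the route the paper takes. The genuine gap is in the justification of the two integrations by parts, which is really the entire content of the lemma since the identity is formally evident. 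Your displayed intermediate step contains $\int_{\mathcal{O}_I(v)}\partial_x\zeta_v\,\partial_x\partial_z^2\zeta_v\,\mathrm{d}(x,z)$ and a boundary integral of $\partial_z^2\zeta_v$; both require more than $\zeta_v\in H^2(\mathcal{O}_I(v))$ (a third derivative, and a trace of a second derivative, neither of which is available). The density argument you propose does not repair this: approximating $\zeta_v$ in $H^2$ by functions in $C^\infty(\overline{\mathcal{O}_I(v)})$ destroys the boundary conditions \eqref{t5bP}--\eqref{t5cP}, so the identity you would prove for the approximants is a general Rellich-type identity whose boundary terms involve traces of second derivatives, and these do not pass to the limit under $H^2$-convergence. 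Likewise, the claim that Gauss' theorem for $H^2$ functions on a Lipschitz domain makes the computation "licit" is not correct for this particular manipulation.

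The paper's way out is to work entirely on the rectangle: with $\Phi(x,\eta)=\zeta_v(x,-H+\eta(H+v(x)))\in H^2(\mathcal{R}_I)$, the key input is an extension of Grisvard's lemma to Robin boundary conditions (Lemma~\ref{lemt3P}), obtained via the substitution $\xi=e^{-\eta\mu(x)}\varphi$ with $\mu=\sigma(H+v)$, which converts the Robin condition at $\eta=0$ into the vanishing of $G:=e^{\eta\mu}\partial_\eta\xi=\partial_\eta\varphi-\mu\varphi$ there, so that Grisvard's $H^1$-level identity $\int\partial_x F\,\partial_\eta G\,\mathrm{d}(x,\eta)=\int\partial_\eta F\,\partial_x G\,\mathrm{d}(x,\eta)$ applies without ever touching third derivatives or boundary traces of second derivatives. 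If you keep your plan, you must supply this Robin-adapted version of Grisvard's lemma (or an equivalent density statement within the class of $H^2$ functions satisfying the mixed boundary conditions), and you should expect the subsequent bookkeeping on $\mathcal{R}_I$ to be as involved as the paper's, since the flattening mixes $\partial_x$ and $\partial_\eta$ and generates the additional correction terms $J_2$ and $J_3$ handled there.
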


The identity of Lemma~\ref{lemt2Px} is reminiscent of \cite[Lemma 3.5]{LW19}.
Its proof is rather technical and thus postponed to Appendix~\ref{A1}.

The next step of the analysis is to show that the two integrals over $I$ on the right-hand side of the identity stated in Lemma~\ref{lemt2Px} can be controlled by the $H^2$-norm of $\zeta_v$ with a mild dependence on $v$. To this end, we need some auxiliary functional and trace inequalities which are established in Appendix~\ref{A2}. With this in hand, we begin with an estimate of the last integral.

\begin{lemma}\label{lemt7P}
\refstepcounter{ncr}\label{cr1P} There is $C_{\ref{cr1P}}(M)>0$ such that, for any $r\in [2,\infty)$,
\begin{equation}
\|\partial_z\zeta_v(\cdot,v)\|_{L_r(I)} \le r C_{\ref{cr1P}}(M) \|f\|_{L_2(\mathcal{O}_I(v))}^{1/r} \left( \|\nabla\partial_z\zeta_v\|_{L_2(\mathcal{O}_I(v))} + \|f\|_{L_2(\mathcal{O}_I(v))} \right)^{(r-1)/r}\ . \label{z3P}
\end{equation}
\refstepcounter{ncr}\label{cr2P} In particular, there is $C_{\ref{cr2P}}(M)>0$ such that
\begin{equation}
\left| \int_I \partial_x^2 v |\partial_z\zeta_v(\cdot,v)|^2\ \mathrm{d}x \right| \le C_{\ref{cr2P}}(M) \|\partial_x^2 v\|_{L_2(I)} \left[ \|f \|_{L_2(\mathcal{O}_I(v))}^{1/2} \|\nabla \partial_z \zeta_v\|_{L_2(\mathcal{O}_I(v))}^{3/2} + \|f\|_{L_2(\mathcal{O}_I(v))}^2 \right]\ . \label{z4P}
\end{equation}
\end{lemma}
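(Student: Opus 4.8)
The plan is to prove Lemma~\ref{lemt7P} in two stages: first the trace estimate \eqref{z3P} for $\partial_z\zeta_v(\cdot,v)$ in $L_r(I)$, and then deduce the controlled bound \eqref{z4P} on the boundary integral $\int_I \partial_x^2 v\,|\partial_z\zeta_v(\cdot,v)|^2\,\mathrm{d}x$ by combining \eqref{z3P} with H\"older's inequality.

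\textbf{Step 1: the trace estimate \eqref{z3P}.} First I would observe that $\partial_z\zeta_v\in H^1(\mathcal{O}_I(v))$ by Lemma~\ref{lemt1P}, and that it satisfies homogeneous boundary conditions on the relevant parts of $\partial\mathcal{O}_I(v)$. Specifically, from \eqref{t5cP} we have $\partial_z\zeta_v(\cdot,-H) = \sigma\,\zeta_v(\cdot,-H)$ on $\Sigma_I$, so $\partial_z\zeta_v$ is not itself in $H_{WS}^1(\mathcal{O}_I(v))$, but a cutoff or correction makes it so; alternatively, one works with the decomposition $\partial_z\zeta_v = P + (\text{correction})$ where $P$ vanishes on $\Sigma_I$ and on the lateral edge $\{a\}\times(-H,v(a))$. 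The key tool is the trace inequality from $H_{WS}^1(\mathcal{O}_I(v))$ into $L_r(\mathfrak{G}_I(v))$ announced in Lemmas~\ref{lemt4P}--\ref{lemt6P} of Appendix~\ref{A2}, whose constants are \emph{independent} of $\min_{[a,b]}\{v+H\}$ and grow at most linearly in $r$. Applying this trace inequality to (a correction of) $\partial_z\zeta_v$, together with the Sobolev embedding $H^1_{WS}(\mathcal{O}_I(v))\hookrightarrow L_r(\mathcal{O}_I(v))$, gives a bound of the form $\|\partial_z\zeta_v(\cdot,v)\|_{L_r(I)}\le r\,C(M)\,\|\partial_z\zeta_v\|_{L_2(\mathcal{O}_I(v))}^{1/r}\,\|\partial_z\zeta_v\|_{H^1(\mathcal{O}_I(v))}^{(r-1)/r}$ (an interpolation-type estimate coming from the structure of the trace-Gagliardo--Nirenberg inequality). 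Finally, one replaces $\|\partial_z\zeta_v\|_{L_2(\mathcal{O}_I(v))}$ and the correction terms by $\|f\|_{L_2(\mathcal{O}_I(v))}$ using the a priori bound \eqref{h1P} from Lemma~\ref{lemt0P}, and bounds $\|\partial_z\zeta_v\|_{H^1}$ by $\|\nabla\partial_z\zeta_v\|_{L_2(\mathcal{O}_I(v))}+\|f\|_{L_2(\mathcal{O}_I(v))}$ using \eqref{h1P} again (for the lower-order part) to absorb constants into $C_{\ref{cr1P}}(M)$, yielding \eqref{z3P}.

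\textbf{Step 2: the boundary integral estimate \eqref{z4P}.} This is a routine consequence of Step~1. I would write
\[
\left|\int_I \partial_x^2 v\,|\partial_z\zeta_v(\cdot,v)|^2\,\mathrm{d}x\right| \le \|\partial_x^2 v\|_{L_2(I)}\,\|\partial_z\zeta_v(\cdot,v)\|_{L_4(I)}^2
\]
by H\"older's inequality with exponents $2$ and $2$ (noting $\||\partial_z\zeta_v(\cdot,v)|^2\|_{L_2(I)} = \|\partial_z\zeta_v(\cdot,v)\|_{L_4(I)}^2$). Then applying \eqref{z3P} with $r=4$ gives
\[
\|\partial_z\zeta_v(\cdot,v)\|_{L_4(I)}^2 \le 16\,C_{\ref{cr1P}}(M)^2\,\|f\|_{L_2(\mathcal{O}_I(v))}^{1/2}\left(\|\nabla\partial_z\zeta_v\|_{L_2(\mathcal{O}_I(v))}+\|f\|_{L_2(\mathcal{O}_I(v))}\right)^{3/2},
\]
and expanding the power $3/2$ of the sum (using $(a+b)^{3/2}\le C(a^{3/2}+b^{3/2})$) produces the two terms $\|f\|_{L_2}^{1/2}\|\nabla\partial_z\zeta_v\|_{L_2}^{3/2}$ and $\|f\|_{L_2}^2$ on the right-hand side of \eqref{z4P}, with $C_{\ref{cr2P}}(M)$ absorbing the numerical constants and $C_{\ref{cr1P}}(M)^2$.

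\textbf{Main obstacle.} The genuinely delicate point is Step~1, and within it the need for trace and Sobolev embedding constants on $\mathcal{O}_I(v)$ that do \emph{not} blow up as the domain degenerates (as $\min_{[a,b]}\{v+H\}\to 0$, i.e.\ as cusps form). This is precisely why the auxiliary Lemmas~\ref{lemt4P}--\ref{lemt6P} in Appendix~\ref{A2} are needed, and why the argument is restricted here to $v$ satisfying the stronger assumption \eqref{t7P}: one wants $\mathcal{O}_I(v)$ Lipschitz so that $\zeta_v\in H^2(\mathcal{O}_I(v))$ (Lemma~\ref{lemt1P}) and the traces are classical, while still tracking the $M$-dependence carefully enough that the estimates survive the later approximation step relaxing \eqref{t7P} to \eqref{t0P}. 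The secondary technical care is handling the inhomogeneous boundary value of $\partial_z\zeta_v$ on $\Sigma_I$ (from the Robin condition \eqref{t5cP}) so as to legitimately invoke the $H_{WS}^1$ trace inequality, which requires vanishing on $\Sigma_I$ and on one lateral edge.
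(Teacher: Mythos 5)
Your proposal is correct and follows essentially the same route as the paper: the ``correction'' you allude to is exactly $P:=\partial_z\zeta_v-\sigma\zeta_v$, which lies in $H^1_{WS}(\mathcal{O}_I(v))$ thanks to the Robin condition \eqref{t5cP} and the Dirichlet condition \eqref{t5bP}, satisfies $P(\cdot,v)=\partial_z\zeta_v(\cdot,v)$, and is fed into Lemma~\ref{lemt6P}, with the lower-order terms $\|P\|_{L_2}$ and the $\sigma\zeta_v$-contributions to $\|\nabla P\|_{L_2}$ absorbed via Lemma~\ref{lemt0P}. Your Step~2 (H\"older with $r=4$ and expanding the power $3/2$) is exactly the paper's argument for \eqref{z4P}.
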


\begin{proof}
To lighten notation, we set $\mathcal{O} := \mathcal{O}_I(v)$ and introduce $P:= \partial_z \zeta_v - \sigma \zeta_v$. Since $\zeta_v\in H^2(\mathcal{O})$ by Lemma~\ref{lemt1P} and $\sigma\in C^2(\bar{I})$, the function $P$ belongs to $H^1(\mathcal{O})$ and satisfies \eqref{t41P} by \eqref{t5bP} and \eqref{t5cP}. In addition, we observe that $P(\cdot,v)=\partial_z \zeta_v(\cdot,v)$ by \eqref{t5bP}. It then follows from Lemma~\ref{lemt6P} that
\begin{equation*}
\|\partial_z\zeta_v(\cdot,v)\|_{L_r(I)}^r = \|P(\cdot,v)\|_{L_r(I)}^r \le  \left( 4r\sqrt{M} \right)^r \|P\|_{L_2(\mathcal{O})} \|\nabla P\|_{L_2(\mathcal{O})}^{r-1}\ .
\end{equation*}
Moreover, by \eqref{s0P} and Lemma~\ref{lemt0P},
\begin{align*}
\|P\|_{L_2(\mathcal{O})} & \le \|\partial_z\zeta_v\|_{L_2(\mathcal{O})} + \bar{\sigma} \|\zeta_v\|_{L_2(\mathcal{O})} \le \left( 1 + \bar{\sigma} \right) \|\zeta_v\|_{H^1(\mathcal{O})} \\
& \le 4 \|H+v\|_{L_\infty(I)} \sqrt{1+4\|H+v\|_{L_\infty(I)}^2} \left( 1 + \bar{\sigma} \right) \|f\|_{L_2(\mathcal{O})} \le C(M) \|f\|_{L_2(\mathcal{O})}
\end{align*}
and
\begin{align*}
\|\nabla P\|_{L_2(\mathcal{O})} & \le \|\partial_x P\|_{L_2(\mathcal{O})} + \|\partial_z P\|_{L_2(\mathcal{O})} \\
& \le \|\partial_x\partial_z \zeta_v\|_{L_2(\mathcal{O})} + \bar{\sigma} \|\partial_x \zeta_v\|_{L_2(\mathcal{O})} + \bar{\sigma} \|\zeta_v\|_{L_2(\mathcal{O})} + \|\partial_z^2 \zeta_v\|_{L_2(\mathcal{O})} + \bar{\sigma} \|\partial_z \zeta_v\|_{L_2(\mathcal{O})} \\
& \le \sqrt{2} \|\nabla\partial_z \zeta_v\|_{L_2(\mathcal{O})} + \bar{\sigma} \left( \sqrt{2} \|\nabla \zeta_v\|_{L_2(\mathcal{O})} +  \|\zeta_v\|_{L_2(\mathcal{O})} \right) \\
& \le \sqrt{2} \|\nabla\partial_z \zeta_v\|_{L_2(\mathcal{O})} +  C(M) \|f\|_{L_2(\mathcal{O})}\ .
\end{align*}
Collecting the previous estimates, we end up with
\begin{align*}
\|\partial_z\zeta_v(\cdot,v)\|_{L_r(I)}^r  & \le \left( 4r\sqrt{M} \right)^r C(M) \|f\|_{L_2(\mathcal{O})} \left( \sqrt{2} \|\nabla\partial_z \zeta_v\|_{L_2(\mathcal{O})} +  C(M) \|f\|_{L_2(\mathcal{O})} \right)^{r-1} \\
& \le (rC(M))^r \|f\|_{L_2(\mathcal{O})} \left( \|\nabla\partial_z \zeta_v\|_{L_2(\mathcal{O})} + \|f\|_{L_2(\mathcal{O})} \right)^{r-1}\ ,
\end{align*}
from which \eqref{z3P} follows. We next deduce from \eqref{z3P} (with $r=4$) and H\"older's inequality that
\begin{align*}
\left| \int_I \partial_x^2 v |\partial_z\zeta_v(\cdot,v)|^2\ \mathrm{d}x \right| & \le \|\partial_x^2 v\|_{L_2(I)} \|\partial_z\zeta_v(\cdot,v)\|_{L_4(I)}^2 \\
& \le 16 C_{\ref{cr1P}}(M)^2  \|\partial_x^2 v\|_{L_2(I)} \|f\|_{L_2(\mathcal{O})}^{1/2} \left( \|\nabla\partial_z \zeta_v\|_{L_2(\mathcal{O})} + \|f\|_{L_2(\mathcal{O})} \right)^{3/2} \\
& \le C(M) \|\partial_x^2 v\|_{L_2(I)} \|f\|_{L_2(\mathcal{O})}^{1/2} \left( \|\nabla\partial_z \zeta_v\|_{L_2(\mathcal{O})}^{3/2} + \|f\|_{L_2(\mathcal{O})}^{3/2} \right)\ ,
\end{align*}
and the proof is complete. 
\end{proof}

We are now in a position to derive quantitative estimates in $H^2$ for $\zeta_v$, which only depends on the $H^2$-norm of $v$, even though $v$ is assumed to be more regular.

\begin{lemma}\label{lemt8P}
\refstepcounter{ncr}\label{cr3P} There is $C_{\ref{cr3P}}(M)>0$ such that
\begin{subequations}\label{t81P}
\begin{align}
\|\nabla \partial_z \zeta_v\|_{L_2(\mathcal{O}_I(v))}^2 + \|\sqrt{\sigma}\partial_x\zeta_v(\cdot,-H)\|_{L_2(I)}^2 & \le C_{\ref{cr3P}}(M) \left( 1 + \|\partial_x^2 v\|_{L_2(I)}^4 \right) \|f \|_{L_2(\mathcal{O}_I(v))}^2\ ,  \label{t81aP} \\
\|\partial_x^2 \zeta_v \|_{L_2(\mathcal{O}_I(v))}^2 & \le C_{\ref{cr3P}}(M) \left( 1 + \|\partial_x^2 v\|_{L_2(I)}^4 \right) \|f\|_{L_2(\mathcal{O}_I(v))}^2\ . \label{t81bP}
\end{align}
\end{subequations}
\end{lemma}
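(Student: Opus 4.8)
The plan is to combine the Grisvard-type identity of Lemma~\ref{lemt2Px} with the sublinear trace estimates of Lemma~\ref{lemt7P} and a standard integration-by-parts identity for the $H^2$-norm on (smooth) domains. Recall that since $v$ satisfies \eqref{t7P}, the domain $\mathcal{O}_I(v)$ is Lipschitz with piecewise $W_\infty^3$ boundary, so $\zeta_v\in H^2(\mathcal{O}_I(v))$ by Lemma~\ref{lemt1P} and all the manipulations below are justified. First I would start from the elementary identity
\begin{equation*}
\|\Delta\zeta_v\|_{L_2(\mathcal{O}_I(v))}^2 = \|\partial_x^2\zeta_v\|_{L_2(\mathcal{O}_I(v))}^2 + \|\partial_z^2\zeta_v\|_{L_2(\mathcal{O}_I(v))}^2 + 2\int_{\mathcal{O}_I(v)} \partial_x^2\zeta_v\,\partial_z^2\zeta_v\ \mathrm{d}(x,z)\,,
\end{equation*}
and use \eqref{t5aP} to replace the left-hand side by $\|f\|_{L_2(\mathcal{O}_I(v))}^2$. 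Then I would substitute the identity of Lemma~\ref{lemt2Px} for the cross term, yielding
\begin{equation*}
\|f\|_{L_2(\mathcal{O}_I(v))}^2 = \|\partial_x^2\zeta_v\|_{L_2}^2 + \|\partial_z^2\zeta_v\|_{L_2}^2 + 2\|\partial_x\partial_z\zeta_v\|_{L_2}^2 + 2\int_I \left(\partial_x\zeta_v\,\partial_x(\sigma\zeta_v)\right)(\cdot,-H)\ \mathrm{d}x - \int_I \partial_x^2 v\,|\partial_z\zeta_v(\cdot,v)|^2\ \mathrm{d}x\,.
\end{equation*}
The point is that the first three terms on the right are (essentially) the full $\dot H^2$-seminorm, while the last two boundary integrals must be absorbed.

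Next I would estimate the two boundary integrals with a mild (sublinear) dependence on the relevant $H^2$-quantities. The curvature integral $\int_I \partial_x^2 v\,|\partial_z\zeta_v(\cdot,v)|^2\,\mathrm{d}x$ is controlled by \eqref{z4P} of Lemma~\ref{lemt7P}, which gives a bound by $C_{\ref{cr2P}}(M)\|\partial_x^2 v\|_{L_2(I)}\big[\|f\|_{L_2}^{1/2}\|\nabla\partial_z\zeta_v\|_{L_2}^{3/2} + \|f\|_{L_2}^2\big]$; by Young's inequality this is at most $\delta\|\nabla\partial_z\zeta_v\|_{L_2}^2 + C_\delta(M)(1+\|\partial_x^2v\|_{L_2}^4)\|f\|_{L_2}^2$ for any $\delta>0$ (the exponent $4$ on $\|\partial_x^2 v\|_{L_2}$ comes from splitting $\|\partial_x^2 v\|_{L_2}\cdot\|f\|^{1/2}\|\nabla\partial_z\zeta_v\|^{3/2}$ with Young weights $4$ and $4/3$). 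For the term $\int_I(\partial_x\zeta_v\,\partial_x(\sigma\zeta_v))(\cdot,-H)\,\mathrm{d}x$, I would expand $\partial_x(\sigma\zeta_v) = \sigma\,\partial_x\zeta_v + \sigma'\zeta_v$ on $\Sigma_I$ so that it becomes $\int_I \sigma(x)|\partial_x\zeta_v(\cdot,-H)|^2\,\mathrm{d}x + \int_I \sigma'(x)\,\partial_x\zeta_v(\cdot,-H)\,\zeta_v(\cdot,-H)\,\mathrm{d}x$. The first piece is nonnegative and, crucially, it appears with a favorable sign after the sign of the whole boundary integral is tracked (this is exactly why the left-hand side of \eqref{t81aP} carries the extra term $\|\sqrt\sigma\,\partial_x\zeta_v(\cdot,-H)\|_{L_2(I)}^2$); the second, lower-order, piece is handled using the trace bound \eqref{t400P} of Lemma~\ref{lemT2H} applied to $\partial_x\zeta_v\in H_B^1$(-type) together with Lemma~\ref{lemt0P} and another Young inequality, again absorbing a small multiple of $\|\nabla\partial_z\zeta_v\|_{L_2}^2$. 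A point requiring a little care is justifying that $\partial_x\zeta_v$ has a well-defined trace on $\Sigma_I$ controlled as in Lemma~\ref{lemT2H}; since $\zeta_v\in H^2$ one has $\partial_x\zeta_v\in H^1(\mathcal{O}_I(v))$ and it vanishes on the lateral sides $\{a,b\}\times(-H,v(\cdot))$ (because $\zeta_v=0$ there and the sides are vertical), so it lies in the relevant subspace.

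After these absorptions, collecting terms gives, for a suitably small fixed $\delta$,
\begin{equation*}
\tfrac12\big(\|\partial_x^2\zeta_v\|_{L_2}^2 + \|\partial_z^2\zeta_v\|_{L_2}^2 + 2\|\partial_x\partial_z\zeta_v\|_{L_2}^2\big) + \|\sqrt\sigma\,\partial_x\zeta_v(\cdot,-H)\|_{L_2(I)}^2 \le C(M)\big(1+\|\partial_x^2 v\|_{L_2(I)}^4\big)\|f\|_{L_2(\mathcal{O}_I(v))}^2\,.
\end{equation*}
Since $\|\nabla\partial_z\zeta_v\|_{L_2}^2 = \|\partial_x\partial_z\zeta_v\|_{L_2}^2 + \|\partial_z^2\zeta_v\|_{L_2}^2$, this yields \eqref{t81aP}, and then \eqref{t81bP} follows because $\|\partial_x^2\zeta_v\|_{L_2}^2$ already appears on the left (alternatively, read it off directly from the displayed equality by moving everything else to the right and bounding). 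I expect the main obstacle to be bookkeeping the sign of the boundary integral $\int_I(\partial_x\zeta_v\,\partial_x(\sigma\zeta_v))(\cdot,-H)\,\mathrm{d}x$ correctly — i.e., verifying that the $\int_I\sigma|\partial_x\zeta_v(\cdot,-H)|^2$ contribution really lands on the good side of the inequality rather than against us — together with choosing the Young-inequality weights so that exactly a $4$th power of $\|\partial_x^2 v\|_{L_2(I)}$ appears and all $\|\nabla\partial_z\zeta_v\|_{L_2}^2$ contributions can be absorbed into the left-hand side; the functional inequalities from Appendix~\ref{A2} invoked through Lemma~\ref{lemt7P} are exactly what make the sublinear dependence (and hence the absorption) possible.
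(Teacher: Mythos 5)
Your proposal is correct and follows essentially the same route as the paper: the paper multiplies $-\Delta\zeta_v=f$ by $\partial_z^2\zeta_v$ and integrates rather than expanding $\|\Delta\zeta_v\|_{L_2}^2=\|f\|_{L_2}^2$, but both reduce the cross term via Lemma~\ref{lemt2Px}, keep the favorable $\int_I\sigma|\partial_x\zeta_v(\cdot,-H)|^2\,\rd x$ on the good side, bound the curvature integral by \eqref{z4P} with Young weights $4$ and $4/3$, and absorb using Lemma~\ref{lemt0P}. One small correction: $\partial_x\zeta_v$ does not vanish on $\mathfrak{G}_I(v)$, so \eqref{t400P} does not apply to it, but this is inessential since the term $\int_I\sigma'\,\zeta_v\,\partial_x\zeta_v(\cdot,-H)\,\rd x$ is handled by Cauchy--Schwarz, absorbing $\|\sqrt{\sigma}\,\partial_x\zeta_v(\cdot,-H)\|_{L_2(I)}^2$ into the left-hand side and controlling $\|\zeta_v(\cdot,-H)\|_{L_2(I)}$ via Lemma~\ref{lemt0P}, exactly as the paper does.
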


\begin{proof}
To lighten notation, we set $\mathcal{O} := \mathcal{O}_I(v)$. We infer from \eqref{t5aP} and Lemma~\ref{lemt2Px} that
\begin{align*}
- \int_{\mathcal{O}} f\partial_z^2\zeta_v\ \mathrm{d}(x,z) & = \int_{\mathcal{O}} \left( \partial_x^2\zeta_v \partial_z^2\zeta_v + |\partial_z^2\zeta_v|^2 \right)\ \mathrm{d}(x,z) \\
& = \|\nabla\partial_z\zeta_v\|_{L_2(\mathcal{O})}^2 + \int_I \partial_x\zeta_v(\cdot,-H) \partial_x(\sigma\zeta_v)(\cdot,-H)\ \mathrm{d}x \\
& \qquad - \frac{1}{2} \int_I \partial_x^2 v |\partial_z\zeta_v(\cdot,v)|^2\ \mathrm{d}x\ .
\end{align*}
Hence, thanks to \eqref{s0P},  Lemma~\ref{lemt7P}, and H\"older's and Young's inequalities,
\begin{align*}
X & := \|\nabla\partial_z\zeta_v\|_{L_2(\mathcal{O})}^2 + \|\sqrt{\sigma}\partial_x\zeta_v(\cdot,-H)\|_{L_2(I)}^2 \\
& = - \int_{\mathcal{O}} f\partial_z^2\zeta_v\ \mathrm{d}(x,z) - \int_I \partial_x \sigma (\zeta_v\partial_x\zeta_v)(\cdot,-H)\ \mathrm{d}x + \frac{1}{2} \int_I \partial_x^2 v |\partial_z\zeta_v(\cdot,v)|^2\ \mathrm{d}x \\
& \le \|f\|_{L_2(\mathcal{O})} \|\partial_z^2\zeta_v\|_{L_2(\mathcal{O})} + \bar{\sigma} \|\zeta_v(\cdot,-H)\|_{L_2(I)} \|\partial_x\zeta_v(\cdot,-H)\|_{L_2(I)} \\
& \qquad + \frac{C_{\ref{cr2P}}(M)}{2} \|\partial_x^2 v\|_{L_2(I)} \left[ \|f \|_{L_2( \mathcal{O})}^{1/2} \|\nabla \partial_z \zeta_v\|_{L_2( \mathcal{O})}^{3/2} + \|f\|_{L_2(\mathcal{O})}^2 \right] \\
& \le \frac{1}{4} \|\partial_z^2\zeta_v\|_{L_2(\mathcal{O})}^2 + \|f\|_{L_2(\mathcal{O})}^2 + \frac{\bar{\sigma}}{\sqrt{\sigma_{min}}} \|\zeta_v(\cdot,-H)\|_{L_2(I)} \|\sqrt{\sigma}\partial_x\zeta_v(\cdot,-H)\|_{L_2(I)} \\
& \qquad + \frac{1}{4} \|\nabla\partial_z\zeta_v\|_{L_2(\mathcal{O})}^2 + C(M) \left( \|\partial_x^2 v\|_{L_2(I)}^4 + \|\partial_x^2 v\|_{L_2(I)} \right) \|f\|_{L_2(\mathcal{O})}^2 \\
& \le \frac{1}{2} \|\nabla\partial_z\zeta_v\|_{L_2(\mathcal{O})}^2 + \frac{1}{2} \|\sqrt{\sigma}\partial_x\zeta_v(\cdot,-H)\|_{L_2(I)}^2 + \frac{\bar{\sigma}^2}{2\sigma_{min}} \|\zeta_v(\cdot,-H)\|_{L_2(I)}^2 \\
& \qquad + C(M)  \left( 1 + \|\partial_x^2 v\|_{L_2(I)}^4 \right) \|f\|_{L_2(\mathcal{O})}^2\ .
\end{align*}
Consequently, using once more Young's inequality,
\begin{equation*}
X \le \frac{\bar{\sigma}^2}{\sigma_{min}} \|\zeta_v(\cdot,-H)\|_{L_2(I)}^2 + C(M) \left( 1 + \|\partial_x^2 v\|_{L_2(I)}^4 \right) \|f\|_{L_2(\mathcal{O})}^2\ .
\end{equation*}
Now, since $\zeta_v\in H^1_B(\mathcal{O})$, it follows from \eqref{s0P}, \eqref{tmP}, and Lemma~\ref{lemt0P} that
\begin{equation*}
 2 \sigma_{min} \|\zeta_v(\cdot,-H)\|_{L_2(I)}^2 \le  16 M^2 (1+4M^2) \|f\|_{L_2(\mathcal{O})}^2\ .
\end{equation*}
Combining the above two estimates gives \eqref{t81aP}. 

To complete the proof of Lemma~\ref{lemt8P}, we simply notice that \eqref{t5aP} ensures that
\begin{equation*}
\|\partial_x^2\zeta_v\|_{L_2(\mathcal{O})}^2 = \|f + \partial_z^2\zeta_v\|_{L_2(\mathcal{O})}^2 \le 2 \|\partial_z^2\zeta_v\|_{L_2(\mathcal{O})}^2 + 2 \|f\|_{L_2(\mathcal{O})}^2
\end{equation*}
and deduce \eqref{t81bP} from \eqref{t81aP}.
\end{proof}

Summarizing, we have established the following result:

\begin{proposition}\label{propt9b}
Consider $v\in H^2(I)$ satisfying \eqref{t7P}; that is, 
\begin{equation*}
v\in W_\infty^3(I) \;\text{ and }\; \min_{[a,b]} v > -H\, , 
\end{equation*}
 and fix $\kappa>0$ such that
\begin{equation}
\|v\|_{H^2(I)}\le \kappa\ . \label{t83P}
\end{equation}
Then the elliptic boundary value problem \eqref{t5P} has a unique strong solution $\zeta_v\in H^2(\mathcal{O}_I(v))$ which satisfies \refstepcounter{ncr}\label{cr4P} 
\begin{align}
\|\zeta_v\|_{H^2(\mathcal{O}_I(v))} + \|\partial_x\zeta_v(\cdot,-H)\|_{L_2(I)} & \le C_{\ref{cr4P}}(\kappa) \|f\|_{L_2(\mathcal{O}_I(v))}\ , \label{t82aP} \\
\|\partial_z \zeta_v(\cdot,v)\|_{L_r(I)} & \le r C_{\ref{cr4P}}(\kappa) \|f\|_{L_2(\mathcal{O}_I(v))} \ , \qquad r\in [2,\infty)\ . \label{t82bP} 
\end{align}
\end{proposition}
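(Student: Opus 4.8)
The statement collects the conclusions of Lemmas~\ref{lemt0P}, \ref{lemt1P}, \ref{lemt7P}, and~\ref{lemt8P}, so my plan is essentially to assemble these and to keep track of the constants. The only genuinely new point is to replace the dependence on $M$ appearing in those lemmas by a dependence on $\kappa$ alone. To this end, I would first note that, since $I\subseteq D$ and hence $|I|\le 2L$, the continuous embedding $H^2(I)\hookrightarrow C^1(\bar I)$ together with \eqref{t83P} yields a bound $\|H+v\|_{L_\infty(I)}+\|\partial_x v\|_{L_\infty(I)}\le C(\kappa)$, the constant also depending on the fixed parameters $L$ and $H$. Consequently one may fix $M=M(\kappa)$ satisfying \eqref{tmP}, after which the constants $C_{\ref{cr1P}}(M)$, $C_{\ref{cr2P}}(M)$, and $C_{\ref{cr3P}}(M)$ become functions of $\kappa$ (and $\sigma$) only.

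With $M=M(\kappa)$ fixed, Lemma~\ref{lemt0P} provides the unique variational solution $\zeta_v\in H_B^1(\mathcal{O}_I(v))$ to \eqref{t5P}, together with the bound $\|\zeta_v\|_{H^1(\mathcal{O}_I(v))}+\|\sqrt{\sigma}\,\zeta_v(\cdot,-H)\|_{L_2(I)}\le C(\kappa)\|f\|_{L_2(\mathcal{O}_I(v))}$. Since the hypotheses of Lemma~\ref{lemt1P} are exactly \eqref{t7P}, that lemma guarantees that $\zeta_v$ has no singular part and hence belongs to $H^2(\mathcal{O}_I(v))$; an integration by parts then shows that $\zeta_v$ is a strong solution of \eqref{t5P} (as in \cite[Theorem~3.5]{LNW19}), and uniqueness of the strong solution follows from that of the variational one.

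It remains to establish the quantitative estimates \eqref{t82aP} and \eqref{t82bP}. For \eqref{t82aP}, Lemma~\ref{lemt8P} combined with $\|\partial_x^2 v\|_{L_2(I)}\le\kappa$ (which absorbs the polynomial factors $1+\|\partial_x^2 v\|_{L_2(I)}^4$ into $C(\kappa)$) gives $\|\partial_x^2\zeta_v\|_{L_2(\mathcal{O}_I(v))}+\|\nabla\partial_z\zeta_v\|_{L_2(\mathcal{O}_I(v))}+\|\sqrt{\sigma}\,\partial_x\zeta_v(\cdot,-H)\|_{L_2(I)}\le C(\kappa)\|f\|_{L_2(\mathcal{O}_I(v))}$; the remaining second derivative is controlled through the equation via $\partial_z^2\zeta_v=-f-\partial_x^2\zeta_v$ from \eqref{t5aP}, the lower-order norms $\|\zeta_v\|_{L_2(\mathcal{O}_I(v))}$ and $\|\nabla\zeta_v\|_{L_2(\mathcal{O}_I(v))}$ come from Lemma~\ref{lemt0P}, and $\sigma\ge\sigma_{min}>0$ from \eqref{s0P} turns the weighted trace norm into $\|\partial_x\zeta_v(\cdot,-H)\|_{L_2(I)}$. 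For \eqref{t82bP}, I would insert the bound $\|\nabla\partial_z\zeta_v\|_{L_2(\mathcal{O}_I(v))}\le C(\kappa)\|f\|_{L_2(\mathcal{O}_I(v))}$ just obtained into \eqref{z3P}: assuming without loss of generality $C(\kappa)\ge 1$, the factor $\|f\|_{L_2}^{1/r}\big(\|\nabla\partial_z\zeta_v\|_{L_2}+\|f\|_{L_2}\big)^{(r-1)/r}$ is then at most $C(\kappa)\|f\|_{L_2(\mathcal{O}_I(v))}$, which yields \eqref{t82bP} with constant $rC_{\ref{cr4P}}(\kappa)$.

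I do not anticipate a serious obstacle: all the analytic difficulty — the cuspidal geometry of $\mathcal{O}_I(v)$, the mixed boundary conditions, and the functional and trace inequalities with constants uniform in the finer features of $v$ — has already been handled in the preceding lemmas, whose estimates were deliberately arranged so that $v$ enters only through $M$. The one step deserving attention is the very first one, namely confirming that $M$ may be chosen as a function of $\kappa$ alone (using $I\subseteq D$): this is what makes the final constant $C_{\ref{cr4P}}(\kappa)$ independent of the particular interval $I$ and function $v$.
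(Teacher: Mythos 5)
Your proposal matches the paper's proof: the authors likewise obtain existence and uniqueness from Lemma~\ref{lemt0P} and Lemma~\ref{lemt1P}, fix $M=M(\kappa)$ via the embedding of $H^2(I)$ in $W_\infty^1(I)$, deduce \eqref{t82aP} from \eqref{s0P}, \eqref{t83P}, Lemma~\ref{lemt0P}, and Lemma~\ref{lemt8P}, and then get \eqref{t82bP} from \eqref{t82aP} and Lemma~\ref{lemt7P}. Your write-up is simply a more explicit version of the same assembly, so it is correct and takes essentially the same route.
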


\begin{proof}
The existence and uniqueness of a strong solution $\zeta_v\in H^2(\mathcal{O}_I(v))$ to \eqref{t5P} are consequences of Lemma~\ref{lemt0P} and Lemma~\ref{lemt1P}. Next, it readily follows from \eqref{t83P} and the continuous embedding of $H^2(I)$ in $W_\infty^1(I)$ that there is $M\ge 1$ depending on $\kappa$ such that
\begin{equation}
\|H+v\|_{L_\infty(I)} + \|\partial_x v\|_{L_\infty(I)} \le M\ . \label{t84P}
\end{equation}
Due to \eqref{t84P}, we deduce \eqref{t82aP} from \eqref{s0P}, \eqref{t83P}, Lemma~\ref{lemt0P}, and Lemma~\ref{lemt8P}, while \eqref{t82bP} follows from \eqref{t82aP} and Lemma~\ref{lemt7P}.
\end{proof}

 We emphasize that, though derived for functions $v\in H^2(I)$ satisfying the additional assumption \eqref{t7P}, the estimates stated in Proposition~\ref{propt9b} only depend on the $H^2$-norm of $v$ and, neither on its $W_\infty^2$-norm, nor on the value of its minimum (provided that it stays above $-H$). The outcome of Proposition~\ref{propt9b} is thus likely to extend to any configuration depicted in Figure~\ref{F2} under the sole assumption \eqref{t0P} and this will be shown in the next section by an approximation argument.

\subsubsection{$H^2$-regularity: Proof of Theorem~\ref{thmt1P}} \label{sec.t2P}

We now prove the  $H^2$-regularity of $\zeta_v$ as stated in Theorem~\ref{thmt1P}. We thus assume that $v$ satisfies \eqref{t0P}; that is, 
\begin{equation*}
v\in H^2(I) \;\text{ such that }\; v(x)>-H\, , \qquad x\in I\, , 
\end{equation*}
and fix $\kappa>0$ such that $\|v\|_{H^2(I)}\le \kappa$. Owing to the density of $C^\infty([a,b])$ in $H^2(I)$ and since $v$ satisfies \eqref{t0P}, we employ classical approximation arguments to construct a sequence $(v_n)_{n\ge 1}$ of functions in $C^\infty([a,b])$ with the following properties:
\begin{subequations}\label{s1P}
	\begin{align}
	& \lim_{n\to \infty} \|v_n - v\|_{H^2(I)} = 0\ , \qquad \sup_{n\ge 1}\{\|v_n\|_{H^2(I)}\} \le 1+\kappa\ , \label{s1Pa}\\
	&  v_n\ge v + \frac{1}{n}\ , \qquad n\ge 1\ . \label{s1Pb}
	\end{align}
\end{subequations}
A first consequence of \eqref{s1Pa} and the continuous embedding of $H^2(I)$ in $ W_\infty^1(I)$ is that
\begin{equation}
\begin{split}
& \|H+v_n\|_{L_\infty(I)} + \| \partial_x v_n\|_{L_\infty(I)} \le C(\kappa)\ , \qquad n\ge 1\ , \\
&  \lim_{n\to \infty} \|v_n - v\|_{W_\infty^1(I)} = 0 \ .
\end{split} \label{s1Pc}
\end{equation}
According to \eqref{t0P} and \eqref{s1Pb}, the function $v_n$ satisfies \eqref{t7P} for each $n\ge 1$ and, since $\mathcal{O}_I(v)\subset \mathcal{O}_I(v_n)$, we infer from Proposition~\ref{propt9b} that the strong solution $\zeta_{v_n}$ to \eqref{t5P} with $v_n$ instead of $v$ (and $f$ replaced by its trivial extension to $\mathcal{O}_I(v_n)$) satisfies
\refstepcounter{ncr}\label{cr5P} 
\begin{align}
\|\zeta_{v_n}\|_{H^2(\mathcal{O}_I(v_n))} + \|\partial_x\zeta_{v_n}(\cdot,-H)\|_{L_2(I)} & \le C_{\ref{cr5P}}(\kappa) \|f\|_{L_2(\mathcal{O}_I(v))}\ , \label{s4P} \\
\|\partial_z \zeta_{v_n}(\cdot,v_n)\|_{L_r(I)} & \le r C_{\ref{cr5P}}(\kappa) \|f\|_{L_2(\mathcal{O}_I(v))} \ , \qquad r\in [2,\infty)\ . \label{s5P} 
\end{align}
Using again the inclusion $\mathcal{O}_I(v)\subset \mathcal{O}_I(v_n)$, we deduce from \eqref{s4P} that $(\zeta_{v_n})_{n\ge 1}$ is bounded in $H^2(\mathcal{O}_I(v))$. Consequently, recalling that $H^1(\mathcal{O}_I(v))$ is compactly embedded in $L_2(\mathcal{O}_I(v))$ (despite the non-Lipschitz character of $\mathcal{O}_I(v)$, see \cite[Theorem~11.21]{Leoni17} or \cite[I.Theorem~1.4]{Necas67}), there are a subsequence of $(\zeta_{v_n})_{n\ge 1}$ (not relabeled) and $\phi\in H^2(\mathcal{O}_I(v))$ such that
\begin{equation}
\begin{split}
\zeta_{v_n} \rightharpoonup \phi \;\;\text{ in }\;\; H^2(\mathcal{O}_I(v)) \ , \\
\zeta_{v_n} \longrightarrow \phi \;\;\text{ in }\;\; H^1(\mathcal{O}_I(v)) \ . \\
\end{split} \label{s6P}
\end{equation}
Let us first check that $\phi\in H_B^1(\mathcal{O}_I(v))$. On the one hand, since both $\phi$ and $\zeta_{v_n}$ belong to $H^1(\mathcal{O}_I(v))$, we infer from \eqref{t100P} that
\begin{equation*}
\int_I \left| (\phi-\zeta_{v_n})(\cdot,v) \right|^2 (H+v)\ \mathrm{d}x \le C(\kappa) \|\phi-\zeta_{v_n}\|_{H^1(\mathcal{O}_I(v))}^2\ .
\end{equation*}
Hence, by \eqref{s6P},
\begin{equation*}
\lim_{n\to\infty} \int_I \left| (\phi-\zeta_{v_n})(\cdot,v) \right|^2 (H+v)\ \mathrm{d}x = 0\ .
\end{equation*}
On the other hand, since $\zeta_{v_n}\in H_B^1(\mathcal{O}_I(v_n))$ and $v_n\ge v$, it follows from Lemma~\ref{lemA1} and \eqref{s4P} that
\begin{align*}
\int_I \left| \zeta_{v_n}(\cdot,v)\right|^2 (H+v)\ \mathrm{d}x & = \int_I \left| \zeta_{v_n}(\cdot,v) - \zeta_{v_n}(\cdot,v_n) \right|^2 (H+v)\ \mathrm{d}x \\ 
& \le \|(v-v_n) (H+v)\|_{L_\infty(I)} \|\partial_z \zeta_{v_n} \|_{L_2(\mathcal{O}_I(v_n))} \\
& \le C(\kappa) \|v-v_n\|_{L_\infty(I)} \|f\|_{L_2(\mathcal{O}_I(v))}\ . 
\end{align*}
Hence, by \eqref{s1Pc},
\begin{equation*}
\lim_{n\to \infty} \int_I \left| \zeta_{v_n}(\cdot,v)\right|^2 (H+v)\ \mathrm{d}x = 0\ .
\end{equation*}
Combining the previous two limits, we deduce 
\begin{equation*}
\int_I \left| \phi(\cdot,v) \right|^2 (H+v)\ \mathrm{d}x = 0\ ,
\end{equation*}
so that $\phi\in H_B^1(\mathcal{O}_I(v))$. In particular, for $n\ge 1$, due to the inclusion $\mathcal{O}_I(v)\subset \mathcal{O}_I(v_n)$, the function $\phi$ also belongs to $H_B^1(\mathcal{O}_I(v_n))$ and we infer from \eqref{t400P} and \eqref{s6P} that 
\begin{equation}
\lim_{n\to\infty} \int_I \left| (\zeta_{v_n}-\phi)(\cdot,-H) \right|^2 \rd x = 0\ . \label{s7P}
\end{equation}
We next recall that $\zeta_{v_n}$ is the unique solution in $H_B^1(\mathcal{O}_I(v_n))$ to 
\begin{equation}
\int_{\mathcal{O}_I(v_n)} \nabla \zeta_{v_n}\cdot \nabla \vartheta\ \mathrm{d}(x,z) + \int_I  \sigma \zeta_{v_n}(\cdot,-H)  \vartheta(\cdot,-H)\ \mathrm{d}x= \int_{\mathcal{O}_I(v_n)} f \vartheta\ \mathrm{d}x \label{s8P}
\end{equation}
for all $\vartheta\in H_B^1(\mathcal{O}_I(v_n))$. Now, since $H_B^1(\mathcal{O}_I(v)) \subset H_B^1(\mathcal{O}_I(v_n))$, we can take $\vartheta\in H_B^1(\mathcal{O}_I(v))$ in \eqref{s8P} and use the convergences \eqref{s6P} and \eqref{s7P} to pass to the limit $n\to\infty$ and conclude that $\phi\in H_B^1(\mathcal{O}_I(v))$ satisfies the variational formulation of \eqref{t5P}. Therefore, Lemma~\ref{lemt0P} guarantees that $\phi=\zeta_v$. We have thus shown that $\zeta_v\in H^2(\mathcal{O}_I(v))$ and it follows from \eqref{s4P} and \eqref{s6P} that
\begin{equation}
\|\zeta_{v}\|_{H^2(\mathcal{O}_I(v))} \le \liminf_{n\to\infty} \|\zeta_{v_n}\|_{H^2(\mathcal{O}_I(v))} \le  \liminf_{n\to\infty} \|\zeta_{v_n}\|_{H^2(\mathcal{O}_I(v_n))}  \le C_{\ref{cr5P}}(\kappa) \|f\|_{L_2(\mathcal{O}_I(v))}\ . \label{s10P}
\end{equation}
A further consequence of \eqref{t200P} and \eqref{s6P} is that $(\partial_x\zeta_{v_n}(\cdot,-H))_{n\ge 1}$ converges to $\partial_x\zeta_{v}(\cdot,-H)$ in $L_2(I,(H+v)\mathrm{d}x)$, which, together with the positivity of $H+v$ in $I$, implies that $(\partial_x\zeta_{v_n}(\cdot,-H))_{n\ge 1}$ converges to $\partial_x\zeta_{v}(\cdot,-H)$ in $L_2(a+\varepsilon,b-\varepsilon)$ for any $\varepsilon\in (0,(b-a)/2)$. Combining this convergence with \eqref{s4P} and using Fatou's lemma to take the limit $\varepsilon\to 0$ give
\begin{equation}
\|\partial_x\zeta_{v}(\cdot,-H)\|_{L_2(I)} \le C_{\ref{cr5P}}(\kappa) \|f\|_{L_2(\mathcal{O}_I(v))}\ . \label{s11P}
\end{equation}
Finally, by \eqref{t100P} and \eqref{s4P},
\begin{equation*}
\int_I \left| (\partial_z\zeta_{v_n} - \partial_z \zeta_v)(\cdot,v) \right|^2 (H+v)\ \mathrm{d}x \le C(\kappa) \|\partial_z\zeta_{v_n} - \partial_z \zeta_v\|_{L_2(\mathcal{O}_I(v))}\ .
\end{equation*}
Hence, by \eqref{s6P},
\begin{equation}
\lim_{n\to\infty} \int_I \left| (\partial_z\zeta_{v_n} - \partial_z \zeta_v)(\cdot,v) \right|^2 (H+v)\ \mathrm{d}x = 0\ . \label{s12P}
\end{equation}
Moreover, owing to Lemma~\ref{lemA1}, \eqref{s4P}, and the properties $\zeta_{v_n}\in H_B^1(\mathcal{O}_I(v_n))$ and $v_n\ge v$, 
\begin{align*}
\int_I \left| \partial_z\zeta_{v_n}(\cdot,v) - \partial_z\zeta_{v_n}(\cdot,v_n) \right|^2 (H+v)\ \mathrm{d}x & \le \|(v-v_n)(H+v)\|_{L_\infty(I)} \|\partial_z^2 \zeta_{v_n} \|_{L_2(\mathcal{O}_I(v_n))}^{2} \\
& \le C(\kappa) \|v-v_n\|_{L_\infty(I)}\ ,
\end{align*}
and it follows from \eqref{s1Pc} that 
\begin{equation}
\lim_{n\to\infty} \int_I \left| \partial_z\zeta_{v_n}(\cdot,v) - \partial_z\zeta_{v_n}(\cdot,v_n) \right|^2 (H+v)\ \mathrm{d}x = 0\ . \label{s13P}
\end{equation}
Gathering \eqref{s12P} and \eqref{s13P} leads us to
\begin{equation}
\lim_{n\to\infty} \int_I \left| \partial_z\zeta_{v}(\cdot,v) - \partial_z\zeta_{v_n}(\cdot,v_n) \right|^2 (H+v)\ \mathrm{d}x = 0\ . \label{s14P}
\end{equation}
Since $H+v>0$ in $I$, we may extract a further subsequence (not relabeled) such that $(\partial_z\zeta_{v_n}(\cdot,v_n))_{n\ge 1}$ converges a.e. in $I$ to $\partial_z\zeta_{v}(\cdot,v)$. We then use Fatou's lemma to pass to the limit $n\to\infty$ in \eqref{s5P} and conclude that
\begin{equation*}
\|\partial_z \zeta_{v}(\cdot,v)\|_{L_r(I)} \le r C_{\ref{cr5P}}(\kappa) \|f\|_{L_2(\mathcal{O}_I(v))} \ , \qquad r\in [2,\infty)\ ,
\end{equation*}
thereby completing the proof of Theorem~\ref{thmt1P}.

\subsection{Global $H^2$-regularity  of $\chi_v$: Proof of Theorem~\ref{thmt1PP} and Theorem~\ref{Thmpsi}}\label{Sec3.2}

 Finally, we prove Theorem~\ref{thmt1PP} and Theorem~\ref{Thmpsi} for which we consider an arbitrary function  $v$ in $\bar S$ and $\kappa>0$ satisfying \eqref{t2PP}. According to  \cite[IX.Proposition~1.8]{AEIII} we can write the open set $D\setminus\mathcal{C}(v)$ as a countable union of disjoint open intervals  $(I_j)_{j\in J}$; that is,
$$
D\setminus\mathcal{C}(v)=\bigcup_{j\in J} I_j\,.
$$
Hence, $\Omega(v)$ is the disjoint union of the open domains $\mathcal{O}_{I_j}(v)$. Now recall from  Proposition~\ref{lemt9P} that $\chi_v\in H_B^1(\Omega(v))$ is the unique minimizer on $H_B^1(\Omega(v))$ of the functional 
\begin{equation*}
G_D(v)[\vartheta] = \frac{1}{2} \int_{\Omega(v)} |\nabla\vartheta|^2\ \mathrm{d}(x,z) + \frac{1}{2} \int_D \sigma |\vartheta(\cdot,-H)|^2\ \mathrm{d}x - \int_{\Omega(v)} \vartheta \Delta h_v \ \mathrm{d}(x,z)\,, \qquad \vartheta\in H_B^1(\Omega(v))\,.
\end{equation*}
Furthermore, since $\Delta h_v$ belongs to $L_2(\Omega(v))$ by Lemma~\ref{h}, it follows from the definition of $H_B^1(\Omega(v))$ that
\begin{equation*}
 G_D(v)[\vartheta]=\sum_{j\in J} G_{I_j}(v)[\vartheta]\,, \qquad \vartheta\in H_B^1(\Omega(v))\,,
\end{equation*}
where $G_{I_j}(v)[\vartheta]$ is defined by \eqref{t101P} with $f :=\Delta h_v \mathbf{1}_{\mathcal{O}_{I_j}(v)}$. Restricting to $\vartheta\in H_B^1(\mathcal{O}_{I_j}(v))$, it thus readily follows that  $\chi_v \mathbf{1}_{\mathcal{O}_{I_j}(v)}$ is a minimizer of $G_{I_j}(v)$ on $H_B^1(\mathcal{O}_{I_j}(v))$. Consequently,  $\chi_v \mathbf{1}_{\mathcal{O}_{I_j}(v)} = \zeta_{I_j,v}$ by Lemma~\ref{lemt0P}.
Hence Theorem~\ref{thmt1P} yields
\begin{equation*}
\|\chi_v\|_{H^2(\mathcal{O}_{I_j}(v))} + \|\partial_x  \chi_v(\cdot,-H)\|_{L_2(I_j)} \le C_{\ref{cr0P}}(\kappa) \|\Delta h_v\|_{L_2(\mathcal{O}_{I_j}(v))}
\end{equation*}
and
\begin{equation*}
\|\partial_z\chi_v(\cdot,v)\|_{L_r(I_j)} \le r C_{\ref{cr0rP}}(\kappa) \|\Delta h_v\|_{L_2(\mathcal{O}_{I_j}(v))} \,, \qquad r\in [2,\infty)\,,
\end{equation*}
with constants $C_{\ref{cr0P}}(\kappa)$ and $C_{\ref{cr0rP}}(\kappa) $ not depending on $I_j$. Therefore, summing with respect to $j\in J$, we conclude that $\chi_v\in H^2(\Omega(v))$ and satisfies \eqref{t6PP} and \eqref{t6rPP},  since $\|\Delta h_v\|_{L_2(\Omega(v))}\le c(\kappa)$ by Lemma~\ref{h}. Therefore, as in \cite[Theorem~3.5]{LNW19}, we may use the version of Gau\ss' Theorem stated in \cite[Folgerung~7.5]{Ko63} in the variational characterization of $\chi_v$ featuring $\mathcal{G}(v)$ to deduce that $\chi_v\in H^2(\Omega(v))$ is indeed a strong solution to \eqref{bbb}. This proves Theorem~\ref{thmt1PP}. Owing to \eqref{chi} and Lemma~\ref{h}, this also entails Theorem~\ref{Thmpsi}.

\section{Continuity of $\chi_v$ with Respect to $v$} \label{sec.Cont}

In this section we derive continuity properties of $\chi_v$ and its gradient trace  $\partial_z\chi_v(\cdot, v)$ with respect to $v\in \bar S$. The latter will also yield the continuity of the function $g$ defined in \eqref{GG}. Throughout this section we denote positive constants depending only on $\sigma$ by $C$. The dependence upon additional parameters will be indicated explicitly.

\subsection{$H^1$-Continuity: $\Gamma$-convergence of $\mathcal{G}$} \label{sec.Gc}

Let us recall that, according to Proposition~\ref{lemt9P},  $\chi_v$ is the unique minimizer on $H_B^1(\Omega(v))$ of the functional $\mathcal{G}(v)$ introduced in \eqref{GFunc} as
$$
\mathcal{G}(v)[\vartheta]= \frac{1}{2} \int_{\Omega(v)} |\nabla (\vartheta+h_v)|^2 \,\mathrm{d}(x,z) + \frac{1}{2} \int_D \sigma(x) |\vartheta(x,-H)+h_v(x,-H) - \mathfrak{h}_v(x)|^2   \,\mathrm{d}x
$$
for $\vartheta\in H_B^1(\Omega(v))$. Now, in order to derive continuity properties of $\chi_v$ (and $\psi_v$) with respect to $v\in \bar S$, we first prove a Gamma convergence result for the set of functionals $\{\mathcal{G}(v)\,,\, v\in \bar S\}$. More precisely, given $M>0$ we set as before $\Omega(M) := D\times (-H,M)$ and, for $v\in\bar S$  such that $v\le M-H$, we extend the functional $\mathcal{G}(v)$ to $L_2(\Omega(M))$ by defining
$$
\mathcal{G}(v)[\vartheta]:=
\infty\,, \quad \vartheta\in L_2(\Omega(M))\setminus H_B^1(\Omega(v))\,.
$$
With these notations we have:

\begin{proposition}\label{P3b}
Let $M>0$ and consider a sequence $(v_n)_{n\ge 1}$ in $\bar{S}$ and $v\in \bar S$ such that 
\begin{equation}\label{p9}
-H\le v_n(x)\,,\, v(x)\le M-H\,,\quad x\in D\,, \qquad v_n\rightarrow v \text{ in }\  H^1(D)\,.
\end{equation}
Then
\begin{equation*}
\Gamma-\lim_{n\rightarrow \infty} \mathcal{G}(v_n) =\mathcal{G}(v)\quad\text{in }\ L_2(\Omega(M))\,.
\end{equation*}
\end{proposition}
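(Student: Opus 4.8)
The plan is to prove the two standard conditions of $\Gamma$-convergence — the liminf inequality and the existence of a recovery sequence — for the functionals $\mathcal{G}(v_n)$ toward $\mathcal{G}(v)$ in $L_2(\Omega(M))$, exploiting that all relevant domains $\mathcal{O}_{I}(w)$ lie below the graph of $w\le M-H$ and are contained in $\Omega(M)$. Throughout I would use the convergence $v_n\to v$ in $C(\bar D)$ coming from the compact embedding $H^1(D)\hookrightarrow C(\bar D)$ together with \eqref{p9}, as well as the convergences \eqref{Gcz5}--\eqref{Gcz7} from Lemma~\ref{h}\,{\bf(b)} for the boundary data $h_{v_n}(\cdot,-H)\to h_v(\cdot,-H)$ in $L_2(D)$, $\mathfrak h_{v_n}\to\mathfrak h_v$ in $L_2(D)$, and $h_{v_n}\to h_v$ in $H^1(\Omega(M))$.

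First, for the liminf inequality, I would take any sequence $\vartheta_n\to\vartheta$ in $L_2(\Omega(M))$ with $\liminf_n \mathcal{G}(v_n)[\vartheta_n]<\infty$; passing to a subsequence I may assume $\vartheta_n\in H_B^1(\Omega(v_n))$ with $\sup_n\|\vartheta_n\|_{H^1(\Omega(v_n))}<\infty$ (using the Poincaré inequality \cite[Lemma~2.2]{LNW19} to control the full $H^1$-norm by $\mathcal{G}(v_n)[\vartheta_n]$ and the bound on $h_{v_n}$). Extending the $\vartheta_n$ by the values of $-h_{v_n}$ above the graph, or more simply working on a fixed reference subdomain, I would show $\vartheta$ is supported in $\overline{\Omega(v)}$, vanishes on $\partial\Omega(v)\setminus\Sigma(v)$ (so $\vartheta\in H_B^1(\Omega(v))$ — this uses the trace estimate \eqref{t100P} of Lemma~\ref{lemT1H} and the uniform convergence $v_n\to v$ exactly as in the approximation argument of Section~\ref{sec.t2P}), and that $\nabla\vartheta_n\rightharpoonup\nabla\vartheta$ weakly while $\vartheta_n(\cdot,-H)\rightharpoonup\vartheta(\cdot,-H)$ weakly in $L_2(D)$. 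Then weak lower semicontinuity of the $L_2$-norms of $\nabla(\vartheta_n+h_{v_n})$ and of $\sqrt\sigma(\vartheta_n(\cdot,-H)+h_{v_n}(\cdot,-H)-\mathfrak h_{v_n})$, combined with the strong convergence of the $h_{v_n}$-terms from \eqref{Gcz5}--\eqref{Gcz7}, yields $\mathcal{G}(v)[\vartheta]\le\liminf_n\mathcal{G}(v_n)[\vartheta_n]$.

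Second, for the recovery sequence, given $\vartheta\in H_B^1(\Omega(v))$ (the case $\vartheta\notin H_B^1(\Omega(v))$ is trivial since $\mathcal{G}(v)[\vartheta]=\infty$), I would construct $\vartheta_n\in H_B^1(\Omega(v_n))$ with $\vartheta_n\to\vartheta$ in $L_2(\Omega(M))$ and $\mathcal{G}(v_n)[\vartheta_n]\to\mathcal{G}(v)[\vartheta]$. The natural device is a vertical rescaling: pull $\vartheta$ back to the reference rectangle $\mathcal{R} := D\times(0,1)$ via the diffeomorphism $(x,z)\mapsto(x,-H+(H+v(x))z)$ on each component, then push forward with the analogous map for $v_n$; since $v_n\to v$ in $W_\infty^1$ fails in general but $v_n\to v$ in $C(\bar D)$ holds, one should instead first approximate $\vartheta$ by a smoother function and truncate near $\mathcal C(v)$, or — cleaner — approximate $v$ from above by $v_n$ only where needed. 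A robust route is: first reduce to $\vartheta\in C_B^1(\overline{\Omega(v)})$ by density, extend $\vartheta$ by $0$ to $\Omega(M)$, and set $\vartheta_n := (\vartheta\circ T_n)\,\eta_n$ where $T_n$ is the vertical dilation sending $\mathcal{O}_{I_j}(v_n)$-fibers to $\mathcal{O}_{I_j}(v)$-fibers and $\eta_n$ is a cutoff handling the lateral boundary; then $\vartheta_n\in H_B^1(\Omega(v_n))$, $\vartheta_n\to\vartheta$ in $H^1$, and using \eqref{Gcz5}--\eqref{Gcz7} one passes to the limit in each term of $\mathcal{G}(v_n)[\vartheta_n]$.

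The main obstacle is the recovery sequence — specifically, producing admissible competitors $\vartheta_n\in H_B^1(\Omega(v_n))$ when the domains $\Omega(v_n)$ neither contain nor are contained in $\Omega(v)$ and may change topology (components merging or splitting as $\mathcal C(v_n)$ varies), all while keeping the boundary condition $\vartheta_n=0$ on $\mathfrak{G}(v_n)$ and on the lateral sides, and while ensuring convergence of the gradient $L_2$-norm rather than merely weak convergence. The vertical-rescaling construction makes the graph condition automatic but forces one to control the Jacobian factors $(H+v_n)/(H+v)$, which is where $v_n\to v$ in $C(\bar D)$ (hence uniform two-sided bounds on $H+v_n$ away from $0$ on any component, and uniform closeness) is essential; near the coincidence set these factors can degenerate, so the cutoff $\eta_n$ must be designed so that the error it introduces vanishes, which I expect to require a quantitative use of the trace estimates \eqref{t100P}--\eqref{t200P} of Lemma~\ref{lemT1H} exactly in the spirit of the approximation argument already carried out in Section~\ref{sec.t2P}. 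Once the recovery sequence is in hand, the liminf part is routine weak-lower-semicontinuity plus the strong $h_{v_n}$-convergences.
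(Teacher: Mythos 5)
Your liminf argument is essentially the paper's: extend $\vartheta_n$ by zero to $\Omega(M)$, extract a weak $H^1(\Omega(M))$-limit, use the trace estimates of Lemma~\ref{lemT1H} together with Lemma~\ref{lemA1} to show that the limit has vanishing trace on $\mathfrak{G}(v)$ and hence belongs to $H_B^1(\Omega(v))$, and then combine weak lower semicontinuity of the gradient term with the strong convergences \eqref{Gcz5}--\eqref{Gcz7} and the strong convergence of traces on $D\times\{-H\}$. One caveat: extending $\vartheta_n$ by the values of $-h_{v_n}$ above the graph, which you offer as an alternative, does not produce an $H^1(\Omega(M))$ function, since $\vartheta_n$ vanishes on $\mathfrak{G}(v_n)$ while $-h_{v_n}$ does not; the zero extension is the one that works.

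The recovery sequence is where you genuinely diverge from the paper, and also where your proposal stops short of a proof. The paper does not rescale fibers at all: it extends $\vartheta$ by zero above $\mathfrak{G}(v)$, reflects evenly across $z=-H$ to obtain $\bar\vartheta\in H_0^1(\hat\Omega(M))$ with $\hat\Omega(M)=D\times(-2H-M,M)$, sets $F:=-\Delta\bar\vartheta\in H^{-1}(\hat\Omega(M))$, and takes $\hat\vartheta_n$ to be the solution of the Dirichlet problem for $F$ on the analogously reflected domain $\hat\Omega(v_n)$. Uniform convergence $v_n\to v$ gives Hausdorff convergence of these domains, whose complements have a single connected component, so the two-dimensional stability theorem of \cite{Sv93} yields $\hat\vartheta_n\to\bar\vartheta$ strongly in $H_0^1(\hat\Omega(M))$; restricting to $\Omega(v_n)$ produces admissible competitors with convergent energy. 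The reflection is the essential device: it removes $D\times\{-H\}$ from the Dirichlet boundary (the condition there is Robin, so competitors must not be forced to vanish), and the construction never sees the ratios $(H+v_n)/(H+v)$. Your vertical dilation, by contrast, must confront exactly those ratios, which are unbounded near $\mathcal{C}(v)$ for any fixed $n$, and $\partial_x v_n\to\partial_x v$ holds only in $L_2(D)$, so $\nabla(\vartheta\circ T_n)\to\nabla\vartheta$ in $L_2$ fails for general $\vartheta\in H_B^1(\Omega(v))$. The remedies you propose (reduction to a dense class, a cutoff supported in $\{H+v\ge\delta\}$, a diagonal argument) are the right ones in principle -- for instance, taking the cutoff as a function of $(H+v)/\delta$ and using $|\vartheta(x,z)|\le\sqrt{H+v(x)}\,\|\partial_z\vartheta(x,\cdot)\|_{L_2}$ controls the commutator error -- but as written they are announced rather than carried out, and the mismatch between $\mathcal{C}(v_n)$ and $\mathcal{C}(v)$ still has to be absorbed in the region killed by the cutoff. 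So the plan is plausible but considerably longer than the paper's, and the recovery step is precisely the part you would have to write out in full before the proof is complete.
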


\begin{proof} The proof is very similar to that of \cite[Proposition~3.11]{LW19}.\\
	
\noindent\textit{(i) Asymptotic weak lower semi-continuity}. Given a sequence $(\vartheta_n)_{n\ge 1}$ in $L_2(\Omega(M))$ and $\vartheta\in L_2(\Omega(M))$ satisfying
\begin{equation}\label{ppp}
\vartheta_n\rightarrow \vartheta\ \text{ in }\ L_2(\Omega(M))\,,
\end{equation}
we shall show that
\begin{equation}\label{1ay}
\mathcal{G}(v)[\vartheta]\le\liminf_{n\rightarrow \infty} \mathcal{G}(v_n)[\vartheta_n]\,.
\end{equation}
We may assume without of loss of generality that
\begin{equation}\label{1a}
\vartheta_n\in H_B^1(\Omega(v_n))\,, \quad n\ge 1\,, \qquad	\mathcal{G}_\infty := \sup_{n\ge 1} \mathcal{G}(v_n)[\vartheta_n]<\infty\,.
\end{equation}
Let $n\ge 1$ and denote the extension by zero of $\vartheta_n$ to $\Omega(M)\setminus\Omega(v_n)$ by $\vartheta_n$. Then $\vartheta_n\in H_B^1(\Omega(M))$ and it follows from \eqref{p9}, \eqref{ppp}, \eqref{1a}, and Lemma~\ref{h}~\textbf{(b)} that the sequence $(\vartheta_n)_{n\ge 1}$ is bounded in $H_B^1(\Omega(M))$. Since $\Omega(M)$ is a Lipschitz domain, the compactness of the embedding of $H^1(\Omega(M))$ in $H^{3/4}(\Omega(M))$ \cite[Theorem~1.4.3.2]{Gr1985}, the continuity of the trace operator from $H^{3/4}(\Omega(M))$ to $L_2(\partial\Omega(M))$ (see, e.g., \cite[Theorem~1.5.1.2]{Gr1985}, \cite{Marschall87}, or \cite[Satz 8.7]{Wloka82}) and \eqref{ppp} ensure that there is a subsequence of $(\vartheta_n)_{n\ge 1}$ (not relabeled) such that 
\begin{align}
\vartheta_n & \rightharpoonup \vartheta \quad\text{in }\ H_B^1(\Omega(M))\,, \label{2aa} \\
\vartheta_n & \rightarrow \vartheta \quad\text{in }\ L_2(\partial\Omega(M))\,. \label{2ba}
\end{align}
In particular, $\vartheta\in H^1(\Omega(v))$ and its trace $\vartheta(\cdot,v)$ is well-defined in $L_2(D,(H+v)\,\mathrm{d}x)$ according to Lemma~\ref{lemT1H}. Similarly, for each $n\ge 1$, $\vartheta\in H^1(\Omega(v_n))$ and its trace $\vartheta(\cdot,v_n)$ is well-defined in $L_2(D,(H+v_n)\,\mathrm{d}x)$. Consequently, for $n\ge 1$,
\begin{equation}
\begin{split}
\int_D (H+v) (H+v_n) |\vartheta(\cdot,v)|^2\,\mathrm{d}x & \le 2 \int_D (H+v) (H+v_n) |\vartheta(\cdot,v)-\vartheta(\cdot,v_n)|^2\,\mathrm{d}x \\
& \quad + 2 \int_D (H+v) (H+v_n) |\vartheta(\cdot,v_n)|^2\,\mathrm{d}x\,.
\end{split} \label{Gcz1}
\end{equation}
On the one hand, by  Lemma~\ref{lemA1} and \eqref{p9},
\begin{align}
\int_D (H+v) (H+v_n) |\vartheta(\cdot,v)-\vartheta(\cdot,v_n)|^2\,\mathrm{d}x & \le \| (H+v) (H+v_n) (v-v_n)\|_{L_\infty(D)} \|\partial_z\vartheta\|_{L_2(\Omega(M))}^2 \nonumber \\
& \le M^2 \|v-v_n\|_{L_\infty(D)} \|\partial_z\vartheta\|_{L_2(\Omega(M))}^2\,. \label{Gcz2}
\end{align}
On the other hand, since $\vartheta_n\in H_B^1(\Omega(v_n))$, we infer from \eqref{p9} and Lemma~\ref{lemT1H} that
\begin{align}
& \int_D (H+v) (H+v_n) |\vartheta(\cdot,v_n)|^2\,\mathrm{d}x \nonumber  \\ 
& \qquad = \int_D (H+v) (H+v_n) |\vartheta(\cdot,v_n)-\vartheta_n(\cdot,v_n)|^2\,\mathrm{d}x \nonumber \\ 
& \qquad \le M \int_D (H+v_n) |\vartheta(\cdot,v_n)-\vartheta_n(\cdot,v_n)|^2\,\mathrm{d}x \nonumber \\ 
&  \qquad \le M \left[ \|\vartheta-\vartheta_n\|_{L_2(\Omega(v_n))}^2 + 2 \|H+v_n\|_{L_\infty(D)} \|\vartheta-\vartheta_n\|_{L_2(\Omega(v_n))} \|\partial_z(\vartheta-\vartheta_n)\|_{L_2(\Omega(v_n))} \right] \nonumber \\
& \qquad \le M \|\vartheta-\vartheta_n\|_{L_2(\Omega(M))} \left[ \sup_{m\ge 1} \|\vartheta-\vartheta_m\|_{L_2(\Omega(M))} + 2 M \sup_{m\ge 1} \|\partial_z(\vartheta-\vartheta_m)\|_{L_2(\Omega(M))} \right] \nonumber \\
& \qquad \le 2M(1+M) \|\vartheta-\vartheta_n\|_{L_2(\Omega(M))} \left[ \|\vartheta\|_{H^1(\Omega(M))} + \sup_{m\ge 1} \|\vartheta_m\|_{H^1(\Omega(M))} \right] \,. \label{Gcz3}
\end{align}
Now, it readily follows from \eqref{p9}, \eqref{ppp}, \eqref{2aa}, \eqref{Gcz2}, \eqref{Gcz3}, and the continuous embedding of $H_0^1(D)$ in $C(\bar{D})$ that the right-hand side of \eqref{Gcz1} converges to zero as $n\to\infty$. Therefore, 
\begin{equation*}
\lim_{n\to\infty} \int_D (H+v) (H+v_n) |\vartheta(\cdot,v)|^2\,\mathrm{d}x = 0\,,
\end{equation*}
and we use Fatou's lemma to conclude that 
\begin{equation*}
\vartheta(\cdot,v)=0 \quad\text{in }\quad L_2(D, (H+v)^2\,\mathrm{d}x)\,. 
\end{equation*}
Combining this result with \eqref{2aa} and \eqref{2ba} implies that 
\begin{equation}
\vartheta\in H_B^1(\Omega(v))\,. \label{Gcz4}
\end{equation}
Now, we infer from \eqref{Gcz5}, \eqref{p9}, \eqref{2aa}, \eqref{Gcz4}, and the continuous embedding of $H_0^1(D)$ in $C(\bar{D})$ that
\begin{align*}
\int_{\Omega(v)} |\nabla (\vartheta+h_v)|^2\,\mathrm{d}(x,z) & = \int_{\Omega(M)} |\nabla (\vartheta+h_v)|^2\,\mathrm{d}(x,z) - \int_{ \Omega(M)\setminus\Omega(v)} |\nabla h_v|^2\,\mathrm{d}(x,z) \\
& \le \liminf_{n\to\infty} \int_{\Omega(M)} |\nabla (\vartheta_n+h_{v_n})|^2\,\mathrm{d}(x,z) - \lim_{n\to\infty} \int_{\Omega(M)\setminus\Omega(v_n)} |\nabla h_{v_n}|^2\,\mathrm{d}(x,z) \\
& = \liminf_{n\to\infty} \int_{\Omega(v_n)} |\nabla (\vartheta_n+h_{v_n})|^2\,\mathrm{d}(x,z)\,.
\end{align*}
Also, from \eqref{2ba} and Lemma~\ref{h} we deduce that
\begin{equation*}
\lim_{n\to\infty} \int_D  \sigma \left| (\vartheta_n + h_{v_n})(\cdot,-H) - \mathfrak{h}_{v_n} \right|^2 \,\mathrm{d}x = \int_D \sigma \left| (\vartheta + h_{v})(\cdot,-H) - \mathfrak{h}_{v} \right|^2  \,\mathrm{d}x\,.
\end{equation*}
Gathering the outcome of the above analysis gives \eqref{1ay}.

\medskip

\noindent\textit{(ii) Recovery sequence}. Consider $\vartheta\in H_B^1(\Omega(v))$ and introduce the function $\bar{\vartheta}$ defined on 
$$
\hat{\Omega}(M) := D\times (-2H-M,M)
$$ 
by
$$
\bar\vartheta(x,z):= \left\{ \begin{array}{ll} 0\,, & x\in D\,,\ v(x)<z<M\,, \\[0.1cm]
\vartheta(x,z)\,, &  x\in D\,,\ -H<z\le v(x)\,,\\[0.1cm]
\vartheta(x,-2H-z)\,, &  x\in D\,,\ -2H-v(x)<z\le -H\,,\\[0.1cm]
0\,, &  x\in D\,,\ -2H-M<z\le -2H-v(x)\,,\\
\end{array} \right.
$$
which is the extension of $\vartheta$ by zero in $\Omega(M)\setminus\Omega(v)$ and the reflection of the thus obtained function to $D\times (-2H-M,-H)$. Then $\bar\vartheta\in H_0^1(\hat\Omega(M))$, so that $F:=-\Delta \bar\vartheta\in H^{-1}(\hat\Omega(M))$. 

Let $n\ge 1$. Since 
\begin{equation*}
\hat\Omega(v_n):=\Omega(v_n) \cup \big(D\times (-2H-M,-H]\big)\subset \hat\Omega(M)\,,
\end{equation*} 
the distribution $F$ can also be considered as an element of $H^{-1}(\hat\Omega(v_n))$ by restriction. Then there is a unique variational solution $\hat\vartheta_n\in H_0^1(\hat\Omega(v_n))\subset H_0^1(\hat\Omega(M))$ to
$$
-\Delta \hat\vartheta_n=F \quad\text{in }\ \hat\Omega(v_n)\,,\qquad \hat\vartheta_n=0 \quad\text{on }\ \partial\hat\Omega(v_n)\,.
$$
Owing to \eqref{p9}  and the continuous embedding of $H_0^1(D)$ in $C(\bar{D})$, 
$$
d_H\left( \hat\Omega(v_n),\hat\Omega(v) \right) \le \|v_n - v\|_{L_\infty(D)}\rightarrow 0\,,
$$
where $d_H$ stands for  the Hausdorff distance in $\hat\Omega(M)$, see \cite[Section~2.2.3]{HP05}. Since $\overline{\hat\Omega(M)}\setminus\hat\Omega(v_n)$ has a single connected component for all $n\ge 1$, it follows from \cite[Theorem~4.1]{Sv93} and \cite[Theorem~3.2.5]{HP05} that $\hat\vartheta_n\rightarrow \hat\vartheta$ in $H_0^1(\hat\Omega(M))$, where $\hat\vartheta\in H_0^1(\hat\Omega(M))$ is the unique variational solution to
$$
-\Delta \hat\vartheta=F \quad\text{in }\ \hat\Omega(M)\,,\qquad \hat\vartheta=0 \quad\text{on }\ \partial\hat\Omega(M)\,.
$$
Clearly, $\hat\vartheta=\bar\vartheta$ by uniqueness, so that  $ \hat{\vartheta}_n  \rightarrow \bar\vartheta$ in $H_0^1(\hat{\Omega}(M))$. Setting $\vartheta_n := \hat\vartheta_n \mathbf{1}_{\Omega(v_n)}\in H^1(\Omega(M))$, $n\ge 1$, this convergence implies that
\begin{equation}\label{ui}
\vartheta_n\rightarrow \bar\vartheta\quad \text{ in }\ H^1(\Omega(M))\,.
\end{equation}
Since $\vartheta_n=0$ in $\Omega(M)\setminus\Omega(v_n)$ we obtain from \eqref{Gcz5}, \eqref{p9}, and \eqref{ui} that
\begin{equation*}
\begin{split}
\int_{\Omega(v)} \vert \nabla(\vartheta+h_v)\vert^2\,\mathrm{d} (x,z) &= \int_{\Omega(v)} \left(\vert \nabla\vartheta\vert^2 + 2 \nabla\vartheta\cdot \nabla h_v + \vert \nabla h_v\vert^2 \right)\,\mathrm{d} (x,z) \\
& = \int_{\Omega(M)} \left(\vert \nabla\vartheta\vert^2 + 2 \nabla\vartheta\cdot \nabla h_v\right)\,\mathrm{d} (x,z) + \int_{\Omega(v)} \vert \nabla h_v\vert^2\,\mathrm{d} (x,z) \\
& = \lim_{n\rightarrow\infty} \int_{\Omega(M)} \left( \vert \nabla\vartheta_n\vert^2 + 2 \nabla\vartheta_n\cdot \nabla h_{v_n}\right)\,\mathrm{d} (x,z) \\
&\qquad + \lim_{n\rightarrow\infty} \int_{\Omega(v_n)} \vert \nabla h_{v_n}\vert^2\,\mathrm{d} (x,z)\\
&= \lim_{n\rightarrow\infty} \int_{\Omega(v_n)} \vert \nabla(\vartheta_n +h_{v_n})\vert^2\,\mathrm{d} (x,z)\,.
\end{split}
\end{equation*}
Moreover, the continuity of the trace from $H^1(\Omega(M))$ to $L_2(D\times\{-H\})$ and \eqref{ui} entail that  
$$\vartheta_n(\cdot,-H)\rightarrow \bar\vartheta(\cdot,-H)=\vartheta(\cdot,-H)\quad \text{in}\quad L_2(D)\,.
$$
These two properties, along with \eqref{Gcz6} and \eqref{Gcz7}, imply that 
$$
\mathcal{G}(v)[\vartheta]=\lim_{n\rightarrow\infty} \mathcal{G}(v_n)[\vartheta_n]\,;
$$
that is, $(\vartheta_n)_{n\ge 1}$ is a recovery sequence for $\vartheta$ and the claim is proved.
\end{proof}

The Fundamental Theorem of $\Gamma$-convergence, see \cite[Corollary~7.20]{DaM93}, then yields the following continuous dependence of $\chi_v$ on $v\in \bar S$: 

\begin{corollary}\label{L3}
Suppose \eqref{p9} and assume further that there is $\kappa>0$ such that 
\begin{equation}
\|v\|_{H^2(D)}\le \kappa \;\text{ and }\; \|v_n\|_{H^2(D)}\le \kappa\,, \qquad n\ge 1\, . \label{x1P}
\end{equation} 
Then
\begin{equation}
\lim_{n\to\infty} \mathcal{G}(v_n)[\chi_{v_n}] = \mathcal{G}(v)[\chi_{v}] \label{x2P}
\end{equation}
and
\begin{equation}
\lim_{n\to\infty} \|\chi_{v_n}-\chi_{v}\|_{{H^1}(\Omega(M))} =\lim_{n\to\infty} \|\chi_{v_n}(\cdot,-H)-\chi_{v}(\cdot,-H)\|_{L_r(D)} = 0\,, \quad r\in [1,\infty)\,. \label{x3P}
\end{equation}
\end{corollary}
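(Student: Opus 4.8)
The plan is to obtain Corollary~\ref{L3} from the $\Gamma$-convergence statement of Proposition~\ref{P3b} via the Fundamental Theorem of $\Gamma$-convergence, \cite[Corollary~7.20]{DaM93}, and then to bootstrap the ensuing $L_2(\Omega(M))$-convergence of the minimizers into the $H^1(\Omega(M))$- and $L_r(D)$-convergences asserted in \eqref{x3P}. Throughout, a function on $\Omega(v_n)$ (or $\Omega(v)$) lying in $H_B^1$ is tacitly identified with its extension by zero to $\Omega(M)$, so that $\chi_{v_n},\chi_v\in H_B^1(\Omega(M))$ and $\|\chi_{v_n}\|_{H^1(\Omega(M))}=\|\chi_{v_n}\|_{H^1(\Omega(v_n))}$.

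First I would check the equi-coercivity hypothesis needed to apply \cite[Corollary~7.20]{DaM93}. This is exactly where the extra assumption \eqref{x1P} (rather than merely the $H^1$-convergence \eqref{p9}) is used: Theorem~\ref{thmt1PP} together with \eqref{chi} gives $\|\chi_{v_n}\|_{H^1(\Omega(M))}\le C(\kappa)$ for all $n\ge1$, and more generally, the $\vartheta\equiv0$ comparison argument from Proposition~\ref{lemt9P} and Lemma~\ref{lemt0P}, combined with the uniform bounds on $h_{v_n}$, $h_{v_n}(\cdot,-H)$ and $\mathfrak{h}_{v_n}$ from Lemma~\ref{h}, shows that any $\vartheta_n\in H_B^1(\Omega(v_n))$ with $\sup_n\mathcal{G}(v_n)[\vartheta_n]<\infty$ stays bounded in $H_B^1(\Omega(M))$. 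Since $\Omega(M)=D\times(-H,M)$ is a bounded Lipschitz domain, $H^1(\Omega(M))$ embeds compactly into $L_2(\Omega(M))$, so $\{\mathcal{G}(v_n)\}_{n\ge1}$ is equi-coercive on $L_2(\Omega(M))$. The Fundamental Theorem of $\Gamma$-convergence then yields $\inf_{L_2(\Omega(M))}\mathcal{G}(v_n)\to\inf_{L_2(\Omega(M))}\mathcal{G}(v)$, i.e.\ \eqref{x2P}, and shows that every $L_2(\Omega(M))$-cluster point of the minimizers $(\chi_{v_n})_{n\ge1}$ minimizes $\mathcal{G}(v)$; since the latter has $\chi_v$ as its unique minimizer (Proposition~\ref{lemt9P}) and $(\chi_{v_n})_{n\ge1}$ is relatively compact in $L_2(\Omega(M))$ by the uniform $H^1$-bound, the whole sequence converges: $\chi_{v_n}\to\chi_v$ in $L_2(\Omega(M))$, hence $\chi_{v_n}\rightharpoonup\chi_v$ in $H^1(\Omega(M))$.

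To upgrade this to strong convergence in $H^1(\Omega(M))$ it remains, given the $L_2$-convergence just obtained, to show $\|\nabla\chi_{v_n}\|_{L_2(\Omega(M))}\to\|\nabla\chi_v\|_{L_2(\Omega(M))}$. For this I would expand the energy at the minimizer, using the zero-extension convention,
\begin{align*}
\mathcal{G}(v_n)[\chi_{v_n}] & = \tfrac12\|\nabla\chi_{v_n}\|_{L_2(\Omega(M))}^2 + \int_{\Omega(M)}\nabla\chi_{v_n}\cdot\nabla h_{v_n}\,\mathrm{d}(x,z) + \tfrac12\int_{\Omega(v_n)}|\nabla h_{v_n}|^2\,\mathrm{d}(x,z) \\
& \quad + \tfrac12\int_D\sigma\,\big|(\chi_{v_n}+h_{v_n})(\cdot,-H)-\mathfrak{h}_{v_n}\big|^2\,\mathrm{d}x ,
\end{align*}
and verify that each summand other than $\tfrac12\|\nabla\chi_{v_n}\|_{L_2(\Omega(M))}^2$ converges to the corresponding summand of $\mathcal{G}(v)[\chi_v]$: the cross term by weak--strong convergence together with \eqref{Gcz5}; the term $\int_{\Omega(v_n)}|\nabla h_{v_n}|^2$ by \eqref{Gcz5}, the uniform convergence $v_n\to v$ in $C(\bar D)$ and dominated convergence for $\mathbf{1}_{\Omega(v_n)}$; and the boundary term by \eqref{Gcz6}, \eqref{Gcz7} and the $L_2(D)$-convergence of $\chi_{v_n}(\cdot,-H)$ to $\chi_v(\cdot,-H)$, the last being a consequence of the weak $H^1(\Omega(M))$-convergence and the compactness of the trace operator $H^1(\Omega(M))\to L_2(D\times\{-H\})$ (compare \eqref{2ba}). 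Together with \eqref{x2P} this forces $\|\nabla\chi_{v_n}\|_{L_2(\Omega(M))}\to\|\nabla\chi_v\|_{L_2(\Omega(M))}$, and strong convergence $\chi_{v_n}\to\chi_v$ in $H^1(\Omega(M))$ follows. Finally, since $\Omega(M)$ is a two-dimensional bounded Lipschitz domain, the trace operator maps $H^1(\Omega(M))$ boundedly into $L_r(D\times\{-H\})$ for every $r\in[1,\infty)$ (its range lies in $H^{1/2}$ of the one-dimensional boundary, which embeds into all such $L_r$), so the just-established strong $H^1$-convergence immediately gives $\chi_{v_n}(\cdot,-H)\to\chi_v(\cdot,-H)$ in $L_r(D)$, completing \eqref{x3P}; here $\chi_{v_n}(\cdot,-H)$ is read as extended by zero on $\mathcal{C}(v_n)$, consistently with the zero-extension convention.

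The only genuinely new ingredient beyond Proposition~\ref{P3b} is the equi-coercivity, i.e.\ the uniform $H^1$-bound coming from Theorem~\ref{thmt1PP}; this is what forces the $H^2$-hypothesis \eqref{x1P} and is the crux of the argument. The remaining work --- keeping track of the zero-extensions, the moving domains $\Omega(v_n)$ and coincidence sets $\mathcal{C}(v_n)$, and reusing the convergences of $h_{v_n}$ and $\mathfrak{h}_{v_n}$ from Lemma~\ref{h}\,\textbf{(b)} exactly as in the proof of Proposition~\ref{P3b} --- is the technically fiddly part, but the passage from weak to strong $H^1$-convergence via convergence of the energies is then standard.
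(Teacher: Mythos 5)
Your proposal is correct and follows essentially the same route as the paper: a uniform $H^1(\Omega(M))$ bound from Theorem~\ref{thmt1PP} supplies the compactness needed to apply \cite[Corollary~7.20]{DaM93} to Proposition~\ref{P3b}, uniqueness of the minimizer of $\mathcal{G}(v)$ identifies the limit and gives \eqref{x2P}, and the upgrade to strong $H^1$-convergence is obtained exactly as in the paper by combining weak convergence of the gradients with convergence of the Dirichlet part of the energy (the paper keeps $\|\nabla(\chi_{v_n}+h_{v_n})\|_{L_2(\Omega(M))}^2$ together rather than expanding into summands, but the mechanism is identical), the trace convergence being handled by compactness of the trace operator on the Lipschitz domain $\Omega(M)$. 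No gaps.
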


\begin{proof}
It readily follows  from \eqref{p9}, \eqref{x1P}, and Theorem~\ref{thmt1PP}  that
\begin{equation}
(\chi_{v_n})_{n\ge 1} \;\text{ is bounded in }\; H^1(\Omega(M)) \label{x4P}
\end{equation} 
and thus relatively compact in $L_2(\Omega(M))$ by \cite[Theorem~1.4.5.2]{Gr1985}. According to Proposition~\ref{P3b}, we deduce from the Fundamental Theorem of $\Gamma$-convergence, see \cite[Corollary~7.20]{DaM93}, that any cluster point of $(\chi_{v_n})_{n\ge 1}$ in $L_2(\Omega(M))$ is a minimizer of $\mathcal{G}(v)$ and thus coincides with $\chi_v$ by Proposition~\ref{lemt9P}. Therefore, 
\begin{equation}
\lim_{n\to\infty} \|\chi_{v_n} - \chi_v \|_{L_2(\Omega(M))} = 0\,, \label{x5P}
\end{equation}
and, using once more \cite[Corollary~7.20]{DaM93}, we obtain \eqref{x2P}.

We are left with proving \eqref{x3P}. To this end, we first observe that, since $\Omega(M)$ is a Lipschitz domain, \cite[Theorem~1.4.3.2, Theorem~1.4.5.2]{Gr1985} imply that $H^1(\Omega(M))$ compactly embeds in $W_q^{3/2q}(\Omega(M))$ for $q\ge 2$. Thus, the continuity of the trace operator from $W_q^{3/2q}(\Omega(M))$ to $L_q(\partial\Omega(M))$ (see \cite[Theorem~1.5.1.2]{Gr1985} and \cite{Marschall87}), along with \eqref{x4P} and \eqref{x5P}, ensure that there is a subsequence of $(\chi_{v_n})_{n\ge 1}$ (not relabeled) such that 
\begin{align}
\chi_{v_n} & \rightharpoonup \chi_v \quad\text{in }\ H_B^1(\Omega(M))\,, \label{x6P} \\
\chi_{v_n}(\cdot,-H) & \rightarrow \chi_v(\cdot,-H) \quad\text{in }\  L_q(D)\,, \quad q\ge 2\,. \label{x7P}
\end{align}
Notice that \eqref{x7P} yields the second assertion of \eqref{x3P}. It now follows from  \eqref{Gcz5}, \eqref{Gcz6}, \eqref{Gcz7}, \eqref{x2P}, and \eqref{x7P} that
\begin{align*}
\lim_{n\to\infty} \|\nabla (\chi_{v_n} + h_{v_n})\|_{L_2(\Omega(M))}^2 & = \lim_{n\to\infty} \|\nabla (\chi_{v_n} + h_{v_n})\|_{L_2(\Omega(v_n))}^2  + \lim_{n\to\infty} \|\nabla h_{v_n}\|_{L_2(\Omega(M)\setminus\Omega(v_n))}^2 \\
& = \|\nabla (\chi_{v} + h_{v})\|_{L_2(\Omega(v))}^2  +  \|\nabla h_{v}\|_{L_2(\Omega(M)\setminus\Omega(v))}^2 \\
& =  \|\nabla (\chi_{v} + h_{v})\|_{L_2(\Omega(M))}^2\, .
\end{align*}
This property, along with \eqref{Gcz5} and \eqref{x6P}, guarantees that $(\nabla \chi_{v_n})_{n\ge 1}$ converges to $\nabla\chi_v$ in $L_2(\Omega(M))$ and the proof of \eqref{x3P} is complete.
\end{proof}

\subsection{Continuity of $\partial_z\chi_v(\cdot, v)$ with Respect to $v$}
Finally, in order to establish the continuity of the function $g$ defined in \eqref{GG} we need also to investigate the continuous dependence of the gradient trace $\partial_z\chi_v(\cdot, v)$ on $v\in \bar S$, the main difficulty arising when $\mathcal{C}(v)\not=\emptyset$. In this regard we note:

\begin{proposition}\label{L2}
Consider $v \in \bar{S}$ and a sequence $(v_n)_{n\ge 1}$ in $\bar{S}$ such that
\begin{equation}
 \|v\|_{H^2(D)} + \sup_{n\ge 1} \|v_n\|_{H^2(D)} \le \kappa \;\text{ and }\; \lim_{n\to\infty} \| v_n - v\|_{H^1(D)} = 0\,. \label{cv1P}
\end{equation}
Then
\begin{equation}\label{ooo}
\ell(v_n)\rightarrow \ell(v) \quad\text{ in }\ L_r(D) \;\ \text{ for }\ \; r\in [1,\infty)\,,
\end{equation}
where $\ell(v)$ is given by
$$
\ell(v)(x):=\left\{
\begin{array}{ll} 
\partial_z\chi_v(x,v(x))\,, &  x\in D\setminus\mathcal{C}(v)\,,\\[0.1cm]
0\,, &x\in \mathcal{C}(v)\,.
\end{array}
\right.
$$
\end{proposition}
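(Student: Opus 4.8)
The plan is to combine the $H^1$-continuity already obtained in Corollary~\ref{L3} with the uniform $L_r$-bounds on the gradient traces from Theorem~\ref{thmt1PP} (or rather the local version in Theorem~\ref{thmt1P}), and then to upgrade weak-type convergence to strong $L_r$-convergence by uniform integrability. The delicate point is that $\mathcal{O}_I(v)$ may degenerate and that the ``bad'' set where $v_n$ is close to $-H$ but $v$ is not (and vice versa) has to be controlled. Since $v_n\to v$ uniformly on $\bar D$ (by \eqref{cv1P} and the compact embedding $H^1(D)\hookrightarrow C(\bar D)$), for each $\delta>0$ the set $\{v>\delta-H\}$ is eventually contained in $\{v_n>-H\}=D\setminus\mathcal C(v_n)$, so on such sets $\partial_z\chi_{v_n}(\cdot,v_n)$ is genuinely defined, and conversely $\mathcal C(v_n)\subset\{v<2\delta-H\}$ for $n$ large.

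\textbf{Main steps.} First I would fix $r\in[1,\infty)$ and, by a standard subsequence argument, reduce to showing that every subsequence of $(\ell(v_n))$ has a further subsequence converging to $\ell(v)$ in $L_r(D)$. Second, for $\delta>0$ small, let $D_\delta := \{x\in D\ :\ v(x)>2\delta - H\}$, which is a finite union of open intervals compactly contained in $D\setminus\mathcal C(v)$; on $D_\delta$ the graphs $\mathfrak G(v_n)$ and $\mathfrak G(v)$ stay uniformly bounded away from $\{z=-H\}$ for $n$ large, so $\mathcal O_{D_\delta}(v_n)$ and $\mathcal O_{D_\delta}(v)$ are uniformly Lipschitz. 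On such ``nice'' subdomains one has the elliptic estimate from Theorem~\ref{thmt1P} together with the $H^1(\Omega(M))$-convergence $\chi_{v_n}\to\chi_v$ of Corollary~\ref{L3}; by a standard argument (difference quotients / interior-up-to-the-boundary regularity on a fixed Lipschitz domain, after flattening via the change of variables $z\mapsto(H+z)/(H+v_n)$ as in Section~\ref{sec.t1P}), one deduces $\chi_{v_n}\to\chi_v$ in $H^2_{loc}$ of these subdomains, and in particular, using the trace continuity of Lemma~\ref{lemT1H} together with the uniform $W^1_\infty$-convergence $v_n\to v$, that
\begin{equation*}
\partial_z\chi_{v_n}(\cdot,v_n)\longrightarrow \partial_z\chi_v(\cdot,v) \qquad\text{in }\ L_2(D_\delta)\,.
\end{equation*}
After passing to a further subsequence this convergence also holds a.e.\ on $D_\delta$, and since $\delta>0$ is arbitrary and $\bigcup_{\delta>0}D_\delta = D\setminus\mathcal C(v)$, a diagonal extraction gives $\ell(v_n)\to\ell(v)$ a.e.\ on $D\setminus\mathcal C(v)$; on $\mathcal C(v)$ we have $\ell(v)=0$ by definition, and there the contribution will be handled by smallness of measure rather than pointwise convergence.

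\textbf{From a.e.\ convergence to $L_r$-convergence.} The remaining task is a uniform integrability / Vitali argument. By Theorem~\ref{thmt1PP} (applied with exponent $2r$) there is $c=c(2r,\kappa)$ such that $\|\ell(v_n)\|_{L_{2r}(D)}\le c$ for all $n$ and $\|\ell(v)\|_{L_{2r}(D)}\le c$; hence $(|\ell(v_n)|^r)_n$ is bounded in $L_2(D)$ and therefore uniformly integrable. Given $\eta>0$, split $D = D_\delta \cup (D\setminus D_\delta)$: on $D\setminus D_\delta$ one controls $\int |\ell(v_n)-\ell(v)|^r\,\mathrm dx$ by $2^r\big(\|\ell(v_n)\|_{L_{2r}}^r + \|\ell(v)\|_{L_{2r}}^r\big)\,|D\setminus D_\delta|^{1/2}$, which can be made $<\eta$ by choosing $\delta$ small (the measure $|D\setminus D_\delta|\downarrow|\mathcal C(v)|$ is not itself small, but on $\mathcal C(v)$ the set $D\setminus D_\delta$ still has the right-hand side small only if $|\mathcal C(v)|$ is small — so instead one uses uniform integrability directly: choose $\delta$ so that $\sup_n\int_{D\setminus D_\delta}|\ell(v_n)|^r\,\mathrm dx<\eta$ and likewise for $\ell(v)$, which is possible since $|D\setminus D_\delta|$ decreases and $(|\ell(v_n)|^r)$ is uniformly integrable). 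Then on the fixed set $D_\delta$ the $L_2(D_\delta)$-convergence established above (or a.e.\ convergence plus the bound in $L_{2r}$ and Vitali) yields $\int_{D_\delta}|\ell(v_n)-\ell(v)|^r\,\mathrm dx<\eta$ for $n$ large. Combining the two estimates gives $\limsup_n\|\ell(v_n)-\ell(v)\|_{L_r(D)}^r\le C\eta$, and letting $\eta\to0$ proves \eqref{ooo}.

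\textbf{Main obstacle.} The crux is the local $H^2$-convergence $\chi_{v_n}\to\chi_v$ on the Lipschitz subdomains $\mathcal O_{D_\delta}(v_n)$, since these domains move with $n$; the clean way is to pull everything back to the fixed rectangle $D_\delta\times(0,1)$ via $(x,z)\mapsto\big(x,(H+z)/(H+v_n(x))\big)$, note that the transformed operators have coefficients converging in $C(\overline{D_\delta\times(0,1)})$ because $v_n\to v$ in $W^1_\infty$, and then invoke continuous dependence of solutions of uniformly elliptic problems with convergent coefficients on a fixed domain together with the uniform $H^2$-bound of Theorem~\ref{thmt1P}. The boundary term $\sigma\chi_v(\cdot,-H)$ is handled by the $L_r$-convergence of $\chi_{v_n}(\cdot,-H)$ already provided by Corollary~\ref{L3}. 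Everything away from $D_\delta$, i.e.\ near the coincidence set, is absorbed into the uniform-integrability estimate and never requires pointwise control there.
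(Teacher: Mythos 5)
Your treatment of the region away from the coincidence set is workable in spirit, though heavier than necessary: the paper gets the convergence of the traces on $\Lambda(\varepsilon)=\{v>-H+2\varepsilon\}$ without any local $H^2$-convergence of $\chi_{v_n}$, simply by applying the trace inequality \eqref{t100P} to $\partial_z(\chi_v-\chi_{v_n})$ on $\mathcal{O}_{\Lambda_j(\varepsilon)}(v-\varepsilon)$, which bounds the $L_2$-norm of the trace by $\|\partial_z(\chi_v-\chi_{v_n})\|_{L_2}$ times uniformly bounded $H^2$-quantities; Corollary~\ref{L3} then makes this go to zero. Your route through continuous dependence of the flattened elliptic problem in $H^2$ would need genuine additional work (the lower-order coefficients involve $\partial_x^2 v_n$, which only converges weakly in $L_2$), but it is not the fatal issue.

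The genuine gap is near the coincidence set, and you half-notice it yourself. You propose to control $\int_{D\setminus D_\delta}|\ell(v_n)|^r\,\mathrm{d}x$ by uniform integrability of $(|\ell(v_n)|^r)_n$ in $L_2(D)$, arguing that this "is possible since $|D\setminus D_\delta|$ decreases". But $|D\setminus D_\delta|$ decreases to $|\mathcal{C}(v)|$, which may be strictly positive, and uniform integrability gives smallness only on sets of small measure. Moreover $\ell(v_n)$ does not vanish on $\mathcal{C}(v)$: the set $(D\setminus D_\delta)\setminus\mathcal{C}(v_n)$, where $\ell(v_n)=\partial_z\chi_{v_n}(\cdot,v_n)$, can have measure close to $|\mathcal{C}(v)|$, and the global $L_{2r}$-bound gives no reason for the mass of $\ell(v_n)$ to avoid this region. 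So your splitting cannot close when $\mathcal{C}(v)$ has positive measure, which is precisely the case the proposition must cover. The missing idea is the paper's Step~1: writing, via the Robin condition at $z=-H$ and the Dirichlet condition at $z=v_n$,
\begin{equation*}
\partial_z\chi_{v_n}(x,v_n(x)) = \int_{-H}^{v_n(x)} \left( \partial_z^2\chi_{v_n}(x,z) - \sigma(x)\,\partial_z\chi_{v_n}(x,z) \right) \mathrm{d}z\,,
\end{equation*}
whence $\int_E|\partial_z\chi_{v_n}(\cdot,v_n)|\,\mathrm{d}x \le C\big(\int_E(H+v_n)\,\mathrm{d}x\big)^{1/2}\|\chi_{v_n}\|_{H^2(\Omega(v_n))}$ for any measurable $E\subset D\setminus\mathcal{C}(v_n)$. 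This exploits the smallness of the \emph{gap} $H+v_n\le 3\varepsilon$ on $D\setminus\Lambda(\varepsilon)$ rather than the smallness of the measure, and yields the $O(\sqrt{\varepsilon})$ control near the coincidence set that your argument lacks. With that estimate one first proves $L_1$-convergence and then interpolates against the uniform $L_{2r}$-bounds to get $L_r$; your final interpolation step is fine once the $L_1$-convergence is secured.
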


\begin{proof}
Thanks to \eqref{cv1P} and the continuous embedding of $H^2(D)$ in $L_\infty(D)$, we may fix $M>H$ (only depending on $\kappa$) such that
\begin{equation}
- H \le v_n(x) , v(x) \le M-H\ , \qquad x\in\bar{D}\ , \quad n\ge 1\, . \label{cv2P}
\end{equation}

\noindent\textbf{Step~1.} We first establish an estimate ensuring that there is no concentration of $\partial_z\chi_v(\cdot,v)$ on small subsets of $D\setminus\mathcal{C}(v)$. Indeed, since $\chi_v\in H^2(\Omega(v))$ we have $\chi_v(x,\cdot)\in H^2((-H,v(x)))$ for a.a. $x\in D\setminus\mathcal{C}(v)$, so that it follows from the boundary conditions \eqref{t5bP} and \eqref{t5cP} that
\begin{align*}
\partial_z \chi_v(x,v(x)) &= \partial_z \chi_v(x,-H) + \int_{-H}^{v(x)} \partial_z^2\chi_v(x,z)\, \mathrm{d}z
\\
&= \sigma(x) \, \chi_v(x,-H) + \int_{-H}^{v(x)} \partial_z^2\chi_v(x,z)\, \mathrm{d}z
\\
&=\sigma(x) \left( \chi_v(x,v(x)) - 
\int_{-H}^{v(x)} \partial_z\chi_v(x,z)\mathrm{d}z\right) 
+ \int_{-H}^{v(x)} \partial_z^2\chi_v(x,z)\, \mathrm{d}z
\\
&= \int_{-H}^{v(x)} \left(\partial_z^2\chi_v(x,z) - \sigma(x) \partial_z\chi_v(x,z)\right)\mathrm{d}z
\end{align*}
for a.a. $x\in D\setminus\mathcal{C}(v)$. Thus, for an arbitrary measurable subset $E \subset D\setminus\mathcal{C}(v)$, we infer from H\"older's inequality that 
\begin{subequations}\label{y0P}
\begin{align}
& \int_E \vert\partial_z \chi_v(x,v(x))\vert \, \mathrm{d}x \nonumber \\
& \qquad \leq \int_E \int_{-H}^{v(x)} \left( \vert \partial_z^2\chi_v(x,z) \vert + \sigma(x)\vert \partial_z\chi_v(x,z)\vert\right) \mathrm{d}z \mathrm{d}x \nonumber
\\
&\qquad \leq \left( \int_E (H+v)(x) \, \mathrm{d}x  \right)^{1/2} \bigg(\int_{\Omega(v)} \left( 2 \vert \partial_z^2\chi_v(x,z) \vert^2 + 2
\Vert \sigma\Vert_{\infty}^2 \vert \partial_z\chi_v(x,z) \vert^2 \right) \mathrm{d}(x,z) \bigg)^{1/2} \nonumber
\\
&\qquad \leq C \left( \int_E (H+v)(x) \, \mathrm{d}x  \right)^{1/2} \Vert \chi_v \Vert_{H^2(\Omega(v))}\,. \label{y0Pa}
\end{align}
Clearly, the same proof implies that, for any $n\ge 1$ and arbitrary measurable subset $E \subset D\setminus\mathcal{C}(v_n)$, 
\begin{equation}
\int_E \vert\partial_z \chi_{v_n}(x,v_n(x))\vert \, \mathrm{d}x \leq C \left( \int_E (H+v_n)(x) \, \mathrm{d}x  \right)^{1/2} \Vert \chi_{v_n} \Vert_{H^2(\Omega(v_n))}\,. \label{y0Pb}
\end{equation}
\end{subequations}

\noindent\textbf{Step~2.} We next handle the behavior of $\partial_z \chi_v(\cdot,v)$ where $v$ stays away from $-H$. To this end, let $\varepsilon\in (0,H/2)$ and define
\begin{equation}
\Lambda(\varepsilon) := \{ x \in D\ :\ v(x)>-H+2\varepsilon\}\,, \label{y1P}
\end{equation} 
which is a non-empty open subset of $D$, since $v\in C(\bar{D})$ with $v(\pm L)=0$. We can thus write it as a countable union of disjoint open intervals $(\Lambda_j(\varepsilon))_{j\in J}$, see \cite[IX.Proposition~1.8]{AEIII}. Also, owing to \eqref{cv1P} and the continuous embedding of $H^1(D)$ in $C(\bar{D})$, there is $n_\varepsilon\ge 1$ such that
\begin{equation}
v(x)-\varepsilon \le v_n(x) \le v(x)+\varepsilon\ , \qquad x\in\bar{D}\ , \quad n\ge n_\varepsilon\, . \label{y2P}
\end{equation}
A straightforward consequence of \eqref{y1P} and \eqref{y2P} is that
\begin{equation}
\{ (x,z) \in \Lambda(\varepsilon)\times [-H,\infty)\ :\ -H< z < v(x)-\varepsilon\} \subset \Omega(v_n)\ , \qquad n\ge n_\varepsilon\,. \label{y3P}
\end{equation}
Therefore, the function $X_{n,\varepsilon}$, given by
\begin{equation*}
X_{n,\varepsilon}(x) := \partial_z \chi_v(x,v(x)-\varepsilon) - \partial_z \chi_{v_n}(x,v(x)-\varepsilon), \qquad x \in \Lambda(\varepsilon)\, , \quad n\ge n_\varepsilon\, ,
\end{equation*}
is well-defined. Let $j\in J$ and $n\ge n_\varepsilon$. Since $\partial_z \chi_v$ and $\partial_z \chi_{v_n}$ both belong to $H^1(\mathcal{O}_{\Lambda_j(\varepsilon)}(v-\varepsilon))$, the set $\mathcal{O}_{\Lambda_j(\varepsilon)}(v-\varepsilon)$ being defined in \eqref{t3P}, it follows from \eqref{t100P}, \eqref{cv2P}, and the definition of $\Lambda(\varepsilon)$ that
\begin{align*}
\varepsilon \int_{\Lambda_j(\varepsilon)} |X_{n,\varepsilon}(x)|^2\ \mathrm{d}x & \le \int_{\Lambda_j(\varepsilon)} |X_{n,\varepsilon}(x)|^2 (H+v(x)-\varepsilon)\ \mathrm{d}x \\
& \le \|\partial_z (\chi_v - \chi_{v_n})\|_{L_2(\mathcal{O}_{\Lambda_j(\varepsilon)}(v-\varepsilon))}^2 \\
& \quad + 2 \|H+v-\varepsilon\|_{L_\infty(\Lambda_j(\varepsilon))} \|\partial_z (\chi_v - \chi_{v_n})\|_{L_2(\mathcal{O}_{\Lambda_j(\varepsilon)}(v-\varepsilon))} \|\partial_z^2 (\chi_v - \chi_{v_n})\|_{L_2(\mathcal{O}_{\Lambda_j(\varepsilon)}(v-\varepsilon))} \\
& \le \|\partial_z (\chi_v - \chi_{v_n})\|_{L_2(\mathcal{O}_{\Lambda_j(\varepsilon)}(M))}^2 \\
& \quad + C(\kappa) \|\partial_z (\chi_v - \chi_{v_n})\|_{L_2(\mathcal{O}_{\Lambda_j(\varepsilon)}(M))} \left( \|\partial_z^2 \chi_v\|_{L_2(\mathcal{O}_{\Lambda_j(\varepsilon)}(v))} + \|\partial_z^2 \chi_{v_n}\|_{L_2(\mathcal{O}_{\Lambda_j(\varepsilon)}(v_n))} \right) \,.
\end{align*}
Summing the above inequality over $j\in J$ and noticing that
\begin{align*}
& \sum_{j\in J} \|\partial_z (\chi_v - \chi_{v_n})\|_{L_2(\mathcal{O}_{\Lambda_j(\varepsilon)}(M))} \left( \|\partial_z^2 \chi_v\|_{L_2(\mathcal{O}_{\Lambda_j(\varepsilon)}(v))} + \|\partial_z^2 \chi_{v_n}\|_{L_2(\mathcal{O}_{\Lambda_j(\varepsilon)}(v_n))} \right) \\
& \quad \le \left( \sum_{j\in J} \|\partial_z (\chi_v - \chi_{v_n})\|_{L_2(\mathcal{O}_{\Lambda_j(\varepsilon)}(M))}^2 \right)^{1/2} \left( \sum_{j\in J} \left( \|\partial_z^2 \chi_v\|_{L_2(\mathcal{O}_{\Lambda_j(\varepsilon)}(v))} + \|\partial_z^2 \chi_{v_n}\|_{L_2(\mathcal{O}_{\Lambda_j(\varepsilon)}(v_n))} \right)^2 \right)^{1/2} \\
& \quad \le \sqrt{2} \|\partial_z (\chi_v - \chi_{v_n})\|_{L_2(\Omega(M))} \left( \sum_{j\in J} \left( \|\partial_z^2 \chi_v\|_{L_2(\mathcal{O}_{\Lambda_j(\varepsilon)}(v))}^2 + \|\partial_z^2 \chi_{v_n}\|_{L_2(\mathcal{O}_{\Lambda_j(\varepsilon)}(v_n))}^2 \right) \right)^{1/2} \\
& \quad \le \sqrt{2} \|\partial_z (\chi_v - \chi_{v_n})\|_{L_2(\Omega(M))} \left( \|\partial_z^2 \chi_v\|_{L_2(\Omega(v))} + \|\partial_z^2 \chi_{v_n}\|_{L_2(\Omega(v_n))} \right) \\
& \quad \le  C(\kappa) \|\partial_z (\chi_v - \chi_{v_n})\|_{L_2(\Omega(M))}
\end{align*}
by the Cauchy-Schwarz inequality, \eqref{cv1P},  Theorem~\ref{thmt1PP}, we obtain
\begin{equation*}
\varepsilon \int_{\Lambda(\varepsilon)} |X_{n,\varepsilon}(x)|^2\ \mathrm{d}x \le \|\partial_z (\chi_v - \chi_{v_n})\|_{L_2(\Omega(M))}^2 + C(\kappa) \|\partial_z (\chi_v - \chi_{v_n})\|_{L_2(\Omega(M))} \,.
\end{equation*}
We now infer from \eqref{x3P} and the above inequality that
\begin{equation}
\lim_{n\to\infty} \int_{\Lambda(\varepsilon)} |X_{n,\varepsilon}(x)|^2\ \mathrm{d}x = 0\, . \label{x8P}
\end{equation}

We next set
\begin{equation*}
Y_n (x) := \partial_z \chi_v(x,v(x)) - \partial_z \chi_{v_n}(x,v_n(x)), \qquad x \in \Lambda(\varepsilon)\ , \quad n\ge n_\varepsilon\, .
\end{equation*}
Using \eqref{y2P} and H\"older's and Young's inequalities, we obtain, for $j\in J$, 
\begin{align*}
\Vert Y_n \Vert_{L_1(\Lambda_j(\varepsilon))}  & \leq \Vert X_{n,\varepsilon}\Vert_{L_1(\Lambda_j(\epsilon))} 
+ \int_{\Lambda_j(\varepsilon)} \left\vert \int_{v-\varepsilon}^v \partial_z^2 \chi_v(\cdot,z)\,\mathrm{d}z 
- \int_{v-\varepsilon}^{v_n}\partial_z^2 \chi_{v_n}(\cdot,z)\,\mathrm{d}z\right\vert \, \mathrm{d}x 
\\
& \leq  \Vert X_{n,\varepsilon}\Vert_{L_1(\Lambda_j(\varepsilon))}  
 +\int_{\Lambda_j(\varepsilon)} \int_{v-\varepsilon}^v \vert \partial_z^2 \chi_v(\cdot,z)\vert \,\mathrm{d}z \mathrm{d}x
 + \int_{\Lambda_j(\varepsilon)} \int_{v-\varepsilon}^{v_n} \vert \partial_z^2 \chi_{v_n}(\cdot,z)\vert \,\mathrm{d}z \mathrm{d}x 
\\
& \leq \Vert X_{n,\varepsilon} \Vert_{L_1(\Lambda_j(\varepsilon))} 
+\sqrt{\varepsilon \vert \Lambda_j(\varepsilon)\vert}\left(\int_{\Lambda_j(\varepsilon)} 
\int_{v-\varepsilon}^v \vert \partial_z^2 \chi_v(\cdot,z)\vert^2 \,\mathrm{d}z \mathrm{d}x \right)^{1/2}
\\
& \quad +\sqrt{2\varepsilon \vert \Lambda_j(\varepsilon)\vert}\left(\int_{\Lambda_j(\varepsilon)} 
\int_{v-\varepsilon}^{v_n} \vert \partial_z^2 \chi_{v_n}(\cdot,z)\vert^2 \,\mathrm{d}z \mathrm{d}x \right)^{1/2}
\\
&\leq \Vert X_{n,\varepsilon} \Vert_{L_1(\Lambda_j(\epsilon))} + \frac{\sqrt{\varepsilon}}{2} \, \vert\Lambda_j(\varepsilon) \vert 
+  \frac{\sqrt{\varepsilon}}{2} \int_{\Lambda_j(\varepsilon)} \int_{-H}^v \vert \partial_z^2 \chi_v(\cdot,z)\vert^2 \,\mathrm{d}z \mathrm{d}x
\\
&\quad + \frac{\sqrt{\varepsilon}}{2} \, \vert\Lambda_j(\varepsilon) \vert 
+  \sqrt{\varepsilon} \int_{\Lambda_j(\varepsilon)} \int_{-H}^{v_n} \vert \partial_z^2 \chi_{v_n}(\cdot,z)\vert^2 \,\mathrm{d}z \mathrm{d}x\,.
\end{align*}
Summing over $j\in J$ and using \eqref{cv1P} and Theorem~\ref{thmt1PP} give
\begin{align*}
\Vert Y_n \Vert_{L_1(\Lambda(\varepsilon))} & \leq \Vert X_{n,\varepsilon} \Vert_{L_1(\Lambda(\varepsilon))} + \sqrt{\varepsilon}  \vert \Lambda (\varepsilon)\vert  +  \sqrt{\varepsilon} \Vert \chi_v \Vert_{H^2(\Omega(v))}  + \sqrt{\varepsilon} \Vert \chi_{v_n} \Vert_{H^2(\Omega(v_n))} \\
& \le \Vert X_{n,\varepsilon} \Vert_{L_1(\Lambda(\varepsilon))} +  C(\kappa) \sqrt{\varepsilon}  \,.
\end{align*}
Owing to \eqref{x8P}, we may take the limit $n\to \infty$ in the previous inequality and obtain
\begin{equation*}
\limsup_{n\to \infty} \Vert Y_n \Vert_{L_1(\Lambda(\varepsilon))}  \leq C(\kappa) \sqrt{\varepsilon} .
\end{equation*}
Since $\Lambda(\varepsilon)\subset \Lambda(\delta)$ for all $\delta \in (0,\varepsilon)$, we infer from the above inequality that
\begin{equation*}
\limsup_{n\to \infty} \Vert Y_n \Vert_{L_1(\Lambda(\varepsilon))} \le \limsup_{n\to \infty} \Vert Y_n \Vert_{L_1(\Lambda(\delta))} \le  C(\kappa) \sqrt{\delta} 
\end{equation*}
and we may pass to the limit $\delta\to 0$ to conclude that
\begin{equation}
\lim_{n\to \infty} \Vert Y_n \Vert_{L_1(\Lambda(\varepsilon))} =0, \qquad \varepsilon \in (0,H/2). \label{x9P}
\end{equation}

\noindent\textbf{Step~3.} Finally, we infer from \eqref{cv1P}, \eqref{cv2P}, \eqref{y0P}, and Theorem~\ref{thmt1PP} that 
\begin{align*}
&\Vert \ell(v_n)- \ell(v)\Vert_{L_1(D)} 
\\
& \;\leq  \int_{\Lambda(\varepsilon)} \vert \ell(v_n)- \ell(v)\vert \, \mathrm{d}x 
+ \int_{D\setminus\Lambda(\varepsilon)} \vert \ell(v_n)\vert\, \mathrm{d}x  
+\int_{D\setminus\Lambda(\varepsilon)} \vert \ell(v)\vert\, \mathrm{d}x
\\
& \; = \Vert Y_n \Vert_{L_1(\Lambda(\varepsilon))}
+ \int_{(D\setminus\Lambda(\varepsilon))\setminus \mathcal{C}(v_n)} \vert \partial_z \chi_{v_n}(\cdot, v_n)\vert \,\mathrm{d}x +  \int_{(D\setminus\Lambda(\varepsilon))\setminus \mathcal{C}(v)} \vert \partial_z \chi_{v}(\cdot, v)\vert \,\mathrm{d}x 
\\
& \;\leq \Vert Y_n \Vert_{L_1(\Lambda(\varepsilon))} + C \left( \int_{(D\setminus\Lambda(\varepsilon))\setminus \mathcal{C}(v_n)} (H + v_n)(x) \, \mathrm{d}x \right)^{1/2} \Vert \chi_{v_n} \Vert_{H^2(\Omega(v_n))} 
\\
& \qquad\qquad\qquad 
+ C \left(\int_{(D\setminus\Lambda(\varepsilon))\setminus\mathcal{C}(v)} (H+v)(x) \, \mathrm{d}x \right)^{1/2} \Vert \chi_{v} \Vert_{H^2(\Omega(v))} \\
& \;\leq \Vert Y_n \Vert_{L_1(\Lambda(\varepsilon))} +  C(\kappa)  \left( \int_{D\setminus\Lambda(\varepsilon)} (H + v)(x) \, \mathrm{d}x \right)^{1/2}  + C(\kappa) \left( \int_{D\setminus\Lambda(\varepsilon)} (H + v_n)(x) \, \mathrm{d}x \right)^{1/2} \,.
\end{align*}
Since $0\le H+v\le 2\varepsilon$ and $0\le H+v_n\le 3\varepsilon$ in $D\setminus\Lambda(\varepsilon)$ for $n\ge n_\varepsilon$ by \eqref{y1P} and \eqref{y2P}, we further obtain
\begin{equation*}
\|\ell(v_n)-\ell(v)\|_{L_1(D)} \le \Vert Y_n \Vert_{L_1(\Lambda(\varepsilon))} + C(\kappa)\sqrt{\varepsilon}\,.
\end{equation*}
We now first let $n\to\infty$ with the help of \eqref{x9P} and then take the limit $\varepsilon\to 0$ to conclude that
\begin{equation}
\lim_{n\to\infty} \|\ell(v_n)-\ell(v)\|_{L_1(D)} = 0\,. \label{x10P}
\end{equation}
Finally, given $r\in [1,\infty)$, we infer from H\"older's inequality, Lemma~\ref{h},  \eqref{t6rPP}, and \eqref{cv1P} that
\begin{align*}
\|\ell(v_n)-\ell(v)\|_{L_r(D)} & \le \|\ell(v_n)-\ell(v)\|_{L_1(D)}^{1/(2r-1)} \|\ell(v_n)-\ell(v)\|_{L_{2r}(D)}^{2(r-1)/(2r-1)} \\
& \le \|\ell(v_n)-\ell(v)\|_{L_1(D)}^{1/(2r-1)} \left( \|\ell(v_n)\|_{L_{2r}(D)}^{2(r-1)/(2r-1)}  + \|\ell(v)\|_{L_{2r}(D)}^{2(r-1)/(2r-1)} \right) \\
& \le C(\kappa, r) \|\ell(v_n)-\ell(v)\|_{L_1(D)}^{1/(2r-1)}
\end{align*}
and the assertion follows from \eqref{x10P}.
\end{proof}

Summarizing the outcome of this section, we have obtained continuity properties of the electrostatic energy $E_e$ and the function $g$ introduced in \eqref{GG}.

\begin{theorem}\label{CC}
The electrostatic energy $E_e: \bar{S} \rightarrow \mathbb{R}$ is continuous for the weak topology of $H^2(D)$. The function $g: \bar{S} \rightarrow L_r(D)$ is continuous for each $r \in [1,\infty)$, the set $\bar{S}$ being still endowed with the weak topology of $H^2(D)$.
\end{theorem}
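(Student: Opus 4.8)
The plan is to prove that $E_e$ and $g$ are sequentially continuous for the weak topology of $H^2(D)$ (which is the property needed in the minimization procedure). So I fix $v\in\bar S$ and a sequence $(v_n)_{n\ge1}$ in $\bar S$ with $v_n\rightharpoonup v$ in $H^2(D)$. Being weakly convergent, $(v_n)_{n\ge1}$ is bounded in $H^2(D)$, say $\|v\|_{H^2(D)}+\sup_{n\ge1}\|v_n\|_{H^2(D)}\le\kappa$, and the compactness of the embedding of $H^2(D)$ in $C^1(\bar D)$ together with the uniqueness of the limit yields $v_n\to v$ in $C^1(\bar D)$, hence in $H^1(D)$. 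In particular, the hypotheses of Lemma~\ref{h}, Corollary~\ref{L3}, and Proposition~\ref{L2} all hold, with $M>H$ chosen so that $-H\le v_n,v\le M-H$ on $\bar D$. The continuity of $E_e$ is then immediate: comparing \eqref{Ee}, \eqref{GFunc}, and \eqref{chi} and using $\psi_w=\chi_w+h_w$ shows that $E_e(w)=-\mathcal{G}(w)[\chi_w]$ for every $w\in\bar S$, so that $E_e(v_n)\to E_e(v)$ by \eqref{x2P}.

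For $g$, the crucial point is to recast the two-branch definition \eqref{GG} as a single formula, valid a.e.\ on $D$, expressed through the function $\ell(w)$ of Proposition~\ref{L2} and the trace $\chi_w(\cdot,-H)$ of Corollary~\ref{L3}. On $D\setminus\mathcal{C}(w)$ one has $\partial_z\psi_w(\cdot,w)=\ell(w)+(\partial_z h)(\cdot,w,w)$ (chain rule for the trace of the smooth function $h_w$) and $\psi_w(\cdot,-H)=\chi_w(\cdot,-H)+h(\cdot,-H,w)$, which turns \eqref{g} into
\begin{equation*}
\begin{split}
g(w)(x)={}&\tfrac12\big(1+|\partial_x w(x)|^2\big)\big[\ell(w)(x)-(\partial_w h)(x,w(x),w(x))\big]^2\\
&+\sigma(x)\big[\chi_w(x,-H)+h(x,-H,w(x))-\mathfrak{h}(x,w(x))\big](\partial_w\mathfrak{h})(x,w(x))\\
&-\tfrac12\Big[\big|(\partial_x h)(x,w(x),w(x))\big|^2+\big((\partial_z h)(x,w(x),w(x))+(\partial_w h)(x,w(x),w(x))\big)^2\Big]\,.
\end{split}
\end{equation*}
I then check that this same formula reproduces \eqref{gg} a.e.\ on $\mathcal{C}(w)$: there $w=-H$, $\ell(w)=0$ by definition, $\chi_w(\cdot,-H)=0$ (this trace being that of the zero-extension of $\chi_w$ to $\Omega(M)=D\times(-H,M)$, which vanishes on the strip lying over $\mathcal{C}(w)$), and $\partial_x w=0$ a.e.\ (the gradient of the nonnegative function $w+H$ vanishing a.e.\ on its zero set). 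Substituting these values into the display gives precisely \eqref{gg}. This uniform representation, combined with the bounds of Theorem~\ref{Thmpsi} (which provide $\ell(w),\chi_w(\cdot,-H)\in L_q(D)$ for every $q\in[1,\infty)$ when $\|w\|_{H^2(D)}\le\kappa$), also shows in passing that $g(w)\in L_r(D)$ for all $w\in\bar S$ and $r\in[1,\infty)$.

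With this representation in hand, I would deduce $g(v_n)\to g(v)$ in $L_r(D)$ for $r\in[1,\infty)$ by assembling the convergences at hand: $\ell(v_n)\to\ell(v)$ in $L_q(D)$ for all $q\in[1,\infty)$ by Proposition~\ref{L2}; $\chi_{v_n}(\cdot,-H)\to\chi_v(\cdot,-H)$ in $L_q(D)$ for all $q\in[1,\infty)$ by Corollary~\ref{L3}; and $\partial_x v_n\to\partial_x v$ together with the finitely many first-order derivatives of $h$ and $\mathfrak{h}$ evaluated along $(\cdot,v_n,v_n)$, resp.\ $(\cdot,v_n)$, converging uniformly on $\bar D$, thanks to $v_n\to v$ in $C(\bar D)$ and the regularity \eqref{zucchero}. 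Writing $g(v_n)-g(v)$ as a finite sum, each summand is a product of a uniformly bounded, uniformly convergent factor with an $L_q$-bounded, $L_q$-convergent factor — the quadratic term $\big[\ell(v_n)-(\partial_w h)(\cdot,v_n,v_n)\big]^2$ being handled through $a_n^2-a^2=(a_n-a)(a_n+a)$ with $a_n\to a$ in $L_{2r}(D)$ and $(a_n)_{n\ge1}$ bounded there — and Hölder's inequality then gives that each summand tends to $0$ in $L_r(D)$.

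The main obstacle is the verification of the uniform representation of $g$: once the behaviour of $\ell(w)$, $\chi_w(\cdot,-H)$, and $\partial_x w$ on the coincidence set $\mathcal{C}(w)$ has been identified as above, the rest is a routine $L_p$-convergence argument built on Proposition~\ref{L2} and Corollary~\ref{L3}. A minor but recurrent technical point is to keep track throughout of the fact that all bounds used (for $\chi_w$, $\ell(w)$, and hence for the sequences in the various $L_q$-spaces) depend only on $\kappa$ and $\sigma$, so that the required boundedness is genuinely uniform in $n$.
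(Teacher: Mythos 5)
Your proposal is correct and follows essentially the same route as the paper: the continuity of $E_e$ is read off from the identity $E_e(w)=-\mathcal{G}(w)[\chi_w]$ together with \eqref{x2P}, and the continuity of $g$ from the single unified formula in terms of $\ell(w)$ and $\chi_w(\cdot,-H)$ (using $\partial_x w=0$ a.e.\ on $\mathcal{C}(w)$ and the zero-extension convention for the traces there), combined with Proposition~\ref{L2}, Corollary~\ref{L3}, and the $C^1$-regularity of $h$ and $\mathfrak{h}$. The only difference is that you spell out the verification of the unified formula on the coincidence set and the final H\"older bookkeeping, which the paper leaves implicit.
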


\begin{proof}
Let us first recall that, if $(v_n)_{n\ge 1}$ is a sequence in $\bar{S}$ converging weakly in $H^2(D)$ to $v\in\bar{S}$, then there is $\kappa>0$ such that \eqref{x1P} and \eqref{cv1P} hold true. Consequently, we infer from Corollary~\ref{L3} that
\begin{equation*}
\lim_{n\to\infty} E_e(v_n) = - \lim_{n\to\infty} \mathcal{G}(v_n)[\chi_{v_n}] = - \mathcal{G}(v)[\chi_v] = E_e(v)\,,
\end{equation*}
thereby establishing the stated continuity of $E_e$. Next, let $v\in \bar{S}$. Since $\partial_x v=0$ a.e. in $\mathcal{C}(v)$, it follows from \eqref{GG} and Proposition~\ref{L2} that
\begin{align*}
g(v)(x) & = \frac{1}{2} (1+|\partial_x v(x)|^2)\,\big[\ell(v)(x) - (\partial_w h)_{v}(x,v(x))\big]^2\\
& \qquad + \sigma(x)\big[\chi_{v}(x,-H)+ h_v(x,-H)-\mathfrak{h}_{v}(x)\big](\partial_w \mathfrak{h})_{v}(x)\\
& \qquad -\frac{1}{2} \left[ \big\vert(\partial_x h)_v\big\vert^2+ \big((\partial_z h)_v+(\partial_w h)_v\big)^2 \right](x, v(x))
\end{align*}
for $x\in D$. The stated continuity of $g$ then readily follows from Proposition~\ref{L2} and the $C^1$-regularity of $h$ and $\mathfrak{h}$ (see also Lemma~\ref{h}{\bf (b)}).
\end{proof}

\section{Shape Derivative of the Electrostatic Energy}\label{Sec5}

In this section we investigate differentiability properties of the 
 electrostatic energy
$$
E_e(u)=-\dfrac{1}{2}\displaystyle\int_{\Omega(u)}  \big\vert\nabla \psi_u\big\vert^2\,\rd (x,z)\\
 -\dfrac{1}{2}\displaystyle\int_{ D} \sigma(x) \big\vert \psi_u(x,-H)- \mathfrak{h}_{u}(x)\big\vert^2\,\rd x
$$
with respect to $u\in \bar S$, where $\psi_u$ is the strong solution to \eqref{MBP0}, see Theorem~\ref{Thmpsi}. Owing to the dependence of $\psi_u$ on the domain $\Omega(u)$ this resembles the computation of a shape derivative, a topic which has received considerable attention in recent years, see \cite{BB05,HP05,SZ92} and the references therein. Note that we may write alternatively $E_e(u)=-\mathcal{G}(u)[\psi_u-h_u]$, since $\chi_u= \psi_u-h_u$ is the strong solution to \eqref{bbb} (with $v=u$) given by Theorem~\ref{thmt1PP}.

As might be expected, the switch between boundary conditions for $\psi_u$ when $\mathcal{C}(u)\ne \emptyset$ generates additional difficulties and we begin with the differentiability  of $\psi_u$ with respect to $u\in S$.

\begin{lemma}\label{P316}
Let $u\in S$ be fixed and define, for $v\in S$, the transformation 
$
\Theta_v:\Omega(u)\rightarrow \Omega(v)
$  
by 
\begin{equation*}
\Theta_{v}(x,z):=\left(x,z+\frac{v(x)-u(x)}{H+u(x)}(z+H)\right)\,,\quad (x,z)\in\Omega(u) \,.
\end{equation*}
Then there exists a neighborhood $U$ of $u$ in $S$ such that the mapping
$$
U\rightarrow H_B^1(\Omega(u)),\quad v\mapsto \chi_v\circ \Theta_v
$$
is continuously differentiable, where $\chi_v=\psi_v-h_v\in H_B^1(\Omega(v))$ solves \eqref{bbb}, see Theorem~\ref{thmt1PP}, and $S$ is endowed with the $H^2(D)$-topology.
\end{lemma}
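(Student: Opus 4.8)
The plan is a standard transport‑to‑a‑fixed‑domain argument combined with the implicit function theorem. Throughout, write $\Theta_v^{(2)}(x,z):=z+\tfrac{v(x)-u(x)}{H+u(x)}(z+H)$ for the second component of $\Theta_v$, so that $\det D\Theta_v=\partial_z\Theta_v^{(2)}=\tfrac{H+v}{H+u}$ on $\Omega(u)$.

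\textbf{Step~1: reduction to a fixed domain.} Since $u,v\in S\subset H^2(D)\cap H_0^1(D)$, one has $u(\pm L)=v(\pm L)=0$, and $H^2(D)\hookrightarrow C(\bar D)$ together with $\min_{\bar D}(H+u)>0$ allows one to choose $U$ so small that $\min_{\bar D}(H+v)\ge\delta>0$ for $v\in U$. For such $v$ the map $\Theta_v$ is a bi‑Lipschitz homeomorphism of $\Omega(u)$ onto $\Omega(v)$ which is the identity on $D\times\{-H\}$, maps $\mathfrak{G}(u)$ onto $\mathfrak{G}(v)$ and fixes $\{\pm L\}\times(-H,0)$; hence $\vartheta\mapsto\vartheta\circ\Theta_v$ is a topological isomorphism of $H_B^1(\Omega(v))$ onto $H_B^1(\Omega(u))$. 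By the change‑of‑variables formula, $\hat\chi_v:=\chi_v\circ\Theta_v\in H_B^1(\Omega(u))$ is the unique solution of
\begin{equation*}
\begin{split}
\int_{\Omega(u)} B_v\nabla\hat\chi_v\cdot\nabla\hat\vartheta\,\rd(x,z) & +\int_D\sigma\,\hat\chi_v(\cdot,-H)\hat\vartheta(\cdot,-H)\,\rd x \\
& = -\int_{\Omega(u)} B_v\nabla\hat h_v\cdot\nabla\hat\vartheta\,\rd(x,z)-\int_D\sigma\big[h_v(\cdot,-H)-\mathfrak h_v\big]\hat\vartheta(\cdot,-H)\,\rd x
\end{split}
\end{equation*}
for all $\hat\vartheta\in H_B^1(\Omega(u))$, where $B_v:=(\det D\Theta_v)\,(D\Theta_v)^{-1}(D\Theta_v)^{-\mathrm T}\in L_\infty(\Omega(u);\RR^{2\times2})$ is symmetric positive definite with $B_u=\mathrm{id}$, $\hat h_v:=h_v\circ\Theta_v$, and the boundary integrals are unchanged because $\Theta_v$ fixes $\{z=-H\}$.

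\textbf{Step~2: the implicit function theorem.} Define $F:U\times H_B^1(\Omega(u))\to H_B^1(\Omega(u))'$ by letting $\langle F(v,\phi),\hat\vartheta\rangle$ be the difference of the left‑ and right‑hand sides of the identity in Step~1, with $\phi$ in place of $\hat\chi_v$. Then $F$ is affine in $\phi$, $F(u,\chi_u)=0$ (as $\Theta_u=\mathrm{id}$, $B_u=\mathrm{id}$, $\hat h_u=h_u$), and $\partial_\phi F(u,\chi_u)$ is the operator attached to the bilinear form $(\phi,\hat\vartheta)\mapsto\int_{\Omega(u)}\nabla\phi\cdot\nabla\hat\vartheta\,\rd(x,z)+\int_D\sigma\,\phi(\cdot,-H)\hat\vartheta(\cdot,-H)\,\rd x$, which is an isomorphism onto $H_B^1(\Omega(u))'$ by the Lax--Milgram theorem, using $\sigma\ge\sigma_{min}>0$ from \eqref{s0P} and the Poincar\'e and trace inequalities of Lemma~\ref{lemT2H}. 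It remains to check that $F$ is of class $C^1$. The coefficient field $v\mapsto B_v$ is a rational expression in $v$, $\partial_x v$ (and in the fixed functions $u$, $\partial_x u$) whose denominators are powers of $H+u$ and $H+v$, hence bounded away from $0$ on $\bar D$ for $v\in U$; since $v\mapsto(v,\partial_x v)$ is bounded linear from $S$ into $C(\bar D)^2$, the map $v\mapsto B_v$ is real‑analytic into $L_\infty(\Omega(u);\RR^{2\times2})$, so the bilinear part of $F$ is smooth. For the load part, $\nabla\hat h_v$, $h_v(\cdot,-H)$ and $\mathfrak h_v$ are written through the first‑order partials of $h$ and $\mathfrak h$ evaluated along $(x,z)\mapsto(x,\Theta_v^{(2)}(x,z),v(x))$ and $x\mapsto(x,-H,v(x))$; since $h\in C^2$ and $\mathfrak h\in C^1$, the corresponding superposition (Nemytskii) operators are of class $C^1$ from $C(\overline{\Omega(u)})$, respectively $C(\bar D)$, into themselves, while $v\mapsto(\Theta_v^{(2)},\partial_x\Theta_v^{(2)},\partial_z\Theta_v^{(2)})$ is bounded affine; together with the continuity of the trace $H_B^1(\Omega(u))\to L_2(D)$ this yields $F\in C^1$. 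The implicit function theorem then provides a (possibly smaller) neighbourhood $U$ of $u$ in $S$ and a map $v\mapsto\phi(v)\in H_B^1(\Omega(u))$ of class $C^1$ with $\phi(u)=\chi_u$ and $F(v,\phi(v))=0$; by the uniqueness in Step~1, $\phi(v)=\chi_v\circ\Theta_v$, which is the claim.

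The step I expect to be the main obstacle is the $C^1$‑regularity of $F$ in Step~2. The crucial point is to perform the whole reduction at the level of the \emph{weak} formulation of \eqref{bbb}, so that only first‑order partials of $h$ enter the data: working with the strong form would force $\Delta h_v$ into the load term, bringing in the merely continuous partial $\partial_w^2 h$ and destroying differentiability. One must also track carefully that the rational coefficients defining $B_v$ and $(D\Theta_v)^{-1}$ stay bounded, which is exactly where $u\in S$ (so that $H+u$ is bounded below on $\bar D$) and the freedom to shrink $U$ are used.
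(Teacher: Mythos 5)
Your argument is essentially the paper's proof, carried out in more detail: the paper likewise transports \eqref{bbb} to the fixed domain $\Omega(u)$ via $\Theta_v$, writes the transformed weak formulation (your $B_v\nabla\hat h_v$ equals the paper's $J_v(D\Theta_v)^{-1}(\nabla h_v\circ\Theta_v)$ since $\nabla\hat h_v=D\Theta_v^{\mathrm T}(\nabla h_v\circ\Theta_v)$), recasts it as $F(v,\xi_v)=0$ for a Fr\'echet differentiable $F:S\times H_B^1(\Omega(u))\to(H_B^1(\Omega(u)))'$, and concludes by the Implicit Function Theorem, following \cite[Theorem~5.3.2]{HP05}. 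Your verification of the $C^1$-regularity of $F$ and of the invertibility of $\partial_\phi F(u,\chi_u)$ via Lax--Milgram fills in exactly the details the paper leaves to the cited references.
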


\begin{proof}
The proof follows the lines of \cite[Theorem~5.3.2]{HP05}, a similar proof is given in \cite[Lemma 4.1]{LW19}. We thus only provide a very brief sketch here. Let $u\in S$ and $v\in S$. Setting $\xi_v:=\chi_v\circ \Theta_v$ and performing a change of variables $(\bar x,\bar z)=\Theta_v(x,z)$, the weak formulation \eqref{yes1} satisfied by $\chi_v$ (as a critical point of $\mathcal{G}(v)$) can be written in the form
\begin{equation}\label{w6}
\begin{split}
\int_{\Omega(u)} &J_v\, \big((D\Theta_v)^{-1} (D\Theta_v^T)^{-1}\nabla\xi_v \big)\cdot\nabla\phi\,\rd (x,z) +\int_D \sigma \big(\xi_v \phi\big)(\cdot,-H)\,\rd x\\
&=  
-\int_{\Omega(u)} J_v\, \big((D\Theta_v)^{-1} (\nabla h_v\circ \Theta_v) \big)\cdot\nabla\phi\,\rd (x,z) +\int_D \sigma \big[\mathfrak{h}_v-h_v(\cdot,-H)\big]\phi(\cdot,-H)\, \rd x
\end{split}
\end{equation}
for $\phi\in H_B^1(\Omega(u))$,
where $J_v:= |\mathrm{det}(D\Theta_v)|$. Therefore, \eqref{w6} is equivalent to 
\begin{equation}\label{w7}
F(v,\xi_v)=0\,,\quad v\in S\,,
\end{equation}
for some Fr\'echet differentiable function
$$
F: S\times H_B^1(\Omega(u))\rightarrow (H_B^1(\Omega(u)))'\,.
$$ 
One then uses the Implicit Function Theorem to derive that $\xi_v$ depends smoothly on $v$.
\end{proof}

As a next step  we establish the Fr\'echet differentiability of $E_e$ on the open set $S$. For $u\in S$ recall that $g(u)$ is given by \eqref{g} since $\mathcal{C}(u)=\emptyset$ in this case.

\begin{proposition} \label{P416}
Let $S$ be endowed with the $H^2(D)$-topology. Then the electrostatic energy 
$E_e: S \rightarrow \mathbb{R}$ is continuously Fr\'echet differentiable with 
\begin{equation*}
\partial_u  E_e(u) [\vartheta] =\int_D g(u)\vartheta\,\rd x
\end{equation*}
for $u\in S$ and $\vartheta \in H^2(D)\cap H^1_0(D)$.
\end{proposition}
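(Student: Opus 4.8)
The plan is to differentiate the identity $E_e(u)=-\mathcal{G}(u)[\chi_u]$ using the chain rule together with the smooth dependence of $v\mapsto\chi_v\circ\Theta_v$ established in Lemma~\ref{P316}. Fix $u\in S$ and a direction $\vartheta\in H^2(D)\cap H_0^1(D)$, and for small $t$ set $u_t:=u+t\vartheta\in S$. Writing $\xi_t:=\chi_{u_t}\circ\Theta_{u_t}\in H_B^1(\Omega(u))$, Lemma~\ref{P316} gives that $t\mapsto\xi_t$ is $C^1$ near $t=0$ with $\xi_0=\chi_u$; denote its derivative at $t=0$ by $\dot\xi\in H_B^1(\Omega(u))$. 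Performing the change of variables $(\bar x,\bar z)=\Theta_{u_t}(x,z)$ in the two integrals defining $E_e(u_t)=-\mathcal{G}(u_t)[\chi_{u_t}]$, one rewrites $E_e(u_t)$ as an integral over the \emph{fixed} domain $\Omega(u)$ of an integrand that depends on $t$ only through the (smooth) data $J_{u_t}$, $D\Theta_{u_t}$, $h_{u_t}\circ\Theta_{u_t}$, $\mathfrak h_{u_t}$ and through $\xi_t$. All of these are differentiable in $t$ by \eqref{zucchero}, the explicit form of $\Theta_v$, and Lemma~\ref{P316}, so $t\mapsto E_e(u_t)$ is differentiable and $\partial_u E_e(u)[\vartheta]=\frac{d}{dt}E_e(u_t)\big|_{t=0}$.

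Next I would carry out the differentiation. It splits into two kinds of terms: those where the $t$-derivative hits $\xi_t$, and those where it hits the geometric/data coefficients. For the first kind, collecting the $\dot\xi$-contributions and changing variables back from $\Omega(u)$ to $\Omega(u)$ reproduces exactly the bilinear form appearing in the weak formulation \eqref{w6} (equivalently \eqref{yes1}) evaluated at the test function $\dot\xi$; since $\dot\xi\in H_B^1(\Omega(u))$ is an admissible test function and $\chi_u$ solves \eqref{yes1}, these terms cancel. This is the standard trick that removes the (otherwise intractable) material-derivative contribution of the state. For the second kind, one computes $\partial_t\big|_{t=0}$ of $J_{u_t}$, $(D\Theta_{u_t})^{-1}$, $\nabla(h_{u_t}\circ\Theta_{u_t})$, $h_{u_t}(\cdot,-H)$, $\mathfrak h_{u_t}$ explicitly: from $\Theta_v(x,z)=(x,z+\frac{v-u}{H+u}(z+H))$ one gets $\partial_t\Theta_{u_t}|_{t=0}=(0,\frac{\vartheta}{H+u}(z+H))$, hence the derivative of $J_{u_t}$ is $\partial_z\big(\frac{\vartheta}{H+u}(z+H)\big)=\frac{\vartheta}{H+u}$, etc. Substituting these and changing variables back to $\Omega(u)$ yields an integral over $\Omega(u)$ of $\chi_u=\psi_u-h_u$ and its derivatives; one then integrates by parts in the $z$-variable (using $\psi_u\in H^2(\Omega(u))$ from Theorem~\ref{Thmpsi}, which makes the normal trace $\partial_z\psi_u(\cdot,u)$ on $\mathfrak G(u)$ well-defined, and $\psi_u=h_u$ on $\mathfrak G(u)$, $-\partial_z\psi_u+\sigma(\psi_u-\mathfrak h_u)=0$ on $\Sigma(u)$ together with $\Delta\psi_u=0$) to reduce the bulk integral to a boundary integral over $\mathfrak G(u)$ and $\Sigma(u)$. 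The boundary terms on $\mathfrak G(u)$ produce, after writing the surface measure as $\sqrt{1+|\partial_x u|^2}\,\mathrm dx$ and using the chain-rule relations $\partial_x(\psi_u(x,u(x)))=(\partial_x\psi_u)(x,u(x))+(\partial_z\psi_u)(x,u(x))\partial_x u(x)$ and $\partial_x h_u(x)=(\partial_x h)_u+(\partial_z h)_u+(\partial_w h)_u$ evaluated at $(x,u(x))$ (wait: $\partial_x h_u(x)=(\partial_x h)(x,u(x),u(x))+(\partial_z h)(x,u(x),u(x))\partial_x u(x)+(\partial_w h)(x,u(x),u(x))\partial_x u(x)$), precisely the first and third lines of \eqref{g}; the boundary terms on $\Sigma(u)$, using \eqref{MBP3} and $\mathfrak h_u(x)=\mathfrak h(x,u(x))$, produce the middle line of \eqref{g}. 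Matching all terms gives $\partial_u E_e(u)[\vartheta]=\int_D g(u)\vartheta\,\mathrm dx$.

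Finally, continuity of $u\mapsto\partial_u E_e(u)$ from $S$ (with the $H^2(D)$-topology) into $(H^2(D)\cap H_0^1(D))'$ follows from the formula $\partial_u E_e(u)[\vartheta]=\int_D g(u)\vartheta\,\mathrm dx$ together with the continuity of $g:\bar S\to L_2(D)$ for the strong $H^2$-topology, which is a special case of Theorem~\ref{CC} (indeed the continuity in Theorem~\ref{CC} holds even for the weak $H^2$-topology, and on $S$ the coincidence set is empty so $g(u)$ is given by \eqref{g}); continuous Fréchet differentiability on the \emph{open} set $S$ then follows from the existence of the Gateaux derivative together with its continuity. The main obstacle is the middle computation: carrying out the change of variables carefully, correctly identifying and discarding the $\dot\xi$-terms via \eqref{yes1}, and—most delicately—justifying the integration by parts on the possibly only Lipschitz domain $\Omega(u)$ and matching the resulting boundary integrals term-by-term with the somewhat intricate expression \eqref{g}. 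The $H^2$-regularity of $\psi_u$ from Theorem~\ref{Thmpsi} is exactly what makes the boundary traces $\partial_z\psi_u(\cdot,u)$ meaningful and the integration by parts legitimate, and one should invoke the version of Gauß' theorem for these domains (as in \cite{Ko63}) used already in the proof of Proposition~\ref{lemt9P}. (On $S$ the domain $\Omega(u)$ has Lipschitz boundary, so this last point is in fact routine here; the genuine difficulty is bookkeeping.)
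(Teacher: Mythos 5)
Your plan is correct and follows essentially the same route as the paper's proof: transport to the fixed domain $\Omega(u)$ via $\Theta_v$, invoke Lemma~\ref{P316}, differentiate the transported functional, eliminate the material derivative of the state, and integrate by parts using the $H^2$-regularity of $\psi_u$ to match the boundary terms with \eqref{g}. The only (harmless) organizational difference is that you cancel the $\dot\xi$-contributions directly by recognizing the weak formulation \eqref{yes1} tested against $\dot\xi$, whereas the paper first applies Gau{\ss}' theorem to that term and observes the cancellation at the level of the resulting $\Sigma(u)$-boundary integral; the two are equivalent.
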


\begin{proof}
In this proof we shall use the notation from Lemma~\ref{P316}. We fix $u\in S$ and recall from Lemma~\ref{P316} that the mapping $v \mapsto \xi_v = \chi_v \circ \Theta_v$ is continuously differentiable with respect to $v$ in a neighborhood $U$ of $u$ in $S$ and takes values in $H^1_B(\Omega(u))$. With $\psi_v = \chi_v + h_v$, $J_v=\vert\mathrm{det}(D\Theta_v)\vert$, and the change of variables $(\bar x,\bar z)=\Theta_v(x,z)$, we obtain that, for $v \in U$, 
\begin{align*}
E_e(v) &= -\frac{1}{2} \int_{\Omega(v)} \vert \nabla \psi_v\vert^2 \, 
\rd (\bar x,\bar z) 
- \frac{1}{2} \int_D \sigma \big\vert \psi_v(\bar x, -H)  - \mathfrak{h}_v(\bar{x})\big\vert^2 \, \rd \bar x
\\
& = -\frac{1}{2} \int_{\Omega(u)} \left\vert (D\Theta_v^T)^{-1}\nabla \xi_v + 
\nabla h_v \circ \Theta_v \right\vert^2 \, J_v \, \rd (x,z) 
- \frac{1}{2} \int_D  \sigma \vert (\xi_v + h_v)(x,-H) - \mathfrak{h}_v(x)\vert^2  \, \rd x\,.
\end{align*}	
We introduce the functions 
\begin{align*}
j(v) &:= (D\Theta_v^T)^{-1}  \nabla \xi_v + 
\nabla h_v \circ \Theta_v  \quad \text{ in } \Omega(u),
\\[0.2cm]
 m(v) &:=  \big( \xi_v + h_v\big)(\cdot,-H) - \mathfrak{h}_v \quad\text{ in } D\, .
\end{align*}
Then, recalling that $h$ and $\mathfrak{h}$ are $C^1$-functions in all their arguments by \eqref{zucchero}, we conclude that the Fr\'echet derivative of $E_e$ at $u$ applied to 
$\vartheta \in H^2(D)\cap H^1_0(D)$ is given by 
\begin{align*}
\partial_u  E_e(u) [\vartheta] 
&= \partial_v  E_e(v) [\vartheta]\vert_{v=u} 
= - \int_{\Omega(u)} j(u) \cdot (\partial_v j(v)[\vartheta]\vert_{v=u}) \, 
J_u \, \rd (x,z)
\\
& \qquad 
- \frac{1}{2}\int_{\Omega(u)} \vert j(u)\vert^2 \, (\partial_v J_v[\vartheta]\vert_{v=u}) \, \rd (x,z) - \int_D \sigma \, m(u)\, (\partial_v m(v)[\vartheta]\vert_{v=u}) \, \rd x\,.
\end{align*}
Using $J_u=1$, $j(u)= \nabla \chi_u+ \nabla h_u =\nabla \psi_u$ in $\Omega(u)$, 
and  $m(u) = \psi_u - \mathfrak{h}_u$ in $D$, we see that 
\begin{align*}
\partial_u  E_e(u) [\vartheta] 
&= - \int_{\Omega(u)} \nabla \psi_u \cdot \big(\partial_v j(v)[\vartheta]\vert_{v=u}\big) \, \rd (x,z)
- \frac{1}{2}\int_{\Omega(u)} \vert \nabla \psi_u \vert^2 \, \big(\partial_v J_v[\vartheta]\vert_{v=u}\big) \, \rd (x,z)
\\
&\quad - \int_D \sigma \big[ \psi_u(\cdot,-H) - \mathfrak{h}_u\big] \, (\partial_v m(v)[\vartheta]\vert_{v=u}) \, \rd x\,.
\end{align*}
Since 
\begin{equation*}
\partial_v J_v[\vartheta]\vert_{v=u} = \frac{\vartheta}{H+u} \quad \text{ in } D
\end{equation*}
and 
\begin{equation*}
\partial_v m(v)[\vartheta]\vert_{v=u} = 
(\partial_v \xi_v[\vartheta]\vert_{v=u})(\cdot,-H)  + (\partial_w h)_u(\cdot,-H) \, \vartheta - (\partial_w \mathfrak{h})_u \,\vartheta \quad \text{ in } D\,,
\end{equation*}
it follows that 
\begin{align*}
\partial_u  E_e(u) [\vartheta] 
&= -  \int_{\Omega(u)} \nabla \psi_u 
\cdot \big(\partial_v j(v)[\vartheta]\vert_{v=u}\big) \, \rd (x,z)
- \frac{1}{2}\int_{\Omega(u)} \vert \nabla \psi_u \vert^2 
\frac{\vartheta}{H+u} \, \rd (x,z)
\\
& \quad - \int_D   \sigma \big[ \psi_u(\cdot,-H) - \mathfrak{h}_u\big]
\big[ (\partial_v \xi_v[\vartheta]\vert_{v=u})(\cdot,-H)  + (\partial_w h)_u(\cdot,-H) \, \vartheta - (\partial_w \mathfrak{h})_u \,\vartheta \big] \, \rd x\,.
\end{align*}
Using that $\Theta_u$ is the identity on $\Omega(u)$, $D\Theta_u=\mathrm{id}$, and that $\xi_u= \chi_u$, we 
compute from the definition of $j(v)$ that 
\begin{align*}
\partial_v j(v)[\vartheta]\vert_{v=u} 
= -  \partial_v (D\Theta_v^T)[\vartheta]\vert_{v=u} \, \nabla \chi_u + 
\partial_v (\nabla \xi_v)[\vartheta]\vert_{v=u} 
 + \partial_v (\nabla h_v \circ \Theta_v) [\vartheta]\vert_{v=u}  
\end{align*}
in $\Omega(u)$. Now,
\begin{equation*}
- \partial_v (D\Theta_v^T)[\vartheta]\vert_{v=u} \, \nabla \chi_u  = - \partial_z \chi_u \nabla \left( \frac{(z+H)\vartheta}{H+u}\right) \quad \text{ in } \Omega(u)
\end{equation*}
and 
\begin{equation*}
\partial_v (\nabla \xi_v)[\vartheta]\vert_{v=u} 
= \nabla \left( \partial_v  \xi_v [\vartheta]\vert_{v=u} \right)
\quad \text{ in } \Omega(u)\,. 
\end{equation*}
Moreover, 
\begin{equation*}
\partial_v (\nabla h_v \circ \Theta_v) [\vartheta]\vert_{v=u}   
= \nabla \big((\partial_w h)_u \vartheta \big) + 
\frac{(z+H)\vartheta}{H+u} \, \nabla \big((\partial_z h)_u \big)
\quad \text{ in } \Omega(u)\,.
\end{equation*}
The above three identities yield
\begin{equation}\label{w10}
\begin{split}
\partial_u  E_e(u) [\vartheta] 
&=- \int_{\Omega(u)}  \nabla \psi_u \cdot 
\nabla \big( \partial_v  \xi_v [\vartheta]\vert_{v=u} + (\partial_w h)_u \vartheta
\big) \,\rd (x,z) 
\\
& \quad + \int_{\Omega(u)}  \nabla \psi_u \cdot 
\left[
\partial_z \chi_u \, \nabla \left( \frac{(z+H)\vartheta}{H+u}\right) - 
\frac{(z+H)\vartheta}{H+u} \nabla \big((\partial_z h)_u \big)
\right] \rd (x,z) \\
&\quad - \frac{1}{2} \int_{\Omega(u)} \vert \nabla \psi_u \vert^2 \frac{\vartheta}{H+u}
\, \rd (x,z)
 \\
& \quad - \int_D   \sigma \big[ \psi_u(\cdot,-H) - \mathfrak{h}_u\big]
	\big[ (\partial_v \xi_v[\vartheta]\vert_{v=u})(\cdot,-H)  + (\partial_w h)_u(\cdot,-H) \, \vartheta - (\partial_w \mathfrak{h})_u \,\vartheta \big] \, \rd x\,. 
\end{split}
\end{equation}
Next we shall simplify the right-hand side of \eqref{w10}. Using Gau\ss' Theorem, the fact that $\psi_u$ is a strong solution to \eqref{MBP1}, $\vartheta =0$ on $\partial D$, and the fact that 
$\partial_v \xi_v [\vartheta]\vert_{v=u}$ belongs to $H^1_B(\Omega(u))$, the first integral on the right-hand side of \eqref{w10} can be rewritten in the form 
\begin{align*}
- \int_{\Omega(u)}  &\nabla \psi_u \cdot 
\nabla \big( \partial_v  \xi_v [\vartheta]\vert_{v=u} + (\partial_w h)_u \vartheta
\big) \,\rd (x,z) 
\\
&= - \int_D (\partial_w h)_u (x,u(x)) \, \vartheta(x) 
\big[ \partial_z \psi_u - \partial_x u \,\partial_x \psi_u \big](x,u(x))\, \rd x
\\
& \quad + \int_D \big[ (\partial_v \xi_v[\vartheta]\vert_{v=u})(x,-H)  + (\partial_w h)_u(x,-H) \, \vartheta(x) \big] \, \partial_z \psi_u(x,-H) \, \rd x\,.
\end{align*}
Since, due to \eqref{MBP3},
\begin{equation*}
\partial_z \psi_u(x,-H) = \sigma(x) \big[\psi_u(x,-H) - \mathfrak{h}_u(x)\big]\,, 
\quad x \in D, 
\end{equation*}
it follows that 
\begin{align}
- \int_{\Omega(u)}  &\nabla \psi_u \cdot 
\nabla \big( \partial_v  \xi_v [\vartheta]\vert_{v=u} + (\partial_w h)_u \vartheta
\big) \,\rd (x,z) \nonumber
\\
&= - \int_D  \vartheta(x) 
\Big[ (\partial_w h)_u \big( \partial_z \psi_u - \partial_x u \,\partial_x \psi_u \big) \Big](x,u(x))\, \rd x
\label{w11} \\
& \quad + 
\int_D  \sigma(x) \big[\psi_u(x,-H) - \mathfrak{h}_u(x)\big]
\big[ (\partial_v \xi_v[\vartheta]\vert_{v=u}) (x,-H) + (\partial_w h)_u(x,-H) \, \vartheta(x) \big] \, \rd x\,. \nonumber
\end{align}
On account of $(\partial_z h)_u = \partial_z \psi_u - \partial_z \chi_u$ in $ \Omega(u)$, the second integral on the right-hand side of \eqref{w10} can be written as 
\begin{equation}
\label{w12}
\begin{split}
&\int_{\Omega(u)}  \nabla \psi_u \cdot 
\left[
\partial_z \chi_u \, \nabla \left( \frac{(z+H)\vartheta}{H+u}\right) - 
\frac{(z+H)\vartheta}{H+u} \nabla \big((\partial_z h)_u \big)
\right] \rd (x,z) 
\\[0.1cm]
& = \int_{\Omega(u)} 
\nabla \psi_u \cdot \nabla \left( \partial_z \chi_u \, \frac{(z+H)\vartheta}{H+u}
\right) \rd (x,z) - \int_{\Omega(u)} 
\nabla \psi_u \cdot \nabla \big( \partial_z \psi_u\big) \, \frac{(z+H)\vartheta}{H+u}\, \rd (x,z)\, . 
\end{split}
\end{equation}
Thanks again to Gau\ss' Theorem and using \eqref{MBP1} and the fact that 
\begin{equation}
\label{w13}
(x,z) \mapsto (z+H) \vartheta(x) \text{ vanishes on } \partial D \times (-H,0) 
\text{ and on } D\times \{-H\},
\end{equation}
we obtain 
\begin{align*}
\int_{\Omega(u)} 
&\nabla \psi_u \cdot \nabla \left( \partial_z \chi_u \, \frac{\vartheta(z+H)}{H+u}
\right) \rd (x,z) 
\\
&= \int_D \vartheta(x) \left(\partial_z \chi_u
\big[ \partial_z \psi_u - \partial_x u \,\partial_x \psi_u \big]\right)(x,u(x))\, \rd x 
\\
&= \int_D \vartheta(x) \, \Big[ \big( \partial_z \psi_u -(\partial_z h)_u\big)
\big( \partial_z \psi_u - \partial_x u \,\partial_x \psi_u \big) \Big](x,u(x))\, \rd x\, .
\end{align*}
We write the second integral in \eqref{w12} in the form 
\begin{equation*}
- \int_{\Omega(u)} 
\nabla \psi_u \cdot \nabla \big( \partial_z \psi_u\big) \, \frac{(z+H)\vartheta}{H+u}\, \rd (x,z) 
= - \frac{1}{2} \int_{\Omega(u)} \partial_z (\vert \nabla \psi_u \vert^2) \, 
\frac{(z+H)\vartheta}{H+u}\, \rd (x,z) 
\end{equation*}
and use integration by parts and \eqref{w13} to get 
\begin{align*}
- \int_{\Omega(u)} &
\nabla \psi_u \cdot \nabla \big( \partial_z \psi_u\big) \, \frac{\vartheta(z+H)}{H+u}\, \rd (x,z) 
\\ 
& = \frac{1}{2} \int_{\Omega(u)} \frac{\vartheta}{H+u}\, \vert \nabla \psi_u \vert^2 \,  \rd (x,z) 
- \frac{1}{2} \int_D \vartheta(x) \, \vert \nabla \psi_u(x,u(x)) \vert^2 \,  \rd x\,.
\end{align*}
Therefore, we deduce from \eqref{w12} that 
\begin{align*}
\int_{\Omega(u)}  &\nabla \psi_u \cdot 
\left[
\partial_z \chi_u \, \nabla \left( \frac{(z+H)\vartheta}{H+u}\right) - 
\frac{(z+H)\vartheta}{H+u} \nabla \big((\partial_z h)_u \big)
\right] \rd (x,z) 
\\
&= \frac{1}{2} \int_{\Omega(u)} \frac{\vartheta}{H+u}\, \vert \nabla \psi_u \vert^2 \,  \rd (x,z) 
- \frac{1}{2} \int_D \vartheta(x) \, \vert \nabla \psi_u(x,u(x)) \vert^2 \,  \rd x
\\
& \quad + \int_D \vartheta(x) \, \Big[ \big( \partial_z \psi_u -(\partial_z h)_u\big)
\big( \partial_z \psi_u - \partial_x u \,\partial_x \psi_u \big) \Big](x,u(x))\, \rd x\, .
\end{align*}
Combining this identity with \eqref{w10} and \eqref{w11} yields
\begin{equation}
\label{w14}
\begin{split}
\partial_u  E_e(u) [\vartheta] &= 
\int_D \vartheta(x) \,\Big[ \big( \partial_z \psi_u -(\partial_z h)_u - (\partial_wh)_u\big) \big( \partial_z \psi_u - \partial_x u \,\partial_x \psi_u \big) \Big](x,u(x))\, \rd x
\\
& \quad - \frac{1}{2}\int_D  \vartheta(x) \, \vert \nabla \psi_u(x,u(x)) \vert^2 \,  \rd x + \int_D \sigma(x) \big[\psi_u(x,-H) - \mathfrak{h}_u(x)\big] \, (\partial_w \mathfrak{h})_u(x) \,\vartheta(x)  \, \rd x\, .
\end{split}
\end{equation}
Since \eqref{MBP2} entails $\psi_u(x,u(x))= h(x,u(x),u(x))$, $x\in D$, we have 
\begin{equation*}
\partial_x \psi_u(x,u(x)) = (\partial_x h)_u(x,u(x)) - \partial_xu(x) 
\big[ \partial_z \psi_u - (\partial_z h)_u-(\partial_w h)_u
\big](x,u(x)), \quad x \in D,
\end{equation*}
and hence, for $x\in D$, 
\begin{equation*}
\begin{split}
\frac{1}{2}&\big\vert\nabla\psi_{u}(x,u(x))\big\vert^2-\Big[ \big( \partial_z\psi_{u}-(\partial_z h)_u-(\partial_w h)_u\big) \big( \partial_z\psi_{u} - \partial_x u\, \partial_x\psi_{u}\big) \Big](x, u(x))\\
& = -\frac{1}{2}(1+\vert\partial_x u(x)\vert^2)\,\big[\partial_z\psi_{u}-(\partial_z h)_u-(\partial_w h)_u\big]^2(x, u(x))\\
&\quad +\frac{1}{2}\big[\big\vert(\partial_x h)_u\big\vert^2+ \big((\partial_z h)_u+(\partial_w h)_u\big)^2\big](x, u(x))\,.
\end{split}
\end{equation*}
Inserting this identity into \eqref{w14} gives 
\begin{align*}
\partial_u  E_e(u) [\vartheta] 
&= \frac{1}{2} \int_D (1+ \vert \partial_x u(x)\vert^2) 
\big[ \partial_z \psi_u -(\partial_z h)_u - (\partial_w h)_u \big]^2 (x,u(x)) \, 
\vartheta(x) \,  \rd x
\\
&\quad - \frac{1}{2} \int_D \big[\vert (\partial_x h)_u\vert^2 + 
\left((\partial_z h)_u + (\partial_w h)_u\right)^2\big](x,u(x)) 
\, \vartheta(x) \,  \rd x
\\
&\quad + \int_D \sigma(x) \big[\psi_u(x,-H) - \mathfrak{h}_u(x)\big] \ (\partial_w \mathfrak{h})_u(x) \,  \vartheta(x) \,  \rd x\\
& =\int_D g(u)(x)\,\vartheta(x) \,  \rd x\,,
\end{align*}
according to \eqref{g}. Finally,  the continuity of 
$$
\partial_u E_e : S \rightarrow \mathcal{L}\big(H^2(D)\cap H_0^1(D),\mathbb{R}\big)
$$
readily follows from Theorem~\ref{CC}.
\end{proof}

We finally provide the differentiability property of $E_e$ on the closed set $\bar S$. More precisely, we show that $E_e$ admits a directional derivative at a point $u\in \bar S$ in any direction of $-u+S$, which is given by $g(u)$ defined in~\eqref{GG}. Recall that $\mathcal{C}(u)$ may be non-empty in this case.

\begin{proposition}
	\label{P516}
	Let $u\in \bar S$ and $w\in S$. Then
	\begin{equation*}
	\lim_{s\rightarrow 0^+} \frac{1}{s}\Big[ E_e(u+s(w-u))-E_e(u) \Big] =\int_D g(u) (w-u)\, \rd x\,.
	\end{equation*} 
\end{proposition}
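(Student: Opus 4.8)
The plan is to reduce the computation of the one-sided directional derivative along the segment $s\mapsto u_s := u + s(w-u)$, $s\in[0,1]$, to an application of Proposition~\ref{P416} combined with the continuity properties collected in Theorem~\ref{CC}, together with a careful handling of the behaviour near the coincidence set $\mathcal{C}(u)$. First I would observe that for each $s\in(0,1]$ the function $u_s$ lies in the \emph{open} set $S$: indeed $u\ge -H$, $w>-H$, so $u_s = (1-s)u + sw \ge (1-s)(-H) + s(-H+\delta_K)$ on any compact $K\subset D$ on which $w\ge -H+\delta_K$, hence $u_s>-H$ in $D$. Therefore $s\mapsto E_e(u_s)$ is of class $C^1$ on $(0,1]$ by Proposition~\ref{P416}, with
\begin{equation*}
\frac{\mathrm d}{\mathrm d s} E_e(u_s) = \partial_u E_e(u_s)[w-u] = \int_D g(u_s)\,(w-u)\,\mathrm dx\,, \qquad s\in(0,1]\,.
\end{equation*}
By the fundamental theorem of calculus applied on $[\varepsilon,s]$ and then letting $\varepsilon\to 0^+$, using the continuity of $E_e$ on $\bar S$ (Theorem~\ref{CC}), we get
\begin{equation*}
E_e(u_s) - E_e(u) = \int_0^s \int_D g(u_\sigma)\,(w-u)\,\mathrm dx\,\mathrm d\sigma\,,
\end{equation*}
so that
\begin{equation*}
\frac1s\big[E_e(u_s)-E_e(u)\big] = \frac1s\int_0^s \left( \int_D g(u_\sigma)(w-u)\,\mathrm dx \right)\mathrm d\sigma\,.
\end{equation*}

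The conclusion then follows provided the map $\sigma\mapsto \int_D g(u_\sigma)(w-u)\,\mathrm dx$ is continuous on $[0,1]$ (in particular right-continuous at $\sigma=0$), since then the averaged integral on the right converges to its value at $\sigma=0$ as $s\to0^+$. For $\sigma\to 0^+$ within $(0,1]$ we have $u_\sigma\to u$ strongly in $H^2(D)$, hence weakly in $H^2(D)$, and Theorem~\ref{CC} gives $g(u_\sigma)\to g(u)$ in $L_1(D)$; since $w-u\in H^2(D)\hookrightarrow L_\infty(D)$, this yields $\int_D g(u_\sigma)(w-u)\,\mathrm dx\to\int_D g(u)(w-u)\,\mathrm dx$. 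The same argument handles continuity at interior points $\sigma\in(0,1]$. This establishes the claim.

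The main obstacle, and the point requiring the most care, is the justification that one may differentiate $s\mapsto E_e(u_s)$ on $(0,1]$ and pass to the limit at the endpoint $s=0$: a priori Proposition~\ref{P416} only gives $C^1$-regularity on the open set $S$, with no uniform control as one approaches $\bar S\setminus S$, so I deliberately avoid any claim of differentiability of $E_e$ at $u$ itself and instead integrate the derivative and use only the \emph{continuity} of $E_e$ and of $g$ on all of $\bar S$, which are precisely what Theorem~\ref{CC} supplies. One subtlety to check is that $\frac{\mathrm d}{\mathrm ds}E_e(u_\sigma)$ is genuinely integrable near $\sigma=0$: this is guaranteed because $\sigma\mapsto g(u_\sigma)$ is bounded in $L_1(D)$ on $[0,1]$ (again by Theorem~\ref{CC}, continuity on the compact set $\{u_\sigma:\sigma\in[0,1]\}$, which is compact in the weak $H^2$-topology being a continuous image of $[0,1]$ inside a bounded subset of $H^2(D)$) and $w-u\in L_\infty(D)$, so the integrand $\sigma\mapsto\int_D g(u_\sigma)(w-u)\,\mathrm dx$ is in fact continuous, hence bounded, on $[0,1]$.
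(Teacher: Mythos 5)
Your proposal is correct and follows essentially the same route as the paper: both use Proposition~\ref{P416} to differentiate $s\mapsto E_e(u_s)$ on the open segment (where $u_s\in S$), invoke the continuity of $E_e$ and of $g$ from Theorem~\ref{CC} to justify the fundamental-theorem-of-calculus identity down to $s=0$ and to pass to the limit in the averaged derivative. The only cosmetic difference is the order of steps (you integrate first and then take the limit of the average, while the paper first identifies $\lim_{s\to 0^+}\frac{\rd}{\rd s}E_e(u_s)$), which is immaterial.
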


\begin{proof}
	Fix $w\in S$ and note that
	\begin{equation*}
	u_s:= u+s(w-u)=(1-s)u+sw \in S\,,\quad s\in (0,1)\,.
	\end{equation*}
Since $u_s\in S$ for $s\in (0,1)$, we obtain from Proposition~\ref{P416} that
\begin{equation*}
	\frac{\rd}{\rd s} E_e(u_s) = \int_D g(u_s) (w-u)\,\rd x
\end{equation*}
for $s\in (0,1)$. Therefore, letting $s\rightarrow 0$, we derive with the help of Theorem~\ref{CC} that
	\begin{equation}
	\label{pa}
	\begin{split}
	\lim_{s\rightarrow 0^+}\frac{\rd}{\rd s} E_e(u_s)=&\int_D g(u)\, (w-u)\, \rd x\,.
	\end{split}
	\end{equation}
Now, Theorem~\ref{CC} guarantees that $E_e(u_s)\rightarrow E_e(u)$ as $s\rightarrow 0$,  so that
	\begin{equation*}
	\begin{split}
	E_e(u_t)-E_e(u)= \int_0^t \frac{\rd}{\rd s} E_e(u_s)\,\rd s\,,\quad t\in (0,1)\,,
	\end{split}
	\end{equation*}
	and we conclude from \eqref{pa} that
	\begin{equation*}
	\begin{split}
	\lim_{t\rightarrow 0^+} \frac{1}{t}\big(E_e(u_t)-E_e(u)\big)&= \lim_{t\rightarrow 0^+} \frac{1}{t}\int_0^t \frac{\rd}{\rd s} E_e(u_s)\,\rd s=\int_D g(u) (w-u)\, \rd x\,,
	\end{split}
	\end{equation*} 
as claimed.
\end{proof}

\section{Proofs of Theorem~\ref{Thm22} and Theorem~\ref{Thm33} for $\alpha=0$}\label{proof}

In this section we deal with the case $\alpha=0$ and recall that the total energy is then given by
$$
E(u)= E_m(u)+E_e(u)
$$ 
with mechanical energy 
$$
E_m(u)=\frac{\beta}{2}\|\partial_x^2u\|_{L_2(D)}^2 +  \frac{\tau}{2} \|\partial_x u\|_{L_2(D)}^2
$$
and electrostatic energy 
$$
E_e(u)=-\dfrac{1}{2}\displaystyle\int_{\Omega(u)}  \big\vert\nabla \psi_u\big\vert^2\,\rd (x,z)\\
 -\dfrac{1}{2}\displaystyle\int_{ D} \sigma(x) \big\vert \psi_u(x,-H)-\mathfrak{h}_{u}(x)\big\vert^2\,\rd x\,.
$$

\subsection{Existence of a Minimizer of a Regularized Energy}\label{sec61}

As already noted in \cite{LW19}, the boundedness from below of the functional $E$ is \textit{a priori} unclear since $\alpha=0$. To cope with this issue, we work with the regularized functional given by
\begin{equation}\label{Ek}
\mathcal{E}_k(u):= E(u) + \frac{A}{2} \|(u-k)_+\|_{L_2(D)}^2\,,\quad u\in \bar S_0\,,
\end{equation}
for $k\ge  H$, where 
$$
A:= 8\left(\frac{K^4}{\beta}  + 2 K^2 \right)\,,
$$
and the constant $K$ is introduced in \eqref{hbound}.

\begin{lemma}\label{L333}
For each $k\ge H$ the functional $\mathcal{E}_k$ is bounded from below with
$$
\mathcal{E}_k(u)\ge \frac{\beta}{4}\|\partial_x^2 u\|_{L_2(D)}^2+\frac{A}{4}\|(u-k)_+\|_{L_2(D)}^2- c(k)
$$
for some constant $c(k)>0$.
\end{lemma}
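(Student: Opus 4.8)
The plan is to bound each piece of $\mathcal{E}_k(u)$ from below using the growth assumptions \eqref{bb5} and \eqref{hbound}. The only genuinely negative contribution to $\mathcal{E}_k(u)$ comes from the electrostatic energy $E_e(u)$, so the heart of the argument is to show that $|E_e(u)|$ is controlled by a small fraction of $\frac{\beta}{2}\|\partial_x^2 u\|_{L_2(D)}^2$ plus a small fraction of the penalization term $\frac{A}{2}\|(u-k)_+\|_{L_2(D)}^2$, with the remainder absorbed into a constant depending only on $k$ (and on $\sigma$, $H$, $L$, $K$).

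First I would estimate $E_e(u)$ from below. Since $\psi_u = \chi_u + h_u$ and $\chi_u$ minimizes $\mathcal{G}(u)$ over $H_B^1(\Omega(u))$, testing with $\vartheta = 0$ gives $\mathcal{G}(u)[\chi_u] \le \mathcal{G}(u)[0]$, i.e.
\begin{equation*}
-E_e(u) = \mathcal{G}(u)[\chi_u] \le \frac{1}{2}\int_{\Omega(u)} |\nabla h_u|^2\,\rd(x,z) + \frac{1}{2}\int_D \sigma(x)\,|h_u(x,-H)-\mathfrak{h}_u(x)|^2\,\rd x\,,
\end{equation*}
so $E_e(u) \ge -\mathcal{G}(u)[0] \ge 0$ gives nothing, but the \emph{upper} bound $-E_e(u) \le \mathcal{G}(u)[0]$ is exactly what we need since $E_e(u) \le 0$ always. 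The second term on the right is bounded by a constant using \eqref{bb7}. For the first term, I would use the chain rule $\nabla h_u(x,z) = \big((\partial_x h)_u + \partial_x u\,(\partial_w h)_u, (\partial_z h)_u\big)(x,z)$ together with the bounds in \eqref{bb6}, which give $|\nabla h_u(x,z)|^2 \le C\,(1+|\partial_x u(x)|^2)\,\frac{1+u(x)^2}{H+u(x)}$; integrating in $z$ over $(-H,u(x))$ produces a factor $H+u(x)$, leaving $\int_D (1+|\partial_x u|^2)(1+u^2)\,\rd x$. This is the same mechanism as in \cite[Section~5]{LW19}. The term $\int_D u^2\,\rd x$ and $\int_D |\partial_x u|^2 u^2\,\rd x$ are then split on $\{u\le k\}$ and $\{u>k\}$: on the former they are bounded by constants times $k^2$ (and, using $\|\partial_x u\|_{L_\infty}^2 \le 2L\|\partial_x^2 u\|_{L_2}^2$ from Lemma~\ref{h}, by $k^2\|\partial_x^2 u\|_{L_2}^2$), while on $\{u>k\}$ one writes $u \le k + (u-k)_+$ and uses $\|\partial_x u\|_{L_\infty}^2 \le 2L\|\partial_x^2 u\|_{L_2}^2$ again so that $\int_{\{u>k\}}|\partial_x u|^2 u^2 \le C\|\partial_x^2 u\|_{L_2}^2(k^2 |D| + \|(u-k)_+\|_{L_2}^2)$. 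Collecting terms and invoking $\|\partial_x u\|_{L_2}^2 \le C\|\partial_x^2 u\|_{L_2}^2$, one arrives at an estimate of the form
\begin{equation*}
|E_e(u)| \le C(k)\big(1 + \|\partial_x^2 u\|_{L_2(D)}^2\big) + \delta\,\|\partial_x^2 u\|_{L_2(D)}^2\,\|(u-k)_+\|_{L_2(D)}^2
\end{equation*}
for a constant that can be made to carry the precise numerical coefficient matching the definition of $A$; the quartic cross term $\|\partial_x^2 u\|^2\|(u-k)_+\|^2$ is then tamed by Young's inequality, $\delta\,ab \le \frac{\beta}{4}a^2\cdot\frac{4\delta^2}{\beta}\cdot\frac{\beta}{4\delta^2}\cdots$ — more simply, $\delta\|\partial_x^2 u\|^2\|(u-k)_+\|^2 \le \frac{\beta}{4}\|\partial_x^2 u\|^2 + \frac{\delta^2}{\beta}\|(u-k)_+\|^4$, and the quartic term in $\|(u-k)_+\|$ is absorbed once we check that the coefficient produced is at most $A/4$ — this is precisely the role of the factor $8$ and the shape $K^4/\beta + 2K^2$ in the definition of $A$, so the bookkeeping must be done carefully to land on these constants.

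**Conclusion.** Putting this together with $E_m(u) \ge \frac{\beta}{2}\|\partial_x^2 u\|_{L_2(D)}^2$ (recall $\tau \ge 0$) and the penalization term $\frac{A}{2}\|(u-k)_+\|_{L_2(D)}^2$ yields
\begin{equation*}
\mathcal{E}_k(u) = E_m(u) + E_e(u) + \frac{A}{2}\|(u-k)_+\|_{L_2(D)}^2 \ge \frac{\beta}{4}\|\partial_x^2 u\|_{L_2(D)}^2 + \frac{A}{4}\|(u-k)_+\|_{L_2(D)}^2 - c(k)\,,
\end{equation*}
which is the asserted inequality; in particular $\mathcal{E}_k$ is bounded below by $-c(k)$. **Main obstacle.** The delicate point is not the existence of \emph{some} lower bound — that follows easily from the growth conditions — but obtaining it with coefficients $\beta/4$ and $A/4$ exactly, which forces one to track the constants through the chain-rule estimate of $\int_{\Omega(u)}|\nabla h_u|^2$, the splitting over $\{u\le k\}$ and $\{u>k\}$, the Poincaré-type bound $\|\partial_x u\|_{L_\infty}^2 \le 2L\|\partial_x^2 u\|_{L_2}^2$, and the Young inequality applied to the quartic cross term, and to verify that the resulting coefficient of $\|(u-k)_+\|_{L_2}^4$ is dominated by $A/4$ — this is exactly where the specific value $A = 8(K^4/\beta + 2K^2)$ enters.
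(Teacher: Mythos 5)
There is a genuine gap in your treatment of the gradient term, and it is not merely a matter of bookkeeping constants. You bound $|\nabla h_u|^2$ in the \emph{product} form $C(1+|\partial_x u|^2)\frac{1+u^2}{H+u}$, which after integration in $z$ produces the term $\int_D |\partial_x u|^2 u^2\,\rd x$ and hence, after your splitting, the quartic cross term $\|\partial_x^2 u\|_{L_2}^2\,\|(u-k)_+\|_{L_2}^2$. With $\alpha=0$ the only positive quantities available are the \emph{quadratic} terms $\tfrac{\beta}{2}\|\partial_x^2 u\|_{L_2}^2$ and $\tfrac{A}{2}\|(u-k)_+\|_{L_2}^2$, and a quartic cross term cannot be dominated by quadratic ones: the inequality you invoke, $\delta XY \le \tfrac{\beta}{4}X + \tfrac{\delta^2}{\beta}Y^2$ with $X=\|\partial_x^2 u\|_{L_2}^2$ and $Y=\|(u-k)_+\|_{L_2}^2$, is false (divide by $X$ and let $X\to\infty$ with $Y$ fixed large), and the correct Young inequality would leave $\|\partial_x^2 u\|_{L_2}^4$ or $\|(u-k)_+\|_{L_2}^4$, neither of which is absorbable. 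Even your estimate on $\{u\le k\}$ yields $k^2\|\partial_x^2 u\|_{L_2}^2$, which would degrade the coefficient $\beta/4$ for large $k$, whereas the statement requires $\beta/4$ uniformly in $k$.

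The way out is to exploit the asymmetry built into \eqref{bb6}: only $\partial_x h$ and $\partial_z h$ carry the factor $\sqrt{(1+w^2)/(H+w)}$, while $\partial_w h$ --- the term that gets multiplied by $\partial_x u$ in the chain rule --- is bounded by $K/\sqrt{H+w}$ alone. Keeping the bound in the \emph{sum} form
\begin{equation*}
|\nabla h_u|^2 \le 2K^2\,\frac{1+u^2}{H+u} + K^2\,\frac{|\partial_x u|^2}{H+u}
\end{equation*}
(up to harmless numerical factors), the integration in $z$ yields $K^2\big(2|D|+2\|u\|_{L_2}^2+\|\partial_x u\|_{L_2}^2\big)$ with no $|\partial_x u|^2 u^2$ term at all. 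One then writes $\|\partial_x u\|_{L_2}^2 \le \|u\|_{L_2}\|\partial_x^2 u\|_{L_2}$ (integration by parts, using $u(\pm L)=0$) and applies Young's inequality to get $-E_e(u)\le \tfrac{\beta}{4}\|\partial_x^2 u\|_{L_2}^2 + \big(\tfrac{K^4}{\beta}+2K^2\big)\|u\|_{L_2}^2 + C$, all quadratic. Finally $\|u\|_{L_2}^2 \le 2\|(u-k)_+\|_{L_2}^2 + 2k^2|D|$, and the choice $A=8\big(\tfrac{K^4}{\beta}+2K^2\big)$ makes $\tfrac{A}{2}-2\big(\tfrac{K^4}{\beta}+2K^2\big)=\tfrac{A}{4}$, giving the stated bound. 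Your opening steps (testing $\mathcal{G}(u)$ with $\vartheta=0$, bounding the boundary integral via \eqref{bb7}) are correct and coincide with the paper's; the argument breaks only at the product-form gradient bound and the subsequent attempt to tame the resulting quartic term.
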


\begin{proof}
By \eqref{bb5}, \eqref{s0P}, and Proposition~\ref{lemt9P}, 
\begin{align*}
-E_e(u)&=\mathcal{G}(u)[\psi_u-h_u] \le \mathcal{G}(u)[0] \\
& =\frac{1}{2}\int_{\Omega(u)} \vert \nabla h_u\vert^2\, \rd (x,z) +\frac{1}{2}\int_D \sigma(x)\big[h_u(x,-H)-\mathfrak{h}_u(x)\big]^2\,\rd x\\
& \le \int_{\Omega(u)} \left[ (\partial_x h)_u^2 + |\partial_x u|^2 (\partial_w h)_u^2 + (\partial_z h)_u^2 \right]\,\rd x + \bar{\sigma} \int_D \Big\{ \big[h_u(x,-H)\big]^2 + \big[\mathfrak{h}_u(x)\big]^2 \Big\}\,\rd x \\
&\le K^2\int_{\Omega(u)} \left( 2 \frac{1+u(x)^2}{H+u(x)}+\frac{\vert\partial_x u(x)\vert^2}{H+u(x)}\right)\,\rd (x,z) + 2 \bar\sigma K^2|D|\\
& \le K^2 \left( 2|D| + 2 \|u\|_{L_2(D)}^2 + \|\partial_x u\|_{L_2(D)}^2 \right) + 2 \bar\sigma K^2|D|\\
& =2(1+\bar{\sigma}) |D| K^2 + 2K^2 \|u\|_{L_2(D)}^2 + K^2 \|\partial_x u\|_{L_2(D)}^2\,.
\end{align*}
Now, since $u\in \bar S$,
$$
\int_D \vert\partial_x u\vert^2\,\rd x=-\int_D u\partial_x^2u\,\rd x\le \|u\|_{L_2(D)} \|\partial_x^2u\|_{L_2(D)}\,,
$$
and we further obtain with the help of Young's inequality that
\begin{align*}
-E_e(u)&\le  2(1+\bar{\sigma}) |D| K^2 + 2 K^2 \|u\|_{L_2(D)}^2 + K^2 \|u\|_{L_2(D)} \|\partial_x^2u\|_{L_2(D)}\\
&\le 2(1+\bar{\sigma}) |D| K^2 +\left(\frac{K^4}{\beta}  +2K^2 \right) \| u\|_{L_2(D)}^2+\frac{\beta}{4} \|\partial_x^2u\|_{L_2(D)}^2 \,.
\end{align*}
Using this estimate in the definition of $\mathcal{E}_k(u)$ along with
\begin{align*}
\|u\|_{L_2(D)}^2 & = \int_D u^2 \mathbf{1}_{(k,\infty)}(u)\,\rd x + \int_D u^2 \mathbf{1}_{[-H,k]}(u)\,\rd x \\
& \le 2 \|(u-k)_+\|_{L_2(D)}^2 + 2 k^2 \int_D \mathbf{1}_{(k,\infty)}(u)\,\rd x + k^2 \int_D \mathbf{1}_{[-H,k]}(u)\,\rd x \\
& \le 2 \|(u-k)_+\|_{L_2(D)}^2 + 2 k^2 |D|\,,
\end{align*}
 we derive
\begin{align*}
\mathcal{E}_k(u)&\ge  \frac{\beta}{4} \|\partial_x^2u\|_{L_2(D)}^2  - 2(1+\bar{\sigma}) |D| K^2 - \left(\frac{K^4}{\beta}  +2K^2 \right) \| u\|_{L_2(D)}^2 + \frac{A}{2}\|(u-k)_+\|_{L_2(D)}^2
\\
&\ge \frac{\beta}{4} \|\partial_x^2u\|_{L_2(D)}^2  - c(k)  + \left[\frac{A}{2}-2\left(\frac{K^4}{\beta}  + 2 K^2 \right)\right] \|(u-k)_+\|_{L_2(D)}^2\\
&\ge 
\frac{\beta}{4} \|\partial_x^2u\|_{L_2(D)}^2 + \frac{A}{4} \|(u-k)_+\|_{L_2(D)}^2 - c(k)\,,
\end{align*}
thereby completing the proof.
\end{proof}

Due to the weak lower semicontinuity of $E_m$ in $H^2(D)$ and the continuity of $E_e$ with respect to the weak topology of $H^2(D)$ (see Theorem~\ref{CC}), Lemma~\ref{L333} allows us to apply the direct method of the calculus of variations to derive the existence of a minimizer of $\mathcal{E}_k$ in $\bar{S}_0$. 

\begin{corollary}\label{L334}
 For each $k\ge H$, the functional $\mathcal{E}_k$ has at least one minimizer $u_k\in \bar S_0$; that is,
\begin{equation}\label{mini}
\mathcal{E}_k(u_k)=\min_{\bar S_0} \mathcal{E}_k\,.
\end{equation}
\end{corollary}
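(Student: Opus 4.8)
The plan is to apply the direct method of the calculus of variations to the functional $\mathcal{E}_k$ on the set $\bar S_0$, using the coercivity estimate from Lemma~\ref{L333} together with the semicontinuity properties of the two constituents $E_m$ and $E_e$. First I would fix $k\ge H$ and observe that $\bar S_0$ is nonempty (for instance $0\in\bar S_0$), so $m_k := \inf_{\bar S_0}\mathcal{E}_k$ is well-defined, and by Lemma~\ref{L333} it is finite, $m_k\ge -c(k)>-\infty$. I would then take a minimizing sequence $(u_j)_{j\ge 1}$ in $\bar S_0$, so that $\mathcal{E}_k(u_j)\to m_k$; in particular $\sup_j \mathcal{E}_k(u_j)<\infty$.

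Next I would extract bounds. The lower bound in Lemma~\ref{L333} gives $\tfrac{\beta}{4}\|\partial_x^2 u_j\|_{L_2(D)}^2 \le \mathcal{E}_k(u_j)+c(k)$, which is bounded uniformly in $j$; hence $(\partial_x^2 u_j)_{j\ge1}$ is bounded in $L_2(D)$. Combined with the clamped boundary conditions $u_j(\pm L)=\partial_x u_j(\pm L)=0$ and the Poincaré-type inequalities used already in Lemma~\ref{h} (namely $\|\partial_x u_j\|_{L_\infty(D)}\le\sqrt{2L}\,\|\partial_x^2 u_j\|_{L_2(D)}$ and consequently $\|u_j\|_{L_\infty(D)}$ bounded), this shows that $(u_j)_{j\ge1}$ is bounded in $H^2(D)$. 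Therefore, after passing to a subsequence (not relabeled), there is $u_k\in H^2(D)$ with $u_j\rightharpoonup u_k$ in $H^2(D)$, and by the compact embedding $H^2(D)\hookrightarrow C^1(\bar D)$ also $u_j\to u_k$ in $C^1(\bar D)$. The $C^1$-convergence preserves the constraints: $u_k(\pm L)=\partial_x u_k(\pm L)=0$ and $u_k\ge -H$ on $D$, so $u_k\in\bar S_0$; moreover $\bar S_0$ is in fact weakly closed in $H^2(D)$, being closed and convex.

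It remains to pass to the limit in the functional. The mechanical energy $E_m(u)=\tfrac{\beta}{2}\|\partial_x^2 u\|_{L_2(D)}^2+\tfrac{\tau}{2}\|\partial_x u\|_{L_2(D)}^2$ is convex and continuous on $H^2(D)$, hence weakly lower semicontinuous, so $E_m(u_k)\le\liminf_{j\to\infty}E_m(u_j)$. The penalization term $u\mapsto\tfrac{A}{2}\|(u-k)_+\|_{L_2(D)}^2$ is likewise continuous under the strong $C(\bar D)$-convergence (indeed continuous on $H^2(D)$ for the weak topology, since $u\mapsto(u-k)_+$ is continuous from $C(\bar D)$ to $L_2(D)$), so it converges along the sequence. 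Finally, the electrostatic energy $E_e$ is continuous for the weak topology of $H^2(D)$ by Theorem~\ref{CC}, whence $E_e(u_k)=\lim_{j\to\infty}E_e(u_j)$. Combining these three facts,
\begin{equation*}
\mathcal{E}_k(u_k)\le\liminf_{j\to\infty}\mathcal{E}_k(u_j)=m_k=\inf_{\bar S_0}\mathcal{E}_k\le\mathcal{E}_k(u_k)\,,
\end{equation*}
so equality holds throughout and $\mathcal{E}_k(u_k)=\min_{\bar S_0}\mathcal{E}_k$, proving \eqref{mini}.

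The only genuinely delicate point is the behaviour of $E_e$ under weak $H^2$-convergence: unlike $E_m$, it is not obviously semicontinuous, and in fact it is not even bounded below by itself — this is precisely why the penalization was introduced. However, this difficulty has already been resolved upstream, since Theorem~\ref{CC} asserts the full (sequential) continuity of $E_e$ for the weak $H^2(D)$-topology, which is exactly what is needed here; the nonlocal, domain-dependent nature of $\psi_u$ that makes this nontrivial has been handled in Corollary~\ref{L3} via the $\Gamma$-convergence of $\mathcal{G}$. Everything else is the standard compactness-plus-semicontinuity argument, so no further obstacle arises.
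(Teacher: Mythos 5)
Your proposal is correct and is exactly the argument the paper intends: the paper's proof of Corollary~\ref{L334} is a one-sentence invocation of the direct method, citing the coercivity from Lemma~\ref{L333}, the weak lower semicontinuity of $E_m$, and the weak continuity of $E_e$ from Theorem~\ref{CC}, and your write-up simply fills in the standard details (minimizing sequence, $H^2$-bound via the coercivity estimate and the clamped boundary conditions, weak closedness of the closed convex set $\bar S_0$, and continuity of the penalization term under the compact embedding into $C(\bar D)$). No gaps.
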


\subsection{Derivation of the Euler-Lagrange Equation  for the Regularized Energy}\label{sec62}

We shall next identify the Euler-Lagrange equation satisfied by a minimizer of the regularized energy $\mathcal{E}_k$ on $\bar S_0$.

\begin{proposition}\label{prop3}
 Let $k\ge H$ and let $u\in \bar{S_0}$ be a minimizer of $\mathcal{E}_k$ on $\bar S_0$. Then $u$ is an $H^2$-weak solution to the variational inequality
\begin{subequations}\label{bennygoodman}
\begin{equation}
\beta\partial_x^4u-  \tau \partial_x^2 u+
A (u-k)_++\partial\mathbb{I}_{\bar{S_0}}(u) \ni -g(u) \;\;\text{ in }\;\; D\,, \label{bennygoodman1}
\end{equation}
where $\partial\mathbb{I}_{\bar{S_0}}$ is the subdifferential of the indicator function $\mathbb{I}_{\bar S_0}$ of the closed convex subset $\bar{S_0}$ of $H^2(D)$; that is, 
\begin{equation}
\begin{split}
\int_D &\left\{\beta\partial_x^2 u\,\partial_x^2 (w-u) + \tau \partial_x u\, \partial_x(w-u)  +A(u-k)_+ (w-u)\right\}\,\rd x \ge -\int_D g(u) (w-u)\, \rd x  
\end{split} \label{bennygoodman2}
\end{equation}
\end{subequations}
for all $w\in \bar{S_0}$. 
\end{proposition}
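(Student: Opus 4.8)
The strategy is to test the minimality of $u$ along the line segments $s\mapsto u_s:=u+s(w-u)$, $s\in[0,1]$, joining $u$ to competitors $w\in\bar{S_0}$, and to show that the right derivative of $s\mapsto\mathcal{E}_k(u_s)$ at $s=0$ is non-negative; once this derivative is identified, the resulting inequality is precisely \eqref{bennygoodman2}.

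A preliminary step is to reduce to the case $w\in S$. Both sides of \eqref{bennygoodman2} are continuous in $w$ with respect to the $H^2(D)$-topology: the left-hand side is a sum of bounded bilinear forms on $H^2(D)$ (the embedding $H^2(D)\hookrightarrow L_\infty(D)$ being used to control the term $A(u-k)_+(w-u)$), while the right-hand side is continuous since $g(u)\in L_2(D)$ by Theorem~\ref{CC} and $w\mapsto w-u$ maps $H^2(D)$ boundedly into $L_2(D)$. Moreover, for $w\in\bar{S_0}$ the functions $w_n:=(1-\tfrac1n)w$ belong to $\bar{S_0}\cap S$ — they keep the clamped boundary conditions and satisfy $w_n\ge(1-\tfrac1n)(-H)>-H$ in $D$ — and converge to $w$ in $H^2(D)$. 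Hence it suffices to prove \eqref{bennygoodman2} for $w\in\bar{S_0}\cap S$.

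So fix $w\in\bar{S_0}\cap S$. By convexity of $\bar{S_0}$ one has $u_s\in\bar{S_0}$ for every $s\in[0,1]$, and $u_s\ge(1-s)(-H)+sw>-H$ in $D$, so $u_s\in S$ for $s\in(0,1]$. The map $s\mapsto E_m(u_s)$ is a quadratic polynomial, hence differentiable with $\tfrac{\rd}{\rd s}E_m(u_s)\big|_{s=0}=\int_D\{\beta\partial_x^2u\,\partial_x^2(w-u)+\tau\partial_xu\,\partial_x(w-u)\}\,\rd x$; the penalization term $s\mapsto\tfrac A2\|(u_s-k)_+\|_{L_2(D)}^2$ is differentiable because $t\mapsto\tfrac12(t-k)_+^2$ is $C^1$ with derivative $(t-k)_+$ and the difference quotients are dominated (by $2(\|u\|_{L_\infty(D)}+\|w\|_{L_\infty(D)}+k)|w-u|\in L_1(D)$), which by dominated convergence gives $\tfrac{\rd}{\rd s}\big[\tfrac A2\|(u_s-k)_+\|_{L_2(D)}^2\big]\big|_{s=0}=A\int_D(u-k)_+(w-u)\,\rd x$; finally, Proposition~\ref{P516} (valid since $u\in\bar S$ and $w\in S$) yields $\lim_{s\to0^+}\tfrac1s[E_e(u_s)-E_e(u)]=\int_Dg(u)(w-u)\,\rd x$. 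Since $u$ minimizes $\mathcal{E}_k$ over $\bar{S_0}$ and $u_s\in\bar{S_0}$ for all $s\in[0,1]$, the scalar function $s\mapsto\mathcal{E}_k(u_s)$ attains its minimum on $[0,1]$ at $s=0$, so its right derivative there is $\ge0$; adding the three contributions gives
\begin{equation*}
\int_D\Big\{\beta\partial_x^2u\,\partial_x^2(w-u)+\tau\partial_xu\,\partial_x(w-u)+A(u-k)_+(w-u)\Big\}\,\rd x+\int_Dg(u)(w-u)\,\rd x\ge0\,,
\end{equation*}
which is \eqref{bennygoodman2} for $w\in\bar{S_0}\cap S$; the density argument above extends it to all $w\in\bar{S_0}$, and \eqref{bennygoodman1} then follows by definition of $\partial\mathbb{I}_{\bar{S_0}}$. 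The only genuinely delicate ingredient is Proposition~\ref{P516}, which already packages the shape-derivative computation; the remaining obstacle, a minor one, is precisely that Proposition~\ref{P516} is available only for directions pointing into $S$, which is why the rescaling $w_n=(1-\tfrac1n)w$ is needed.
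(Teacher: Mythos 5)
Your proposal is correct and follows essentially the same route as the paper: test minimality along the segments $s\mapsto u+s(w-u)$ for $w\in S_0=\bar{S}_0\cap S$, identify the one-sided derivative of $\mathcal{E}_k$ at $s=0$ using Proposition~\ref{P516} for the electrostatic part (the mechanical and penalization terms being elementary), and then pass to general $w\in\bar{S}_0$ by density of $S_0$ in $\bar{S}_0$ together with the $H^2$-continuity of both sides of \eqref{bennygoodman2}. Your explicit rescaling $w_n=(1-\tfrac1n)w$ is a clean way to realize the density step that the paper only asserts.
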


\begin{proof}
Let $k\ge  H$ be fixed. Consider a minimizer $u\in\bar{S_0}$ of $\mathcal{E}_k$ on $\bar{S_0}$ and fix $w\in S_0 := \bar{S}_0\cap S$. Owing to the convexity of $\bar{S_0}$, the function $u+s(w-u)=(1-s)u+sw$ belongs to $S_0$  for all $s\in (0,1]$ and the minimizing property of $u$ guarantees that
$$
0\le \liminf_{s\rightarrow 0^+} \frac{1}{ s}\big(\mathcal{E}_k(u+s(w-u))-\mathcal{E}_k(u)\big)\,.
$$
Since $u\in \bar{S}_0\subset \bar{S}$ and $w\in S_0\subset S$, Proposition~\ref{P516} implies that
\begin{align*}
0 & \le  \int_D \left\{\beta\partial_x^2 u\,\partial_x^2 (w-u) +  \tau \partial_x u\, \partial_x (w-u)+A(u-k)_+ (w-u)\right\}\,\rd x  + \int_D g(u) (w-u)\, \rd x
\end{align*}
for all $w\in S_0$. Since $S_0$ is dense in $\bar{S_0}$ and $(u,g(u))$ belongs to $H^2(D)\times L_2(D)$,  this inequality also holds for any $w\in \bar{S_0}$.
\end{proof}

\begin{proposition}\label{prop4}
There is $\kappa_0 \ge H$ depending only on $K$ such that, if $u\in \bar{S_0}$ is any solution to the variational inequality \eqref{bennygoodman} with  $k\ge H$, then $\|u\|_{L_\infty(D)}\le \kappa_0$.
\end{proposition}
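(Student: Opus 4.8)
The plan is to derive an $L_\infty$-bound for solutions $u$ to the variational inequality \eqref{bennygoodman} that is independent of $k\ge H$. The starting point is to test \eqref{bennygoodman2} with a well-chosen competitor $w\in\bar S_0$. Since $\bar S_0$ is convex and $u\in\bar S_0$, a natural choice is $w=\min\{u,k\}=u-(u-k)_+$, provided this function lies in $\bar S_0$; it does, because $(u-k)_+\ge 0$ keeps the constraint $w\ge -H$ (as $k\ge H\ge -H$... more precisely $k\ge H>0>-H$) and $(u-k)_+$ vanishes near $\partial D$ since $u(\pm L)=\partial_x u(\pm L)=0$ and $k>0$, so the clamped boundary conditions are preserved. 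With $w-u=-(u-k)_+$, the variational inequality becomes
\begin{equation*}
\int_D\Big\{\beta\,\partial_x^2 u\,\partial_x^2(u-k)_+ + \tau\,\partial_x u\,\partial_x(u-k)_+ + A\,[(u-k)_+]^2\Big\}\,\rd x \le \int_D g(u)\,(u-k)_+\,\rd x\,.
\end{equation*}

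Next I would exploit the chain rule for $(u-k)_+$: one has $\partial_x(u-k)_+ = \partial_x u\,\mathbf 1_{\{u>k\}}$ and, at the level of $H^2$, $\int_D \partial_x^2 u\,\partial_x^2(u-k)_+\,\rd x = \|\partial_x^2(u-k)_+\|_{L_2(D)}^2\ge 0$ together with $\int_D\partial_x u\,\partial_x(u-k)_+\,\rd x = \|\partial_x(u-k)_+\|_{L_2(D)}^2\ge 0$. Dropping these two nonnegative terms leaves
\begin{equation*}
A\,\|(u-k)_+\|_{L_2(D)}^2 \le \int_D g(u)\,(u-k)_+\,\rd x \le \|g(u)\|_{L_2(D)}\,\|(u-k)_+\|_{L_2(D)}\,,
\end{equation*}
hence $\|(u-k)_+\|_{L_2(D)}\le A^{-1}\|g(u)\|_{L_2(D)}$. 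The crucial point now is that the growth assumptions \eqref{bb5} (and, if needed, the pointwise formula \eqref{g} for $g(u)$ together with the regularity estimates of Theorem~\ref{Thmpsi}) should yield a bound $\|g(u)\|_{L_2(D)}\le C(K)$ depending only on the structural constant $K$, \emph{and not on} $\|u\|_{H^2(D)}$ or $k$. Indeed, from \eqref{g} the terms $|(\partial_xh)_u|^2$, $((\partial_zh)_u+(\partial_wh)_u)^2$, and $\sigma[\psi_u(\cdot,-H)-\mathfrak h_u](\partial_w\mathfrak h)_u$ are controlled by \eqref{bb6}–\eqref{bb7} and the maximum principle Lemma~\ref{MP}, while the term $\tfrac12(1+|\partial_xu|^2)[\partial_z\psi_u-(\partial_zh)_u-(\partial_wh)_u]^2(\cdot,u)$ is handled using the $L_r$-estimate on $\partial_z\psi_u(\cdot,u)$ from Theorem~\ref{Thmpsi} combined with $\|\partial_x u\|_{L_\infty}$; but the latter reintroduces $\|u\|_{H^2}$. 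The way around this — and this is where I expect the real work — is to absorb the offending $\|\partial_x u\|_{L_\infty(D)}^2$ contribution, which is why the problematic term was written as $\tfrac12(1+|\partial_x u|^2)(\cdots)^2$: one reorganizes so that, modulo an $A$-dependent constant already built into the definition $A=8(K^4/\beta+2K^2)$, the bad term gets reabsorbed, exactly as in \cite[Section~5]{LW19}. This is the main obstacle: showing $\|g(u)\|_{L_2(D)}$ is controlled purely in terms of $K$ (and the fixed data $\sigma,H,|D|$), uniformly in $k$ and in the solution $u$.

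Once $\|(u-k)_+\|_{L_2(D)}\le A^{-1}C(K)=:C_1(K)$ is established, I would feed this back into the variational inequality to also bound $\|\partial_x^2(u-k)_+\|_{L_2(D)}^2 + \|\partial_x(u-k)_+\|_{L_2(D)}^2 \le \int_D g(u)(u-k)_+\,\rd x \le C(K)C_1(K)$, so that $(u-k)_+$ is bounded in $H^2(D)\cap H_0^1(D)$ by a constant $C_2(K)$ depending only on $K$. The continuous embedding $H^2(D)\hookrightarrow L_\infty(D)$ then gives $\|(u-k)_+\|_{L_\infty(D)}\le C_3(K)$, whence $u(x)\le k + C_3(K)$ for all $x\in D$. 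Combining with the lower constraint $u\ge -H$, we obtain $\|u\|_{L_\infty(D)}\le \max\{H,\,k+C_3(K)\}$. This, however, still depends on $k$, so a final step is needed: one runs the argument with the specific choice $k=H$ first? No — rather, the point is that the bound $\|(u-k)_+\|_{L_\infty(D)}\le C_3(K)$ is \emph{independent of $k$}, so $u\le k+C_3(K)$ holds for \emph{every} admissible $k\ge H$, and in particular, since any solution $u$ for a given $k\ge H$ is also being estimated, we conclude uniform control by bootstrapping: if $\|u\|_{L_\infty(D)}\le k+C_3(K)$ then choosing to re-test with $w=\min\{u,\kappa_0\}$ for $\kappa_0:=H+C_3(K)$ (which is $\le k$, hence a valid competitor and $(u-\kappa_0)_+\le(u-k)_+\equiv$ small) shows $\|(u-\kappa_0)_+\|_{L_\infty(D)}\le C_3(K)$ cannot force $u$ above $\kappa_0$... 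Let me state it cleanly: set $\kappa_0:=H+C_3(K)$. For any $k\ge H$ and any solution $u$, testing with $w=\min\{u,\kappa_0\}\in\bar S_0$ (valid since $\kappa_0\ge H>0$) and repeating the above with $\kappa_0$ in place of $k$ — noting $A(u-k)_+(u-\kappa_0)_+\ge 0$ is still discarded favorably — yields $\|(u-\kappa_0)_+\|_{L_\infty(D)}\le C_3(K)$; but a self-improving/fixed-point reading of this inequality, or simply noting the estimate forces $(u-\kappa_0)_+=0$ once $C_3(K)$ has been chosen as the a priori constant, gives $u\le\kappa_0$ and hence $\|u\|_{L_\infty(D)}\le\kappa_0$. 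I would present this last step carefully, choosing $\kappa_0$ depending only on $K$ so that the constant $C_3$ in the $L_\infty$ estimate for $(u-\kappa_0)_+$ is strictly less than $\kappa_0-H$, which closes the argument by contradiction.
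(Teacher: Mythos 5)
Your approach has several genuine gaps, the most serious of which is that your test function is not admissible. The competitor $w=\min\{u,k\}=u-(u-k)_+$ does not belong to $\bar S_0$ in general: for $u\in H^2(D)$ the truncation $(u-k)_+$ is only in $H^1(D)$, since wherever $u$ crosses the level $k$ transversally the derivative $\partial_x(u-k)_+=\partial_x u\,\mathbf 1_{\{u>k\}}$ jumps, so its distributional derivative contains Dirac masses and $(u-k)_+\notin H^2(D)$. Hence the quantity $\int_D\partial_x^2u\,\partial_x^2(u-k)_+\,\rd x$ is not even defined, and the identity you rely on, $\int_D\partial_x^2u\,\partial_x^2(u-k)_+\,\rd x=\|\partial_x^2(u-k)_+\|_{L_2(D)}^2\ge 0$, is false. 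This is the standard obstruction to Stampacchia/De Giorgi truncation for fourth-order operators, and it is precisely why the paper does not argue this way. Two further steps do not close as stated: (i) the uniform bound $\|g(u)\|_{L_2(D)}\le C(K)$ that your Cauchy--Schwarz step requires is not available --- the first term of \eqref{g} involves $|\partial_x u|^2$ and the trace of $\partial_z\psi_u$ on $\mathfrak{G}(u)$, which are controlled only in terms of $\|u\|_{H^2(D)}$, and your proposed ``absorption'' is never carried out; and (ii) the final bootstrap is circular: from $\|(u-\kappa_0)_+\|_{L_\infty(D)}\le C_3(K)$ one can only conclude $u\le\kappa_0+C_3(K)$, never $u\le\kappa_0$, and there is no geometric gain that would let a fixed-point or contradiction argument close. (There is also a sign slip when you substitute $w-u=-(u-k)_+$: the right-hand side should read $-\int_D g(u)(u-k)_+\,\rd x$.)

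The paper's proof avoids all of this. It needs only a one-sided, pointwise bound $g(u)\ge -G_0$ with $G_0$ depending on $K$ and $\bar\sigma$ alone: the problematic quadratic term $\tfrac12(1+|\partial_xu|^2)\big[\partial_z\psi_u-(\partial_z h)_u-(\partial_w h)_u\big]^2$ in \eqref{g} is nonnegative and is simply discarded when bounding from below, while the remaining terms are controlled by \eqref{bb5}, \eqref{hbound}, and the weak maximum principle of Lemma~\ref{MP}, which gives $|\psi_u|\le K$. One then writes $\{u>-H\}$ as a countable union of open intervals $I_j$; on each $I_j$ the constraint is inactive, so testing with $u\pm\delta\theta$, $\theta\in\mathcal{D}(I_j)$, turns the variational inequality into the equation $\beta\partial_x^4u-\tau\partial_x^2u=-g(u)-A(u-k)_+$ with clamped boundary conditions on $\partial I_j$. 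Subtracting the explicit bounded supersolution $S_{I_j}$ of Proposition~\ref{propA3.1} makes the right-hand side nonpositive, and Boggio's comparison principle for the clamped operator $\beta\partial_x^4-\tau\partial_x^2$ yields $u\le S_{I_j}\le\kappa_0$ with $\kappa_0$ independent of $k$ and of $u$. If you wished to salvage a truncation argument you would have to regularize $(u-k)_+$ in $H^2$ and control the cross terms created at the free boundary $\{u=k\}$, which is exactly where the absence of a maximum principle for the bilaplacian bites.
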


\begin{proof} Owing  to the continuous embedding of $H_0^1(D)$ in $C(\bar{D})$, the function $u$ belongs to $C(\bar{D})$ with $u(\pm L)=0$. Consequently, the set $\{ x \in D\,:\, u(x)>-H\}$ is a non-empty open subset of $D$ and we can write it as a countable union of disjoint open intervals $(I_j)_{j\in J}$, see \cite[IX.Proposition~1.8]{AEIII}. Using once more the property $u(\pm L)=0>-H$, we may assume without loss of generality that $I_0=(-L,a_0)$ and $I_1=(b_0,L)$ for some $-L<a_0<b_0<L$, and $\bar{I}_j\subset (-L,L)$ for $j\in J$ with $j\ge 2$.
	
\noindent {\bf Step 1:} Thanks to \eqref{bb7} and \eqref{hbound1}, we infer from Lemma~\ref{MP} that $|\psi_u|\le K$ in $\Omega(u)$. Combining this bound with \eqref{bb5}, \eqref{hbound}, \eqref{GG}, and \eqref{s0P} readily gives
\begin{equation}\label{bono}
g(u)(x)\ge  -2\bar\sigma K^2 - K^2=:-G_0\,, \quad x\in D\,.
\end{equation}

\noindent {\bf Step 2:} Consider first $j\in J$ with $j\ge 2$ and let $\theta\in \mathcal{D}(I_j)$. Since $u>-H$ in the support of $\theta$, the function $u\pm \delta\theta$ belongs to $S_0$ for $\delta>0$ small enough. We thus infer from \eqref{bennygoodman2} that
\begin{equation*}
\pm\delta\int_{I_j} \left\{\beta\partial_x^2 u\,\partial_x^2 \theta+\tau \partial_x u\, \partial_x\theta  +A(u-k)_+ \theta\right\}\,\rd x \ge \mp\delta\int_{I_j} g(u) \theta\, \rd x  \,,
\end{equation*}
hence
\begin{equation*}
\int_{I_j} \left\{\beta\partial_x^2 u\,\partial_x^2 \theta+\tau \partial_x u\, \partial_x\theta  +A(u-k)_+ \theta\right\}\,\rd x =-\int_{I_j} g(u) \theta\, \rd x  \,.
\end{equation*}
Consequently, using the function $S_{I_j}$ defined in Proposition~\ref{propA3.1}, we realize that $u-S_{I_j} \in H^2(I_j)$ is a weak solution to the boundary value problem
\begin{subequations}\label{ep}
\begin{align}
\beta\partial_x^4 w-\tau\partial_x^2 w& = -G_0-g(u)-A (u-k)_+\;\;\text{ in }\;\; I_j\,,\label{ep1}\\
&  w=\partial_x w=0 \;\;\text{ in }\;\; \partial I_j\,, \label{ep2}
\end{align}
\end{subequations}
the boundary conditions \eqref{ep2} being a consequence of the definition of $I_j$, $j\ge2$,  the $H^2(D)$-regularity of $u$, and the constraint $u\ge -H$.
Taking into account that $g(u)+A (u-k)_+\in L_2(I_j)$ by Theorem~\ref{CC}, classical elliptic regularity theory implies that $ u-S_{I_j}\in H^4(I_j)$ is a strong solution to \eqref{ep}. Since the right hand side of \eqref{ep1} is non-positive due to \eqref{bono}, it now follows from a version of Boggio's comparison principle \cite{Bo05, Gr02, LW15, Ow97} that $u-S_{I_j} < 0$ in $I_j$, so that $u(x)\le  \kappa_0$ for $x\in \bar I_j$ and $j\ge 2$ by Proposition~\ref{propA3.1}.

\noindent {\bf Step 3:} We next handle the case $j=0$ in which $I_0=(-L,a_0)$. We first argue as in the previous step to conclude that
\begin{equation}
\int_{I_0} \left\{\beta\partial_x^2 u\,\partial_x^2 \theta+\tau \partial_x u\, \partial_x\theta + A(u-k)_+ \theta \right\}\,\rd x =-\int_{I_0} g(u) \theta\, \rd x \label{fp3}
\end{equation}
for all $\theta\in \mathcal{D}(I_0)$ and that $u(-L)=\partial_x u(-L)=u(a_0)+H = \partial_x u(a_0)=0$. Consequently, we infer from \eqref{fp3} and Proposition~\ref{propA3.1} that $u-S_{I_0}\in H^2(I_0)$ is a weak solution to the boundary value problem
\begin{align*}
\beta\partial_x^4 w-\tau\partial_x^2 w & = -G_0 -g(u)-A (u-k)_+\;\;\text{ in }\;\; I_0\,,\\
&  w=\partial_x w=0 \;\;\text{ on }\;\; \partial I_0\,. 
\end{align*}
We then argue as in \textbf{Step~2} to establish that $u-S_{I_0} < 0$ in $I_0=(-L,a_0)$. Hence, $u\le \kappa_0$ in $[-L,a_0]$ by Proposition~\ref{propA3.1}.

\noindent {\bf Step 4:} For the case $I_1$ we proceed as in {\bf Step~3} using Proposition~\ref{propA3.1} to deduce that $u\le \kappa_0$ in $[b_0,L]$. This completes the proof.
\end{proof}

\subsection{Proof of Theorem~\ref{Thm22} for $\alpha=0$}\label{sec63}

Let $k\ge H$ and consider a minimizer $u_k\in \bar S_0$ of the functional $\mathcal{E}_k$ on $\bar S_0$ as provided by Corollary~\ref{L334}. Then, $-H\le u_k\le \kappa_0$ in $D$ according to Proposition~\ref{prop4}. Therefore, if $k\ge \kappa_0$, then
\begin{equation}\label{888}
E(u_k)=\mathcal{E}_{\kappa_0}(u_k)=\mathcal{E}_k(u_k)\le \mathcal{E}_k(v) = E(v) + \frac{A}{2} \|(v-k)_+\|_{L_2(D)}^2\,,\quad v\in \bar S_0\,.
\end{equation}
Now, it follows from Lemma~\ref{L333} and the fact that $0\in \bar S_0$ that, for $k\ge \kappa_0$,
$$
\frac{\beta}{4}\|\partial_x^2 u_k\|_{L_2(D)}^2 \le \mathcal{E}_{\kappa_0}(u_k)+ c(\kappa_0)\le  \mathcal{E}_{k}(0)+ c(\kappa_0)=E(0)+ c(\kappa_0)\,.
$$
Therefore, $(u_k)_{k\ge \kappa_0}$ is bounded in $H^2(D)$ and there is a subsequence  of $(u_k)_{k\ge \kappa_0}$ (not relabeled) which converges weakly in $H^2(D)$ and strongly in $H^1(D)$ towards some $u_*\in \bar S_0$. Due to the weak lower semicontinuity of $E_m$ in $H^2(D)$ and the continuity of $E_e$ with respect to the weak topology of $H^2(D)$ (see Theorem~\ref{CC}), we readily infer from \eqref{888} that
\begin{equation*}
E(u_*)\le E(v)\,,\quad v\in \bar S_0\,,
\end{equation*}
after taking into account that
\begin{equation*}
\lim_{k\to\infty} \|(v-k)_+\|_{L_2(D)} = 0\,, \qquad v\in L_2(D)\,.
\end{equation*}
Consequently, $u_*\in \bar S_0$ is a minimizer of $E$ on $\bar S_0$. This proves Theorem~\ref{Thm22}.

\subsection{Proof of Theorem~\ref{Thm33} for $\alpha=0$}\label{sec64}

Let $u\in \bar S_0$ be any minimizer of $E$ on $\bar S_0$. 
Proceeding as in the proof of Proposition~\ref{prop3}, this implies that $u\in \bar S_0$
is an $H^2$-weak solution to the variational inequality
$$
\beta\partial_x^4u- \tau \partial_x^2 u+
+\partial\mathbb{I}_{\bar{S_0}}(u) \ni -g(u) \;\;\text{ in }\;\; D\,, 
$$
which completes the proof of Theorem~\ref{Thm33}.

\section{Proofs of Theorem~\ref{Thm22} and Theorem~\ref{Thm33} for $\alpha>0$}\label{proof2}

Consider now $\alpha>0$. In that case, the total energy is given by
$$
E(u)= E_m(u)+E_e(u)
$$ 
with mechanical energy 
$$
E_m(u)=\frac{\beta}{2}\|\partial_x^2u\|_{L_2(D)}^2 +\left(\frac{\tau}{2}+\frac{\alpha}{4}\|\partial_x u\|_{L_2(D)}^2\right)\|\partial_x u\|_{L_2(D)}^2
$$
and electrostatic energy 
$$
E_e(u)=-\dfrac{1}{2}\displaystyle\int_{\Omega(u)}  \big\vert\nabla \psi_u\big\vert^2\,\rd (x,z)\\
-\dfrac{1}{2}\displaystyle\int_{ D} \sigma(x) \big\vert \psi_u(x,-H)-\mathfrak{h}_{u}(x)\big\vert^2\,\rd x\,.
$$
Observe that, since $\alpha>0$, the mechanical energy $E_m$ features a super-quadratic term in $\|\partial_x u\|_{L_2(D)}$ which has the following far-reaching consequence.

\begin{lemma}\label{kraftwerk}
The functional $E$ is bounded from below with
\begin{equation*}
E(u) \ge \frac{\beta}{4} \|\partial_x^2 u\|_{L_2(D)}^2 -c
\end{equation*}
for some constant $c>0$. 
\end{lemma}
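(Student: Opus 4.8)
The plan is to combine the elementary bound on $-E_e(u)$ that was already derived in the proof of Lemma~\ref{L333} with the super-quadratic term now available in $E_m$ when $\alpha>0$. The starting point is the estimate obtained there: using the minimality of $\chi_u=\psi_u-h_u$ for $\mathcal{G}(u)$ (Proposition~\ref{lemt9P}), the growth assumptions \eqref{bb5} on $h$ and $\mathfrak{h}$, and the bound \eqref{s0P} on $\sigma$, one gets
\begin{equation*}
-E_e(u) \le 2(1+\bar\sigma)|D|K^2 + 2K^2\|u\|_{L_2(D)}^2 + K^2\|\partial_x u\|_{L_2(D)}^2
\end{equation*}
for every $u\in\bar S$. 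Note that here only \eqref{bb5} is needed, which is why the statement of Lemma~\ref{kraftwerk} does not require \eqref{hbound} (in contrast to the $\alpha=0$ case).

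Next I would bound $\|u\|_{L_2(D)}$ and $\|\partial_x u\|_{L_2(D)}$ in terms of $\|\partial_x^2 u\|_{L_2(D)}$. Since $u\in\bar S_0$ (or more generally $u\in H^2(D)\cap H_0^1(D)$), Poincar\'e's inequality on $D=(-L,L)$ gives $\|u\|_{L_2(D)}\le c_D\|\partial_x u\|_{L_2(D)}\le c_D^2\|\partial_x^2 u\|_{L_2(D)}$, and similarly $\|\partial_x u\|_{L_2(D)}\le c_D\|\partial_x^2 u\|_{L_2(D)}$. Plugging these into the estimate for $-E_e(u)$ yields
\begin{equation*}
-E_e(u) \le c_1 + c_2 \|\partial_x^2 u\|_{L_2(D)}^2
\end{equation*}
for constants $c_1,c_2>0$ depending only on $K$, $\bar\sigma$, $|D|$, and $L$.

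Finally I would absorb the quadratic term $c_2\|\partial_x^2 u\|_{L_2(D)}^2$ into the mechanical energy. The key observation is that $E_m(u)\ge \frac{\beta}{2}\|\partial_x^2 u\|_{L_2(D)}^2 + \frac{\alpha}{4}\|\partial_x u\|_{L_2(D)}^4$, and the quartic term in $\|\partial_x u\|_{L_2(D)}$ dominates. Interpolating $\|\partial_x^2 u\|_{L_2(D)}^2$ against $\|\partial_x u\|_{L_2(D)}^4$ and a constant (for instance via $\|\partial_x^2 u\|_{L_2(D)}^2 = -\int_D u\,\partial_x^4 u$ is not available, so instead use the Gagliardo--Nirenberg-type inequality $\|\partial_x u\|_{L_2(D)}^2 \le \|u\|_{L_2(D)}\|\partial_x^2 u\|_{L_2(D)}$ from the proof of Lemma~\ref{L333}, hence $\|\partial_x^2 u\|_{L_2(D)}\ge \|\partial_x u\|_{L_2(D)}^2/\|u\|_{L_2(D)}$; combined with Poincar\'e this does not immediately close) — more directly, one writes
\begin{equation*}
c_2\|\partial_x^2 u\|_{L_2(D)}^2 \le \frac{\beta}{4}\|\partial_x^2 u\|_{L_2(D)}^2 + \frac{c_2^2}{\beta}\,,
\end{equation*}
which already gives $E(u)\ge \frac{\beta}{4}\|\partial_x^2 u\|_{L_2(D)}^2 - c$ \emph{without even using the quartic term} once $c_2$ is a fixed constant. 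So the proof is in fact just: (1) quote the $-E_e$ bound from Lemma~\ref{L333}, (2) apply Poincar\'e to reduce everything to $\|\partial_x^2 u\|_{L_2(D)}^2$, (3) Young's inequality to absorb into $\frac{\beta}{2}\|\partial_x^2u\|_{L_2(D)}^2$. The $\alpha>0$ hypothesis is not strictly needed for boundedness below; the role of the quartic term appears only later, in the coercivity argument and the direct method, which is presumably why the authors separated this as a standalone lemma. The only mild subtlety — and the step most worth stating carefully — is that the $-E_e$ estimate uses $u\in\bar S$ together with the identity $\|\partial_x u\|_{L_2(D)}^2 \le \|u\|_{L_2(D)}\|\partial_x^2 u\|_{L_2(D)}$, valid because of the clamped/pinned boundary data; I would reproduce that short computation explicitly rather than merely cite it.
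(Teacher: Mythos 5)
Your first two steps are fine and coincide with the paper's: you quote the bound
$-E_e(u) \le 2(1+\bar{\sigma}) |D| K^2 + 2K^2\|u\|_{L_2(D)}^2 + K^2\|u\|_{L_2(D)}\|\partial_x^2 u\|_{L_2(D)}$
from the proof of Lemma~\ref{L333} and use Poincar\'e's inequality to control $\|u\|_{L_2(D)}$. The fatal step is the last one. The inequality
\begin{equation*}
c_2\|\partial_x^2 u\|_{L_2(D)}^2 \le \frac{\beta}{4}\|\partial_x^2 u\|_{L_2(D)}^2 + \frac{c_2^2}{\beta}
\end{equation*}
is simply false unless $c_2\le \beta/4$: a quadratic term cannot be absorbed into a quadratic term with a \emph{smaller} coefficient plus a constant, since the difference $(c_2-\beta/4)\|\partial_x^2 u\|_{L_2(D)}^2$ is unbounded. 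Young's inequality lets you absorb a \emph{product} $\|u\|_{L_2}\|\partial_x^2 u\|_{L_2}$ into $\frac{\beta}{4}\|\partial_x^2 u\|_{L_2}^2$ at the cost of a multiple of $\|u\|_{L_2}^2$ --- that is what the proof of Lemma~\ref{L333} does --- but the resulting term $\bigl(\frac{K^4}{\beta}+2K^2\bigr)\|u\|_{L_2(D)}^2$, even after Poincar\'e turns it into $C\|\partial_x^2 u\|_{L_2(D)}^2$, comes with a constant $C$ that has no reason to be below $\beta/4$. Your own aside that the Gagliardo--Nirenberg route ``does not immediately close'' was the correct instinct; the ``more direct'' replacement does not close either.

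This is precisely why the hypothesis $\alpha>0$ is essential here, contrary to your closing claim. The paper's proof bounds $\|u\|_{L_2(D)}^2\le 4|D|^2\|\partial_x u\|_{L_2(D)}^2$ and then absorbs the resulting term $C\|\partial_x u\|_{L_2(D)}^2$ into the \emph{quartic} term $\frac{\alpha}{4}\|\partial_x u\|_{L_2(D)}^4$ via Young's inequality ($Ct^2\le \frac{\alpha}{8}t^4 + \frac{2C^2}{\alpha}$), which does produce a genuine constant. If boundedness from below of $E$ followed without $\alpha>0$ as you assert, the entire machinery of Section~\ref{proof} --- the penalized functional $\mathcal{E}_k$, its Euler--Lagrange inequality, and the uniform $L_\infty$-bound via Boggio's comparison principle --- would be superfluous; the paper states explicitly that for $\alpha=0$ the coercivity of $E$ is not granted and is only recovered indirectly through that regularization. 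To repair your argument, replace the final display by the absorption into $\frac{\alpha}{4}\|\partial_x u\|_{L_2(D)}^4$.
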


\begin{proof}
As in the proof of Lemma~\ref{L333}, we deduce from \eqref{bb5}, \eqref{s0P}, and Proposition~\ref{lemt9P} that
\begin{equation*}
-E_e(u) \le 2(1+\bar{\sigma}) |D| K^2 +\left(\frac{K^4}{\beta}  +2K^2 \right) \| u\|_{L_2(D)}^2+\frac{\beta}{4} \|\partial_x^2u\|_{L_2(D)}^2 \,.
\end{equation*}
Therefore, since $\|u\|_{L_2(D)} \le 2 |D| \|\partial_x u\|_{L_2(D)}$ by Poincar\'e's inequality, it follows fromYoung's inequality that
\begin{align*}
E(u) & \ge \frac{\beta}{4} \|\partial_x^2u\|_{L_2(D)}^2 + \frac{\alpha}{4} \|\partial_x u\|_{L_2(D)}^4 - 2(1+\bar{\sigma}) |D| K^2 - \left(\frac{K^4}{\beta}  + 2K^2 \right) \| u\|_{L_2(D)}^2 \\
& \ge  \frac{\beta}{4} \|\partial_x^2u\|_{L_2(D)}^2 + \frac{\alpha}{4} \|\partial_x u\|_{L_2(D)}^4 - 2(1+\bar{\sigma}) |D| K^2 - 4 |D|^2\left(\frac{K^4}{\beta}  + 2K^2 \right) \|\partial_x u\|_{L_2(D)}^2 \\
& \ge  \frac{\beta}{4} \|\partial_x^2u\|_{L_2(D)}^2 + \frac{\alpha}{8} \|\partial_x u\|_{L_2(D)}^4 - c\,,
\end{align*}
and the proof is complete.
\end{proof}
Once Lemma~\ref{kraftwerk} is established, the existence of a minimizer of $E$ on $\bar S_0$ follows from the weak lower semicontinuity of $E_m$ in $H^2(D)$ and the continuity of $E_e$ with respect to the weak topology of $H^2(D)$ (see Corollary~\ref{L3}) by the direct method of the calculus of variations, hence Theorem~\ref{Thm22} for $\alpha>0$ (see also \cite[Theorem~5.1]{LW19}). As for the proof of Theorem~\ref{Thm33} for $\alpha>0$, it is the same as that for $\alpha=0$, see Section~\ref{sec64}.

\appendix

\section{A technical lemma}

\begin{lemma}\label{lemA1}
Let $I$ and $J$ be two bounded intervals in $\mathbb{R}$, and let $U$ be a bounded open subset of $I \times J$. 
Consider $\vartheta \in H^1(U)$ and  functions $v\in C(\bar{I})$, $w\in C(\bar{I})$, and $\rho \in C(\bar{I})$, 
$\rho \geq 0$, such that 

\begin{itemize}
		\item [(a)] $x \mapsto \vartheta(x,v(x))$ and $x \mapsto \vartheta(x,w(x))$ are well-defined and belong 
		to $L_2(I,\rho \, \mathrm{d}x)$;
		\item [(b)] $\{ (x,z) \in I \times J : \min\{v(x),w(x)\} < z< \max\{v(x),w(x)\}\} \subset \bar{U}$.
\end{itemize}
Then
\begin{equation*}
\int_I  \vert \vartheta(\cdot, v) - \vartheta(\cdot, w)\vert^2 \rho\, \mathrm{d}x \leq \Vert  (v-w) \rho\Vert_{L_{\infty}(I)} \Vert \partial_z \vartheta \Vert_{L_2(U)}^2. 
\end{equation*}
\end{lemma}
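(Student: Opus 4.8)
The plan is to reduce everything to the one--dimensional fundamental theorem of calculus along vertical slices $\{x\}\times J$, combined with the Cauchy--Schwarz inequality. Set
$$
\mathcal{O} := \big\{ (x,z)\in I\times J \ :\ \min\{v(x),w(x)\} < z < \max\{v(x),w(x)\} \big\}\,,
$$
which is open because $v$ and $w$ are continuous, and which satisfies $\mathcal{O}\subseteq \bar{U}$ by assumption~(b). I would first prove the inequality for $\vartheta\in C^1(\bar{U})$. For each $x\in I$ the slice $z\mapsto\vartheta(x,z)$ is $C^1$ on the interval joining $w(x)$ to $v(x)$, so
$$
\vartheta(x,v(x)) - \vartheta(x,w(x)) = \int_{w(x)}^{v(x)} \partial_z\vartheta(x,z)\,\mathrm{d}z\,,
$$
and the Cauchy--Schwarz inequality in the $z$ variable gives the pointwise bound
$$
|\vartheta(x,v(x)) - \vartheta(x,w(x))|^2 \le |v(x)-w(x)| \int_{\min\{v(x),w(x)\}}^{\max\{v(x),w(x)\}} |\partial_z\vartheta(x,z)|^2\,\mathrm{d}z\,.
$$
Multiplying by $\rho(x)\ge 0$, using $|v(x)-w(x)|\,\rho(x)\le \|(v-w)\rho\|_{L_\infty(I)}$, integrating over $I$, and applying Tonelli's theorem to recognize the remaining double integral as $\iint_{\mathcal{O}} |\partial_z\vartheta|^2\,\mathrm{d}(x,z) \le \|\partial_z\vartheta\|_{L_2(U)}^2$ (this is where $\mathcal{O}\subseteq\bar U$ enters), one obtains the claimed estimate for smooth $\vartheta$.

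The second step is to pass from $C^1(\bar U)$ to a general $\vartheta\in H^1(U)$ by approximation. By the Meyers--Serrin theorem there is a sequence $(\vartheta_n)_{n\ge 1}$ in $C^\infty(U)\cap H^1(U)$ with $\vartheta_n\to\vartheta$ in $H^1(U)$; in particular $\partial_z\vartheta_n\to\partial_z\vartheta$ in $L_2(U)$. Since the traces $\vartheta(\cdot,v)$ and $\vartheta(\cdot,w)$ are, by hypothesis~(a), well defined as elements of $L_2(I,\rho\,\mathrm{d}x)$ -- these being produced precisely by a density construction of the type underlying Lemma~\ref{lemT1H}, whose defining estimates furnish the continuity of $\vartheta\mapsto\vartheta(\cdot,v)$ and $\vartheta\mapsto\vartheta(\cdot,w)$ from $H^1(U)$ to $L_2(I,\rho\,\mathrm{d}x)$ -- one has $\vartheta_n(\cdot,v)\to\vartheta(\cdot,v)$ and $\vartheta_n(\cdot,w)\to\vartheta(\cdot,w)$ in $L_2(I,\rho\,\mathrm{d}x)$. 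Passing to the limit $n\to\infty$ in the inequality already established for each $\vartheta_n$ then yields the result. In the concrete situations where Lemma~\ref{lemA1} is invoked, $U$ is either a Lipschitz rectangle or a domain of the form $\mathcal{O}_I(\phi)$, so the trace operators of Lemma~\ref{lemT1H} (and Lemma~\ref{lemT2H}) apply verbatim and make this step unambiguous.

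The delicate point is the trace step: $U$ need not be a Lipschitz domain -- it may have cuspidal boundary points where a graph meets $\{z=-H\}$ tangentially -- so $\vartheta(\cdot,v)$ and $\vartheta(\cdot,w)$ are \emph{not} the standard Sobolev traces, and the weighted spaces $L_2(I,\rho\,\mathrm{d}x)$ (with $\rho$ typically vanishing exactly where the cusps form) are what makes the traces and their continuity meaningful. Obtaining a single approximating sequence that converges simultaneously in $H^1(U)$ and in both weighted trace norms is therefore the crux, and it is exactly what the construction behind Lemma~\ref{lemT1H} provides. A minor technical issue, settled by Tonelli's theorem and the openness of $\mathcal{O}$, is the interchange of the iterated integrals and the passage to $\iint_{\mathcal{O}}$ using $\mathcal{O}\subseteq\bar U$; the remaining estimates are elementary.
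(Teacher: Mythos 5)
Your core estimate is exactly the paper's: the fundamental theorem of calculus along vertical slices, Cauchy--Schwarz in $z$, multiplication by $\rho$, and Tonelli to recognize the double integral over the region between the two graphs as a piece of $\|\partial_z\vartheta\|_{L_2(U)}^2$. The difference is structural: the paper applies this computation \emph{directly} to $\vartheta\in H^1(U)$, using that for a.a.\ $x\in I$ the precise representative of $z\mapsto\vartheta(x,z)$ is absolutely continuous on the segment joining $w(x)$ to $v(x)$ with derivative $\partial_z\vartheta(x,\cdot)\in L_2$, so that the identity $\vartheta(x,v(x))-\vartheta(x,w(x))=\int_{w(x)}^{v(x)}\partial_z\vartheta(x,z)\,\mathrm{d}z$ holds a.e.\ and the whole proof is three lines. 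You instead prove the inequality for $C^1(\bar U)$ and then pass to the limit by density, which forces you to confront the convergence of the approximating traces in $L_2(I,\rho\,\mathrm{d}x)$. You correctly identify this as the delicate point, but your resolution --- appealing to the trace construction of Lemma~\ref{lemT1H} and to the concrete domains where the lemma is invoked --- goes beyond what hypothesis~(a) supplies: in the abstract setting of the statement, $U$ is an arbitrary bounded open set and the traces are merely \emph{assumed} to exist in $L_2(I,\rho\,\mathrm{d}x)$, with no guarantee that they arise as limits of traces of a smooth approximating sequence. So your argument proves the lemma only under an additional (implicit) compatibility assumption on the traces, whereas the paper's direct route needs no such assumption because the a.e.\ slice representation \emph{is} the definition being used. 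The lesson is that the density detour buys nothing here and is the only place where the proof can leak; if you keep your structure, you should at least record that the traces in~(a) are understood as the a.e.\ endpoint values of the absolutely continuous slice representatives, at which point the approximation step becomes unnecessary anyway.
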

\begin{proof}
Owing to (b) we have, for a.a. $x \in I$,
\begin{equation*}
 \vert \vartheta(x, v(x)) - \vartheta(x, w(x))\vert^2 = \left( \int_{w(x)}^{v(x)} \partial_z \vartheta(x,z)  \, \mathrm{d}z \right)^2\,.
\end{equation*}
Integrating with respect to $x\in I$ after multiplication by $\rho (x)$ and using H\"older's inequality give
\begin{align*}
\int_I \vert \vartheta(x, v(x)) - \vartheta(x, w(x))\vert^2 \rho(x)\, \mathrm{d}x 
&\leq \int_I  \vert v(x)-w(x)\vert \left\vert \int_{w(x)}^{v(x)} \vert\partial_z \vartheta(x,z)\vert^2   \mathrm{d}z \right\vert \,\rho(x)\,  \mathrm{d}x
\\
&\leq \Vert \rho (v-w) \Vert_{L_{\infty}(I)} \int_U  \vert \partial_z \vartheta(x,z)\vert^2 \, \mathrm{d}(x,z)
\end{align*}
and the proof is complete.
\end{proof}

\section{Proof of Lemma~\ref{lemt2Px}} \label{A1}

The proof of Lemma~\ref{lemt2Px} relies on the following result, which can be seen as an extension of \cite[Lemma~4.3.1.3]{Gr1985} to include Robin boundary conditions.

\begin{lemma}\label{lemt3P}
Let $I:=(a,b)$ and set $\mathcal{R}_I = I\times (0,1)$. Consider $\varphi\in H^2(\mathcal{R}_I)$ and $\mu\in C^2(\bar{I})$ such that
\begin{subequations}\label{t31P}
\begin{align}
\varphi(a,\eta) = \varphi(b,\eta) & = 0\ , \qquad \eta\in (0,1)\ , \label{t31aP} \\
\varphi(x,1) = - \partial_\eta\varphi(x,0) + \mu(x) \varphi(x,0) & = 0\ , \qquad x\in I\ . \label{t31bP}
\end{align}
\end{subequations}
Then
\begin{equation*}
\int_{\mathcal{R}_I} \partial_x^2\varphi \partial_\eta^2\varphi\ \mathrm{d}(x,\eta) = \int_{\mathcal{R}_I} |\partial_x \partial_\eta\varphi|^2\ \mathrm{d}(x,\eta) + \int_I \left( \partial_x\varphi \partial_x(\mu\varphi) \right)(\cdot,0)\ \mathrm{d}x\ .
\end{equation*}
\end{lemma}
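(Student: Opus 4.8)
The plan is to establish the identity first for smooth $\varphi$ and then to recover the general case by density.

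\emph{Smooth case.} Suppose first that $\varphi\in C^\infty(\overline{\mathcal{R}_I})$ satisfies \eqref{t31P}. I would integrate by parts twice, first in the $x$-variable and then in the $\eta$-variable. Since $\varphi$ vanishes identically on $\{x=a\}\cup\{x=b\}$ by \eqref{t31aP}, so do all its $\eta$-derivatives there, and the first integration by parts produces no boundary contribution, giving $\int_{\mathcal{R}_I}\partial_x^2\varphi\,\partial_\eta^2\varphi\,\mathrm{d}(x,\eta)=-\int_{\mathcal{R}_I}\partial_x\varphi\,\partial_x\partial_\eta^2\varphi\,\mathrm{d}(x,\eta)$. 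Integrating by parts once more in $\eta$ yields
\begin{equation*}
\int_{\mathcal{R}_I}\partial_x^2\varphi\,\partial_\eta^2\varphi\,\mathrm{d}(x,\eta)=\int_{\mathcal{R}_I}|\partial_x\partial_\eta\varphi|^2\,\mathrm{d}(x,\eta)-\int_I\Big[\big(\partial_x\varphi\,\partial_x\partial_\eta\varphi\big)(x,1)-\big(\partial_x\varphi\,\partial_x\partial_\eta\varphi\big)(x,0)\Big]\,\mathrm{d}x\,.
\end{equation*}
The contribution at $\eta=1$ vanishes because $\varphi(\cdot,1)\equiv0$, hence $\partial_x\varphi(\cdot,1)\equiv0$, by \eqref{t31bP}; and at $\eta=0$ the Robin condition $\partial_\eta\varphi(\cdot,0)=\mu\varphi(\cdot,0)$ from \eqref{t31bP} lets me rewrite $\partial_x\partial_\eta\varphi(\cdot,0)=\partial_x\big(\partial_\eta\varphi(\cdot,0)\big)=\partial_x(\mu\varphi)(\cdot,0)$, which gives the claimed formula. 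Equivalently, this computation is just the divergence theorem on the rectangle applied to the pointwise identity $\partial_x^2\varphi\,\partial_\eta^2\varphi-|\partial_x\partial_\eta\varphi|^2=\partial_x\big(\partial_x\varphi\,\partial_\eta^2\varphi\big)-\partial_\eta\big(\partial_x\varphi\,\partial_x\partial_\eta\varphi\big)$, only the edge $\{\eta=0\}$ contributing, in the spirit of \cite[Lemma~4.3.1.3]{Gr1985}.

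\emph{General case.} For arbitrary $\varphi\in H^2(\mathcal{R}_I)$ satisfying \eqref{t31P}, I would pick a sequence $(\varphi_n)_{n\ge1}$ in $C^\infty(\overline{\mathcal{R}_I})$ with $\varphi_n\to\varphi$ in $H^2(\mathcal{R}_I)$ (which exists since $\mathcal{R}_I$ is a Lipschitz domain), apply the divergence theorem to each $\varphi_n$ \emph{without} imposing any boundary condition on $\varphi_n$ — so that the full boundary of $\mathcal{R}_I$ shows up — and then let $n\to\infty$ in both sides. The volume integrals pass to the limit at once, being continuous quadratic forms on $H^2(\mathcal{R}_I)$. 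For the boundary integrals I would invoke the trace theory for $H^1$ and $H^2$ on the rectangle: along each edge, $\varphi_n\to\varphi$ in $H^{3/2}$ and $\partial_x\varphi_n,\partial_\eta\varphi_n\to\partial_x\varphi,\partial_\eta\varphi$ in $H^{1/2}$, so that the edge integrals over $\{x=a\}$, $\{x=b\}$ and $\{\eta=1\}$, read as $H^{-1/2}$--$H^{1/2}$ duality pairings, tend to $0$ because the corresponding traces of $\varphi$ vanish by \eqref{t31P}, while the edge integral over $\{\eta=0\}$ converges to $\int_I\big(\partial_x\varphi\,\partial_x(\mu\varphi)\big)(\cdot,0)\,\mathrm{d}x$ after inserting the Robin condition satisfied by $\varphi$ and using $\mu\in C^2(\bar I)$; this last quantity is well-defined since $(\partial_x\varphi)(\cdot,0)\in L_2(I)$ as a trace of $\partial_x\varphi\in H^1(\mathcal{R}_I)$ and $\partial_x(\mu\varphi)=\mu'\varphi+\mu\,\partial_x\varphi\in H^1(\mathcal{R}_I)$ likewise has an $L_2(I)$-trace on $\{\eta=0\}$.

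The step requiring care — the main obstacle — is precisely this limiting procedure in the boundary terms, i.e.\ controlling the top-order traces of the $\varphi_n$ along the edges and corners of $\mathcal{R}_I$. An alternative is to approximate $\varphi$ directly by smooth functions that already satisfy \eqref{t31P}, which is admissible here because the Dirichlet and Robin conditions are compatible at all four corners of the rectangle (as one checks directly from \eqref{t31P}) — the setting already covered by the corner-domain theory used in the proof of Lemma~\ref{lemt1P}. Either way, once the approximation is in hand the passage to the limit delivers the identity.
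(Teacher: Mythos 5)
Your computation in the smooth case is correct and identifies the right cancellations, but the passage to general $\varphi\in H^2(\mathcal{R}_I)$ contains a genuine gap. The edge terms you propose to control are of the form $\int_I u_n' w_n\,\mathrm{d}x$ with $u_n,w_n$ first-order traces of $\varphi_n$ converging only in $H^{1/2}$ along the edge (e.g.\ $u_n=\partial_\eta\varphi_n(\cdot,0)$, $w_n=\partial_x\varphi_n(\cdot,0)$ on $\{\eta=0\}$). The bilinear map $(u,w)\mapsto \int_I u'w\,\mathrm{d}x$ is \emph{not} continuous on $H^{1/2}(I)\times H^{1/2}(I)$: formally $\int_I u'u\,\mathrm{d}x=\tfrac12[u^2]_a^b$, and point evaluation at the endpoints of $I$ --- that is, at the corners of $\mathcal{R}_I$ --- is not continuous on $H^{1/2}(I)$. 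So reading the edge integrals as $H^{-1/2}$--$H^{1/2}$ pairings does not let you pass to the limit along an unconstrained smooth approximation; this is precisely the corner obstruction you name but do not resolve. Your fallback (approximating by smooth functions that already satisfy \eqref{t31P}) would indeed close the argument, since then only first-order traces appear in the boundary term and these converge in $L_2(I)$; but the required density of such functions in the $H^2$-class satisfying \eqref{t31P} is a nontrivial mixed Dirichlet--Robin corner result that you assert rather than prove.

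The paper avoids the issue altogether by a substitution: it sets $F:=\partial_x\varphi-\eta\,\partial_x\mu\,\varphi$ and $G:=\partial_\eta\varphi-\mu\varphi$ (arising from $\xi=e^{-\eta\mu}\varphi$), observes that the Robin condition in \eqref{t31bP} makes $G$ vanish on $\{\eta=0\}$ as well as on $\{x=a\}\cup\{x=b\}$, while $F$ vanishes on $\{\eta=1\}$, and then applies \cite[Lemma~4.3.1.3]{Gr1985}, which gives $\int_{\mathcal{R}_I}\partial_xF\,\partial_\eta G\,\mathrm{d}(x,\eta)=\int_{\mathcal{R}_I}\partial_\eta F\,\partial_x G\,\mathrm{d}(x,\eta)$ for $H^1$ functions vanishing on complementary parts of the boundary --- exactly the corner-safe version of the commutation you need. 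The lower-order terms generated by the substitution are then computed by elementary integrations by parts involving only first-order traces and are shown to reduce to $\int_I(\partial_x\varphi\,\partial_x(\mu\varphi))(\cdot,0)\,\mathrm{d}x$. If you want to keep your structure, you should either prove the density statement for the mixed boundary conditions or reorganize the boundary terms into quantities vanishing on complementary boundary portions before passing to the limit, as the paper does.
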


\begin{proof}
We put $\xi(x,\eta) := e^{-\eta\mu(x)} \varphi(x,\eta)$ and $\rho(x,\eta) := e^{\eta\mu(x)}$ for $(x,\eta)\in \mathcal{R}_I$. Owing to the regularity of $\varphi$ and $\mu$, the function $\xi$ belongs to $H^2(\mathcal{R}_I)$ and, for $(x,\eta)\in \mathcal{R}_I$, 
\begin{align*}
\partial_x \xi(x,\eta) & = e^{-\eta\mu(x)} \left[ \partial_x\varphi(x,\eta) - \eta\partial_x \mu(x) \varphi(x,\eta) \right]\ , \\
\partial_\eta \xi(x,\eta) & = e^{-\eta\mu(x)} \left[ \partial_\eta\varphi(x,\eta) - \mu(x) \varphi(x,\eta) \right]\ .
\end{align*}
Consequently, the functions $F$ and $G$,  defined for $(x,\eta)\in \mathcal{R}_I$ by
\begin{align*}
F(x,\eta) & := \rho(x,\eta) \partial_x \xi(x,\eta) = \partial_x\varphi(x,\eta) - \eta\partial_x \mu(x) \varphi(x,\eta) \ , \\
G(x,\eta) & := \rho(x,\eta) \partial_\eta \xi(x,\eta) = \partial_\eta\varphi(x,\eta) - \mu(x) \varphi(x,\eta) \ ,
\end{align*}
 satisfy
\begin{align*}
G(a,\eta) = G(b,\eta) & = 0\ , \qquad \eta\in (0,1)\ , \\
F(x,1) = G(x,0) & = 0\ , \qquad x\in I\ ,
\end{align*}
since, by \eqref{t31P},
\begin{equation}
\begin{split}
\partial_\eta\varphi(a,\eta) = \partial_\eta\varphi(b,\eta) & = 0\ , \qquad \eta\in (0,1)\ , \\
\partial_x\varphi(x,1) & = 0\ , \qquad  x\in I\,.
\end{split}\label{t33P}
\end{equation}
 We then infer from \cite[Lemma~4.3.1.3]{Gr1985} that
\begin{align*}
\int_{\mathcal{R}_I} \partial_x(\rho\partial_x\xi) \partial_\eta(\rho\partial_\eta\xi)\ \mathrm{d}(x,\eta) & = \int_{\mathcal{R}_I} \partial_x F \partial_\eta G\ \mathrm{d}(x,\eta) = \int_{\mathcal{R}_I} \partial_\eta F \partial_x G\ \mathrm{d}(x,\eta) \\
& = \int_{\mathcal{R}_I} \partial_\eta(\rho\partial_x\xi) \partial_x(\rho\partial_\eta\xi)\ \mathrm{d}(x,\eta)\ ;
\end{align*}
that is, 
\begin{equation}
0 = \int_{\mathcal{R}_I} \left[ \partial_x^2\varphi \partial_\eta^2\varphi - |\partial_x\partial_\eta\varphi|^2 \right]\ \mathrm{d}(x,\eta) + \sum_{j=1}^3 I_j\ , \label{t32P}
\end{equation}
where
\begin{align*}
I_1 & := \int_{\mathcal{R}_I} \left[ - \partial_\eta(\mu\varphi) \partial_x^2\varphi + \partial_x(\mu\varphi)  \partial_x\partial_\eta\varphi \right]\ \mathrm{d}(x,\eta)\, , \\
I_2 & := \int_{\mathcal{R}_I} \left[ - \partial_x(\eta \varphi\partial_x \mu) \partial_\eta^2\varphi + \partial_\eta(\eta\varphi\partial_x \mu)  \partial_x\partial_\eta\varphi \right]\ \mathrm{d}(x,\eta)\ , \\
I_3 & := \int_{\mathcal{R}_I} \left[ \partial_x(\eta \varphi\partial_x \mu) \partial_\eta(\tau\varphi) - \partial_\eta(\eta\varphi\partial_x \mu)  \partial_x(\tau\varphi) \right]\ \mathrm{d}(x,\eta)\,.
\end{align*}
First, integrating by parts and using the boundary values \eqref{t31P} of $\varphi$ give
\begin{equation*}
I_3 = \int_0^1 \Big[ \eta  \varphi\partial_x \mu \partial_\eta(\mu\varphi) \Big]_{x=a}^{x=b}\ \mathrm{d}\eta - \int_I \Big[ \eta \varphi\partial_x \mu  \partial_x(\mu\varphi) \Big]_{\eta=0}^{\eta=1}\ \mathrm{d}x = 0
\end{equation*}
and
\begin{align*}
I_2 & = - \int_I \Big[ \partial_x(\eta \varphi \partial_x \mu) \partial_\eta\varphi \Big]_{\eta=0}^{\eta=1}\ \mathrm{d}x + \int_0^1 \Big[ \partial_\eta(\eta \varphi \partial_x \mu) \partial_\eta\varphi \Big]_{x=a}^{x=b}\ \mathrm{d}\eta \\
& = - \int_I \partial_x \mu \partial_x\varphi(\cdot,1) \partial_\eta\varphi(\cdot,1)\ \mathrm{d}x + \int_0^1 \partial_\eta(\eta\varphi\partial_x \mu)(b,\cdot) \partial_\eta\varphi(b,\cdot)\ \mathrm{d}\eta \\
& \quad\ - \int_0^1 \partial_\eta(\eta\varphi\partial_x \mu)(a,\cdot) \partial_\eta\varphi(a,\cdot)\ \mathrm{d}\eta\ .
\end{align*}
Owing to \eqref{t33P} we conclude that $I_2=0$. Finally, we deduce from \eqref{t31P} and \eqref{t33P} after integrating by parts that
\begin{align*}
I_1 & = - \int_0^1 \Big[ \partial_\eta(\mu \varphi) \partial_x\varphi \Big]_{x=a}^{x=b}\ \mathrm{d}\eta + \int_I \Big[ \partial_x(\mu \varphi) \partial_x\varphi \Big]_{\eta=0}^{\eta=1}\ \mathrm{d}x \\
& = - \int_0^1 \mu(b) \partial_x\varphi(b,\cdot) \partial_\eta\varphi(b,\cdot)\ \mathrm{d}\eta + \int_0^1  \mu(a) \partial_x\varphi(a,\cdot) \partial_\eta\varphi(a,\cdot)\ \mathrm{d}\eta \\
& \qquad + \int_I \partial_x(\mu\varphi)(\cdot,1) \partial_x\varphi(\cdot,1)\ \mathrm{d}x - \int_I \partial_x(\mu\varphi)(\cdot,0) \partial_x\varphi(\cdot,0)\ \mathrm{d}x \\
& = - \int_I \partial_x(\mu\varphi)(\cdot,0) \partial_x\varphi(\cdot,0)\ \mathrm{d}x\ .
\end{align*}
Collecting \eqref{t32P} and the formulas for $I_j$, $1\le j\le 3$, completes the proof.
\end{proof}

\begin{proof}[Proof of Lemma~\ref{lemt2Px}]
For $(x,\eta)\in \mathcal{R}_I$, we define 
\begin{equation}
\Phi(x,\eta) := \zeta_v(x,-H+\eta(H+v(x)))\,, \label{t25P}
\end{equation}
or, equivalently,
\begin{equation*}
\zeta_v(x,z) = \Phi\left( x , \frac{H+z}{H+v(x)} \right)\ , \qquad (x,z)\in \mathcal{O}_I(v)\,.
\end{equation*}
Since $\zeta_v\in H^2(\mathcal{O}_I(v))$ by Lemma~\ref{lemt1P} and $v\in H^2(I)$, the function $\Phi$ belongs to $H^2(\mathcal{R}_I)$ and we infer from \eqref{t5bP} and \eqref{t5cP} that 
\begin{equation}
\begin{split}
\Phi(a,\eta) = \Phi(b,\eta) & = 0\ , \qquad \eta\in (0,1)\ , \\
\Phi(x,1) = -\partial_\eta\Phi(x,0) + \sigma(x) (H+v)(x) \Phi(x,0) & = 0\ , \qquad x\in I\ .
\end{split} \label{t21P}
\end{equation}

Next,
\begin{equation}
J := \int_{\mathcal{O}_I(v)} \partial_x^2 \zeta_v \partial_z^2 \zeta_v\ \mathrm{d}(x,z) = \sum_{i=1}^3 J_i\ , \label{t24P}
\end{equation}
where
\begin{align*}
J_1 & := \int_{\mathcal{R}_I} \partial_x^2\Phi \partial_\eta^2\Phi \frac{\mathrm{d}(x,\eta)}{H+v}\ , \\
J_2 & := \int_{\mathcal{R}_I} \left[ - 2 \eta \frac{\partial_x v}{H+v} \partial_x\partial_\eta\Phi + \eta^2 \left( \frac{\partial_x v}{H+v} \right)^2 \partial_\eta^2\Phi \right] \partial_\eta^2\Phi \frac{\mathrm{d}(x,\eta)}{H+v}\ , \\
J_3 & := \int_{\mathcal{R}_I} \eta \left[ 2 \left( \frac{\partial_x v}{H+v} \right)^2 - \frac{\partial_x^2 v}{H+v} \right] \partial_\eta\Phi \partial_\eta^2\Phi \frac{\mathrm{d}(x,\eta)}{H+v}\ .
\end{align*}
Since
\begin{equation*}
\partial_x^2 \left( \frac{\Phi}{\sqrt{H+v}} \right) = \frac{\partial_x^2\Phi}{\sqrt{H+v}} - \frac{\partial_x v}{(H+v)^{3/2}} \partial_x \Phi - \frac{1}{2} \partial_x\left( \frac{\partial_x v}{(H+v)^{3/2}} \right) \Phi\ ,
\end{equation*}
we further obtain
\begin{equation*}
J_1 := \sum_{i=1}^{3} J_{1,i}\ ,
\end{equation*}
where
\begin{align*}
J_{1,1} & := \int_{\mathcal{R}_I} \partial_x^2 \left( \frac{\Phi}{\sqrt{H+v}} \right) \partial_\eta^2\left( \frac{\Phi}{\sqrt{H+v}} \right)\ \mathrm{d}(x,\eta)\,, \\
J_{1,2} & :=  \int_{\mathcal{R}_I} \frac{\partial_x v}{(H+v)^{3/2}} \,\partial_x \Phi \partial_\eta^2\left( \frac{\Phi}{\sqrt{H+v}} \right)\ \mathrm{d}(x,\eta)\,, \\
J_{1,3} & := \frac{1}{2} \int_{\mathcal{R}_I} \partial_x\left( \frac{\partial_x v}{(H+v)^{3/2}} \right) \Phi \partial_\eta^2\left( \frac{\Phi}{\sqrt{H+v}} \right)\ \mathrm{d}(x,\eta)\ .
\end{align*}
We first infer from \eqref{t21P} and Lemma~\ref{lemt3P} (with $\varphi=\Phi/\sqrt{H+v}$ and $\mu=\sigma (H+v)$) that
\begin{align*}
J_{1,1} & = \int_{\mathcal{R}_I} \left| \partial_x\partial_\eta \left( \frac{\Phi}{\sqrt{H+v}} \right) \right|^2\ \mathrm{d}(x,\eta) \\
& \qquad + \int_I \partial_x \left( \frac{\Phi}{\sqrt{H+v}} \right)(\cdot,0)\, \partial_x \left( \sigma \sqrt{H+v} \Phi \right)(\cdot,0)\ \mathrm{d}x \\
& = \int_{\mathcal{R}_I} \frac{\left| \partial_x\partial_\eta\Phi \right|^2}{H+v}\ \mathrm{d}(x,\eta) - \int_{\mathcal{R}_I} \frac{\partial_x v}{(H+v)^2} \partial_\eta\Phi \partial_x\partial_\eta\Phi\ \mathrm{d}(x,\eta) \\
& \qquad + \frac{1}{4} \int_{\mathcal{R}_I} \frac{(\partial_x v)^2}{(H+v)^3} |\partial_\eta\Phi|^2\ \mathrm{d}(x,\eta) \\
& \qquad + \int_I \partial_x \left( \frac{\Phi}{\sqrt{H+v}} \right)(\cdot,0) \partial_x \left( \sigma \sqrt{H+v} \Phi \right)(\cdot,0)\ \mathrm{d}x \ .
\end{align*}
Next, we integrate by parts and use the boundary values \eqref{t21P} of $\Phi$ to obtain
\begin{align*}
J_{1,2} & = \int_{\mathcal{R}_I} \frac{\partial_x v}{(H+v)^2} \partial_x \Phi \partial_\eta^2\Phi\ \mathrm{d}(x,\eta) \\
& = \int_I \frac{\partial_x v}{(H+v)^2} \Big[ \partial_x \Phi \partial_\eta\Phi \Big]_{\eta=0}^{\eta=1}\ \mathrm{d}x - \int_{\mathcal{R}_I} \frac{\partial_x v}{(H+v)^2} \partial_\eta \Phi \partial_x\partial_\eta\Phi\ \mathrm{d}(x,\eta) \\
& = - \int_I \frac{\sigma \partial_x v}{(H+v)} \Phi(\cdot,0) \partial_x \Phi(\cdot,0)\ \mathrm{d}x - \int_{\mathcal{R}_I} \frac{\partial_x v}{(H+v)^2} \partial_\eta \Phi \partial_x\partial_\eta\Phi\ \mathrm{d}(x,\eta)
\end{align*}
and
\begin{align*}
J_{1,3} & = \int_{\mathcal{R}_I} \left( \frac{\partial_x^2 v}{2(H+v)^2} - \frac{3}{4} \frac{(\partial_x v)^2}{(H+v)^3} \right) \Phi \partial_\eta^2\Phi\ \mathrm{d}(x,\eta) \\
& = \int_I \left( \frac{\partial_x^2 v}{2(H+v)^2} - \frac{3}{4} \frac{(\partial_x v)^2}{(H+v)^3} \right) \Big[ \Phi \partial_\eta\Phi \Big]_{\eta=0}^{\eta=1}\ \mathrm{d}x \\
& \qquad - \int_{\mathcal{R}_I} \left( \frac{\partial_x^2 v}{2(H+v)^2} - \frac{3}{4} \frac{(\partial_x v)^2}{(H+v)^3} \right) |\partial_\eta\Phi|^2\ \mathrm{d}(x,\eta) \\
& = - \int_I \sigma \left( \frac{\partial_x^2 v}{2(H+v)} - \frac{3}{4} \frac{(\partial_x v)^2}{(H+v)^2} \right) |\Phi(\cdot,0)|^2\ \mathrm{d}x \\
& \qquad - \int_{\mathcal{R}_I} \left( \frac{\partial_x^2 v}{2(H+v)^2} - \frac{3}{4} \frac{(\partial_x v)^2}{(H+v)^3} \right) |\partial_\eta\Phi|^2\ \mathrm{d}(x,\eta)\ .
\end{align*}
Next,
\begin{equation*}
J_3 = \int_{\mathcal{R}_I} 2\eta \frac{(\partial_x v)^2}{(H+v)^3} \partial_\eta\Phi \partial_\eta^2\Phi \mathrm{d}(x,\eta) + J_{3,2}
\end{equation*}
with
\begin{equation*}
J_{3,2} := - \int_{\mathcal{R}_I} \frac{\eta \partial_x^2 v}{2(H+v)^2} \partial_\eta\left( |\partial_\eta\Phi|^2 \right)  \mathrm{d}(x,\eta) \ .
\end{equation*}
Integrating by parts and using \eqref{t21P} give
\begin{align*}
J_{3,2} & = - \int_I \frac{\partial_x^2 v}{2(H+v)^2} \Big[ \eta |\partial_\eta\Phi|^2 \Big]_{\eta=0}^{\eta=1}\ \mathrm{d}x + \int_{\mathcal{R}_I} \frac{\partial_x^2 v}{2(H+v)^2} |\partial_\eta\Phi|^2 \mathrm{d}(x,\eta) \\
& = - \int_I \frac{\partial_x^2 v}{2(H+v)^2} |\partial_\eta\Phi(\cdot,1)|^2\ \mathrm{d}x + \int_{\mathcal{R}_I} \frac{\partial_x^2 v}{2(H+v)^2} |\partial_\eta\Phi|^2 \mathrm{d}(x,\eta)\ .
\end{align*}
Consequently,
\begin{equation}
\begin{split}
J & = \int_{\mathcal{R}_I} (H+v) \left( \frac{\partial_x\partial_\eta\Phi}{H+v} - \frac{\eta \partial_x v}{(H+v)^2} \partial_\eta^2\Phi - \frac{\partial_x v}{(H+v)^2} \partial_\eta\Phi \right)^2\ \mathrm{d}(x,\eta) \\
& \qquad + J_4 - \frac{1}{2} \int_I \frac{\partial_x^2 v}{(H+v)^2} |\partial_\eta\Phi(\cdot,1)|^2\ \mathrm{d}x\ ,
\end{split}\label{t22P}
\end{equation}
where
\begin{align*}
J_4 & := \int_I \left( \partial_x\Phi \partial_x(\sigma\Phi) - \frac{\sigma  \partial_x v}{2(H+v)} \Phi \partial_x\Phi - \frac{\partial_x v}{2(H+v)} \Phi \partial_x(\sigma\Phi) \right)(\cdot,0)\ \mathrm{d}x \\
& \qquad + \frac{1}{2} \int_I \sigma \left( \frac{(\partial_x v)^2}{(H+v)^2} - \frac{\partial_x^2 v}{H+v} \right) |\Phi(\cdot,0)|^2\ \mathrm{d}x\ .
\end{align*}
Now, since $H^2(\mathcal{R}_I)$ is continuously embedded in $C(\overline{\mathcal{R}_I})$ by \cite[Theorem~1.4.5.2]{Gr1985}, we infer from \eqref{t21P} that
\begin{equation*}
\Phi(a,0)=\Phi(b,0)=0\ .
\end{equation*}
Using this property along with an integration by parts, we obtain
\begin{align*}
& - \int_I \left( \frac{\sigma \partial_x v}{2(H+v)} \Phi \partial_x\Phi + \frac{\partial_x v}{2(H+v)} \Phi \partial_x(\sigma\Phi) \right)(\cdot,0)\ \mathrm{d}x \\
& \qquad = - \frac{1}{2} \int_I \frac{\partial_x v}{H+v} \partial_x(\sigma\Phi^2)\ \mathrm{d}x \\
& \qquad = - \frac{1}{2} \Big[ \frac{\partial_x v}{H+v} \sigma |\Phi(\cdot,0)|^2 \Big]_{x=a}^{x=b} + \frac{1}{2} \int_I \left( \frac{\partial_x^2 v}{H+v} - \frac{(\partial_x v)^2}{(H+v)^2} \right) \sigma |\Phi(\cdot,0)|^2\ \mathrm{d}x \\
& \qquad = \frac{1}{2} \int_I \left( \frac{\partial_x^2 v}{H+v} - \frac{(\partial_x v)^2}{(H+v)^2} \right) \sigma |\Phi(\cdot,0)|^2\ \mathrm{d}x \ ,
\end{align*}
so that $J_4$ reduces to
\begin{equation}
J_4 = \int_I \left( \partial_x\Phi \partial_x(\sigma\Phi) \right)(\cdot,0)\ \mathrm{d}x\ . \label{t23P}
\end{equation}
We then infer from \eqref{t25P}, \eqref{t22P}, and \eqref{t23P} that
\begin{equation*}
J = \int_{\mathcal{O}_I(v)} |\partial_x\partial_z\zeta_v|^2\ \mathrm{d}(x,z) + \int_I \partial_x \zeta_v(\cdot,-H) \partial_x(\sigma \zeta_v)(\cdot,-H)\ \mathrm{d}x - \frac{1}{2} \int_I \partial_x^2 v |\partial_z \zeta_v(\cdot,v)|^2\ \mathrm{d}x\ .
\end{equation*} 
Combining \eqref{t24P} and the above identity completes the proof.
\end{proof}

\section{Some Functional Inequalities}\label{A2}

Let $I=(a,b)\subset D$ be an open interval and consider $v\in W_\infty^3(I)$ such that $\min_{[a,b]} v >-H$. Let $M>0$ be such that
\begin{equation}
M \ge \max\left\{ 1, \|H+v\|_{L_\infty(I)} , \|\partial_x v\|_{L_\infty(I)} \right\}\,. \label{hello}
\end{equation}
We derive in this section functional inequalities for functions in the subspace $H^1_{WS}(\mathcal{O}_I(v))$ of $H^1(\mathcal{O}_I(v))$ introduced in \eqref{t9P}. Recall that $P\in H^1_{WS}(\mathcal{O}_I(v))$ if and only if $P\in H^1(\mathcal{O}_I(v))$ satisfies
\begin{subequations}\label{t41P}
	\begin{align}
	P(x,-H) & = 0\ , \qquad x\in I\ , \label{t41aP} \\
	P(a,z) & = 0\ , \qquad z\in (-H,v(a)) \ , \label{t41bP}
	\end{align}
\end{subequations} 
We begin with Poincar\'e and Sobolev inequalities and pay special attention to the dependence of the constants on $v$.

\begin{lemma}\label{lemt4P}
Let $P\in H^1_{WS}(\mathcal{O}_I(v))$. Then
\begin{equation*}
\|P\|_{L_2(\mathcal{O}_I(v))}^2 \le 2 M \|\nabla P\|_{L_1(\mathcal{O}_I(v))} \|\partial_z P\|_{L_1(\mathcal{O}_I(v))}\ ,
\end{equation*}
where $M$ is given by \eqref{hello}.
\end{lemma}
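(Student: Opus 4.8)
The plan is to exploit the two homogeneous boundary conditions \eqref{t41aP}--\eqref{t41bP} in combination with the fact that $\mathcal{O}_I(v)$ lies between the graph $z=-H$ and the graph $z=v(x)$, with $v$ bounded above by $M-H$. First I would note that, since $\min_{[a,b]}v>-H$ and $v\in W_\infty^3(I)$, the domain $\mathcal{O}_I(v)$ is Lipschitz, so smooth functions are dense in $H^1_{WS}(\mathcal{O}_I(v))$ and it suffices to prove the estimate for $P\in C^1(\overline{\mathcal{O}_I(v)})$ satisfying \eqref{t41P}; the general case then follows by approximation. For such a $P$ and a point $(x,z)\in\mathcal{O}_I(v)$, I would write $P(x,z)$ as an integral of $\partial_z P$ in the vertical direction starting from $z=-H$ (using \eqref{t41aP}) and, independently, as an integral of $\partial_x P$ in the horizontal direction starting from $x=a$ along the segment at height $z$ (using \eqref{t41bP}, which is available precisely for heights $z\in(-H,v(a))$; one has to check that the horizontal segment from $(a,z)$ to $(x,z)$ stays inside $\mathcal{O}_I(v)$, which may require first slicing at heights below $v(a)$ and handling the remaining part separately, or else using the monotone nature of the graphs). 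Multiplying the two representations gives the pointwise bound
\[
|P(x,z)|^2 \le \left(\int_{-H}^{v(x)}|\partial_z P(x,\zeta)|\,\mathrm{d}\zeta\right)\left(\int_{a}^{b}|\partial_x P(\xi,z)|\,\mathrm{d}\xi\right),
\]
valid on the relevant part of the domain.

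Next I would integrate this inequality over $\mathcal{O}_I(v)$. The key observation is that the first factor on the right depends only on $x$ and the second only on $z$, so Fubini's theorem turns the double integral into a product: integrating in $z$ first over the interval $(-H,v(x))$, whose length is at most $\|H+v\|_{L_\infty(I)}\le M$, one gets a factor bounded by $M$ times $\int_I\big(\int_{-H}^{v(x)}|\partial_z P|\,\mathrm{d}\zeta\big)\,\mathrm{d}x=M\|\partial_z P\|_{L_1(\mathcal{O}_I(v))}$ from the vertical part, while the horizontal integral $\int_I\int_a^b|\partial_x P(\xi,z)|\,\mathrm{d}\xi$, after integrating the remaining variable over a set of measure at most $M$, contributes at most $M\|\partial_x P\|_{L_1(\mathcal{O}_I(v))}\le M\|\nabla P\|_{L_1(\mathcal{O}_I(v))}$. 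Being slightly careful with which variable absorbs the length-$M$ factor, one arrives at
\[
\|P\|_{L_2(\mathcal{O}_I(v))}^2 \le 2 M \,\|\nabla P\|_{L_1(\mathcal{O}_I(v))}\,\|\partial_z P\|_{L_1(\mathcal{O}_I(v))},
\]
the factor $2$ leaving room for the bookkeeping between the two slicings.

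The main obstacle is not the estimate itself, which is an elementary slicing argument, but making the horizontal representation rigorous for \emph{all} heights $z$: the condition \eqref{t41bP} only provides a vanishing trace on $\{a\}\times(-H,v(a))$, and for heights $z$ with $v(a)<z<v(x)$ (possible when $v(x)>v(a)$) the horizontal segment at height $z$ issuing from the left boundary may not be anchored where $P$ vanishes. I would handle this by splitting $\mathcal{O}_I(v)$ into the sub-region below the level $v(a)$ — where the horizontal argument applies directly — and the sub-region above it, where instead one should start the horizontal integration from a point on the graph $\mathfrak{G}_I(v)$ where $P$ need not vanish, and therefore reroute: use the vertical bound from $z=-H$ for the factor controlled by $\partial_z P$, and for the other factor integrate $\partial_x P$ along a segment that does reach a zero of $P$, exploiting that the graph is connected and $P$ vanishes on the left edge below height $v(a)$, together with $\partial_z P$ again to descend. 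Carrying this out cleanly, possibly by a change of variables flattening $\mathcal{O}_I(v)$ to the rectangle $\mathcal{R}_I=I\times(0,1)$ as done elsewhere in the paper, so that both homogeneous conditions sit on two adjacent sides of a rectangle, is where the care is needed; on the rectangle the estimate is the classical Gagliardo--Nirenberg-type bound $\|P\|_{L_2}^2\le 2\|\partial_x P\|_{L_1}\|\partial_z P\|_{L_1}$ for functions vanishing on two adjacent sides, and the constant $2M$ then absorbs the Jacobian factors of the transformation, all of which are controlled by $M$ via \eqref{hello}.
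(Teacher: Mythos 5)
Your proposal is correct and ultimately lands on the same argument as the paper: the authors prove the lemma precisely by the change of variables $Q(x,\eta)=P(x,-H+\eta(H+v(x)))$ onto the rectangle $\mathcal{R}_I$, where \eqref{t41aP}--\eqref{t41bP} become vanishing on two adjacent sides, and then apply the two-directional slicing bound, absorbing the Jacobian terms (including the zeroth-order term $\partial_x v\,P/(H+v)$, which is handled by re-using the vertical representation of $P$ from the bottom boundary) via \eqref{hello}. You correctly identified both the obstruction to the direct horizontal slicing above height $v(a)$ and its resolution, so no gap remains.
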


\begin{proof}
For $(x,\eta)\in \mathcal{R}_I=I\times (0,1)$, we define
\begin{equation}
Q(x,\eta) := P(x,-H+\eta(H+v(x)))\ , \label{t40P}
\end{equation}
and observe that the regularity of $v$ and $P$ implies that $Q\in H^1(\mathcal{R}_I)$. In addition, we deduce from \eqref{t41P} that
\begin{subequations}\label{t42P}
\begin{align}
Q(x,0) & = 0\ , \qquad x\in I\ , \label{t42aP} \\
Q(a,\eta) & = 0\ , \qquad \eta\in (0,1)\ . \label{t42bP}
\end{align}
\end{subequations}
On the one hand, it follows from \eqref{t42bP} that, for a.a. $(x,\eta)\in \mathcal{R}_I$,
\begin{align}
|(H+v)(x) Q(x,\eta)| & = \left| \int_a^x  \left[ (H+v(x_*)) \partial_x Q(x_*,\eta) + \partial_x v(x_*) Q(x_*,\eta) \right]\ \mathrm{d}x_* \right| \nonumber \\
& \le \mathcal{J}_1(\eta) := \int_I \left| (H+v(x_*)) \partial_x Q(x_*,\eta) + \partial_x v(x_*) Q(x_*,\eta) \right|\ \mathrm{d}x_*\ . \label{t43P}
\end{align}
On the other hand, by \eqref{t42aP}, we obtain, for a.a. $(x,\eta)\in \mathcal{R}_I$,
\begin{equation}
|Q(x,\eta)| = \left| \int_0^\eta \partial_\eta Q(x,\eta_*)\ \mathrm{d}\eta_* \right| \le \mathcal{J}_2(x) := \int_0^1 \left| \partial_\eta Q(x,\eta_*) \right|\ \mathrm{d}\eta_*\ . \label{t44P}
\end{equation}
We then infer from \eqref{t40P}, \eqref{t43P}, and \eqref{t44P} that
\begin{align}
\|P\|_{L_2(\mathcal{O}_I(v))}^2 & = \int_{\mathcal{R}_I} (H+v(x)) |Q(x,\eta)|^2\ \mathrm{d}(x,\eta) \nonumber \\
& \le \int_{\mathcal{R}_I} \mathcal{J}_1(\eta) \mathcal{J}_2(x)\ \mathrm{d}(x,\eta) = \left( \int_I \mathcal{J}_2(x)\ \mathrm{d}x \right) \left( \int_0^1 \mathcal{J}_1(\eta)\ \mathrm{d}\eta \right)\ . \label{t45P}
\end{align}
Now, 
\begin{equation}
\int_I \mathcal{J}_2(x)\ \mathrm{d}x = \int_{\mathcal{R}_I} |\partial_\eta Q(x,\eta)|\ \mathrm{d}(x,\eta) = \int_{\mathcal{O}_I(v)} |\partial_z P(x,z)|\ \mathrm{d}(x,z) \label{t46P} 
\end{equation}
and
\begin{align}
\int_0^1 \mathcal{J}_1(\eta)\ \mathrm{d}\eta & = \int_{\mathcal{R}_I} \left| (H+v(x)) \partial_x Q(x,\eta) + \partial_x v(x) Q(x,\eta) \right|\ \mathrm{d}(x,\eta) \nonumber \\
& = \int_{\mathcal{O}_I(v)} \left| \partial_x P(x,z) + \frac{H+z}{H+v(x)} \partial_x v(x) \partial_z P(x,z) + \frac{\partial_x v(x)}{(H+v)(x)} P(x,z) \right|\ \mathrm{d}(x,z)\ . \label{t47P}
\end{align}
It further follows from \eqref{t41aP} that, for a.a. $(x,z)\in \mathcal{O}_I(v)$,
\begin{equation*}
|P(x,z)| = \left| \int_{-H}^z \partial_z P(x,z_*)\ \mathrm{d}z_* \right| \le \int_{-H}^{v(x)} |\partial_z P(x,z_*)|\ \mathrm{d}z_*\ .
\end{equation*}
Hence,
\begin{equation}
\int_{\mathcal{O}_I(v)} \left| \frac{\partial_x v(x)}{(H+v)(x)} P(x,z) \right|\ \mathrm{d}(x,z) \le \int_{\mathcal{O}_I(v)} |\partial_x v(x)| |\partial_z P(x,z_*)|\ \mathrm{d}(x,z_*)\ . \label{t48P}
\end{equation}
Since $0\le H+z \le H+v(x)$ for $(x,z)\in \mathcal{O}_I(v)$, we deduce from \eqref{hello}, \eqref{t47P}, and \eqref{t48P} that
\begin{equation}
\int_0^1 \mathcal{J}_1(\eta)\ \mathrm{d}\eta \le \int_{\mathcal{O}_I(v)} \left( |\partial_x P(x,z)| + 2 |\partial_x v(x)| |\partial_z P(x,z)| \right)\ \mathrm{d}(x,z) \le 2 M \|\nabla P\|_{L_1(\mathcal{O}_I(v))}\ . \label{t49P}
\end{equation}
Collecting \eqref{t45P}, \eqref{t46P}, and \eqref{t49P} completes the proof.
\end{proof}

Since $\mathcal{O}_I(v)$ is a two-dimensional domain, a classical consequence of Lemma~\ref{lemt4P} is the continuous embedding of $H_{WS}^1(\mathcal{O}_I(v))$ in $L_r(\mathcal{O}_I(v))$ for $r\in [1,\infty)$.  We stress here once more that our main concern is the precise dependence of the embedding constant on $v$.

\begin{lemma}\label{lemt5P}
Let $P\in H^1_{WS}(\mathcal{O}_I(v))$ and $r\in [2,\infty)$. Then
\begin{equation*}
\|P\|_{L_r(\mathcal{O}_I(v))}^r \le \left( 2r \sqrt{M} \right)^{r-2} \|P\|_{L_2(\mathcal{O}_I(v))}^2 \|\nabla P\|_{L_2(\mathcal{O}_I(v))}^{(r-2)/2} \|\partial_z P\|_{L_2(\mathcal{O}_I(v))}^{(r-2)/2}\ ,
\end{equation*}
where $M$ is given by  \eqref{hello}.
\end{lemma}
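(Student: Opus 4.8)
The plan is to derive the Gagliardo--Nirenberg type inequality stated in Lemma~\ref{lemt5P} from the Poincar\'e--type inequality of Lemma~\ref{lemt4P} applied to powers of $P$, mimicking the classical one-dimensional (in terms of scaling, two-dimensional in terms of the domain) Gagliardo--Nirenberg--Sobolev argument. The main point is to keep careful track of how the constant depends on $M$ and, crucially, \emph{not} on $\min_{[a,b]} v$ or on higher norms of $v$, since these constants must survive the approximation argument in Section~\ref{sec.t2P}. I would first observe that, since $P\in H^1_{WS}(\mathcal{O}_I(v))$ and $r\geq 2$, the function $|P|^{r/2}$ also belongs to $H^1_{WS}(\mathcal{O}_I(v))$: indeed $P$ vanishes on $\Sigma_I$ and on $\{a\}\times(-H,v(a))$ by \eqref{t41P}, hence so does $|P|^{r/2}$, and $\nabla(|P|^{r/2}) = \tfrac{r}{2}|P|^{(r-2)/2}\,\mathrm{sign}(P)\,\nabla P$ lies in $L_1(\mathcal{O}_I(v))$ by H\"older's inequality (the exponent $(r-2)/2$ paired with $|\nabla P|\in L_2$ and, after a further H\"older split, $|P|^{(r-2)/2}\in L_2$ thanks to $P\in L_{r-2}\subset L_r$, which is where one may need to first argue by density with smooth functions, or proceed formally and close the estimate at the end).

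Next I would apply Lemma~\ref{lemt4P} with $P$ replaced by $|P|^{r/2}$ to get
\begin{equation*}
\|P\|_{L_r(\mathcal{O}_I(v))}^r = \big\| |P|^{r/2} \big\|_{L_2(\mathcal{O}_I(v))}^2 \le 2M \big\| \nabla(|P|^{r/2}) \big\|_{L_1(\mathcal{O}_I(v))} \big\| \partial_z(|P|^{r/2}) \big\|_{L_1(\mathcal{O}_I(v))}.
\end{equation*}
Then I would estimate each $L_1$ norm on the right-hand side: using $\nabla(|P|^{r/2}) = \tfrac{r}{2}|P|^{(r-2)/2}\,\mathrm{sign}(P)\,\nabla P$ and the Cauchy--Schwarz inequality,
\begin{equation*}
\big\| \nabla(|P|^{r/2}) \big\|_{L_1(\mathcal{O}_I(v))} \le \frac{r}{2} \big\| |P|^{(r-2)/2} \big\|_{L_2(\mathcal{O}_I(v))} \|\nabla P\|_{L_2(\mathcal{O}_I(v))} = \frac{r}{2} \|P\|_{L_{r-2}(\mathcal{O}_I(v))}^{(r-2)/2} \|\nabla P\|_{L_2(\mathcal{O}_I(v))},
\end{equation*}
and analogously with $\partial_z P$ in place of $\nabla P$ for the second factor. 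Multiplying the two bounds yields
\begin{equation*}
\|P\|_{L_r(\mathcal{O}_I(v))}^r \le 2M \cdot \frac{r^2}{4} \|P\|_{L_{r-2}(\mathcal{O}_I(v))}^{r-2} \|\nabla P\|_{L_2(\mathcal{O}_I(v))}^{(r-2)/2} \|\partial_z P\|_{L_2(\mathcal{O}_I(v))}^{(r-2)/2} \cdot \frac{?}{?}
\end{equation*}
--- more precisely, after regrouping, a factor $\tfrac{r^2 M}{2}$ times $\|P\|_{L_{r-2}}^{r-2}$ times the product of the two half-power gradient factors. Finally I would control $\|P\|_{L_{r-2}(\mathcal{O}_I(v))}^{r-2}$ by interpolation between $L_2$ and $L_r$, namely $\|P\|_{L_{r-2}} \le \|P\|_{L_2}^{2/(r-2)}\|P\|_{L_r}^{(r-4)/(r-2)}$ for $r\geq 4$ (treating $r\in[2,4)$ directly, since there $L_{r-2}\subseteq L_2$ on the bounded set with an $M$-dependent embedding constant, or even more simply noting $r-2\le 2$), substitute, absorb the resulting power of $\|P\|_{L_r}$ into the left-hand side, and simplify the exponents; tracking the constant through this bookkeeping gives the stated $(2r\sqrt{M})^{r-2}$ prefactor.

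The main obstacle I anticipate is precisely this last bookkeeping step: arranging the interpolation and the absorption so that (i) the exponents of $\|P\|_{L_2}$, $\|\nabla P\|_{L_2}$, $\|\partial_z P\|_{L_2}$ come out exactly as $2$, $(r-2)/2$, $(r-2)/2$ respectively, and (ii) the numerical constant is bounded by $(2r\sqrt{M})^{r-2}$ uniformly in $r\in[2,\infty)$, including the borderline small-$r$ cases where the interpolation degenerates. A clean way to avoid case distinctions is to run the argument by induction on $r$ along the arithmetic progression $r, r-2, r-4,\ldots$ down to the base cases $r\in\{2,3\}$ (where the inequality is either trivial or follows directly from Lemma~\ref{lemt4P} and Cauchy--Schwarz), so that at each step one only uses the inequality for $r-2$ rather than a genuine interpolation inequality; one then checks that the recursion for the constants $c_r$ defined by $c_r^r = \tfrac{r^2 M}{2} c_{r-2}^{r-2}$ (with suitable powers of the gradient norms tracked separately) closes under the ansatz $c_r = 2r\sqrt{M}$. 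A secondary, more technical point is justifying the chain-rule formula $\nabla(|P|^{r/2}) = \tfrac{r}{2}|P|^{(r-2)/2}\mathrm{sign}(P)\nabla P$ and the membership $|P|^{r/2}\in H^1_{WS}(\mathcal{O}_I(v))$ for merely $H^1$ functions; this is standard (approximate $|t|^{r/2}$ by $C^1$ functions with bounded derivative, or invoke the Stampacchia-type results on composition in Sobolev spaces), and the boundary vanishing conditions in \eqref{t41P} are clearly preserved under composition with a function vanishing at $0$, so no difficulty arises there beyond routine verification.
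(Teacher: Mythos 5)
Your overall strategy is the same as the paper's: apply Lemma~\ref{lemt4P} to the power $|P|^{r/2}$, use Cauchy--Schwarz to reduce to $\|P\|_{L_{r-2}}^{r-2}\,\|\nabla P\|_{L_2}\|\partial_z P\|_{L_2}$, then interpolate and absorb. (The paper runs this on the truncations $\mathcal{T}_n(P)^{r/2}$ and concludes by Fatou's lemma; besides settling the chain-rule issue you flag, this is what licenses the absorption step, since dividing by a power of $\|P\|_{L_r}$ requires knowing a priori that this quantity is finite.) However, two of your concrete steps fail as written. First, the interpolation inequality $\|P\|_{L_{r-2}}\le\|P\|_{L_2}^{2/(r-2)}\|P\|_{L_r}^{(r-4)/(r-2)}$ is not a valid H\"older interpolation: with $\theta=2/(r-2)$ one gets $\tfrac{\theta}{2}+\tfrac{1-\theta}{r}=\tfrac{2}{r}\neq\tfrac{1}{r-2}$ unless $r=4$. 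The correct relation is $\|P\|_{L_{r-2}}^{r-2}\le\|P\|_{L_2}^{4/(r-2)}\|P\|_{L_r}^{r(r-4)/(r-2)}$, and the difference matters: your exponents lead, after absorption, to $\|P\|_{L_r}^{4}\lesssim Mr^2\|P\|_{L_2}^{2}\|\nabla P\|_{L_2}\|\partial_z P\|_{L_2}$, whose homogeneity in the gradient norms does not match the statement, whereas the correct ones give $\|P\|_{L_r}^{2r/(r-2)}\le Mr^2\|P\|_{L_2}^{4/(r-2)}\|\nabla P\|_{L_2}\|\partial_z P\|_{L_2}$, which upon raising to the power $(r-2)/2$ is exactly the claim with constant $\left(r\sqrt{M}\right)^{r-2}$.

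Second, your handling of $r\in[2,4)$ does not produce the stated exponents. Bounding $\|P\|_{L_{r-2}}$ by $\|P\|_{L_2}$ through the finiteness of $|\mathcal{O}_I(v)|$ yields $\|P\|_{L_r}^r\lesssim \|P\|_{L_2}^{r-2}\|\nabla P\|_{L_2}\|\partial_z P\|_{L_2}$, i.e. exponents $(r-2,1,1)$ instead of $\bigl(2,\tfrac{r-2}{2},\tfrac{r-2}{2}\bigr)$; since Poincar\'e's inequality goes in the wrong direction to trade $\|P\|_{L_2}$ for gradient norms, this weaker bound does not imply the lemma. The same defect hits the base cases of your proposed induction: for $r=3$, Lemma~\ref{lemt4P} plus Cauchy--Schwarz gives $\|P\|_{L_3}^3\lesssim M\|P\|_{L_1}\|\nabla P\|_{L_2}\|\partial_z P\|_{L_2}$, again with exponent $1$ rather than $\tfrac12$ on each gradient factor (though the induction step itself, from $r-2$ to $r$, does close correctly). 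The missing idea is the paper's Step~2: establish $r=4$ first, where no interpolation or absorption is needed because $L_{r-2}=L_2$, and then for $r\in[2,4]$ interpolate $\|P\|_{L_r}^r\le\|P\|_{L_4}^{2(r-2)}\|P\|_{L_2}^{4-r}$ and substitute the $r=4$ bound; this yields precisely the exponents $\bigl(2,\tfrac{r-2}{2},\tfrac{r-2}{2}\bigr)$ with constant $\left(4\sqrt{M}\right)^{r-2}\le\left(2r\sqrt{M}\right)^{r-2}$.
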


\begin{proof}
\noindent\textbf{Step~1.} Assume first that $r\ge 4$. For $n\ge 1$, we define the truncation $\mathcal{T}_n$ by $\mathcal{T}_n(s):= s$ for $s\in [-n,n]$ and $\mathcal{T}_n(s) := n\, \mathrm{sign}(s)$ for $s\in (-\infty,-n)\cup (n,\infty)$. Since $\mathcal{T}_n$ is a Lipschitz continuous function on $\mathbb{R}$ with $|\mathcal{T}_n'|\le 1$ and vanishes at zero, the function $\mathcal{T}_n(P)^{r/2}$ also belongs to $H^1_{WS}(\mathcal{O}_I(v))$. We then infer from Lemma~\ref{lemt4P}, the bound $|\mathcal{T}_n'|\le 1$, and H\"older's inequality that
\begin{align*}
\|\mathcal{T}_n(P)\|_{L_r(\mathcal{O}_I(v))}^r & \le \frac{M r^2}{2} \left\| \mathcal{T}_n(P)^{(r-2)/2} \nabla P \right\|_{L_1(\mathcal{O}_I(v))} \left\| \mathcal{T}_n(P)^{(r-2)/2} \partial_z P \right\|_{L_1(\mathcal{O}_I(v))} \\
& \le M r^2 \|\mathcal{T}_n(P)\|_{L_{r-2}(\mathcal{O}_I(v))}^{r-2} \|\nabla P\|_{L_2(\mathcal{O}_I(v))} \|\partial_z P\|_{L_2(\mathcal{O}_I(v))}\ .
\end{align*}
Using again H\"older's inequality, as well as the property $|\mathcal{T}_n(s)|\le |s|$ for $s\in \mathbb{R}$, gives
\begin{align*}
\|\mathcal{T}_n(P)\|_{L_{r-2}(\mathcal{O}_I(v))}^{r-2} & \le \|\mathcal{T}_n(P)\|_{L_r(\mathcal{O}_I(v))}^{r(r-4)/(r-2)} \|\mathcal{T}_n(P)\|_{L_2(\mathcal{O}_I(v))}^{4/(r-2)} \\
& \le \|\mathcal{T}_n(P)\|_{L_r(\mathcal{O}_I(v))}^{r(r-4)/(r-2)} \|P\|_{L_2(\mathcal{O}_I(v))}^{4/(r-2)}\ ,
\end{align*}
since $r\ge 4$. Combining the above two inequalities leads us to
\begin{equation*}
\|\mathcal{T}_n(P)\|_{L_r(\mathcal{O}_I(v))}^r \le (M r^2)^{(r-2)/2} \|P\|_{L_2(\mathcal{O}_I(v))}^{2} \|\nabla P\|_{L_2(\mathcal{O}_I(v))}^{(r-2)/2} \|\partial_z P\|_{L_2(\mathcal{O}_I(v))}^{(r-2)/2}\ .
\end{equation*}
Since the right-hand side of the above inequality does not depend on $n\ge 1$, we may take the limit $n\to\infty$ and deduce from Fatou's lemma that $P\in L_r(\mathcal{O}_I(v))$ and satisfies the stated bound for $r\ge 4$.

\smallskip

\noindent\textbf{Step~2.} Consider now $r\in [2,4]$. By H\"older's inequality and Lemma~\ref{lemt5P} for $r=4$,
\begin{align*}
\|P\|_{L_r(\mathcal{O}_I(v))}^r & \le \|P\|_{L_4(\mathcal{O}_I(v))}^{2(r-2)} \|P\|_{L_2(\mathcal{O}_I(v))}^{4-r} \\
& \le (16M)^{(r-2)/2} \|P\|_{L_2(\mathcal{O}_I(v))}^{2} \|\nabla P\|_{L_2(\mathcal{O}_I(v))}^{(r-2)/2} \|\partial_z P\|_{L_2(\mathcal{O}_I(v))}^{(r-2)/2}\ ,
\end{align*}
and we complete the proof by noticing that $4\le 2r$.
\end{proof}

In the same vein, we derive an estimate for the trace of $P\in H_{WS}^1(\mathcal{O}_I(v))$ on the graph $\mathfrak{G}_I(v)$ of $v$, the trace being here well-defined since the assumption $\min_{[a,b]} v >-H$ guarantees that $\mathcal{O}_I(v)$ is a Lipschitz domain.

\begin{lemma}\label{lemt6P}
Let $P\in H^1_{WS}(\mathcal{O}_I(v))$ and $r\in [2,\infty)$. Then
\begin{equation*}
\|P(\cdot,v)\|_{L_r(I)}^r \le \left( 4r\sqrt{M} \right)^r \|P\|_{L_2(\mathcal{O}_I(v))} \|\nabla P\|_{L_2(\mathcal{O}_I(v))}^{(r-2)/2} \|\partial_z P\|_{L_2(\mathcal{O}_I(v))}^{r/2}\ ,
\end{equation*}
where $M$ is given by \eqref{hello}.
\end{lemma}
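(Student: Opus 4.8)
The plan is to establish the trace inequality of Lemma~\ref{lemt6P} by combining the boundary representation formula along the graph $\mathfrak{G}_I(v)$ with the Poincar\'e-type estimate from Lemma~\ref{lemt4P} and the interpolation argument already used in Lemma~\ref{lemt5P}. First I would transform to the rectangle: with $Q(x,\eta) := P(x,-H+\eta(H+v(x)))$ for $(x,\eta)\in\mathcal{R}_I = I\times(0,1)$, we have $P(x,v(x)) = Q(x,1)$, and since $Q(x,0)=0$ by \eqref{t41aP}, the fundamental theorem of calculus gives, for a.a.\ $x\in I$,
\begin{equation*}
|Q(x,1)|^r = \left| \int_0^1 \partial_\eta\big( |Q(x,\eta)|^{r} \big)\ \mathrm{d}\eta \right| \le r \int_0^1 |Q(x,\eta)|^{r-1} |\partial_\eta Q(x,\eta)|\ \mathrm{d}\eta\ .
\end{equation*}
Integrating over $x\in I$ and changing variables back to $\mathcal{O}_I(v)$ (noting $\mathrm{d}(x,z) = (H+v(x))\,\mathrm{d}(x,\eta)$, so the Jacobian is bounded below by $H$ and above by $M$), I would bound $\|P(\cdot,v)\|_{L_r(I)}^r$ by a constant times $\int_{\mathcal{O}_I(v)} |P|^{r-1} |\partial_z P|\ \mathrm{d}(x,z)$, and then apply H\"older's inequality to get $\|P(\cdot,v)\|_{L_r(I)}^r \lesssim \|P\|_{L_{2(r-1)}(\mathcal{O}_I(v))}^{r-1} \|\partial_z P\|_{L_2(\mathcal{O}_I(v))}$.

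Next I would invoke Lemma~\ref{lemt5P} with exponent $2(r-1)$ to control $\|P\|_{L_{2(r-1)}(\mathcal{O}_I(v))}$ by $\|P\|_{L_2(\mathcal{O}_I(v))}$, $\|\nabla P\|_{L_2(\mathcal{O}_I(v))}$, and $\|\partial_z P\|_{L_2(\mathcal{O}_I(v))}$ with the appropriate powers, tracking the explicit dependence of the constant on $M$ and on the exponent. Raising that estimate to the power $r-1$ and multiplying by $\|\partial_z P\|_{L_2(\mathcal{O}_I(v))}$ should yield an inequality of the form $\|P(\cdot,v)\|_{L_r(I)}^r \le C(M,r)\, \|P\|_{L_2(\mathcal{O}_I(v))} \|\nabla P\|_{L_2(\mathcal{O}_I(v))}^{(r-2)/2} \|\partial_z P\|_{L_2(\mathcal{O}_I(v))}^{r/2}$, after checking that the exponents on each factor match. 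The final bookkeeping step is to show that the resulting constant $C(M,r)$ is dominated by $(4r\sqrt{M})^r$; this is where I would absorb the powers of $M\ge 1$ (using that each derivative term in Lemma~\ref{lemt4P} picks up at most one factor of $M$) and the numerical constants coming from the factor $r$ in the Gagliardo--Nirenberg-type chain and from the $2r$ appearing in Lemma~\ref{lemt5P}.

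The main obstacle I anticipate is the constant-tracking: ensuring the bound is exactly $(4r\sqrt{M})^r$ rather than some larger expression requires being careful about how the constants from Lemma~\ref{lemt4P} (a factor $2M$), Lemma~\ref{lemt5P} (a factor $(2r\sqrt{M})^{r-2}$ at exponent $2(r-1)$, hence $(4(r-1)\sqrt{M})^{2r-4}$), and the factor $r$ from the trace representation compound when raised to the power $r-1$ and combined. One clean way to manage this is to first prove the $r=2$ case directly (which is essentially \eqref{t400P} sharpened via Lemma~\ref{lemt4P}) and then, for $r\ge 2$, apply the representation formula only once and reduce everything to a single application of Lemma~\ref{lemt5P}; splitting into the subcases $r\in[2,4]$ and $r\ge 4$ as in the proof of Lemma~\ref{lemt5P} keeps the interpolation exponents non-negative. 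A secondary point to verify is that $P(\cdot,v)$ is genuinely well-defined in $L_r(I)$: since $\min_{[a,b]} v > -H$ the domain $\mathcal{O}_I(v)$ is Lipschitz, so the trace on $\mathfrak{G}_I(v)$ exists by the standard trace theorem, and the transformed function $Q$ lies in $H^1(\mathcal{R}_I)$ with a well-defined trace on $I\times\{1\}$, which is all that the computation above uses; a density argument reducing to $P\in C^1(\overline{\mathcal{O}_I(v)})$ makes the manipulations rigorous.
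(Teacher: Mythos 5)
Your proposal is correct and follows essentially the same route as the paper's proof: the pointwise bound $|P(x,v(x))|^r \le r\int_{-H}^{v(x)} |P|^{r-1}|\partial_z P|\,\mathrm{d}z$ from \eqref{t41aP}, H\"older's inequality to reach $r\,\|P\|_{L_{2(r-1)}(\mathcal{O}_I(v))}^{r-1}\|\partial_z P\|_{L_2(\mathcal{O}_I(v))}$, a single application of Lemma~\ref{lemt5P} at exponent $2(r-1)\ge 2$ (so no further case split is needed), and the elementary estimate $r\left(4(r-1)\sqrt{M}\right)^{r-2}\le\left(4r\sqrt{M}\right)^r$. The only cosmetic difference is your detour through the rectangle $\mathcal{R}_I$, where your worry about the Jacobian is moot: since $|\partial_\eta Q|=(H+v)|\partial_z P|$ while $\mathrm{d}\eta=\mathrm{d}z/(H+v)$, the factors cancel exactly, and in any case $H+v$ is \emph{not} bounded below by $H$ (only by $\min_{[a,b]}(H+v)>0$), so the exact cancellation, not a Jacobian bound, is what makes the step work.
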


\begin{proof}
By \eqref{t41aP} we have, for a.a. $x\in I$, 
\begin{align*}
|P(x,v(x))|^r & \le r \int_{-H}^{v(x)} |P(x,z)|^{r-1} |\partial_z P(x,z)|\ \mathrm{d}z \ .
\end{align*}
Integrating over $I$ and using H\"older's inequality lead us to 
\begin{equation*}
\|P(\cdot,v)\|_{L_r(I)}^r \le r \|P\|_{L_{2(r-1)}(\mathcal{O}_I(v))}^{r-1} \|\partial_z P\|_{L_2(\mathcal{O}_I(v))}\,.
\end{equation*}
Since $2(r-1)\ge 2$ as $r\ge 2$, we deduce from Lemma~\ref{lemt5P} and the above inequality that
\begin{equation*}
\|P(\cdot,v)\|_{L_r(I)}^r \le r \left( 4(r-1) \sqrt{M} \right)^{r-2} \|P\|_{L_2(\mathcal{O}_I(v))}  \|\nabla P\|_{L_2(\mathcal{O}_I(v))}^{(r-2)/2} \|\partial_z P\|_{L_2(\mathcal{O}_I(v))}^{r/2}\ ,
\end{equation*}
from which Lemma~\ref{lemt6P} follows, after using that $r \left(4(r-1) \sqrt{M} \right)^{r-2} \le \left(4r\sqrt{M} \right)^r$. 
\end{proof}

\section{A uniform bound for an auxiliary  stationary problem}\label{A3}

\begin{proposition}\label{propA3.1}
Consider $G_0> 0$, $\beta>0$, and $\tau\ge 0$. Let $I=(a,b)\subset (-L,L)$ be an open interval and let $S_{I}$ be the unique solution to the boundary value problem
\begin{equation}
\beta S_{I}'''' - \tau S_{I}'' = G_0\ , \qquad x\in I\ , \label{eA3.1}
\end{equation}
supplemented with inhomogeneous Dirichlet boundary conditions 
\begin{align}
& S_{I}(a) + H = S'_{I}(a)= S_{I}(b) + H = S'_{I}(b) = 0 \quad\text { if }\quad -L<a<b<L\ , \label{eA3.2} \\
& S_{I}(-L) = S'_{I}(-L)= S_{I}(b) + H= S'_{I}(b) = 0 \quad\text { if }\quad -L=a<b<L\ , \label{eA3.3} \\
& S_{I}(a)+H = S'_{I}(a)= S_{I}(L) = S'_{I}(L) = 0 \quad\text { if }\quad -L<a<b=L\ , \label{eA3.4} 
\end{align}
or clamped boundary conditions
\begin{equation}
S_{I}(-L) = S'_{I}(-L)= S_{I}(L) = S'_{I}(L) = 0 \quad\text { if }\quad -L=a<b=L\ . \label{eA3.5}
\end{equation}
There is $\kappa_0>0$ depending only on $G_0$, $\beta$, $L$, $H$, and $\tau$ such that 
\begin{equation*}
\left| S_{I}(x) \right| \le \kappa_0\ , \qquad x\in [a,b]\ , \qquad -L \le a < b \le L\ .
\end{equation*}
\end{proposition}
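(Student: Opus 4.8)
The plan is to establish the uniform bound by decomposing $S_I$ into a part handling the inhomogeneous boundary data and a part handling the constant forcing, and then to exploit the fact that the interval $I$ is contained in the fixed interval $(-L,L)$ of bounded length. First I would observe that the boundary conditions in \eqref{eA3.2}--\eqref{eA3.5} all prescribe, at each endpoint of $I$, either the value $-H$ or $0$ (depending on whether the endpoint is interior to $(-L,L)$ or coincides with $\pm L$) together with a vanishing first derivative. Thus in every case the boundary data is bounded by $H$ in absolute value with vanishing slope. Writing $S_I = P_I + R_I$, where $P_I$ is the (unique) cubic polynomial matching the prescribed Hermite boundary data at $a$ and $b$, and $R_I$ solves the same ODE \eqref{eA3.1} (with a modified right-hand side, since $\beta P_I'''' - \tau P_I''$ is a known bounded function) subject to \emph{homogeneous} clamped conditions $R_I(a)=R_I'(a)=R_I(b)=R_I'(b)=0$, reduces the problem to two pieces. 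The polynomial $P_I$ is explicitly bounded on $[a,b]$ by a constant depending only on $H$ and $b-a \le 2L$, since its coefficients are controlled by the boundary data and inverse powers of $b-a$ — but note $b-a$ may be small, so care is needed; however, the specific Hermite cubic interpolant with values in $\{-H,0\}$ and zero derivatives at both ends has the form $P_I(x) = -H\,\phi\big((x-a)/(b-a)\big)$ for suitable $\phi$ built from the standard Hermite basis functions when both endpoint values equal $-H$, and $\|\phi\|_{L_\infty(0,1)}$ is an absolute constant; the mixed cases ($-H$ at one end, $0$ at the other) are handled identically. Hence $\|P_I\|_{L_\infty(I)} \le C(H)$ with no dependence on $b-a$.

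Next I would bound $R_I$. It solves $\beta R_I'''' - \tau R_I'' = F_I$ in $I$ with $R_I \in H_0^2(I)$, where $F_I := G_0 - \beta P_I'''' + \tau P_I''$. The key point is that, although $P_I''''$ and $P_I''$ involve negative powers of $b-a$ and are therefore not uniformly bounded as $b-a\to 0$, we only need the $L_1$ or weak norm of $F_I$: indeed $\|P_I''''\|_{L_1(I)}$ and $\|P_I''\|_{L_1(I)}$ scale like $(b-a)^{-3}$ and $(b-a)^{-1}$ respectively, which still blow up. So a better route is to test the equation for $R_I$ against $R_I$ itself and use the variational/energy estimate: multiplying $\beta R_I'''' - \tau R_I'' = G_0$ (for $S_I$ directly, it is cleaner to work with $S_I$ and incorporate the boundary data via an admissible lift) by $S_I - \ell_I$, where $\ell_I$ is the Hermite cubic lift $P_I$ above, one obtains
\begin{equation*}
\beta \|S_I''\|_{L_2(I)}^2 + \tau\|S_I'\|_{L_2(I)}^2 = \beta \int_I S_I'' \ell_I''\,\mathrm{d}x + \tau\int_I S_I'\ell_I'\,\mathrm{d}x + G_0\int_I (S_I - \ell_I)\,\mathrm{d}x,
\end{equation*}
after integrating by parts and using that $S_I - \ell_I \in H_0^2(I)$. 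Unfortunately $\|\ell_I''\|_{L_2(I)}$ again degenerates like $(b-a)^{-3/2}$. The cleanest fix, and the one I expect the authors intend, is to avoid lifting altogether: extend $S_I+H$ (or the appropriate shifted function vanishing to first order at the endpoints) by a $C^1$ function to all of $(-L,L)$ — for an interior endpoint $S_I+H$ and its derivative vanish there, so the zero extension is $C^1$; for an endpoint at $\pm L$, $S_I$ and its derivative vanish there. One then compares $S_I$ on $I$ with the global Boggio-type solution; but this is exactly the mechanism used in \textbf{Step~2}--\textbf{Step~4} of the proof of Proposition~\ref{prop4}, where Proposition~\ref{propA3.1} is invoked \emph{after} establishing the sign of $G_0 + g(u) + A(u-k)_+$.

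Given that the calling context only needs \emph{one-sided} information (an upper bound $S_I < 0$ via Boggio's principle, combined with a lower bound from the same type of estimate), the realistic plan for proving Proposition~\ref{propA3.1} itself is: (i) reduce, by the shift $x\mapsto S_I+H\cdot\mathbf{1}_{\{\text{endpoint interior}\}}$ and reflection/scaling, to a normalized problem on a subinterval of fixed length $\le 2L$; (ii) use the explicit Green's function representation $S_I(x) = \int_I \mathcal{G}_I(x,y)\,G_0\,\mathrm{d}y + (\text{boundary terms})$, where $\mathcal{G}_I$ is the Green's function of $\beta\partial_x^4 - \tau\partial_x^2$ on $I$ with the given boundary conditions; (iii) bound $\mathcal{G}_I$ uniformly — here one uses the classical fact that the fourth-order clamped Green's function on an interval of length $L_0$ satisfies $0 \le \mathcal{G}_I(x,y) \le C L_0^3$ and $\int_I \mathcal{G}_I(x,y)\,\mathrm{d}y \le C L_0^4$ with $C$ absolute (this is precisely Boggio's computation, see the references \cite{Bo05, Gr02, LW15, Ow97} cited for Proposition~\ref{prop4}), so that $\big|\int_I \mathcal{G}_I(x,y)G_0\,\mathrm{d}y\big| \le C G_0 (2L)^4$; and (iv) bound the boundary contribution, which is the Hermite interpolant discussed above, by $C(H)$. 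Summing gives $|S_I(x)| \le \kappa_0 = \kappa_0(G_0,\beta,L,H,\tau)$ uniformly in $a,b$. The main obstacle is step (iii): obtaining the $L_1$-in-$y$ bound on the Green's function (or, equivalently, the uniform $L_\infty$ bound on the solution of the homogeneous-boundary clamped problem with constant right-hand side) \emph{with a constant independent of the interval length when that length is small}. This is where one must either invoke Boggio's explicit formula and its known uniform positivity/boundedness estimates on nested intervals, or run a direct energy argument combined with the one-dimensional Sobolev embedding $H^2(I)\hookrightarrow C(\bar I)$ whose constant on $I\subset(-L,L)$ of length $L_0$ behaves like $L_0^{-1/2}$ in the lower-order term — so one must be careful to place all $I$-dependence on the \emph{good} (second-derivative) side of the estimate and use that $G_0$ appears only through $\|G_0\|_{L_1(I)} = G_0 L_0 \le 2 G_0 L$. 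Since $\tau\ge 0$, the second-order term only helps, so the bilaplacian alone suffices, and the scaling $x = a + L_0 t$ turning \eqref{eA3.1} into $\beta L_0^{-4}\tilde S'''' - \tau L_0^{-2}\tilde S'' = G_0$ on $(0,1)$ makes the uniformity transparent: the rescaled forcing $G_0 L_0^4/\beta \le G_0(2L)^4/\beta$ is bounded, and the rescaled boundary data is still bounded by $H$, so the standard (interval-independent) estimate on $(0,1)$ closes the argument.
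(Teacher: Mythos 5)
Your final plan---rescale $I$ to $(0,1)$ so that the forcing becomes $(b-a)^4 G_0/\beta \le 16L^4 G_0/\beta$ while the boundary data (values in $\{0,-H\}$, zero derivatives) stays bounded, then subtract a polynomial lift built on the \emph{unit} interval whose derivatives are absolute constants depending only on $H$---is exactly the paper's proof, and your diagnosis that lifting on the original interval $[a,b]$ fails because the lift's derivatives degenerate as $b-a\to 0$ is precisely why the paper rescales first. The ``standard interval-independent estimate on $(0,1)$'' that you leave implicit is simply the energy identity obtained by testing the homogenized equation with the unknown $P\in H_0^2(0,1)$, namely $\beta\|P''\|_{L_2(0,1)}^2+\tau(b-a)^2\|P'\|_{L_2(0,1)}^2=\int_0^1 FP\,\mathrm{d}y$, combined with the pointwise bound $\|P\|_{L_\infty(0,1)}\le\|P''\|_{L_2(0,1)}$ coming from $P(0)=P'(0)=0$; this already yields the two-sided bound, so the Green's-function detour is unnecessary and Boggio's principle is invoked in the paper only to record the sign $P>0$ in the constant-data case.
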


\begin{proof}
\noindent\textbf{Case~1: $-L<a<b<L$.} We set $P(y) := S_{I}(a+(b-a)y)+H$ for $y\in [0,1]$ and deduce from \eqref{eA3.1} and \eqref{eA3.2} that $P$ solves the boundary-value problem 
\begin{equation}
\begin{split}
& \beta P'''' - \tau (b-a)^2 P'' = (b-a)^4 G_0\ , \qquad y\in (0,1)\ , \\
& P(0) = P'(0) = P(1) = P'(1) = 0\ . 
\end{split} \label{eA3.6}
\end{equation}
We first infer from \eqref{eA3.6}, the positivity of $G_0$, and a version of Boggio's comparison principle \cite{Bo05, Gr02, LW15, Ow97} that $P>0$ in $(0,1)$. We next multiply \eqref{eA3.6} by $P$ and integrate over $(0,1)$. After integrating by parts and using the boundary conditions, we obtain
\begin{equation*}
\beta \|P''\|_{L_2(0,1)}^2 + \tau (b-a)^2 \|P'\|_{L_2(0,1)}^2 = (b-a)^4 G_0 \int_0^1 P(y)\ \rd y \ .
\end{equation*}
Since
\begin{equation*}
|P(y)| = \left| \int_0^y (y-y_*) P''(y_*) \ \rd y_* \right|  \le \| P''\|_{L_2(0,1)}\ , \qquad y\in (0,1)\ ,
\end{equation*}
by \eqref{eA3.6}, we infer from these observations that
\begin{equation*}
\beta \|P\|_{L_\infty(0,1)}^2 \le \beta \|P''\|_{L_2(0,1)}^2  \le (b-a)^4 G_0 \| P\|_{L_\infty(0,1)} \le 16L^4  G_0 \| P\|_{L_\infty(0,1)}\ .
\end{equation*}
Consequently, $0\le P \le 16L^4 G_0/ \beta$ in $[0,1]$, hence $-H \le S_{I} \le 16 L^4 G_0/ \beta - H$ in $[a,b]$.

\medskip

\noindent\textbf{Case~2: $-L=a<b<L$.} Let $Q\in \mathbb{R}_4[X]$ be such that $Q(0)=Q'(0) = Q(1)+H = Q'(1)= 0$; that is, $Q(y) = y^2(y^2 +2(H-1)y +1- 3H)$. We set $P(y) := S_{I}(-L+(b+L)y)-Q(y)$ for $y\in [0,1]$ and deduce from \eqref{eA3.1} and \eqref{eA3.3} that $P$ solves the boundary value problem 
\begin{equation}
\begin{split}
& \beta P'''' - \tau (b+L)^2 P'' = (b+L)^4 G_0 - \beta Q'''' + \tau (b+L)^2 Q'' \ , \qquad y\in (0,1)\ , \\
& P(0) = P'(0) = P(1) = P'(1) = 0\ . 
\end{split} \label{eA3.7}
\end{equation}
Arguing as in Case~1, we are led to
\begin{align*}
\beta \|P\|_{L_\infty(0,1)}^2 & \le \beta \|P''\|_{L_2(0,1)}^2 + \tau (b+L)^2 \|P'\|_{L_2(0,1)}^2 \\ 
& \le \left[ (b+L)^4 G_0 + 24\beta +14 \tau (H+1) (b+L)^2 \right] \|P\|_{L_\infty(0,1)} \\
& \le \left[ 16L^4 G_0 +  24\beta + 56 \tau (H+1) L^2 \right] \|P\|_{L_\infty(0,1)}\ ,
\end{align*}
since $Q''''=24$ and 
\begin{equation*}
-14(H+1) \le -12y - 6H \le Q''(y) = 12 y^2 + 12(H-1)y + 2(1-3H) \le 14 + 12H \le 14(H+1)\ .
\end{equation*}
 Consequently, 
\begin{equation*}
\|S_{I}\|_{L_\infty(I)} \le \|P\|_{L_\infty(0,1)} + \|Q\|_{L_\infty(0,1)} \le \frac{16L^4 G_0 + 24\beta + 56 \tau (H+1) L^2}{\beta} + \|Q\|_{L_\infty(0,1)}\ .
\end{equation*}
\medskip

\noindent\textbf{Case~3: $-L<a<b=L$.} We set $P(y) := S_{I}(a+y(L-a))-Q(1-y)$ for $y\in [0,1]$ and proceed as in the previous case to derive the same bound for $\|S_{I}\|_{L_\infty(I)}$.
\medskip

\noindent\textbf{Case~4: $-L=a<b=L$.} We set $P(y) := S_{I}(-L+2Ly)$ for $y\in [0,1]$ and deduce from \eqref{eA3.1} and \eqref{eA3.5} that $P$ solves the boundary value problem 
\begin{equation*}
\begin{split}
& \beta P'''' - 4\tau L^2 P'' = 16L^4 G_0\ , \qquad y\in (0,1)\ , \\
& P(0) = P'(0) = P(1) = P'(1) = 0\ . 
\end{split} 
\end{equation*}
We then argue as in Case~1 to conclude that $0\le S_{I} \le 16L^4 G_0/\beta$ in $[-L,L]$.
\end{proof}


\bibliographystyle{siam}
\bibliography{LNW}

\end{document}